\documentclass[12pt,oneside]{amsart}
\usepackage[latin1]{inputenc}
\usepackage{amsmath,a4,amssymb,amscd}
\usepackage{typearea,lmodern}
\usepackage{url}

 \def\dated#1{\def\thedate{#1}}%
 \dated{2008-12-02}%

\newcount\atcode \atcode=\catcode`\@%
\catcode`\@=12%
\input xy
\xyoption{arrow}
\xyoption{curve}
\catcode`\@=11
 \newdimen\xydashw@@
 \xydashw@@\xydashw@
 \multiply\xydashw@@ by 7
 \divide\xydashw@@ by 4
\setbox\zerodotbox@=\hbox{\dimen@=.5\xydashw@@
 \kern-\dimen@ \vrule width\xydashw@@ height\dimen@ depth\dimen@}
\wd\zerodotbox@=\z@ \ht\zerodotbox@=\z@ \dp\zerodotbox@=\z@
\catcode`\@=12

\newdir{ >}{{ }*!/-.9em/@{>}}%
\newdir^{ (}{{ }*!/-.5em/@^{(}}%
\newdir{< }{!/.9em/@{<}*{ }}%
\newdir{) }{!/.9em/@^{)}*{ }}%
\newdir{ (}{{ }*!/-.5em/@^{(}}%
\newdir{d}{\object{.}}

\newdimen\high%
\newdimen\ul%
\newcount\deltax%
\newcount\deltay%
\newcount\deltaX%
\newcount\deltaXprime%
\newcount\deltaY%
\newcount\deltaYprime%

\newdimen\wdth%
\newcount\xend%
\newcount\yend%
\newcount\Xend%
\newcount\Yend%
\newcount\xpos%
\newcount\ypos%
\newcount\default \default=500%
\newcount\defaultmargin \defaultmargin=150%
\newcount\topw%
\newcount\botw%
\newcount\Xpos%
\newcount\Ypos%
\def\ratchet#1#2{\ifnum#1<#2\global #1=#2\fi}%

\catcode`\@=11%
\expandafter\ifx\csname @ifnextchar\endcsname\relax%
\def\ifnextchar#1#2#3{\let\@tempe%
#1\def\@tempa{#2}\def\@tempb{#3}\futurelet%
    \@tempc\@ifnch}%
\def\@ifnch{\ifx \@tempc \@sptoken \let\@tempd\@xifnch%
      \else \ifx \@tempc \@tempe\let\@tempd\@tempa\else\let\@tempd\@tempb\fi%
      \fi \@tempd}%
\def\:{\let\@sptoken= } \:  
\def\:{\@xifnch} \expandafter\def\: {\futurelet\@tempc\@ifnch}%
\else%
\let\ifnextchar\@ifnextchar%
\fi%
\setbox0=\hbox{$x$}
\newdimen\axis \axis=\fontdimen22\textfont2%
\ul=.01em%
\X@xbase =.01em%
\Y@ybase =.01em%
\def\scalefactor#1{\ul=#1\ul \X@xbase=#1\X@xbase \Y@ybase=#1\Y@ybase}%
\catcode`\@=12%

\def\fontscale#1{%
\if#1h\relax%
\font\xydashfont=xydash10 scaled \magstephalf%
\font\xyatipfont=xyatip10 scaled \magstephalf%
\font\xybtipfont=xybtip10 scaled \magstephalf%
\font\xybsqlfont=xybsql10 scaled \magstephalf%
\font\xycircfont=xycirc10 scaled \magstephalf%
\else%
\font\xydashfont=xydash10 scaled \magstep#1%
\font\xyatipfont=xyatip10 scaled \magstep#1%
\font\xybtipfont=xybtip10 scaled \magstep#1%
\font\xybsqlfont=xybsql10 scaled \magstep#1%
\font\xycircfont=xycirc10 scaled \magstep#1%
\fi}%

\def\bfig{\vcenter\bgroup\xy}%
\def\efig{\endxy\egroup}%

\def\car#1#2\nil{#1}%

\def\morphism{\ifnextchar({\morphismp}{\morphismp(0,0)}}%
\def\morphismp(#1){\ifnextchar|{\morphismpp(#1)}{\morphismpp(#1)|a|}}%
\def\morphismpp(#1)|#2|{\ifnextchar/{\morphismppp(#1)|#2|}%
    {\morphismppp(#1)|#2|/>/}}%
\def\morphismppp(#1)|#2|/#3/{%
    \ifnextchar<{\morphismpppp(#1)|#2|/#3/}%
    {\morphismpppp(#1)|#2|/#3/<\default,0>}}%

\def\morphismpppp(#1,#2)|#3|/#4/<#5,#6>[#7`#8;#9]{%
\xend#1\advance \xend by #5%
\yend#2\advance \yend by #6%
\domorphism(#1,#2)|#3|/#4/<#5,#6>[{#7}`{#8};{#9}]}%

\def\domorphism(#1,#2)|#3|/#4/<#5,#6>[#7`#8;#9]{%
\def\next{\car#4.\nil}%
\if@\next\relax%
 \if#3l%
  \ifnum #6>0%
   \POS(#1,#2)*+!!<0ex,\axis>{#7}\ar#4^-{#9} (\xend,\yend)*+!!<0ex,\axis>{#8}%
  \else%
   \POS(#1,#2)*+!!<0ex,\axis>{#7}\ar#4_-{#9} (\xend,\yend)*+!!<0ex,\axis>{#8}%
  \fi%
 \else \if#3m%
    \setbox0\hbox{$#9$}%
   \ifdim \wd0=0pt%
     \POS(#1,#2)*+!!<0ex,\axis>{#7}\ar#4 (\xend,\yend)*+!!<0ex,\axis>{#8}%
   \else%
     \POS(#1,#2)*+!!<0ex,\axis>{#7}\ar#4|-*+<1pt,4pt>{\labelstyle#9}%
       (\xend,\yend)*+!!<0ex,\axis>{#8}%
   \fi%
 \else \if#3r%
  \ifnum #6<0%
   \POS(#1,#2)*+!!<0ex,\axis>{#7}\ar#4^-{#9} (\xend,\yend)*+!!<0ex,\axis>{#8}%
  \else%
   \POS(#1,#2)*+!!<0ex,\axis>{#7}\ar#4_-{#9} (\xend,\yend)*+!!<0ex,\axis>{#8}%
  \fi%
 \else \if#3a%
  \ifnum #5>0%
   \POS(#1,#2)*+!!<0ex,\axis>{#7}\ar#4^-{#9} (\xend,\yend)*+!!<0ex,\axis>{#8}%
  \else%
   \POS(#1,#2)*+!!<0ex,\axis>{#7}\ar#4_-{#9} (\xend,\yend)*+!!<0ex,\axis>{#8}%
  \fi%
 \else \if#3b%
  \ifnum #5<0%
   \POS(#1,#2)*+!!<0ex,\axis>{#7}\ar#4^-{#9} (\xend,\yend)*+!!<0ex,\axis>{#8}%
  \else%
   \POS(#1,#2)*+!!<0ex,\axis>{#7}\ar#4_-{#9} (\xend,\yend)*+!!<0ex,\axis>{#8}%
  \fi%
 \else%
   \POS(#1,#2)*+!!<0ex,\axis>{#7}\ar#4 (\xend,\yend)*+!!<0ex,\axis>{#8}%
 \fi\fi\fi\fi\fi%
\else%
 \if#3l%
  \ifnum #6>0%
   \POS(#1,#2)*+!!<0ex,\axis>{#7}\ar@{#4}^-{#9} (\xend,\yend)*+!!<0ex,\axis>{#8}%
  \else%
   \POS(#1,#2)*+!!<0ex,\axis>{#7}\ar@{#4}_-{#9} (\xend,\yend)*+!!<0ex,\axis>{#8}%
  \fi%
 \else \if#3m%
    \setbox0\hbox{$#9$}%
   \ifdim \wd0=0pt%
     \POS(#1,#2)*+!!<0ex,\axis>{#7}\ar@{#4} (\xend,\yend)*+!!<0ex,\axis>{#8}%
   \else%
     \POS(#1,#2)*+!!<0ex,\axis>{#7}\ar@{#4}|-*+<1pt,4pt>{\labelstyle#9}%
         (\xend,\yend)*+!!<0ex,\axis>{#8}%
   \fi%
 \else \if#3r%
  \ifnum #6<0%
   \POS(#1,#2)*+!!<0ex,\axis>{#7}\ar@{#4}^-{#9} (\xend,\yend)*+!!<0ex,\axis>{#8}%
  \else%
   \POS(#1,#2)*+!!<0ex,\axis>{#7}\ar@{#4}_-{#9} (\xend,\yend)*+!!<0ex,\axis>{#8}%
  \fi%
 \else \if#3a%
  \ifnum #5>0%
   \POS(#1,#2)*+!!<0ex,\axis>{#7}\ar@{#4}^-{#9} (\xend,\yend)*+!!<0ex,\axis>{#8}%
  \else%
   \POS(#1,#2)*+!!<0ex,\axis>{#7}\ar@{#4}_-{#9} (\xend,\yend)*+!!<0ex,\axis>{#8}%
  \fi%
 \else \if#3b%
  \ifnum #5<0%
   \POS(#1,#2)*+!!<0ex,\axis>{#7}\ar@{#4}^-{#9} (\xend,\yend)*+!!<0ex,\axis>{#8}%
  \else%
   \POS(#1,#2)*+!!<0ex,\axis>{#7}\ar@{#4}_-{#9} (\xend,\yend)*+!!<0ex,\axis>{#8}%
  \fi%
 \else%
   \POS(#1,#2)*+!!<0ex,\axis>{#7}\ar@{#4} (\xend,\yend)*+!!<0ex,\axis>{#8}%
 \fi\fi\fi\fi\fi%
\fi\ignorespaces}%

\def\vect(#1,#2)/#3/<#4,#5>{%
 \xend#1 \yend#2 \advance\xend by #4 \advance\yend by #5%
     \POS(#1,#2)\ar#3 (\xend,\yend)}%

\def\squarepppp(#1,#2)|#3|/#4`#5`#6`#7/<#8>[#9]{%
\xpos#1\ypos#2%
\def\next|##1##2##3##4|{%
 \def\xa{##1}\def\xb{##2}\def\xc{##3}\def\xd{##4}\ignorespaces}%
\next|#3|%
\def\next<##1,##2>{\deltax=##1\deltay=##2\ignorespaces}%
\next<#8>%
\def\next[##1`##2`##3`##4;##5`##6`##7`##8]{%
    \def\nodea{##1}\def\nodeb{##2}\def\nodec{##3}\def\noded{##4}%
    \def\labela{##5}\def\labelb{##6}\def\labelc{##7}\def\labeld{##8}\ignorespaces}%
\next[#9]%
\morphism(\xpos,\ypos)|\xd|/{#7}/<\deltax,0>[\nodec`\noded;\labeld]%
\advance \ypos by \deltay%
\morphism(\xpos,\ypos)|\xb|/{#5}/<0,-\deltay>[\nodea`\nodec;\labelb]%
\morphism(\xpos,\ypos)|\xa|/{#4}/<\deltax,0>[\nodea`\nodeb;\labela]%
 \advance \xpos by \deltax%
\morphism(\xpos,\ypos)|\xc|/{#6}/<0,-\deltay>[\nodeb`\noded;\labelc]%
\ignorespaces}%

\def\square{\ifnextchar({\squarep}{\squarep(0,0)}}%
\def\squarep(#1){\ifnextchar|{\squarepp(#1)}{\squarepp(#1)|alrb|}}%
\def\squarepp(#1)|#2|{\ifnextchar/{\squareppp(#1)|#2|}%
    {\squareppp(#1)|#2|/>`>`>`>/}}%
\def\squareppp(#1)|#2|/#3`#4`#5`#6/{%
    \ifnextchar<{\squarepppp(#1)|#2|/#3`#4`#5`#6/}%
    {\squarepppp(#1)|#2|/#3`#4`#5`#6/<\default,\default>}}%

\def\diamondpppp(#1,#2)|#3|/#4`#5`#6`#7/<#8>[#9]{%
\xpos#1\ypos#2%
\def\next|##1##2##3##4|{%
 \def\xa{##1}\def\xb{##2}\def\xc{##3}\def\xd{##4}\ignorespaces}%
\next|#3|%
\def\next<##1,##2>{\deltax=##1\deltay=##2\ignorespaces}%
\next<#8>%
\def\next[##1`##2`##3`##4;##5`##6`##7`##8]{%
    \def\nodea{##1}\def\nodeb{##2}\def\nodec{##3}\def\noded{##4}%
    \def\labela{##5}\def\labelb{##6}\def\labelc{##7}%
\def\labeld{##8}\ignorespaces}%
\next[#9]%
\advance\ypos\deltay
\morphism(\xpos,\ypos)|\xc|/{#6}/<\deltax,-\deltay>[\nodeb`\noded;\labelc]%
\advance\xpos \deltax
\advance\xpos \deltax
\morphism(\xpos,\ypos)|\xd|/{#7}/<-\deltax,-\deltay>[\nodec`\noded;\labeld]%
\advance\ypos\deltay \advance\xpos -\deltax
\morphism(\xpos,\ypos)|\xa|/{#4}/<-\deltax,-\deltay>[\nodea`\nodeb;\labela]%
\morphism(\xpos,\ypos)|\xb|/{#5}/<\deltax,-\deltay>[\nodea`\nodec;\labelb]%
}
\def\diamondp(#1){\ifnextchar|{\diamondpp(#1)}{\diamondpp(#1)|lrlr|}}%
\def\diamondpp(#1)|#2|{\ifnextchar/{\diamondppp(#1)|#2|}%
    {\diamondppp(#1)|#2|/>`>`>`>/}}%
\def\diamondppp(#1)|#2|/#3`#4`#5`#6/{%
    \ifnextchar<{\diamondpppp(#1)|#2|/#3`#4`#5`#6/}%
    {\diamondpppp(#1)|#2|/#3`#4`#5`#6/<400,400>}}%

\def\ptrianglepppp(#1,#2)|#3|/#4`#5`#6/<#7>[#8]{%
\xpos#1\ypos#2%
\def\next|##1##2##3|{\def\xa{##1}\def\xb{##2}\def\xc{##3}}%
\next|#3|%
\def\next<##1,##2>{\deltax=##1\deltay=##2\ignorespaces}%
\next<#7>%
\def\next[##1`##2`##3;##4`##5`##6]{%
    \def\nodea{##1}\def\nodeb{##2}\def\nodec{##3}%
    \def\labela{##4}\def\labelb{##5}\def\labelc{##6}}%
\next[#8]%
\advance\ypos by \deltay%
\morphism(\xpos,\ypos)|\xa|/{#4}/<\deltax,0>[\nodea`\nodeb;\labela]%
\morphism(\xpos,\ypos)|\xb|/{#5}/<0,-\deltay>[\nodea`\nodec;\labelb]%
\advance\xpos by \deltax%
\morphism(\xpos,\ypos)|\xc|/{#6}/<-\deltax,-\deltay>[\nodeb`\nodec;\labelc]%
\ignorespaces}%

\def\qtrianglepppp(#1,#2)|#3|/#4`#5`#6/<#7>[#8]{%
\xpos#1\ypos#2%
\def\next|##1##2##3|{\def\xa{##1}\def\xb{##2}\def\xc{##3}}%
\next|#3|%
\def\next<##1,##2>{\deltax=##1\deltay=##2\ignorespaces}%
\next<#7>%
\def\next[##1`##2`##3;##4`##5`##6]{%
    \def\nodea{##1}\def\nodeb{##2}\def\nodec{##3}%
    \def\labela{##4}\def\labelb{##5}\def\labelc{##6}}%
\next[#8]%
\advance\ypos by \deltay%
\morphism(\xpos,\ypos)|\xa|/{#4}/<\deltax,0>[\nodea`\nodeb;\labela]%
\morphism(\xpos,\ypos)|\xb|/{#5}/<\deltax,-\deltay>[\nodea`\nodec;\labelb]%
\advance\xpos by \deltax%
\morphism(\xpos,\ypos)|\xc|/{#6}/<0,-\deltay>[\nodeb`\nodec;\labelc]%
\ignorespaces}%

\def\dtrianglepppp(#1,#2)|#3|/#4`#5`#6/<#7>[#8]{%
\xpos#1\ypos#2%
\def\next|##1##2##3|{\def\xa{##1}\def\xb{##2}\def\xc{##3}}%
\next|#3|%
\def\next<##1,##2>{\deltax=##1\deltay=##2\ignorespaces}%
\next<#7>%
\def\next[##1`##2`##3;##4`##5`##6]{%
    \def\nodea{##1}\def\nodeb{##2}\def\nodec{##3}%
    \def\labela{##4}\def\labelb{##5}\def\labelc{##6}}%
\next[#8]%
\morphism(\xpos,\ypos)|\xc|/{#6}/<\deltax,0>[\nodeb`\nodec;\labelc]%
\advance\ypos by \deltay\advance \xpos by \deltax%
\morphism(\xpos,\ypos)|\xa|/{#4}/<-\deltax,-\deltay>[\nodea`\nodeb;\labela]%
\morphism(\xpos,\ypos)|\xb|/{#5}/<0,-\deltay>[\nodea`\nodec;\labelb]%
\ignorespaces}%

\def\btrianglepppp(#1,#2)|#3|/#4`#5`#6/<#7>[#8]{%
\xpos#1\ypos#2%
\def\next|##1##2##3|{\def\xa{##1}\def\xb{##2}\def\xc{##3}}%
\next|#3|%
\def\next<##1,##2>{\deltax=##1\deltay=##2\ignorespaces}%
\next<#7>%
\def\next[##1`##2`##3;##4`##5`##6]{%
    \def\nodea{##1}\def\nodeb{##2}\def\nodec{##3}%
    \def\labela{##4}\def\labelb{##5}\def\labelc{##6}}%
\next[#8]%
\morphism(\xpos,\ypos)|\xc|/{#6}/<\deltax,0>[\nodeb`\nodec;\labelc]%
\advance\ypos by \deltay%
\morphism(\xpos,\ypos)|\xa|/{#4}/<0,-\deltay>[\nodea`\nodeb;\labela]%
\morphism(\xpos,\ypos)|\xb|/{#5}/<\deltax,-\deltay>[\nodea`\nodec;\labelb]%
\ignorespaces}%

\def\Atrianglepppp(#1,#2)|#3|/#4`#5`#6/<#7>[#8]{%
\xpos#1\ypos#2%
\def\next|##1##2##3|{\def\xa{##1}\def\xb{##2}\def\xc{##3}}%
\next|#3|%
\def\next<##1,##2>{\deltax=##1\deltay=##2\ignorespaces}%
\next<#7>%
\def\next[##1`##2`##3;##4`##5`##6]{%
    \def\nodea{##1}\def\nodeb{##2}\def\nodec{##3}%
    \def\labela{##4}\def\labelb{##5}\def\labelc{##6}}%
\next[#8]%
\multiply\deltax by 2%
\morphism(\xpos,\ypos)|\xc|/{#6}/<\deltax,0>[\nodeb`\nodec;\labelc]%
\divide\deltax by 2%
\advance\ypos by \deltay\advance\xpos by \deltax%
\morphism(\xpos,\ypos)|\xa|/{#4}/<-\deltax,-\deltay>[\nodea`\nodeb;\labela]%
\morphism(\xpos,\ypos)|\xb|/{#5}/<\deltax,-\deltay>[\nodea`\nodec;\labelb]%
\ignorespaces}%

\def\Vtrianglepppp(#1,#2)|#3|/#4`#5`#6/<#7>[#8]{%
\xpos#1\ypos#2%
\def\next|##1##2##3|{\def\xa{##1}\def\xb{##2}\def\xc{##3}}%
\next|#3|%
\def\next<##1,##2>{\deltax=##1\deltay=##2\ignorespaces}%
\next<#7>%
\def\next[##1`##2`##3;##4`##5`##6]{%
    \def\nodea{##1}\def\nodeb{##2}\def\nodec{##3}%
    \def\labela{##4}\def\labelb{##5}\def\labelc{##6}}%
\next[#8]%
\advance\ypos by \deltay%
\morphism(\xpos,\ypos)|\xb|/{#5}/<\deltax,-\deltay>[\nodea`\nodec;\labelb]%
\multiply\deltax by 2%
\morphism(\xpos,\ypos)|\xa|/{#4}/<\deltax,0>[\nodea`\nodeb;\labela]%
\advance\xpos by \deltax \divide \deltax by 2%
\morphism(\xpos,\ypos)|\xc|/{#6}/<-\deltax,-\deltay>[\nodeb`\nodec;\labelc]%
\ignorespaces}%

\def\Ctrianglepppp(#1,#2)|#3|/#4`#5`#6/<#7>[#8]{%
\xpos#1\ypos#2%
\def\next|##1##2##3|{\def\xa{##1}\def\xb{##2}\def\xc{##3}}%
\next|#3|%
\def\next<##1,##2>{\deltax=##1\deltay=##2\ignorespaces}%
\next<#7>%
\def\next[##1`##2`##3;##4`##5`##6]{%
    \def\nodea{##1}\def\nodeb{##2}\def\nodec{##3}%
    \def\labela{##4}\def\labelb{##5}\def\labelc{##6}}%
\next[#8]%
\advance \ypos by \deltay%
\morphism(\xpos,\ypos)|\xc|/{#6}/<\deltax,-\deltay>[\nodeb`\nodec;\labelc]%
\advance\ypos by \deltay \advance \xpos by \deltax%
\morphism(\xpos,\ypos)|\xa|/{#4}/<-\deltax,-\deltay>[\nodea`\nodeb;\labela]%
\multiply\deltay by 2%
\morphism(\xpos,\ypos)|\xb|/{#5}/<0,-\deltay>[\nodea`\nodec;\labelb]%
\ignorespaces}%

\def\Dtrianglepppp(#1,#2)|#3|/#4`#5`#6/<#7>[#8]{%
\xpos#1\ypos#2%
\def\next|##1##2##3|{\def\xa{##1}\def\xb{##2}\def\xc{##3}}%
\next|#3|%
\def\next<##1,##2>{\deltax=##1\deltay=##2\ignorespaces}%
\next<#7>%
\def\next[##1`##2`##3;##4`##5`##6]{%
    \def\nodea{##1}\def\nodeb{##2}\def\nodec{##3}%
    \def\labela{##4}\def\labelb{##5}\def\labelc{##6}}%
\next[#8]%
\advance\xpos by \deltax \advance\ypos by \deltay%
\morphism(\xpos,\ypos)|\xc|/{#6}/<-\deltax,-\deltay>[\nodeb`\nodec;\labelc]%
\advance\xpos by -\deltax \advance\ypos by \deltay%
\morphism(\xpos,\ypos)|\xb|/{#5}/<\deltax,-\deltay>[\nodea`\nodeb;\labelb]%
\multiply \deltay by 2%
\morphism(\xpos,\ypos)|\xa|/{#4}/<0,-\deltay>[\nodea`\nodec;\labela]%
\ignorespaces}%

\def\ptrianglep(#1){\ifnextchar|{\ptrianglepp(#1)}{\ptrianglepp(#1)|alr|}}%
\def\ptrianglepp(#1)|#2|{\ifnextchar/{\ptriangleppp(#1)|#2|}%
    {\ptriangleppp(#1)|#2|/>`>`>/}}%
\def\ptriangleppp(#1)|#2|/#3`#4`#5/{%
    \ifnextchar<{\ptrianglepppp(#1)|#2|/#3`#4`#5/}%
    {\ptrianglepppp(#1)|#2|/#3`#4`#5/<\default,\default>}}%

\def\qtrianglep(#1){\ifnextchar|{\qtrianglepp(#1)}{\qtrianglepp(#1)|alr|}}%
\def\qtrianglepp(#1)|#2|{\ifnextchar/{\qtriangleppp(#1)|#2|}%
    {\qtriangleppp(#1)|#2|/>`>`>/}}%
\def\qtriangleppp(#1)|#2|/#3`#4`#5/{%
    \ifnextchar<{\qtrianglepppp(#1)|#2|/#3`#4`#5/}%
    {\qtrianglepppp(#1)|#2|/#3`#4`#5/<\default,\default>}}%

\def\dtrianglep(#1){\ifnextchar|{\dtrianglepp(#1)}{\dtrianglepp(#1)|lrb|}}%
\def\dtrianglepp(#1)|#2|{\ifnextchar/{\dtriangleppp(#1)|#2|}%
    {\dtriangleppp(#1)|#2|/>`>`>/}}%
\def\dtriangleppp(#1)|#2|/#3`#4`#5/{%
    \ifnextchar<{\dtrianglepppp(#1)|#2|/#3`#4`#5/}%
    {\dtrianglepppp(#1)|#2|/#3`#4`#5/<\default,\default>}}%

\def\btrianglep(#1){\ifnextchar|{\btrianglepp(#1)}{\btrianglepp(#1)|lrb|}}%
\def\btrianglepp(#1)|#2|{\ifnextchar/{\btriangleppp(#1)|#2|}%
    {\btriangleppp(#1)|#2|/>`>`>/}}%
\def\btriangleppp(#1)|#2|/#3`#4`#5/{%
    \ifnextchar<{\btrianglepppp(#1)|#2|/#3`#4`#5/}%
    {\btrianglepppp(#1)|#2|/#3`#4`#5/<\default,\default>}}%

\def\Atrianglep(#1){\ifnextchar|{\Atrianglepp(#1)}{\Atrianglepp(#1)|lrb|}}%
\def\Atrianglepp(#1)|#2|{\ifnextchar/{\Atriangleppp(#1)|#2|}%
    {\Atriangleppp(#1)|#2|/>`>`>/}}%
\def\Atriangleppp(#1)|#2|/#3`#4`#5/{%
    \ifnextchar<{\Atrianglepppp(#1)|#2|/#3`#4`#5/}%
    {\Atrianglepppp(#1)|#2|/#3`#4`#5/<\default,\default>}}%

\def\Vtriangle{\ifnextchar({\Vtrianglep}{\Vtrianglep(0,0)}}%
\def\Vtrianglep(#1){\ifnextchar|{\Vtrianglepp(#1)}{\Vtrianglepp(#1)|alb|}}%
\def\Vtrianglepp(#1)|#2|{\ifnextchar/{\Vtriangleppp(#1)|#2|}%
    {\Vtriangleppp(#1)|#2|/>`>`>/}}%
\def\Vtriangleppp(#1)|#2|/#3`#4`#5/{%
    \ifnextchar<{\Vtrianglepppp(#1)|#2|/#3`#4`#5/}%
    {\Vtrianglepppp(#1)|#2|/#3`#4`#5/<\default,\default>}}%

\def\Ctrianglep(#1){\ifnextchar|{\Ctrianglepp(#1)}{\Ctrianglepp(#1)|arb|}}%
\def\Ctrianglepp(#1)|#2|{\ifnextchar/{\Ctriangleppp(#1)|#2|}%
    {\Ctriangleppp(#1)|#2|/>`>`>/}}%
\def\Ctriangleppp(#1)|#2|/#3`#4`#5/{%
    \ifnextchar<{\Ctrianglepppp(#1)|#2|/#3`#4`#5/}%
    {\Ctrianglepppp(#1)|#2|/#3`#4`#5/<\default,\default>}}%

\def\Dtrianglep(#1){\ifnextchar|{\Dtrianglepp(#1)}{\Dtrianglepp(#1)|lab|}}%
\def\Dtrianglepp(#1)|#2|{\ifnextchar/{\Dtriangleppp(#1)|#2|}%
    {\Dtriangleppp(#1)|#2|/>`>`>/}}%
\def\Dtriangleppp(#1)|#2|/#3`#4`#5/{%
    \ifnextchar<{\Dtrianglepppp(#1)|#2|/#3`#4`#5/}%
    {\Dtrianglepppp(#1)|#2|/#3`#4`#5/<\default,\default>}}%

\def\Atrianglepairpppp(#1)|#2|/#3`#4`#5`#6`#7/<#8>[#9]{%
\def\next(##1,##2){\xpos##1\ypos##2}%
\next(#1)%
\def\next|##1##2##3##4##5|{\def\xa{##1}\def\xb{##2}%
\def\xc{##3}\def\xd{##4}\def\xe{##5}}%
\next|#2|%
\def\next<##1,##2>{\deltax=##1\deltay=##2\ignorespaces}%
\next<#8>%
\def\next[##1`##2`##3`##4;##5`##6`##7`##8`##9]{%
 \def\nodea{##1}\def\nodeb{##2}\def\nodec{##3}\def\noded{##4}%
 \def\labela{##5}\def\labelb{##6}\def\labelc{##7}\def\labeld{##8}\def\labele{##9}}%
\next[#9]%
\morphism(\xpos,\ypos)|\xd|/{#6}/<\deltax,0>[\nodeb`\nodec;\labeld]%
\advance\xpos by \deltax%
\morphism(\xpos,\ypos)|\xe|/{#7}/<\deltax,0>[\nodec`\noded;\labele]%
\advance\ypos by \deltay%
\morphism(\xpos,\ypos)|\xa|/{#3}/<-\deltax,-\deltay>[\nodea`\nodeb;\labela]%
\morphism(\xpos,\ypos)|\xb|/{#4}/<0,-\deltay>[\nodea`\nodec;\labelb]%
\morphism(\xpos,\ypos)|\xc|/{#5}/<\deltax,-\deltay>[\nodea`\noded;\labelc]%
\ignorespaces}%

\def\Vtrianglepairpppp(#1)|#2|/#3`#4`#5`#6`#7/<#8>[#9]{%
\def\next(##1,##2){\xpos##1\ypos##2}%
\next(#1)%
\def\next|##1##2##3##4##5|{\def\xa{##1}\def\xb{##2}%
\def\xc{##3}\def\xd{##4}\def\xe{##5}}%
\next|#2|%
\def\next<##1,##2>{\deltax=##1\deltay=##2\ignorespaces}%
\next<#8>%
\def\next[##1`##2`##3`##4;##5`##6`##7`##8`##9]{%
 \def\nodea{##1}\def\nodeb{##2}\def\nodec{##3}\def\noded{##4}%
 \def\labela{##5}\def\labelb{##6}\def\labelc{##7}\def\labeld{##8}\def\labele{##9}}%
\next[#9]%
\advance\ypos by \deltay%
\morphism(\xpos,\ypos)|\xa|/{#3}/<\deltax,0>[\nodea`\nodeb;\labela]%
\morphism(\xpos,\ypos)|\xc|/{#5}/<\deltax,-\deltay>[\nodea`\noded;\labelc]%
\advance\xpos by \deltax%
\morphism(\xpos,\ypos)|\xb|/{#4}/<\deltax,0>[\nodeb`\nodec;\labelb]%
\morphism(\xpos,\ypos)|\xd|/{#6}/<0,-\deltay>[\nodeb`\noded;\labeld]%
\advance\xpos by \deltax%
\morphism(\xpos,\ypos)|\xe|/{#7}/<-\deltax,-\deltay>[\nodec`\noded;\labele]%
\ignorespaces}%

\def\Ctrianglepairpppp(#1)|#2|/#3`#4`#5`#6`#7/<#8>[#9]{%
\def\next(##1,##2){\xpos##1\ypos##2}%
\next(#1)%
\def\next|##1##2##3##4##5|{\def\xa{##1}\def\xb{##2}%
\def\xc{##3}\def\xd{##4}\def\xe{##5}}%
\next|#2|%
\def\next<##1,##2>{\deltax=##1\deltay=##2\ignorespaces}%
\next<#8>%
\def\next[##1`##2`##3`##4;##5`##6`##7`##8`##9]{%
 \def\nodea{##1}\def\nodeb{##2}\def\nodec{##3}\def\noded{##4}%
 \def\labela{##5}\def\labelb{##6}\def\labelc{##7}\def\labeld{##8}\def\labele{##9}}%
\next[#9]%
\advance\ypos by \deltay%
\morphism(\xpos,\ypos)|\xe|/{#7}/<0,-\deltay>[\nodec`\noded;\labele]%
\advance\xpos by -\deltax%
\morphism(\xpos,\ypos)|\xc|/{#5}/<\deltax,0>[\nodeb`\nodec;\labelc]%
\morphism(\xpos,\ypos)|\xd|/{#6}/<\deltax,-\deltay>[\nodeb`\noded;\labeld]%
\advance\ypos by \deltay%
\advance\xpos by \deltax%
\morphism(\xpos,\ypos)|\xa|/{#3}/<-\deltax,-\deltay>[\nodea`\nodeb;\labela]%
\morphism(\xpos,\ypos)|\xb|/{#4}/<0,-\deltay>[\nodea`\nodec;\labelb]%
\ignorespaces}%

\def\Dtrianglepairpppp(#1)|#2|/#3`#4`#5`#6`#7/<#8>[#9]{%
\def\next(##1,##2){\xpos##1\ypos##2}%
\next(#1)%
\def\next|##1##2##3##4##5|{\def\xa{##1}\def\xb{##2}%
\def\xc{##3}\def\xd{##4}\def\xe{##5}}%
\next|#2|%
\def\next<##1,##2>{\deltax=##1\deltay=##2\ignorespaces}%
\next<#8>%
\def\next[##1`##2`##3`##4;##5`##6`##7`##8`##9]{%
 \def\nodea{##1}\def\nodeb{##2}\def\nodec{##3}\def\noded{##4}%
 \def\labela{##5}\def\labelb{##6}\def\labelc{##7}\def\labeld{##8}\def\labele{##9}}%
\next[#9]%
\advance\ypos by \deltay%
\morphism(\xpos,\ypos)|\xc|/{#5}/<\deltax,0>[\nodeb`\nodec;\labelc]%
\morphism(\xpos,\ypos)|\xd|/{#6}/<0,-\deltay>[\nodeb`\noded;\labeld]%
\advance\ypos by \deltay%
\morphism(\xpos,\ypos)|\xa|/{#3}/<0,-\deltay>[\nodea`\nodeb;\labela]%
\morphism(\xpos,\ypos)|\xb|/{#4}/<\deltax,-\deltay>[\nodea`\nodec;\labelb]%
\advance\ypos by -\deltay%
\advance\xpos by \deltax%
\morphism(\xpos,\ypos)|\xe|/{#7}/<-\deltax,-\deltay>[\nodec`\noded;\labele]%
\ignorespaces}%
\def\Atrianglepairp(#1){\ifnextchar|{\Atrianglepairpp(#1)}%
{\Atrianglepairpp(#1)|lmrbb|}}%
\def\Atrianglepairpp(#1)|#2|{\ifnextchar/{\Atrianglepairppp(#1)|#2|}%
    {\Atrianglepairppp(#1)|#2|/>`>`>`>`>/}}%
\def\Atrianglepairppp(#1)|#2|/#3`#4`#5`#6`#7/{%
    \ifnextchar<{\Atrianglepairpppp(#1)|#2|/#3`#4`#5`#6`#7/}%
    {\Atrianglepairpppp(#1)|#2|/#3`#4`#5`#6`#7/<\default,\default>}}%

\def\Vtrianglepairp(#1){\ifnextchar|{\Vtrianglepairpp(#1)}%
{\Vtrianglepairpp(#1)|aalmr|}}%
\def\Vtrianglepairpp(#1)|#2|{\ifnextchar/{\Vtrianglepairppp(#1)|#2|}%
    {\Vtrianglepairppp(#1)|#2|/>`>`>`>`>/}}%
\def\Vtrianglepairppp(#1)|#2|/#3`#4`#5`#6`#7/{%
    \ifnextchar<{\Vtrianglepairpppp(#1)|#2|/#3`#4`#5`#6`#7/}%
    {\Vtrianglepairpppp(#1)|#2|/#3`#4`#5`#6`#7/<\default,\default>}}%

\def\Ctrianglepairp(#1){\ifnextchar|{\Ctrianglepairpp(#1)}%
{\Ctrianglepairpp(#1)|lrmlr|}}%
\def\Ctrianglepairpp(#1)|#2|{\ifnextchar/{\Ctrianglepairppp(#1)|#2|}%
    {\Ctrianglepairppp(#1)|#2|/>`>`>`>`>/}}%
\def\Ctrianglepairppp(#1)|#2|/#3`#4`#5`#6`#7/{%
    \ifnextchar<{\Ctrianglepairpppp(#1)|#2|/#3`#4`#5`#6`#7/}%
    {\Ctrianglepairpppp(#1)|#2|/#3`#4`#5`#6`#7/<\default,\default>}}%

\def\Dtrianglepairp(#1){\ifnextchar|{\Dtrianglepairpp(#1)}%
{\Dtrianglepairpp(#1)|lrmlr|}}%
\def\Dtrianglepairpp(#1)|#2|{\ifnextchar/{\Dtrianglepairppp(#1)|#2|}%
    {\Dtrianglepairppp(#1)|#2|/>`>`>`>`>/}}%
\def\Dtrianglepairppp(#1)|#2|/#3`#4`#5`#6`#7/{%
    \ifnextchar<{\Dtrianglepairpppp(#1)|#2|/#3`#4`#5`#6`#7/}%
    {\Dtrianglepairpppp(#1)|#2|/#3`#4`#5`#6`#7/<\default,\default>}}%

\def\pplace[#1](#2,#3)[#4]{\POS(#2,#3)*+!!<0ex,\axis>!#1{#4}\ignorespaces}%
\def\cplace(#1,#2)[#3]{\POS(#1,#2)*+!!<0ex,\axis>{#3}\ignorespaces}%

\def\pullback#1]#2]{\square#1]\trident#2]\ignorespaces}%

\def\tridentppp|#1#2#3|/#4`#5`#6/<#7,#8>[#9]{%
\def\next[##1;##2`##3`##4]{\def\nodee{##1}\def\labele{##2}%
   \def\labelf{##3}\def\labelg{##4}}%
\next[#9]%
\advance \xpos by -\deltax%
\advance \xpos by -#7\advance \ypos by #8%
\advance\deltax by #7%
\morphism(\xpos,\ypos)|#1|/{#4}/<\deltax,-#8>[\nodee`\nodeb;\labele]%
\advance\deltax by -#7%
\morphism(\xpos,\ypos)|#2|/{#5}/<#7,-#8>[\nodee`\nodea;\labelf]%
\advance\deltay by #8%
\morphism(\xpos,\ypos)|#3|/{#6}/<#7,-\deltay>[\nodee`\nodec;\labelg]%
\ignorespaces}%

\def\trident{\ifnextchar|{\tridentp}{\tridentp|amb|}}%
\def\tridentp|#1|{\ifnextchar/{\tridentpp|#1|}{\tridentpp|#1|/{>}`{>}`{>}/}}%
\def\tridentpp|#1|/#2/{\ifnextchar<{\tridentppp|#1|/#2/}%
  {\tridentppp|#1|/#2/<500,500>}}%

\def\setmorphismwidth#1#2#3#4{%
 \setbox0=\hbox{$#1{\labelstyle#3#3}#2$}#4=\wd0%
 \divide #4 by 2 \divide #4 by \ul%
 \advance #4 by 350 \ratchet{#4}{500}}%

\def\setSquarewidth[#1`#2`#3`#4;#5`#6`#7`#8]{%
 \setmorphismwidth{#1}{#2}{#5}{\topw}%
 \setmorphismwidth{#3}{#4}{#8}{\botw}%
\ratchet{\topw}{\botw}}%

\def\Squarepppp(#1)|#2|/#3/<#4>[#5]{%
 \setSquarewidth[#5]%
 \squarepppp(#1)|#2|/#3/<\topw,#4>[#5]%
\ignorespaces}%

\def\Squarep(#1){\ifnextchar|{\Squarepp(#1)}{\Squarepp(#1)|alrb|}}%
\def\Squarepp(#1)|#2|{\ifnextchar/{\Squareppp(#1)|#2|}%
    {\Squareppp(#1)|#2|/>`>`>`>/}}%
\def\Squareppp(#1)|#2|/#3`#4`#5`#6/{%
    \ifnextchar<{\Squarepppp(#1)|#2|/#3`#4`#5`#6/}%
    {\Squarepppp(#1)|#2|/#3`#4`#5`#6/<\default>}}%

\def\hsquarespppp(#1,#2)|#3|/#4/<#5>[#6;#7]{%
\Xpos=#1\Ypos=#2%
\def\next|##1##2##3##4##5##6##7|{%
 \def\Xa{##1}\def\Xb{##2}\def\Xc{##3}\def\Xd{##4}%
 \def\Xe{##5}\def\Xf{##6}\def\Xg{##7}}%
\next|#3|%
\def\next<##1,##2,##3>{\deltaX=##1\deltaXprime=##2\deltaY=##3}%
\next<#5>%
\def\next[##1`##2`##3`##4`##5`##6]{%
 \def\Nodea{##1}\def\Nodeb{##2}\def\Nodec{##3}%
 \def\Noded{##4}\def\Nodee{##5}\def\Nodef{##6}}%
\next[#6]%
\def\next[##1`##2`##3`##4`##5`##6`##7]{%
 \def\Labela{##1}\def\Labelb{##2}\def\Labelc{##3}\def\Labeld{##4}%
 \def\Labele{##5}\def\Labelf{##6}\def\Labelg{##7}}%
\next[#7]%
\dohsquares/#4/}%

\def\dohsquares/#1`#2`#3`#4`#5`#6`#7/{%
\squarepppp(\Xpos,\Ypos)|\Xa\Xc\Xd\Xf|/#1`#3`#4`#6/<\deltaX,\deltaY>%
 [\Nodea`\Nodeb`\Noded`\Nodee;\Labela`\Labelc`\Labeld`\Labelf]%
 \advance \Xpos by \deltaX%
\squarepppp(\Xpos,\Ypos)|\Xb\Xd\Xe\Xg|/#2``#5`#7/<\deltaXprime,\deltaY>%
[\Nodeb`\Nodec`\Nodee`\Nodef;\Labelb``\Labele`\Labelg]%
\ignorespaces}%

\def\hsquaresp(#1){\ifnextchar|{\hsquarespp(#1)}{\hsquarespp%
(#1)|aalmrbb|}}%
\def\hsquarespp(#1)|#2|{\ifnextchar/{\hsquaresppp(#1)|#2|}%
    {\hsquaresppp(#1)|#2|/>`>`>`>`>`>`>/}}%
\def\hsquaresppp(#1)|#2|/#3/{%
    \ifnextchar<{\hsquarespppp(#1)|#2|/#3/}%
    {\hsquarespppp(#1)|#2|/#3/<\default,\default,\default>}}%

\def\hSquarespppp(#1,#2)|#3|/#4/<#5>[#6;#7]{%
\Xpos=#1\Ypos=#2%
\def\next|##1##2##3##4##5##6##7|{%
 \def\Xa{##1}\def\Xb{##2}\def\Xc{##3}\def\Xd{##4}%
 \def\Xe{##5}\def\Xf{##6}\def\Xg{##7}}%
\next|#3|%
\deltaY=#5%
\def\next[##1`##2`##3`##4`##5`##6]{%
 \def\Nodea{##1}\def\Nodeb{##2}\def\Nodec{##3}%
 \def\Noded{##4}\def\Nodee{##5}\def\Nodef{##6}}%
\next[#6]%
\def\next[##1`##2`##3`##4`##5`##6`##7]{%
 \def\Labela{##1}\def\Labelb{##2}\def\Labelc{##3}\def\Labeld{##4}%
 \def\Labele{##5}\def\Labelf{##6}\def\Labelg{##7}}%
\next[#7]%
\dohSquares/#4/}%

\def\dohSquares/#1`#2`#3`#4`#5`#6`#7/{%
\Squarepppp(\Xpos,\Ypos)|\Xa\Xc\Xd\Xf|/#1`#3`#4`#6/<\deltaY>%
 [\Nodea`\Nodeb`\Noded`\Nodee;\Labela`\Labelc`\Labeld`\Labelf]%
 \advance \Xpos by \topw%
\Squarepppp(\Xpos,\Ypos)|\Xb\Xd\Xe\Xg|/#2``#5`#7/<\deltaY>%
[\Nodeb`\Nodec`\Nodee`\Nodef;\Labelb``\Labele`\Labelg]%
\ignorespaces}%

\def\hSquares{\ifnextchar({\hSquaresp}{\hSquaresp(0,0)}}%
\def\hSquaresp(#1){\ifnextchar|{\hSquarespp(#1)}{\hSquarespp%
(#1)|aalmrbb|}}%
\def\hSquarespp(#1)|#2|{\ifnextchar/{\hSquaresppp(#1)|#2|}%
    {\hSquaresppp(#1)|#2|/>`>`>`>`>`>`>/}}%
\def\hSquaresppp(#1)|#2|/#3/{%
    \ifnextchar<{\hSquarespppp(#1)|#2|/#3/}%
    {\hSquarespppp(#1)|#2|/#3/<\default>}}%

\def\vsquarespppp(#1,#2)|#3|/#4/<#5>[#6;#7]{%
\Xpos=#1\Ypos=#2%
\def\next|##1##2##3##4##5##6##7|{%
 \def\Xa{##1}\def\Xb{##2}\def\Xc{##3}\def\Xd{##4}%
 \def\Xe{##5}\def\Xf{##6}\def\Xg{##7}}%
\next|#3|%
\def\next<##1,##2,##3>{\deltaX=##1\deltaY=##2\deltaYprime=##3}%
\next<#5>%
\def\next[##1`##2`##3`##4`##5`##6]{%
 \def\Nodea{##1}\def\Nodeb{##2}\def\Nodec{##3}%
 \def\Noded{##4}\def\Nodee{##5}\def\Nodef{##6}}%
\next[#6]%
\def\next[##1`##2`##3`##4`##5`##6`##7]{%
 \def\Labela{##1}\def\Labelb{##2}\def\Labelc{##3}\def\Labeld{##4}%
 \def\Labele{##5}\def\Labelf{##6}\def\Labelg{##7}}%
\next[#7]%
\dovsquares/#4/}%

\def\dovsquares/#1`#2`#3`#4`#5`#6`#7/{%
\squarepppp(\Xpos,\Ypos)|\Xd\Xe\Xf\Xg|/`#5`#6`#7/<\deltaX,\deltaYprime>%
[\Nodec`\Noded`\Nodee`\Nodef;`\Labele`\Labelf`\Labelg]%
 \advance\Ypos by \deltaYprime%
\squarepppp(\Xpos,\Ypos)|\Xa\Xb\Xc\Xd|/#1`#2`#3`#4/<\deltaX,\deltaY>%
 [\Nodea`\Nodeb`\Nodec`\Noded;\Labela`\Labelb`\Labelc`\Labeld]%
\ignorespaces}%

\def\vsquaresp(#1){\ifnextchar|{\vsquarespp(#1)}{\vsquarespp%
(#1)|aalmrbb|}}%
\def\vsquarespp(#1)|#2|{\ifnextchar/{\vsquaresppp(#1)|#2|}%
    {\vsquaresppp(#1)|#2|/>`>`>`>`>`>`>/}}%
\def\vsquaresppp(#1)|#2|/#3/{%
    \ifnextchar<{\vsquarespppp(#1)|#2|/#3/}%
    {\vsquarespppp(#1)|#2|/#3/<\default,\default,\default>}}%

\def\vSquarespppp(#1,#2)|#3|/#4/<#5,#6>[#7;#8]{%
\Xpos=#1\Ypos=#2%
\def\next|##1##2##3##4##5##6##7|{%
 \def\Xa{##1}\def\Xb{##2}\def\Xc{##3}\def\Xd{##4}%
 \def\Xe{##5}\def\Xf{##6}\def\Xg{##7}}%
\next|#3|%
\deltaX=#5%
\deltaY=#6%
\def\next[##1`##2`##3`##4`##5`##6]{%
 \def\Nodea{##1}\def\Nodeb{##2}\def\Nodec{##3}%
 \def\Noded{##4}\def\Nodee{##5}\def\Nodef{##6}}%
\next[#7]%
\def\next[##1`##2`##3`##4`##5`##6`##7]{%
 \def\Labela{##1}\def\Labelb{##2}\def\Labelc{##3}\def\Labeld{##4}%
 \def\Labele{##5}\def\Labelf{##6}\def\Labelg{##7}}%
\next[#8]%
\dovSquares/#4/\ignorespaces}%

\def\dovSquares/#1`#2`#3`#4`#5`#6`#7/{%
\setmorphismwidth{\Nodea}{\Nodeb}{\Labela}{\topw}%
\setmorphismwidth{\Nodec}{\Noded}{\Labeld}{\botw}%
\ratchet{\topw}{\botw}%
\setmorphismwidth{\Nodee}{\Nodef}{\Labelg}{\botw}%
\ratchet{\topw}{\botw}%
\square(\Xpos,\Ypos)|\Xd\Xe\Xf\Xg|/`#5`#6`#7/<\topw,\deltaX>%
 [\Nodec`\Noded`\Nodee`\Nodef;`\Labele`\Labelf`\Labelg]%
\advance \Ypos by \deltaX%
\square(\Xpos,\Ypos)|\Xa\Xb\Xc\Xd|/#1`#2`#3`#4/<\topw,\deltaY>%
 [\Nodea`\Nodeb`\Nodec`\Noded;\Labela`\Labelb`\Labelc`\Labeld]%
}%

\def\vSquaresp(#1){\ifnextchar|{\vSquarespp(#1)}{\vSquarespp%
(#1)|alrmlrb|}}%
\def\vSquarespp(#1)|#2|{\ifnextchar/{\vSquaresppp(#1)|#2|}%
    {\vSquaresppp(#1)|#2|/>`>`>`>`>`>`>/}}%
\def\vSquaresppp(#1)|#2|/#3/{%
    \ifnextchar<{\vSquarespppp(#1)|#2|/#3/}%
    {\vSquarespppp(#1)|#2|/#3/<\default,\default>}}%

\def\osquarepppp(#1)|#2|/#3`#4`#5`#6/<#7>[#8]{\squarepppp%
 (#1)|#2|/#3`#4`#5`#6/<#7>[#8]%
 \let\Nodea\nodea\let\Nodeb\nodeb%
\let\Nodec\nodec\let\Noded\noded\Xpos=\xpos\Ypos=\ypos%
\deltaX=\deltax \deltaY=\deltay \isquare}%

\def\osquarep(#1){\ifnextchar|{\osquarepp(#1)}{\osquarepp(#1)|alrb|}}%
\def\osquarepp(#1)|#2|{\ifnextchar/{\osquareppp(#1)|#2|}%
    {\osquareppp(#1)|#2|/>`>`>`>/}}%
\def\osquareppp(#1)|#2|/#3`#4`#5`#6/{%
    \ifnextchar<{\osquarepppp(#1)|#2|/#3`#4`#5`#6/}%
    {\osquarepppp(#1)|#2|/#3`#4`#5`#6/<1500,1500>}}%

\def\isquarepppp(#1)|#2|/#3`#4`#5`#6/<#7>[#8]{%
 \squarepppp(#1)|#2|/#3`#4`#5`#6/<#7>[#8]%
\ifnextchar|{\cubep}{\cubep|mmmm|}}%
\def\cubep|#1|{\ifnextchar/{\cubepp|#1|}{\cubepp|#1|/>`>`>`>/}}%

\def\isquare{\ifnextchar({\isquarep}{\isquarep(\default,\default)}}%
\def\isquarep(#1){\ifnextchar|{\isquarepp(#1)}{\isquarepp(#1)|alrb|}}%
\def\isquarepp(#1)|#2|{\ifnextchar/{\isquareppp(#1)|#2|}%
    {\isquareppp(#1)|#2|/>`>`>`>/}}%
\def\isquareppp(#1)|#2|/#3`#4`#5`#6/{%
    \ifnextchar<{\isquarepppp(#1)|#2|/#3`#4`#5`#6/}%
    {\isquarepppp(#1)|#2|/#3`#4`#5`#6/<500,500>}}%

\def\cubepp|#1#2#3#4|/#5`#6`#7`#8/[#9]{%
\def\next[##1`##2`##3`##4]{\gdef\Labela{##1}%
\gdef\Labelb{##2}\gdef\Labelc{##3}\gdef\Labeld{##4}}\next[#9]%
\xend\xpos \yend\ypos%
\Xend\xend\advance\Xend by -\Xpos%
\Yend\yend\advance\Yend by -\Ypos%
\domorphism(\Xpos,\Ypos)|#2|/#6/<\Xend,\Yend>[\Nodeb`\nodeb;\Labelb]%
\advance\Xpos by-\deltaX%
\advance\xend by-\deltax%
\Xend\xend\advance\Xend by -\Xpos%
\domorphism(\Xpos,\Ypos)|#1|/#5/<\Xend,\Yend>[\Nodea`\nodea;\Labela]%
\advance\Ypos by-\deltaY%
\advance\yend by-\deltay%
\Yend\yend\advance\Yend by -\Ypos%
\domorphism(\Xpos,\Ypos)|#3|/#7/<\Xend,\Yend>[\Nodec`\nodec;\Labelc]%
\advance\Xpos by\deltaX%
\advance\xend by\deltax%
\Xend\xend\advance\Xend by -\Xpos%
\domorphism(\Xpos,\Ypos)|#4|/#8/<\Xend,\Yend>[\Noded`\noded;\Labeld]%
\ignorespaces}%

\def\setwdth#1#2{\setbox0\hbox{$\labelstyle#1$}\wdth=\wd0%
\setbox0\hbox{$\labelstyle#2$}\ifnum\wdth<\wd0 \wdth=\wd0 \fi}%

\def\topppp/#1/<#2>^#3_#4{\:%
\ifnum#2=0%
   \setwdth{#3}{#4}\deltax=\wdth \divide \deltax by \ul%
   \advance \deltax by \defaultmargin  \ratchet{\deltax}{100}%
\else \deltax #2%
\fi%
\xy\ar@{#1}^{#3}_{#4}(\deltax,0) \endxy%
\:}%

\def\toppp/#1/<#2>^#3{\ifnextchar_{\topppp/#1/<#2>^{#3}}{\topppp/#1/<#2>^{#3}_{}}}%
\def\topp/#1/<#2>{\ifnextchar^{\toppp/#1/<#2>}{\toppp/#1/<#2>^{}}}%
\def\toop/#1/{\ifnextchar<{\topp/#1/}{\topp/#1/<0>}}%

\def\rlimto{{%
\font\xyatipfont=xyatip10 scaled 800
\font\xybtipfont=xybtip10 scaled 800
\raise 2pt\hbox{\,\xy\ar@{->}(100,0) \endxy}\,}}
\def\llimto{{%
\font\xyatipfont=xyatip10 scaled 800
\font\xybtipfont=xybtip10 scaled 800
\raise 2pt\hbox{\,\xy\ar@{<-}(100,0) \endxy}\,}}

\def\twopppp/#1`#2/<#3>^#4_#5{\:%
\ifnum0=#3%
  \setwdth{#4}{#5}\deltax=\wdth \divide \deltax by \ul \advance \deltax%
  by \defaultmargin \ratchet{\deltax}{200}%
\else \deltax#3 \fi%
\xy\ar@{#1}@<2.5pt>^{#4}(\deltax,0)%
\ar@{#2}@<-2.5pt>_{#5}(\deltax,0)\endxy\:}%

\def\twoppp/#1`#2/<#3>^#4{\ifnextchar_{\twopppp/#1`#2/<#3>^{#4}}%
  {\twopppp/#1`#2/<#3>^{#4}_{}}}%
\def\twopp/#1`#2/<#3>{\ifnextchar^{\twoppp/#1`#2/<#3>}{\twoppp/#1`#2/<#3>^{}}}%
\def\twop/#1`#2/{\ifnextchar<{\twopp/#1`#2/}{\twopp/#1`#2/<0>}}%

\def\threeppppp/#1`#2`#3/<#4>^#5|#6_#7{\:%
\ifnum0=#4%
\setbox0\hbox{$\labelstyle#5$}\wdth=\wd0%
\setbox0\hbox{$\labelstyle#6$}\ifnum\wdth<\wd0 \wdth=\wd0 \fi%
\setbox0\hbox{$\labelstyle#7$}\ifnum\wdth<\wd0 \wdth=\wd0 \fi%
\deltax=\wdth \divide \deltax by \ul \advance \deltax by%
\defaultmargin \ratchet{\deltax}{300}%
\else\deltax#4 \fi%
    \xy \ifnum\wd0=0 \ar@{#2}(\deltax,0)%
    \else \ar@{#2}|{#6}(\deltax,0)\fi%
\ar@{#1}@<4.5pt>^{#5}(\deltax,0)%
\ar@{#3}@<-4.5pt>_{#7}(\deltax,0)\endxy\:}%

\def\threepppp/#1`#2`#3/<#4>^#5|#6{\ifnextchar_{\threeppppp%
  /#1`#2`#3/<#4>^{#5}|{#6}}{\threeppppp/#1`#2`#3/<#4>^{#5}|{#6}_{}}}%
\def\threeppp/#1`#2`#3/<#4>^#5{\ifnextchar|{\threepppp%
  /#1`#2`#3/<#4>^{#5}}{\threepppp/#1`#2`#3/<#4>^{#5}|{}}}%
\def\threepp/#1`#2`#3/<#4>{\ifnextchar^{\threeppp/#1`#2`#3/<#4>}%
  {\threeppp/#1`#2`#3/<#4>^{}}}%
\def\threep/#1`#2`#3/{\ifnextchar<{\threepp/#1`#2`#3/}%
  {\threepp/#1`#2`#3/<0>}}%

\def\twoar(#1,#2){{%
 \scalefactor{0.1}%
 \deltax#1\deltay#2%
 \deltaX=\ifnum\deltax<0-\fi\deltax%
 \deltaY=\ifnum\deltay<0-\fi\deltay%
 \Xend\deltax \multiply \Xend by \deltax%
 \Yend\deltay \multiply \Yend by \deltay%
 \advance\Xend by \Yend \multiply \Xend by 3%
 \ifnum \deltaX > \deltaY%
    \multiply \deltaX by 3 \advance \deltaX by \deltaY%
 \else%
    \multiply \deltaY by 3 \advance \deltaX by \deltaY%
 \fi%
 \multiply\deltax by 500%
 \multiply\deltay by 500%
 \xpos\deltax \multiply \xpos by 3 \divide\xpos by \deltaX%
 \Xpos\deltax \multiply \Xpos by \deltaX \divide \Xpos by \Xend%
 \advance \xpos by \Xpos%
 \ypos\deltay \multiply \ypos by 3 \divide\ypos by \deltaX%
 \Ypos\deltay \multiply \Ypos by \deltaX \divide \Ypos by \Xend%
 \advance \ypos by \Ypos%
 \xy \ar@{=>}(\xpos,\ypos) \endxy%
}\ignorespaces}%

\def\iiixiiipppppp(#1,#2)|#3|/#4/<#5>#6<#7>[#8;#9]{%
 \xpos#1\ypos#2\relax%
 \def\next|##1##2##3##4##5##6##7|{\def\xa{##1}\def\xb{##2}%
 \def\xc{##3}\def\xd{##4}\def\xe{##5}\def\xf{##6}\nextt|##7|}%
 \def\nextt|##1##2##3##4##5##6|{\def\xg{##1}\def\xh{##2}%
 \def\xi{##3}\def\xj{##4}\def\xk{##5}\def\xl{##6}}%
 \next|#3|%
 \def\next<##1,##2>{\deltax##1\deltay##2}%
 \next<#5>%
 \def\next<##1,##2>{\deltaX##1\deltaY##2}%
 \next<#7>%
 \def\next##1{\topw##1\relax%
 \ifodd\topw \def\za{}\else\def\za{\relax}\fi \divide\topw by 2
 \ifodd\topw \def\zb{}\else\def\zb{\relax}\fi \divide\topw by 2
 \ifodd\topw \def\zc{}\else\def\zc{\relax}\fi \divide\topw by 2
 \ifodd\topw \def\zd{}\else\def\zd{\relax}\fi \divide\topw by 2
 \ifodd\topw \def\ze{}\else\def\ze{\relax}\fi \divide\topw by 2
 \ifodd\topw \def\zf{}\else\def\zf{\relax}\fi \divide\topw by 2
 \ifodd\topw \def\zg{}\else\def\zg{\relax}\fi \divide\topw by 2
 \ifodd\topw \def\zh{}\else\def\zh{\relax}\fi \divide\topw by 2
 \ifodd\topw \def\zi{}\else\def\zi{\relax}\fi \divide\topw by 2
 \ifodd\topw \def\zj{}\else\def\zj{\relax}\fi \divide\topw by 2
 \ifodd\topw \def\zk{}\else\def\zk{\relax}\fi \divide\topw by 2
 \ifodd\topw \def\zl{}\else\def\zl{\relax}\fi}%
 \next{#6}%
 \def\next[##1`##2`##3`##4`##5`##6`##7`##8`##9]{%
 \def\nodeA{##1}\def\nodeB{##2}\def\nodeC{##3}%
 \def\nodeD{##4}\def\nodeE{##5}\def\nodeF{##6}%
 \def\nodeG{##7}\def\nodeH{##8}\def\nodeI{##9}}%
 \next[#8]%
 \def\next[##1`##2`##3`##4`##5`##6`##7]{%
 \def\labela{##1}\def\labelb{##2}\def\labelc{##3}%
 \def\labeld{##4}\def\labele{##5}\def\labelf{##6}\nextt[##7]}%
 \def\nextt[##1`##2`##3`##4`##5`##6]{%
 \def\labelg{##1}\def\labelh{##2}\def\labeli{##3}%
 \def\labelj{##4}\def\labelk{##5}\def\labell{##6}}%
 \next[#9]%
 \def\next/##1`##2`##3`##4`##5`##6`##7`##8/{%
 \advance\ypos\deltay
    \ifx\zf\empty \morphism(\xpos,\ypos)/<-/<-\deltaX,0>[\nodeD`0;]\fi
 \morphism(\xpos,\ypos)|\xf|/{##6}/<\deltax,0>[\nodeD`\nodeE;\labelf]%
    \advance \xpos\deltax
    \morphism(\xpos,\ypos)|\xg|/{##7}/<\deltax,0>[\nodeE`\nodeF;\labelg]%
    \ifx\zg\empty \advance\xpos \deltax
        \morphism(\xpos,\ypos)<\deltaX,0>[\nodeF`0;]\fi
    \xpos#1 \advance\ypos\deltay
    \ifx\zd\empty \morphism(\xpos,\ypos)/<-/<-\deltaX,0>[\nodeA`0;]\fi
    \ifx\za\empty \morphism(\xpos,\ypos)/<-/<0,\deltaY>[\nodeA`0;]\fi
    \morphism(\xpos,\ypos)|\xa|/{##1}/<\deltax,0>[\nodeA`\nodeB;\labela]%
 \morphism(\xpos,\ypos)|\xc|/{##3}/<0,-\deltay>[\nodeA`\nodeD;\labelc]%
    \advance \xpos\deltax
     \morphism(\xpos,\ypos)|\xb|/{##2}/<\deltax,0>[\nodeB`\nodeC;\labelb]%
     \morphism(\xpos,\ypos)|\xd|/{##4}/<0,-\deltay>[\nodeB`\nodeE;\labeld]%
     \ifx\zb\empty \morphism(\xpos,\ypos)/<-/<0,\deltaY>[\nodeB`0;]\fi
     \advance\xpos\deltax
 \morphism(\xpos,\ypos)|\xe|/{##5}/<0,-\deltay>[\nodeC`\nodeF;\labele]%
     \ifx\zc\empty \morphism(\xpos,\ypos)/<-/<0,\deltaY>[\nodeC`0;]\fi
     \ifx\ze\empty \morphism(\xpos,\ypos)<\deltaX,0>[\nodeC`0;]\fi
   \nextt/##8/}%
 \def\nextt/##1`##2`##3`##4`##5/{%
 \xpos#1\ypos#2\relax%
   \ifx\zh\empty \morphism(\xpos,\ypos)/<-/<-\deltaX,0>[\nodeG`0;]\fi
   \ifx\zj\empty \morphism(\xpos,\ypos)<0,-\deltaY>[\nodeG`0;]\fi
   \morphism(\xpos,\ypos)|\xk|/{##4}/<\deltax,0>[\nodeG`\nodeH;\labelk]%
   \advance\xpos\deltax
   \morphism(\xpos,\ypos)|\xl|/{##5}/<\deltax,0>[\nodeH`\nodeI;\labell]%
   \ifx\zk\empty \morphism(\xpos,\ypos)<0,-\deltaY>[\nodeH`0;]\fi
   \advance\xpos\deltax
   \ifx\zi\empty \morphism(\xpos,\ypos)<\deltaX,0>[\nodeI`0;]\fi
   \ifx\zl\empty \morphism(\xpos,\ypos)<0,-\deltaY>[\nodeI`0;]\fi
   \xpos#1 \advance\ypos\deltay
    \morphism(\xpos,\ypos)|\xh|/{##1}/<0,-\deltay>[\nodeD`\nodeG;\labelh]%
    \advance \xpos\deltax
    \morphism(\xpos,\ypos)|\xi|/{##2}/<0,-\deltay>[\nodeE`\nodeH;\labeli]%
    \advance \xpos\deltax
 \morphism(\xpos,\ypos)|\xj|/{##3}/<0,-\deltay>[\nodeF`\nodeI;\labelj]}%
 \next/#4/\ignorespaces}%

\def\iiixiiip(#1){\ifnextchar|{\iiixiiipp(#1)}%
  {\iiixiiipp(#1)|aalmrmmlmrbb|}}%
\def\iiixiiipp(#1)|#2|{\ifnextchar/{\iiixiiippp(#1)|#2|}%
    {\iiixiiippp(#1)|#2|/>`>`>`>`>`>`>`>`>`>`>`>/}}%
\def\iiixiiippp(#1)|#2|/#3/{%
    \ifnextchar<{\iiixiiipppp(#1)|#2|/#3/}%
    {\iiixiiipppp(#1)|#2|/#3/<\default,\default>}}%
\def\iiixiiipppp(#1)|#2|/#3/<#4>{\ifnextchar[{\iiixiiippppp(#1)|#2|/#3/%
   <#4>0<0,0>}{\iiixiiippppp(#1)|#2|/#3/<#4>}}%
\def\iiixiiippppp(#1)|#2|/#3/<#4>#5{\ifnextchar<%
   {\iiixiiipppppp(#1)|#2|/#3/<#4>{#5}}%
   {\iiixiiipppppp(#1)|#2|/#3/<#4>{#5}<400,400>}}%

\def\iiixiipppppp(#1,#2)|#3|/#4/<#5>#6<#7>[#8;#9]{%
 \xpos#1\ypos#2\relax%
 \def\next|##1##2##3##4##5##6##7|{\def\xa{##1}\def\xb{##2}%
 \def\xc{##3}\def\xd{##4}\def\xe{##5}\def\xf{##6}\def\xg{##7}}%
 \next|#3|%
 \def\next<##1,##2>{\deltax##1\deltay##2}%
 \next<#5>%
 \deltaX#7
 \topw#6
 \def\next{%
 \ifodd\topw \def\za{}\else\def\za{\relax}\fi \divide\topw by 2
 \ifodd\topw \def\zb{}\else\def\zb{\relax}\fi \divide\topw by 2
 \ifodd\topw \def\zc{}\else\def\zc{\relax}\fi \divide\topw by 2
 \ifodd\topw \def\zd{}\else\def\zd{\relax}\fi}%
 \next%
 \def\next[##1`##2`##3`##4`##5`##6]{%
 \def\nodea{##1}\def\nodeb{##2}\def\nodec{##3}%
 \def\noded{##4}\def\nodee{##5}\def\nodef{##6}}%
 \next[#8]%
 \def\next[##1`##2`##3`##4`##5`##6`##7]{%
 \def\labela{##1}\def\labelb{##2}\def\labelc{##3}%
 \def\labeld{##4}\def\labele{##5}\def\labelf{##6}\def\labelg{##7}}%
 \next[#9]%
 \def\next/##1`##2`##3`##4`##5`##6`##7/{%
 {\ifx\zc\empty\advance\xpos -\deltaX
\relax\morphism(\xpos,\ypos)<\deltaX,0>[0`\noded;]\fi}%
 \morphism(\xpos,\ypos)|\xf|/##6/<\deltax,0>[\noded`\nodee;\labelf]%
 \advance\xpos by \deltax%
 \morphism(\xpos,\ypos)|\xg|/##7/<\deltax,0>[\nodee`\nodef;\labelg]%
 {\ifx\zd\empty \advance\xpos by \deltax
\relax  \morphism(\xpos,\ypos)<\deltaX,0>[\nodef`0;]\fi}%
 \advance\xpos by -\deltax  \advance\ypos by \deltay
 {\ifx\za\empty\advance \xpos by -\deltaX
\relax\morphism(\xpos,\ypos)<\deltaX,0>[0`\nodea;]\fi}%
 \morphism(\xpos,\ypos)|\xa|/##1/<\deltax,0>[\nodea`\nodeb;\labela]%
 \morphism(\xpos,\ypos)|\xc|/##3/<0,-\deltay>[\nodea`\noded;\labelc]%
 \advance\xpos by \deltax%
 \morphism(\xpos,\ypos)|\xb|/##2/<\deltax,0>[\nodeb`\nodec;\labelb]%
 \morphism(\xpos,\ypos)|\xd|/##4/<0,-\deltay>[\nodeb`\nodee;\labeld]%
 \advance\xpos by \deltax%
 \morphism(\xpos,\ypos)|\xe|/##5/<0,-\deltay>[\nodec`\nodef;\labele]%
 \ifx\zb\empty\relax \morphism(\xpos,\ypos)<\deltaX,0>[\nodec`0;]\fi}%
 \next/#4/\ignorespaces}%

\def\iiixiip(#1){\ifnextchar|{\iiixiipp(#1)}%
  {\iiixiipp(#1)|aalmrbb|}}%
\def\iiixiipp(#1)|#2|{\ifnextchar/{\iiixiippp(#1)|#2|}%
    {\iiixiippp(#1)|#2|/>`>`>`>`>`>`>/}}%
\def\iiixiippp(#1)|#2|/#3/{%
    \ifnextchar<{\iiixiipppp(#1)|#2|/#3/}%
    {\iiixiipppp(#1)|#2|/#3/<\default,\default>}}%
\def\iiixiipppp(#1)|#2|/#3/<#4>{\ifnextchar[{\iiixiippppp(#1)|#2|/#3/%
   <#4>{0}<0>}{\iiixiippppp(#1)|#2|/#3/<#4>}}%
\def\iiixiippppp(#1)|#2|/#3/<#4>#5{\ifnextchar<%
   {\iiixiipppppp(#1)|#2|/#3/<#4>{#5}}%
   {\iiixiipppppp(#1)|#2|/#3/<#4>{#5}<400>}}%

\def\node#1(#2,#3)[#4]{%
\expandafter\gdef\csname x@#1\endcsname{#2}%
\expandafter\gdef\csname y@#1\endcsname{#3}%
\expandafter\gdef\csname ob@#1\endcsname{#4}%
\ignorespaces}%

\newcount\xfinish%
\newcount\yfinish%
\def\arrowp|#1|{\ifnextchar/{\arrowpp|#1|}{\arrowpp|#1|/>/}}%
\def\arrowpp|#1|/#2/[#3`#4;#5]{%
\xfinish=\csname x@#4\endcsname%
\yfinish=\csname y@#4\endcsname%
\advance\xfinish by -\csname x@#3\endcsname%
\advance\yfinish by -\csname y@#3\endcsname%
\morphism(\csname x@#3\endcsname,\csname y@#3\endcsname)|#1|/{#2}/%
<\xfinish,\yfinish>[\csname ob@#3\endcsname`\csname ob@#4\endcsname;#5]%
}%

\def\Loop(#1,#2)#3(#4,#5){\POS(#1,#2)*+!!<0ex,\axis>{#3}\ar@(#4,#5)}%
\def\iloop#1(#2,#3){\xy\Loop(0,0)#1(#2,#3)\endxy}%

\catcode`\@=\atcode%
\entrymodifiers={+!!<0pt,\fontdimen22\textfont2>}%

   the \xybox  does not allow control over *where*,%
inside the <object> that it builds,%
the reference point is to be located.%

Accordingly, I've just devised a variant that builds%
the same kind of compound <object>, but also sets%
its reference-point to be at the <coord> of the%
last <POS> within the box; i.e., the <coord> for%
the <object> that has been built is at the current%
<POS> when the Xy-pic parsing has been completed.%
The LRUD extents are the size of the complete box;%
i.e., *not* the extents of the final <POS>.%

Here is coding that should go in your document's%
preamble -- eventually it should be added to  xy.tex%

---------  start of new Xy-pic definitions  -------%

> \makeatletter   

 this is a better definition for the new  \xyobjbox%

   \xydef@\xyobjbox#1{\xy%
     \let \PATHafterPOS\PATHafterPOS@default%
     \let \arsavedPATHafterPOS@@\relax%
     \let\afterar@@\relax%
     \POS#1\endxyobj\Edge@c={\rectangleEdge}\computeLeftUpness@}%

> \xydef@\endxyobj{\if\inxy@\else\xyerror@{Unexpected \string\endxy}{}\fi%
>  \relax%
>   \dimen@=\Y@max \advance\dimen@-\Y@min%
>   \ifdim\dimen@<\z@ \dimen@=\z@ \Y@min=\z@ \Y@max=\z@ \fi%
>   \dimen@=\X@max \advance\dimen@-\X@min%
>   \ifdim\dimen@<\z@ \dimen@=\z@ \X@min=\z@ \X@max=\z@ \fi%
>   \edef\tmp@{\egroup%
>     \setboxz@h{\kern-\the\X@min \boxz@}%
>     \ht\z@=\the\Y@max \dp\z@=-\the\Y@min \wdz@=\the\dimen@%
>     \noexpand\maybeunraise@ \raise\dimen@\boxz@%
>     \noexpand\recoverXyStyle@ \egroup \noexpand\xy@end%
>     \U@c=\the\Y@max \advance\U@c-\the\Y@c%
>     \D@c=-\the\Y@min \advance\D@c\the\Y@c%
>     \L@c=-\the\X@min  \advance\L@c\the\X@c%
>     \R@c=\the\X@max  \advance\R@c-\the\X@c%
>    }\tmp@}%
>%
> \makeatother   
>%
> ---------  end of new Xy-pic definitions  -------%
\makeatletter%
\gdef\xymerge@MinMax{}%
\xydef@\twocell{\hbox\bgroup\xysave@MinMax\@twocell}%
\xydef@\uppertwocell{\hbox\bgroup\xysave@MinMax\@uppertwocell}%
\xydef@\lowertwocell{\hbox\bgroup\xysave@MinMax\@lowertwocell}%
\xydef@\compositemap{\hbox\bgroup\xysave@MinMax\@compositemap}%
\xydef@\twocelll#1#{\hbox\bgroup\xysave@MinMax\xy@\save\save@\@twocelll{%
#1}}%

\xydef@\xysave@MinMax{\xdef\xymerge@MinMax{%
   \noexpand\ifdim\X@max<\the\X@max \X@max=\the\X@max\noexpand\fi%
   \noexpand\ifdim\X@min>\the\X@min \X@min=\the\X@min\noexpand\fi%
   \noexpand\ifdim\Y@max<\the\Y@max \Y@max=\the\Y@max\noexpand\fi%
   \noexpand\ifdim\Y@min>\the\Y@min \Y@min=\the\Y@min\noexpand\fi%
  }}%
\xydef@\drop@Twocell{\boxz@ \xymerge@MinMax}%

\xydef@\twocell@DONE{%
  \edef\tmp@{\egroup%
   \X@min=\the\X@min \X@max=\the\X@max%
   \Y@min=\the\Y@min \Y@max=\the\Y@max}\tmp@%
  \L@c=\X@c \advance\L@c-\X@min \R@c=\X@max \advance\R@c-\X@c%
  \D@c=\Y@c \advance\D@c-\Y@min \U@c=\Y@max \advance\U@c-\Y@c%
  \ht\z@=\U@c \dp\z@=\D@c \dimen@=\L@c \advance\dimen@\R@c \wdz@=\dimen@%
  \computeLeftUpness@%
  \setboxz@h{\kern-\X@p \raise-\Y@c\boxz@ }%
  \dimen@=\L@c \advance\dimen@\R@c \wdz@=\dimen@ \ht\z@=\U@c \dp\z@=\D@c%
  \Edge@c={\rectangleEdge}\Invisible@false \Hidden@false%
  \edef\Drop@@{\noexpand\drop@Twocell%
   \noexpand\def\noexpand\Leftness@{\Leftness@}%
   \noexpand\def\noexpand\Upness@{\Upness@}}%
  \edef\Connect@@{\noexpand\connect@Twocell%
   \noexpand\ifdim\X@max<\the\X@max \X@max=\the\X@max\noexpand\fi%
   \noexpand\ifdim\X@min>\the\X@min \X@min=\the\X@min\noexpand\fi%
   \noexpand\ifdim\Y@max<\the\Y@max \Y@max=\the\Y@max\noexpand\fi%
   \noexpand\ifdim\Y@min>\the\Y@min \Y@min=\the\Y@min\noexpand\fi }%
  \xymerge@MinMax%
}%
\makeatother%

\newcounter{lemmacounter}

\newcounter{propcounter}

\newcounter{tmpcounter}
\newcounter{thmcounter}

\numberwithin{equation}{section}
\numberwithin{excounter}{section}
\numberwithin{defcounter}{section}
\numberwithin{thmcounter}{section}
\numberwithin{corcounter}{section}
\numberwithin{lemmacounter}{section}
\numberwithin{propcounter}{section}

\newtheorem{theorem}[thmcounter]{Theorem}
\newtheorem{proposition}[propcounter]{Proposition}
\newtheorem{lemma}[lemmacounter]{Lemma}

\newtheorem{lemma*}[tmpcounter]{Lemma}

\newtheorem*{conjecture*}{Conjecture}
\newtheorem{proposition*}[tmpcounter]{Proposition}

\newcommand{\IH}{\mathbf H}
\newcommand{\IN}{\mathbf N}

\newcommand{\IP}{\mathbf P}
\newcommand{\IC}{\mathbf C}
\newcommand{\IZ}{\mathbf Z}
\newcommand{\IQ}{\mathbf Q}
\newcommand{\IR}{\mathbf R}
\newcommand{\IQbar}{\overline{\mathbf Q}}

\newcommand{\hpS}[1]{H_{#1}}
\newcommand{\hp}[2]{\hpS{#2}({#1})}

\newcommand{\heightS}{h}
\newcommand{\ntheightS}{\hat h}
\newcommand{\ntheight}[1]{\ntheightS{(#1)}}
\newcommand{\height}[1]{\heightS({#1})}
\newcommand{\heightlbS}[1]{h_{#1}}
\newcommand{\heightlb}[2]{\heightlbS{#2}({#1})}
\newcommand{\ntheightlbS}[1]{{\hat h}_{#1}}
\newcommand{\ntheightlb}[2]{\ntheightlbS{#2}({#1})}

\newcommand{\Fh}[1]{h_F({#1})}

\newcommand{\M}[2]{{\rm Mat}_{#1}({#2})}
\newcommand{\SSL}[1]{{\rm SL}_{#1}}
\newcommand{\SL}[2]{\SSL{#1}({#2})}

\newcommand{\remtor}[1]{{#1}^{\star}}

\newcommand{\ns}[1]{{#1}^{\rm ns}}

\newcommand{\dom}[1]{{\rm dom}({#1})}

\renewcommand{\O}[1]{\mathcal{O}({#1})}

\newcommand{\atopx}[2]{\genfrac{}{}{0pt}{}{#1}{#2}}

\newcommand{\trans}[1]{{#1}^{{\tt T}}}

\newcommand{\T}{T}
\newcommand{\U}{\Lambda}

\newcommand{\B}{{Y(2)}}
\newcommand{\EL}{{\mathcal{E}_L}}
\newcommand{\AL}{{\mathcal{A}_L}}
\newcommand{\piL}{{\pi_L}}

\newcommand{\spec}[1]{{\rm Spec\,}{#1}}

\newcommand{\ssm}{\smallsetminus}


\newcommand{\citeEGAIVII}{{\cite[EGA ${\rm IV}_{\rm 2}$]{EGAIV}} }
\newcommand{\citeEGAIVIII}{{\cite[EGA ${\rm IV}_{\rm 3}$]{EGAIV}} }
\newcommand{\citeEGAIVIV}{{\cite[EGA ${\rm IV}_{\rm 4}$]{EGAIV}} }

\begin{document}
\title{Special Points on Fibered Powers of Elliptic Surfaces}
\author[Philipp Habegger]{P. Habegger}

\maketitle

\begin{abstract}
Consider a fibered power of an elliptic surface.
We  characterize its  subvarieties 
 that contain a Zariski dense set of points that are torsion
points in fibers with complex multiplication. This result can be
viewed as a mix of the Manin-Mumford and Andr\'e-Oort Conjecture and
is related to a conjecture of Pink \cite{Pink05}.
The main technical tool is a new height inequality. We  also use
it to give another proof of a  case of Gubler's result on
the Bogomolov Conjecture over function fields \cite{Gubler:Bogo}.
\end{abstract}

\section{Introduction}

In this paper we verify a combination of the Manin-Mumford and 
 Andr{\'e}-Oort Conjecture for a class of abelian schemes:
  fibered powers of an elliptic surface.
The latter conjecture can also be combined with the Mordell-Lang
Conjecture and we obtain results in this context.
A common generalization for all three conjectures was
proposed by Pink \cite{Pink05,Pink}.
An important tool in our proofs is a new height inequality on
subvarieties of the ambient abelian scheme. 
This may be of independent interest as it
generalizes to higher dimension a height theoretic result of Silverman
used in the proof of his
Specialization Theorem \cite{Silverman}. In a third application we
use our height
inequality  to recover the Bogomolov Conjecture for
products of elliptic curves over the function field of a curve. 

Before stating the results we introduce the relevant class of abelian
schemes.
Let $S$ be an irreducible and non-singular quasi-projective curve defined over
$\IQbar$, the algebraic closure of $\IQ$ in $\IC$.
Let $\mathcal{E}\rightarrow S$ be an abelian scheme over $S$ whose
fibers are elliptic curves. For an integer $g\ge 1$ we let 
$\mathcal{A}$ denote the $g$-fold fibered power
$\mathcal{E}\times_S\cdots\times_S\mathcal{E}$. This is also an
abelian scheme over $S$. Let  $\pi$ be the
structural morphism $\mathcal{A}\rightarrow {S}$. If $s\in
S(\IC)$, it is convenient to write $\mathcal{A}_s$ for $\pi^{-1}(s)$,
which is the $g$-th power of an elliptic curve. 
For reasons explained below, our results require $S$, and so
$\mathcal{A}$, to be defined over $\IQbar$. Nevertheless,
we will speak of subvarieties of $\mathcal{A}$ defined over $\IC$ by
extending scalars without further mention.

Our main interest lies in the case where
there are sufficiently many non-isomorphic
abelian varieties among the fibers of $\mathcal{A}\rightarrow S$.
We call $\mathcal{E}$ (or $\mathcal{A}$)
  isotrivial if $\mathcal{E}\rightarrow S$ becomes a constant family
  after a finite \'etale base change.

We now introduce the special points and special subvarieties of $\mathcal{A}$.
 We call a point in $\mathcal{A}(\IC)$ special if it
is a torsion point of its respective fiber and if this fiber 
 has complex multiplication. 
An irreducible closed subvariety of $\mathcal{A}$ 
 is called special 
\begin{enumerate}
\item [(i)]if it is an irreducible component of an algebraic subgroup of  
 $ \mathcal{A}_s$ with $s \in S(\IC)$ such that
$\mathcal{A}_s$ has complex
multiplication,
\item[(ii)] or if it is an irreducible
component of a flat subgroup scheme of $\mathcal{A}$; cf.
Section \ref{sec:sbgrpscheme} for the definition of flat subgroup schemes.
\end{enumerate}
The point of a zero-dimensional special subvariety
is a special point.

An explicit and important example of an abelian scheme is  the
Legendre family of elliptic curves over the modular curve $\B = \IP^1\ssm
\{0,1,\infty\}$ taken as defined over $\IQbar$.
Indeed, the affine equation
\begin{equation*}
  y^2  = x(x-1)(x-\lambda)
\end{equation*}
determines a subvariety of $\IP^2\times \B$ which we denote with 
$\EL$. We let $\piL$ denote the morphism  which
projects $\EL$ to $\B$. Then $\EL$ is an abelian scheme over $\B$ and the
fibers of $\piL$ are elliptic curves, cf. Section \ref{sec:heights}. Any elliptic curve over
$\IQbar$ has a Legendre model, 
so it is isomorphic to some fiber of $\piL$.
This shows that $\EL$ is not
isotrivial. We write $\AL$ for the $g$-fold fibered power of
$\EL$. 

From a different point of view, $\EL$ and $\AL$ 
can be realized as
connected mixed Shimura varieties, cf Pink's Construction 2.9
\cite{Pink05}.
This additional structure comes with a natural notion of special
points  which coincides with our notion by 
Pink's Remark 4.13.

In an abelian variety, the Manin-Mumford Conjecture characterizes 
irreducible components of algebraic subgroups
as those irreducible subvarieties that 
 contain a Zariski dense set of torsion
points. Its first proof is due to Raynaud \cite{Raynaud:MM}.
The Andr\'e-Oort Conjecture, on the other hand,
expects special subvarieties of Shimura varieties 
to be precisely those irreducible subvarieties that contain
 a Zariski dense set of special points. 
 Klingler and Yafaev have announced a proof \cite{KlinglerYafaev} which assumes the 
Generalized Riemann Hypothesis.

Our first result characterizes subvarieties of $\mathcal{A}$
containing a Zariski dense set of special points. 

\begin{theorem}
\label{thm:special1}
Let $\mathcal{A}$ be as above and let us assume that $\mathcal{A}$ is
not isotrivial.
An irreducible closed subvariety of $\mathcal{A}$ defined over $\IC$ 
contains a Zariski dense set of
  special points if and only if it is special. 
\end{theorem}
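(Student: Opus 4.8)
\emph{Plan.} I would prove the two implications separately: the forward one is short, and the reverse one rests on the paper's height inequality together with an induction on the relative dimension. Throughout, call $s\in S(\IC)$ a \emph{CM point} if the fibre over it has complex multiplication. A preliminary descent: a special point is a torsion point of its fibre, this fibre lies over a CM point of $S$ (which is algebraic), and the torsion of an abelian variety over $\IQbar$ is again defined over $\IQbar$; hence every special point lies in $\mathcal{A}(\IQbar)$, so an $X$ as in the theorem carrying a Zariski dense set of special points is the Zariski closure of a set of $\IQbar$-points and is therefore already defined over $\IQbar$, which I assume henceforth. For the implication ``special $\Rightarrow$ dense set of special points'': if $X$ is an irreducible component of an algebraic subgroup of a CM fibre $\mathcal{A}_s$, then $X$ is a torsion coset of $\mathcal{A}_s$ and its torsion points --- all of them special --- are dense in it by the Manin--Mumford conjecture (Raynaud). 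If $X$ is an irreducible component of a flat subgroup scheme, I may assume $X$ dominates $S$ (the other case having just been done); the CM points are Zariski dense in $S$ because the $j$-map $S\to\IA^1$ is non-constant and there are infinitely many singular moduli, and for each CM point $s$ the fibre $X\cap\mathcal{A}_s$ is a finite union of torsion cosets on which torsion points are dense by Manin--Mumford. Since $X$ is irreducible and dominant over $S$, the Zariski closure of the special points on $X$ has dimension $\geq\dim(X\cap\mathcal{A}_s)+1=\dim X$, hence equals $X$.

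\emph{Hard direction: reductions.} For the converse I would argue by induction on the relative dimension $g$ of $\mathcal{A}$ over $S$. Let $X$ carry a Zariski dense set $\Sigma$ of special points. If $\dim X=0$ then $X$ is itself a special point, a zero-dimensional special subvariety. If $X$ is contained in a fibre $\mathcal{A}_s$, then $s$ is a CM point, the points of $\Sigma$ are precisely torsion points of $\mathcal{A}_s$, and Raynaud's theorem forces $X$ to be a torsion coset of $\mathcal{A}_s$, hence special of type~(i). So assume $X$ dominates $S$ and $\dim X\geq1$; if $X=\mathcal{A}$ then $X$ is itself a flat subgroup scheme, so assume $X$ is proper. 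Since $S$ is a curve and $X$ is irreducible and dominant over it, every fibre of $\pi|_X$ has dimension $\leq\dim X-1$, so the images $\pi(P)$ $(P\in\Sigma)$ form an infinite --- hence Zariski dense --- set of CM points of $S$; and by the classical finiteness of CM points of bounded height, the quantities $\height{\pi(P)}$ are unbounded as $P$ ranges over $\Sigma$.

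\emph{Hard direction: the height inequality.} Here I would invoke the paper's height inequality, the step where non-isotriviality of $\mathcal{A}$ is indispensable. Applied to $X$ (together with the structure theory of these abelian schemes), it yields the following dichotomy: either there are a non-empty Zariski open $U\subseteq X$ and constants $c_1>0$, $c_2$ with $\ntheightS(P)\geq c_1\,\height{\pi(P)}-c_2$ for all $P\in U(\IQbar)$, where $\ntheightS$ denotes the fibrewise N\'eron--Tate height for a fixed relatively ample symmetric line bundle; or else $X$ lies in a torsion coset $\tau+\mathcal{H}$ with $\mathcal{H}\subsetneq\mathcal{A}$ a proper flat subgroup scheme and $\tau$ a torsion section. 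In the first alternative one reaches a contradiction: every $P\in\Sigma$ has $\ntheightS(P)=0$, being torsion in its fibre, so the points of $\Sigma$ lying in $U$ satisfy $\height{\pi(P)}\leq c_2/c_1$ and, by the finiteness just recalled, form a finite set; hence all but finitely many points of $\Sigma$ lie in the proper closed subvariety $X\ssm U$, contradicting the density of $\Sigma$ in $X$ (using $\dim X\geq1$). The point I expect to require the most care is exactly this interface --- arranging that the exceptional locus of the height inequality is a \emph{proper} closed subvariety of $X$ precisely when $X$ is not special; the height inequality itself is the genuinely hard input and is established earlier in the paper.

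\emph{Hard direction: descent through subgroup schemes.} It remains to treat $X\subseteq\tau+\mathcal{H}$. If $\mathcal{H}$ is finite over $S$, then $X$ --- irreducible and dominant over $S$ --- is a torsion multisection, hence an irreducible component of a finite flat subgroup scheme, special of type~(ii). Otherwise, replace $\mathcal{H}$ by the irreducible component containing $X-\tau$ and then by its identity component $\mathcal{H}^0$, an abelian subscheme of $\mathcal{A}$ of relative dimension $k$ with $1\leq k<g$. Because $\mathcal{E}$ is non-isotrivial, $\mathrm{End}_S(\mathcal{E})=\IZ$, so $\mathcal{H}^0$ receives an $S$-isogeny from $\mathcal{E}^k$; pulling $X-\tau$ back along it and choosing a suitable irreducible component yields an irreducible subvariety $X'\subseteq\mathcal{E}^k$ that dominates $S$ and carries a Zariski dense set of special points, since preimages under an isogeny over $\IQbar$ of torsion points in CM fibres are again torsion points in CM fibres. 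By the inductive hypothesis $X'$ is a special subvariety of $\mathcal{E}^k$, and since it dominates $S$ it is of type~(ii). Pushing forward along the isogeny and translating by $\tau$ then shows that $X$ is an irreducible component of a flat subgroup scheme of $\mathcal{A}$, which would complete the induction.
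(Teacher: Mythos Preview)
Your approach shares the paper's core ingredients---the descent to $\IQbar$, the height inequality, finiteness of CM $j$-invariants of bounded height, and Manin--Mumford in the fibre---but you take a detour the paper avoids. The paper's Theorem~\ref{thm:main} is sharper than the dichotomy you state: part (i) says that $\remtor{X}$ is empty \emph{if and only if} $X$ is itself an irreducible component of a flat subgroup scheme, i.e.\ already special of type~(ii). So the paper argues directly: assume $X$ is not special of type~(ii); then $\remtor{X}$ is non-empty and Zariski open, the special points in $\remtor{X}$ have $\ntheightlb{P}{\mathcal{A}}=0$ and hence bounded $\heightlb{\pi(P)}{\overline S,\mathcal{L}}$, whence (via height functoriality and Poonen's result) $\pi(P)$ ranges over a finite set, forcing $X$ into a single CM fibre where Manin--Mumford finishes. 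Your induction on $g$ through abelian subschemes arises only because you phrased the second branch of the dichotomy as mere \emph{containment} $X\subseteq\tau+\mathcal{H}$ rather than equality; with the paper's Theorem~\ref{thm:main}(i) in hand, that branch is immediately terminal and no descent is needed. Your concern about ``arranging that the exceptional locus is proper precisely when $X$ is not special'' is exactly what Theorem~\ref{thm:main}(i) settles.

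Two smaller points. First, the finiteness of CM elliptic curves with $j$-invariant of bounded height is less classical than you suggest: the paper cites Poonen, whose proof rests on work of Colmez and Nakkajima--Taguchi on the Faltings height, and this is a genuine arithmetic input. Second, in your first alternative you write that the points of $\Sigma\cap U$ ``form a finite set''; strictly this is false, since bounding $\height{\pi(P)}$ bounds only the number of base points $s$, not the (possibly infinite) torsion in each $X_s$. What you actually get is that $\Sigma\cap U$ lies over finitely many $s$, hence in a proper closed subset of $X$; together with $\Sigma\ssm U\subseteq X\ssm U$ this still contradicts density, so the conclusion survives with this correction.
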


It follows that the Zariski closure of a set of special points in
$\mathcal{A}$ is a finite union of special subvarieties.

This theorem generalizes a result
of  Andr{\'e} \cite[Lecture IV]{Andre01} which holds for curves with $\mathcal{A}$
the Legendre family of elliptic curves (so $g=1$).
Later, Pila  \cite{PilaCurves} gave a  proof of
 Andr\'e's  statement using a different approach. 

The assumption that $\mathcal{A}$ is  not isotrivial is necessary. 
We construct a
counterexample for the constant abelian scheme  $E\times\IP^1$
where $E$ is an elliptic curve with complex multiplication.
The special subvarieties as in (i) above are
of the form $E\times \{s\}$ for some $s\in\IP^1(\IC)$;
those as in (ii) are $\{P\}\times\IP^1$ with $P$ a torsion point of $E$.
Let $C$ be a curve in $E\times \IP^1$ that is not equal to
$\{P\}\times\IP^1$ for any $P\in E(\IC)$ and not of the form $E\times \{s\}$. On considering the projection of $C$ to
$E$ we find that our curve contains infinitely many points $(P,s)$ with $P$
torsion. All these points are  special, but $C$ is not.


The proof of Theorem \ref{thm:special1} relies on a height
inequality to be described in more detail below. 
Another ingredient is a finiteness statement of Poonen \cite{Poonen:MRL01}
on elliptic curves with complex multiplication and bounded Faltings
height. His proof relies on results of Colmez and Nakkajima-Taguchi.
We cannot work with more general abelian schemes 
 because these results are confined to elliptic curves for the
moment.


Following a suggestion of Zannier, we investigate a second ``special
topology'' on $\mathcal{A}$ relative to a fixed elliptic curve $E$ defined over $\IC$. 
A point in $\mathcal{A}(\IC)$ is called $E$-special 
if it is a torsion point in its respective fiber  and if this fiber is
isogenous to $E^g$.
Special subvarieties of
$\mathcal{A}$ are defined in a similar fashion as above. Explicitly, 
an
irreducible closed subvariety of $\mathcal{A}$ defined over $\IC$ is called $E$-special 
\begin{enumerate}
\item [(i)]
if it is an irreducible component of an algebraic subgroup of
 $ \mathcal{A}_s$ with $s\in S(\IC)$ such that
$\mathcal{E}_s$ is isogenous to $E$,
\item[(ii)] or if it is an irreducible
component of a flat subgroup scheme of $\mathcal{A}$. 
\end{enumerate}
The point of an zero-dimensional  $E$-special subvariety is
$E$-special.

The set of  $E$-special points of $\AL$ is a 
Hecke orbit, as defined in Section 3 \cite{Pink05}, of the zero element of an appropriate fiber, cf. the proof of Proposition 5.1 there. 


\begin{theorem}
\label{thm:specialE}
Let $\mathcal{A}$ be as above and let us assume that $\mathcal{A}$ is
not isotrivial.
Let $E$ be an elliptic curve defined over $\IQbar$.
 An irreducible closed subvariety of $\mathcal{A}$ defined over $\IC$
 contains a Zariski dense set of
  $E$-special points if and only if it is $E$-special.  
\end{theorem}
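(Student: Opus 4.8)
The plan is to follow the proof of Theorem~\ref{thm:special1}: the height inequality will reduce matters to points whose image in $S$ has bounded height, and one then exploits the torsion condition. The crucial difference is that Poonen's finiteness statement for complex multiplication curves of bounded Faltings height, which in Theorem~\ref{thm:special1} makes the set of relevant base points finite, has no analogue here, because the elliptic curves isogenous to a fixed $E$ have unbounded degree and can have bounded Faltings height. This forces an additional argument, which I expect to be the main obstacle.

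The implication that an $E$-special $V$ carries a Zariski dense set of $E$-special points is the easy one. If $V$ is of type (i), it is an irreducible component of an algebraic subgroup of some $\mathcal{A}_s$ with $\mathcal{E}_s$ isogenous to $E$, and the torsion points of $\mathcal{A}_s$ lying on $V$ are dense in $V$ and all $E$-special. If $V$ is of type (ii), an irreducible component of a flat subgroup scheme $\mathcal{B}\rightarrow S$, then non-isotriviality makes the $j$-invariant a non-constant morphism $S\rightarrow\IA^1$, so its image is cofinite in $\IA^1$ and meets the infinite set of $j$-invariants of curves isogenous to $E$; hence infinitely many $s\in S(\IQbar)$ satisfy $\mathcal{E}_s$ isogenous to $E$, and over each of them the fiber of $V$ carries a dense set of torsion points of $\mathcal{A}_s$, these being Zariski dense in $V$ since $V$ dominates its image.

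For the converse, let $V$ be irreducible with a Zariski dense set $\Sigma$ of $E$-special points. Since $E$ is defined over $\IQbar$, the condition that $\mathcal{E}_s$ be isogenous to $E$ forces $s\in S(\IQbar)$, so $\Sigma\subseteq\mathcal{A}(\IQbar)$; the standard argument with Galois conjugates (for $\sigma\in\mathrm{Aut}(\IC/\IQbar)$ the set $\Sigma$ is fixed pointwise and lies in $V\cap V^{\sigma}$, forcing $V^{\sigma}=V$) shows that $V$ is defined over $\IQbar$, and we fix a number field $K$ over which $\mathcal{E}\rightarrow S$, $E$ and $V$ are defined. We argue by induction on $\dim V$. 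If $V$ lies in a single fiber $\mathcal{A}_s$ (in particular if $\dim V=0$), then $\mathcal{E}_s$ is isogenous to $E$ and $V$ carries a dense set of torsion points, so by the Manin-Mumford Conjecture \cite{Raynaud:MM} it is an irreducible component of an algebraic subgroup of $\mathcal{A}_s$, hence $E$-special of type (i). Otherwise $\pi|_V\colon V\rightarrow S$ is dominant; and if moreover $V$ is contained in a translate of a proper subgroup scheme of $\mathcal{A}$ over $S$, then, using multiplication by a suitable integer and passing to a quotient abelian scheme (again a fibered power of $\mathcal{E}$ up to isogeny), the inductive hypothesis together with the fiber case identifies $V$ as an irreducible component of a flat subgroup scheme, hence $E$-special of type (ii). So it suffices to rule out the remaining configuration: that a $V$ which dominates $S$ and is not contained in such a translate carries a Zariski dense set of $E$-special points.

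For such a $V$ our height inequality provides a proper Zariski closed $Z\subsetneq V$, a Weil height $h_S$ on $S$ and a constant $c>0$ with $\ntheight{P}\ge c\,h_S(\pi(P))-c^{-1}$ for all $P\in(V\ssm Z)(\IQbar)$, where $\ntheightS$ denotes the N\'eron-Tate height on the fibers of a fixed symmetric ample class on $\mathcal{A}$. An $E$-special point is torsion in its fiber, so $\ntheight{P}=0$ and therefore $h_S(\pi(P))\le c^{-2}$. The $E$-special points of $V$ outside $Z$ are still Zariski dense in $V$, hence their images under $\pi$ are Zariski dense, and so infinite, in the curve $S$; by Northcott's theorem this infinite set, contained in $\{\,s\in S(\IQbar):h_S(s)\le c^{-2},\ \mathcal{E}_s\text{ isogenous to }E\,\}$, must contain points $s_i$ of unbounded degree over $K$. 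An $E$-special point $P_i$ over $s_i$ then satisfies $[K(P_i):K]\ge[K(s_i):K]\to\infty$, so has arbitrarily large $\aGal{K}$-orbit; each conjugate $P_i^{\sigma}$ is again an $E$-special point of $V\ssm Z$ (using $E^{\sigma}\cong E$ and that $V$ and $Z$ are defined over $K$) and, since $h_S(\pi(P_i^{\sigma}))\le c^{-2}$ and this fiberwise height vanishes, has bounded height in a fixed projective model of $\mathcal{A}$. Thus $V$ would contain arbitrarily large $\aGal{K}$-orbits consisting of points of bounded height. Deriving a contradiction from this is the hardest point of the argument, and where the complex multiplication case and the present one genuinely diverge: in place of Poonen's theorem one must exploit the Hecke-orbit structure of the set of $E$-special points, for instance by intersecting $V$ with its images under the Hecke correspondences relating fibers isogenous to $E$, bounding the number of irreducible components that arise, and using the Masser-W\"ustholz isogeny estimates to relate the degrees of the $s_i$ to the degrees of those correspondences, so as to confine the conjugates to a proper Zariski closed subset of $V$ against their density. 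Once this is done, no such $V$ exists, so $V$ is $E$-special of type (i) or (ii) as found above, completing the induction.
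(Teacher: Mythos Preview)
Your central premise is mistaken. You assert that ``the elliptic curves isogenous to a fixed $E$ have unbounded degree and can have bounded Faltings height,'' and that therefore Poonen's finiteness statement has no analogue here. This is false when $E$ has no complex multiplication: by the result of Szpiro and Ullmo (Lemma~\ref{lem:heckeorbitheight}, packaged as Lemma~\ref{lem:heckeheight}), an elliptic curve admitting a cyclic isogeny of degree $N$ from $E$ has Faltings height at least $\Fh{E}+\tfrac{1}{2}\log N - c\log\log(3N)$. Since every isogeny class representative is reached by a cyclic isogeny, up to $\IQbar$-isomorphism there are only finitely many elliptic curves isogenous to $E$ with bounded Faltings height, and hence (comparing Faltings height with the Weil height of the $j$-invariant) only finitely many with $\height{j}$ bounded. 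If $E$ does have complex multiplication, every curve isogenous to $E$ also has complex multiplication, and Poonen's Lemma~\ref{lem:cmheight} already gives the required finiteness.

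With this finiteness in hand the proof is a line-by-line repetition of the proof of Theorem~\ref{thm:special1}: the height inequality of Theorem~\ref{thm:main} bounds $\heightlb{\pi(P)}{\overline S,\mathcal L}$ for $E$-special $P\in\remtor{X}(\IQbar)$, hence bounds $\height{j(\pi(P))}$; the finiteness above forces $\pi(P)$ into a finite set, so $X$ lies in a single fiber $\mathcal{A}_s$ with $\mathcal{E}_s$ isogenous to $E$, and Manin--Mumford in that fiber finishes. No induction on $\dim V$, no Northcott argument on degrees, no Hecke correspondences or Masser--W\"ustholz isogeny estimates are needed. Your proposed endgame via large Galois orbits is left explicitly incomplete (``Deriving a contradiction from this is the hardest point of the argument''), and the missing ingredient is precisely the Szpiro--Ullmo input you overlooked --- which the paper's introduction already flags as the tool replacing Poonen's lemma.
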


So,  the Zariski closure of a set of $E$-special points in
$\mathcal{A}$ is a finite union of $E$-special subvarieties.


In addition to the height inequality which is also used in Theorem
\ref{thm:special1}, the theorem above relies
on a result of Szpiro and Ullmo \cite{SU:variation}. They describe the
distribution of the Faltings height in a fixed isogeny class of
elliptic curves without complex multiplication. 
As was the case with the finiteness statement
 of Poonen, 
a version of this result for more general abelian varieties 
would be needed to treat abelian schemes with more general fibers.


The main  technical tool in the proofs of Theorems
\ref{thm:special1} and \ref{thm:specialE} is a height inequality
on subvarieties of $\mathcal{A}$
given in Theorem \ref{thm:main}. 
It relates the restrictions of two
different height functions on $\mathcal{A}$
to a fixed subvariety.
Since our heights are only defined when dealing with algebraic points
we shall assume that $S$ and $E$ are defined over $\IQbar$.

The first height function is derived from a height on $S$; it
measures the corresponding fiber in $\mathcal{A}$.
 We may assume that $S$
is a Zariski open subset of an irreducible and non-singular projective curve $\overline
S$ over $\IQbar$. 
On $\overline S$ we fix a line bundle $\mathcal{L}$.
Given this pair we may choose a height function
$\heightlbS{\overline S,\mathcal{L}}:\overline S(\IQbar)\rightarrow
\IR$,
 cf. Section \ref{sec:heights}. 
The first height is  just the composition $\heightlbS{\overline
  S,\mathcal{L}}\circ \pi : \mathcal{A}(\IQbar)\rightarrow \IR$.

The second height function
 $\ntheightlbS{\mathcal{A}}:\mathcal{A}(\IQbar)\rightarrow [0,\infty)$
  is  more closely related to the group structure on the fibers 
of $\mathcal{A}\rightarrow S$ above points in $S(\IQbar)$.
Indeed, for any $s\in S(\IQbar)$
the fiber $\mathcal{A}_s$ is the $g$-th power of an elliptic curve.
It is equipped with the so-called N\'eron-Tate height
$\mathcal{A}_s(\IQbar)\rightarrow [0,\infty)$ which we describe more
  thoroughly in Section \ref{sec:heights}.
Letting $s$ vary over $S(\IQbar)$ we obtain a N\'eron-Tate height
$\ntheightlbS{\mathcal{A}}:\mathcal{A}(\IQbar)\rightarrow[0,\infty)$.


Our two height functions are  unrelated in the following sense.
It is not difficult to construct
an infinite sequence of points  $P_1,P_2,\ldots\in \mathcal{A}(\IQbar)$ such that
$\ntheightlb{P_k}{\mathcal{A}}$ is constant  and
$\heightlb{\pi(P_k)}{\overline S,\mathcal{L}}$ unbounded. For example, 
it suffices to take  $P_k$ any torsion point in $\mathcal{A}_{\pi(P)}$
 and the sequence $\pi(P_k)$ of unbounded height. Then
 $\ntheightlb{P_k}{\mathcal{A}}=0$
since the N\'eron-Tate height vanishes on torsion points.

If $\mathcal{A}$ is not isotrivial, the situation
changes when our points lie on an
 irreducible subvariety $X\subset \mathcal{A}$ which is
 not  ``special'' in a slightly weaker sense than above.
This is the content of the height inequality in Theorem \ref{thm:main}.
We will bound
$\heightlbS{\overline S,\mathcal L}\circ \pi$ from above linearly 
in terms of $\ntheightlbS{\mathcal{A}}$ when restricted to a certain 
natural Zariski open and non-empty subset of $X$. 
We define this subset now. 

For an irreducible closed subvariety
$X\subset\mathcal{A}$ defined over $\IC$ we set
\begin{equation*}
  \remtor{X} = X \ssm \bigcup_Z Z
\end{equation*}
where $Z$ runs over all
closed subvarieties of $X$ that are
 irreducible components
of  flat  subgroup schemes of $\mathcal{A}$.
The fact that $\remtor{X}$ is Zariski open is
not immediately obvious since the union may be infinite.
But it is part of the theorem below and 
 will follow from the
 Manin-Mumford Conjecture applied to the
 generic fiber of $\mathcal{A}\rightarrow S$. 
We will also obtain 
 a necessary and sufficient condition for
the non-emptiness of $\remtor{X}$. 


\begin{theorem}
\label{thm:main}
Let $\mathcal{A}$ be as above and 
  let $X\subset\mathcal{A}$ be an irreducible closed subvariety
  defined over $\IC$. 
  \begin{enumerate}
\item[(i)] The set $\remtor{X}$ is Zariski open in $X$. 
  It is empty if and only if $X$ is  an irreducible component
  of a flat subgroup scheme of $\mathcal{A}$. 
  \item [(ii)] If $X$ is defined over $\IQbar$ and if $\mathcal{A}$ is
    not isotrivial there exists a constant $c > 0$ such that
  \begin{equation}
\label{eq:mainthmineq}
    \heightlb{\pi(P)}{\overline S,\mathcal{L}} \le c \max\{1,\ntheightlb{P}{\mathcal{A}}\} 
\quad\text{for all}\quad P\in \remtor{X}(\IQbar). 
  \end{equation}
  \end{enumerate}
\end{theorem}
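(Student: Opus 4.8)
The plan is to prove part~(i) first, since it provides the geometric input needed to control the error terms in part~(ii).

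For part~(i), if $X$ does not dominate $S$ then no component of a flat subgroup scheme can lie in $X$: flatness over the Dedekind scheme $S$ forces every associated point, in particular the generic point of each irreducible component, to lie over $\eta$, so each such component dominates $S$. Hence $\remtor X=X$ is open, non-empty, and not a component of a flat subgroup scheme. If $X$ dominates $S$, write $K=\IQbar(S)$ and pass to the generic fibre $X_\eta\subset\mathcal A_\eta$; over a geometric generic point the fibre $\mathcal A_{\overline\eta}$ is a power of an elliptic curve. The subvarieties $Z$ occurring in the definition of $\remtor X$ all dominate $S$, hence are recovered as the closures in $\mathcal A$ of their generic fibres, and by the description of flat subgroup schemes (Section~\ref{sec:sbgrpscheme}) those generic fibres are exactly the torsion cosets — translates $t+B$ of abelian subvarieties $B$ by torsion points $t$ — of $\mathcal A_{\overline\eta}$ contained in $X_{\overline\eta}$; conversely the closure in $\mathcal A$ of any such coset is again a component of a flat subgroup scheme inside $X$. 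By the Manin--Mumford conjecture (Raynaud \cite{Raynaud:MM}, valid over any field of characteristic zero) applied to $X_{\overline\eta}$, there are only finitely many maximal torsion cosets contained in $X_{\overline\eta}$ and every torsion coset lies in one of them, so $\bigcup_Z Z$ is a finite union of closed subvarieties, hence closed, and $\remtor X$ is Zariski open. If $\remtor X=\emptyset$ then this finite union equals $X$, so by irreducibility $X$ is one of the $Z$'s, i.e.\ a component of a flat subgroup scheme; the converse is immediate.

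For part~(ii) we may assume, by~(i), that $\remtor X\neq\emptyset$, i.e.\ that $X$ is not a component of a flat subgroup scheme, and that $X$ dominates $S$ (otherwise $\pi(X)$ is a single point and any $c$ exceeding its height suffices). Fix a proper model $\overline{\mathcal A}\to\overline S$ of $\mathcal A\to S$ with $\overline{\mathcal A}$ projective and a line bundle $\mathcal M$ on $\overline{\mathcal A}$ that is relatively ample and restricts on each fibre $\mathcal A_s$ to a symmetric ample bundle inducing the N\'eron--Tate height; replacing $\mathcal L$ by an ample bundle (this changes $\heightlbS{\overline S,\mathcal L}$ only by a bounded factor and an additive constant) we may take $\mathcal L$ ample. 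Since $\heightlb{\pi(P)}{\overline S,\mathcal L}=\heightlb{P}{\overline{\mathcal A},\pi^*\mathcal L}$, the task is to compare this with $\ntheightlb{P}{\mathcal A}$ on $\remtor X(\IQbar)$. I would use two inputs. The first is a Silverman-type variation estimate: for every $\varepsilon>0$ there is $c_\varepsilon$ with
\begin{equation*}
\heightlb{P}{\overline{\mathcal A},\mathcal M}\ \le\ \ntheightlb{P}{\mathcal A}+\varepsilon\,\heightlb{\pi(P)}{\overline S,\mathcal L}+c_\varepsilon\qquad\text{for all }P\in\mathcal A(\IQbar),
\end{equation*}
which follows from the analysis of how N\'eron local heights vary across the family that underlies Silverman's Specialization Theorem \cite{Silverman} (the $\varepsilon$-form is the $O(\sqrt{\,\cdot\,})$-bound rearranged). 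The second is a positivity statement: if $X$ dominates $S$ and is not a component of a flat subgroup scheme, then $\mathcal M$ restricted to the closure $\overline X$ of $X$ in $\overline{\mathcal A}$ is big, so for some integer $N\ge 1$ the class $N\mathcal M-\pi^*\mathcal L$ on $\overline X$ is represented by an effective divisor, whence $\heightlb{\pi(P)}{\overline S,\mathcal L}\le N\,\heightlb{P}{\overline{\mathcal A},\mathcal M}+c'$ for $P\in\overline X(\IQbar)$ outside the support $B$ of that divisor. Combining the two with $\varepsilon=\tfrac1{2N}$ and absorbing the resulting $\tfrac12\heightlb{\pi(P)}{\overline S,\mathcal L}$ into the left-hand side gives \eqref{eq:mainthmineq} on $\overline X(\IQbar)\ssm B$ with an explicit constant. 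Finally I pass from $\overline X\ssm B$ to all of $\remtor X$ by induction on $\dim X$: $B$ is a proper closed subset, and for each irreducible component $B_i$ of $B\cap X$ either $B_i$ is a component of a flat subgroup scheme — so $\remtor X\cap B_i=\emptyset$ — or $\dim B_i<\dim X$, in which case the inductive hypothesis applies to $B_i$ (or, if $B_i$ lies in one fibre, a single constant does); since $\remtor X\cap B_i\subset\remtor{B_i}$ for every $i$, the maximum of the finitely many constants obtained finishes the argument.

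The main obstacle is the positivity step and its compatibility with the variation estimate. Bigness of $\mathcal M|_{\overline X}$ is exactly where non-isotriviality is indispensable: a horizontal curve $C\subset\overline X$ with $\deg_{\mathcal M}C=0$ has its generic point equal to a torsion point of $\mathcal A_\eta$ — this is the one situation in which a bounded value of $\ntheightlbS{\mathcal A}$ coexists with unbounded base height — and over a non-isotrivial base this forces $C$ to be a torsion multisection, i.e.\ a component of a flat subgroup scheme; feeding this into the intersection-theoretic criterion for bigness yields that $\mathcal M|_{\overline X}$ is big and, incidentally, that the components of $B$ are of the two types needed in the induction. The delicate point is to arrange a single model $\overline{\mathcal A}$ and bundle $\mathcal M$ for which the variation estimate holds in the strong $\varepsilon$-form \emph{and} $\mathcal M|_{\overline X}$ is big: one cannot force bigness by a harmless twist of $\mathcal M$ pulled back from $\overline S$, since such a twist would degrade the error term from $\varepsilon\,\heightlb{\pi(P)}{\overline S,\mathcal L}+c_\varepsilon$ to $O(\heightlb{\pi(P)}{\overline S,\mathcal L})$ and break the absorption step. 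This is what obliges one to work with the specific N\'eron-model line bundle behind Silverman's estimate and to verify its bigness on $\overline X$ directly.
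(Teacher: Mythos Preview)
Your argument for part~(i) is essentially the paper's: pass to the generic fibre, invoke Raynaud, and take closures. Fine.

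Part~(ii) has a genuine gap in the first input. The $\varepsilon$-form
\[
\heightlb{P}{\overline{\mathcal A},\mathcal M}\ \le\ \ntheightlb{P}{\mathcal A}+\varepsilon\,\heightlb{\pi(P)}{\overline S,\mathcal L}+c_\varepsilon
\qquad\text{for all }P\in\mathcal A(\IQbar)
\]
does not follow from Silverman's analysis and is false for any honest line bundle $\mathcal M$ on a projective compactification restricting fibrewise to the symmetric ample class. On the Legendre family the $2$-torsion section $\lambda\mapsto([\lambda:0:1],\lambda)$ has N\'eron--Tate height $0$ but naive height $h(\lambda)$; twisting $\mathcal M$ by $\pi^*(\text{something})$ only shifts the problem to another torsion section. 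Silverman--Tate/Zimmer (Theorem~\ref{thm:stz}) gives only the linear bound $|\heightlbS{\mathcal A_L}-\ntheightlbS{\mathcal A_L}|\le c\max\{1,h\circ\piL\}$, and that constant $c$ cannot be made small. There is no uniform $O(\sqrt{\,\cdot\,})$ bound to rearrange. You correctly diagnose that a linear error wrecks the absorption step; the resolution you propose is not available.

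The paper closes this gap by an amplification trick. It proves (Proposition~\ref{prop:heightlb}) that on a dense open of $X$ one has $\heightlb{[2^N](P)}{\AL}\ge c\,4^N\heightlb{P}{\AL}-c'(N)$ with $c$ \emph{independent of $N$}; combined with the quadraticity $\ntheightlb{[2^N](P)}{\AL}=4^N\ntheightlb{P}{\AL}$ and the linear Silverman--Tate error, a single large $N$ makes the absorption work (Lemma~\ref{lem:passNTheight}). Obtaining that $N$-independent $c$ is the heart of the matter: it comes from a lower bound $\deg f\ge c\,4^{gN}$ for the rational map $f=[2^N]|_X$ followed by a projection, and that degree bound is produced by counting at least $c\,4^{gN}$ torsion points on $X$ mapping to isolated points of $[2^N](X)\cap C$ (Proposition~\ref{prop:counting}). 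The counting in turn rests on an analytic/monodromy argument around the cusps $0,1$ of $\B$ together with Kronecker's theorem---this is precisely where the hypothesis that $X$ is not a component of a flat subgroup scheme enters. Siu's theorem (Lemma~\ref{lem:heightineq}) then converts the degree bound into the section you are after. So the paper \emph{does} ultimately produce an effective divisor in a class of the shape you want, but via an explicit intersection computation rather than an abstract bigness criterion; your phrase ``feeding this into the intersection-theoretic criterion for bigness'' is exactly the step that requires the work of Sections~\ref{sec:torsionpts}--\ref{sec:internumb}.

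Two further remarks. The paper's induction (Lemma~\ref{lem:heightubU}) is on $\dim X+\dim\AL$ and proceeds by projecting to smaller fibre powers, which lets it reduce to hypersurfaces where the torsion count applies; your induction on $\dim X$ via the base locus $B$ is the same as the paper's final descent from a dense open to all of $\remtor X$, but it presupposes the open-set inequality already established. And the reduction from a general $\mathcal A$ to $\AL$ is carried out separately by adding level structure (Lemma~\ref{lem:levelstructure}).
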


If $X$ is a curve, then Theorem \ref{thm:main}(ii)
can be proved using  Silverman's  Theorem B
\cite{Silverman}; we state it as Theorem \ref{thm:silverman} below. 
In fact, Silverman's result provides a more precise estimate for more
general abelian schemes. 
His proof depends on the fact that an irreducible 
projective curve has infinite cyclic N\'eron-Severi group.
The advantage of our theorem is that it can
handle subvarieties of arbitrary dimension. 


 Masser and Zannier \cite{MZ:torsionanomalous}
proved that a certain explicit curve in
$\EL\times_{\B} \EL$
contains only finitely many points which are torsion in their
respective fibers. Their result is also related to Pink's general
conjecture
\cite{Pink}.
One step in their argument required a height bound as in Theorem \ref{thm:main} 
for curves. For this they used Silverman's result 
mentioned further up. One could hope that our height theoretic result
may  play a role in a generalization of Masser and Zannier's result
to higher dimensional subvarieties. 

The particular abelian scheme $\AL\rightarrow \B$ 
defined using the Legendre family
plays a central role in the proof of
Theorem \ref{thm:main}. 
By adding level structure to
$\mathcal{E}\rightarrow S$ we will  be able to reduce the
proof to the case  $\mathcal{A}=\AL$ and $S=\B$.
This allows us to exploit the very explicit nature of the Legendre family.

We briefly sketch the lines of the  proof in this setting.
Perhaps surprisingly, the basic strategy is to 
 construct sufficiently 
many points on $X$ which are torsion in their respective fibers.
This is done in Proposition
\ref{prop:counting} if $X$ is a hypersurface. 
The existence of many such torsion points has implications for a certain
intersection number on an appropriate compactification of $X$.
This information will be  used to establish the existence of an
auxiliary non-zero global section of a certain line bundle. 
By arguments from height theory this
global section is ultimately responsible for the inequality in
Theorem \ref{thm:main}(ii). 
If $X$ is not a hypersurface, then we will   apply an inductive argument.

There is an implicit restriction on $X$ in part (ii) of the theorem
above. Namely, $\remtor{X}\not=\emptyset$ since the statement is
trivial otherwise.
This suggests that there must be an obstruction  in the  sketch above.
Indeed, it is the argument in Proposition \ref{prop:counting}
 which may fail if $X$ is an irreducible component of a flat subgroup
 scheme.  
Part of the proof of this  proposition 
concerns the Zariski denseness of what one might call an analytic
subgroup scheme of $\AL$. This is done by studying the
 local monodromy of our abelian scheme
 around the cusps $0$ and $1$ of $\B$. 
  Roughly speaking,  monodromy
allows us 
 to extract information from the hypothesis
 $\remtor{X}\not=\emptyset$.
For abelian schemes, local
 monodromy is known  to be quasi-unipotent. For our specific
 $\AL$  we will see that it is even unipotent around $0$ and $1$. We
 will  show that the nilpotent part has sufficiently large rank.
This allows us to 
 apply Kronecker's Theorem from diophantine approximation giving the
 argument an ergodic flavor. 

Our claim on Zariski denseness  can be rephrased by saying that a certain
set of functions is algebraically independent over the field
$\IC(\lambda)$. These functions turn out closely related to
 solutions of type VI Painlev\'e
differential equations. In the setting we consider,
 they are known to be
transcendental over $\IC(\lambda)$. But  algebraic independence
 seems to be new. 


Local monodromy of an abelian scheme over a projective base
is of finite order. Hence, the nilpotent part is trivial.
It would be interesting to see if and how our approach can
  adapt to abelian schemes over  curves lacking cusps.





We come to a final application of the height inequality.
The Bogomolov Conjecture for abelian varieties defined over a number field
 generalizes the
Manin-Mumford Conjecture. 
  Whereas the Manin-Mumford Conjecture describes the
  distribution of torsion points on subvarieties of abelian varieties,
  the Bogomolov Conjecture governs   those points which merely have
  small N\'eron-Tate height. 

Over number fields, the Bogomolov Conjecture is a theorem due to the
work of Ullmo and Zhang. 
It has an analog for abelian varieties  defined over
 function fields since one can also define the N\'eron-Tate height in
 this setting. The Bogomolov Conjecture is open in the
 context of function fields.
But Gubler  \cite{Gubler:Bogo}  has made  important progress by proving
it if the abelian variety is totally degenerate at one place
of the function field.

We prove  the Bogomolov Conjecture for
the power of an elliptic curve defined over the function field of a
curve and with non-constant $j$-invariant.
This abelian variety can be realized as  
 the generic fiber of some  $\mathcal{A}\rightarrow S$ as
in Theorem \ref{thm:main}.  Here our height inequality comes into the
picture;  we shall combine it with the more precise
statement of Silverman which holds for curves.
In fact this particular case of the Bogomolov Conjecture is covered by Gubler's work. But our
approach differs from his and provides another approach to this problem.

Let $K$ be the function field of an irreducible non-singular projective curve
defined over $\IQbar$ and let $E$ be an elliptic curve defined over
$K$.
The $j$-invariant of $E$ is an element of $K$; we 
call it non-constant if it lies in $K\ssm \IQbar$.
 Let $\overline K$ be
an algebraic closure of $K$.
We  fix an ample and symmetric line bundle
on $E$. This induces a
N\'eron-Tate height function $\ntheightS : E^g(\overline K)\rightarrow
[0,\infty)$, see Section \ref{sec:bogo} for references in the function
  field setting.


\begin{theorem}
\label{thm:bogo}
Let $K,\overline K,E,$ and $\ntheightS$ be as above. 
We shall assume that the $j$-invariant of $E$ is non-constant. 
  Let $X\subset E^g$ be an irreducible closed subvariety defined over
  $\overline K$
 which is not an irreducible component of an algebraic
  subgroup of $E^g$. 
There exist $\epsilon > 0$ and a Zariski closed
  proper subset $Z\subset X$ such that $P \in (X\ssm
  Z)(\overline K)$ implies $\ntheight{P} \ge \epsilon$. 
\end{theorem}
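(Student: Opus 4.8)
The plan is to deduce Theorem~\ref{thm:bogo} from the height inequality of Theorem~\ref{thm:main} combined with Silverman's Theorem~\ref{thm:silverman}. First I would set up the geometric model: since $E$ has non-constant $j$-invariant, the elliptic curve $E/K$ spreads out to an elliptic surface $\mathcal{E}\rightarrow S$ over a Zariski open subset $S$ of the curve with function field $K$, and $E^g$ is the generic fiber of the corresponding fibered power $\mathcal{A}=\mathcal{E}\times_S\cdots\times_S\mathcal{E}$. The non-constancy of the $j$-invariant translates exactly into $\mathcal{A}$ being non-isotrivial, so the hypotheses of Theorem~\ref{thm:main}(ii) are met. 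The subvariety $X\subset E^g$ likewise spreads out to a subvariety $\mathcal{X}\subset\mathcal{A}$ whose generic fiber is $X$, and I would check that ``$X$ is not an irreducible component of an algebraic subgroup of $E^g$'' is equivalent to ``$\mathcal{X}$ is not an irreducible component of a flat subgroup scheme of $\mathcal{A}$'', so by Theorem~\ref{thm:main}(i) the open set $\remtor{\mathcal{X}}$ is non-empty.

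Next I would relate the function-field N\'eron--Tate height $\ntheightS$ on $E^g(\overline K)$ to the two height functions on $\mathcal{A}$. A point $P\in E^g(\overline K)$ corresponds, after a finite extension, to a section of $\mathcal{A}$ over a curve mapping to $S$, or more concretely to an algebraic point of $\mathcal{A}$; the function-field height $\ntheight{P}$ is, up to bounded error, $\ntheightlb{P}{\mathcal{A}}$ divided by the (geometric) height $\heightlb{\pi(P)}{\overline S,\mathcal{L}}$ of the corresponding point of $S$ — this is the standard dictionary between the relative height over a function field and the fiberwise N\'eron--Tate height on the family, cf.\ the discussion preceding Theorem~\ref{thm:main} and Silverman's specialization setup. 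Points $P\in X(\overline K)$ of small $\ntheightS$-height thus correspond to points of $\mathcal{A}$ where $\ntheightlb{P}{\mathcal{A}}$ is small compared to $\heightlb{\pi(P)}{\overline S,\mathcal{L}}$. Theorem~\ref{thm:main}(ii) says precisely that on $\remtor{\mathcal{X}}$ one has $\heightlb{\pi(P)}{\overline S,\mathcal{L}}\le c\max\{1,\ntheightlb{P}{\mathcal{A}}\}$; unravelling the dictionary, this forces $\ntheight{P}\ge 1/c$ once $\heightlb{\pi(P)}{\overline S,\mathcal{L}}$ is large enough. Taking $Z$ to be the complement of $\remtor{\mathcal{X}}$ in $X$ together with the (finitely many) fibers of bounded height handles the remaining cases, using Silverman's Theorem~\ref{thm:silverman} for the one-dimensional slices or a direct argument that a subvariety of a fixed abelian variety $E^g_{\overline{\IQ}}$-fiber over finitely many points cannot be the whole obstruction.

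The main obstacle I expect is making the spreading-out dictionary between heights precise and uniform: one must choose the model $\mathcal{A}\rightarrow S$, the line bundle $\mathcal{L}$ on $\overline S$, and the polarization on $\mathcal{A}$ compatibly with the fixed ample symmetric bundle on $E^g$, and then show that the function-field N\'eron--Tate height agrees with $\ntheightlb{\cdot}{\mathcal{A}}$ up to a term of size $O(\heightlb{\pi(\cdot)}{\overline S,\mathcal{L}})$ coming from the difference between a relative and an absolute height, and that this error is actually a genuine normalization rather than an obstruction. A second, subtler point is that Theorem~\ref{thm:main} is stated for $X$ defined over $\IQbar$ while here $X$ is only defined over $\overline K$; one resolves this by viewing $X$ itself as (the generic fiber of) a subvariety $\mathcal{X}$ of $\mathcal{A}$ defined over $\IQbar$, so that the roles of ``base curve'' and ``field of definition'' are correctly interchanged, and checking that $\remtor{(\cdot)}$ behaves well under this identification. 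Once these bookkeeping issues are settled, the inequality $\ntheight{P}\ge\epsilon$ on $X\ssm Z$ drops out of Theorem~\ref{thm:main}(ii) with $\epsilon$ essentially $1/c$, and Silverman's Theorem~\ref{thm:silverman} supplies whatever is needed on the low-dimensional strata where $\remtor{\mathcal{X}}$ is empty.
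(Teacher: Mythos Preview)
Your overall architecture is right: spread $X$ out to $\mathcal{X}\subset\mathcal{A}$, check that $\mathcal{X}$ is not a component of a flat subgroup scheme, and combine Theorem~\ref{thm:main}(ii) with Silverman's Theorem~\ref{thm:silverman}. But the way you describe the link between the two height theories is not correct, and as written the argument would not go through.

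The gap is in your ``dictionary''. A point $x\in X(\overline K)$ does \emph{not} correspond to a single $\IQbar$-point of $\mathcal{A}$ with a well-defined $\pi(P)\in S(\IQbar)$; it corresponds to a section $\widetilde x:S'\dashrightarrow\mathcal{A}$ over a finite cover $S'\rightarrow S$, and the Zariski closure of its image is an irreducible \emph{curve} $\mathcal{Y}\subset\mathcal{X}$. There is no formula of the shape ``$\ntheight{x}$ equals $\ntheightlb{P}{\mathcal{A}}/\heightlb{\pi(P)}{\overline S,\mathcal{L}}$ up to bounded error'' at a single point. What Silverman's Theorem gives is the \emph{limit} statement
\[
\ntheightlb{x}{A}\;=\;\lim_{k\to\infty}\frac{\ntheightlb{\widetilde x(t_k)}{\mathcal{A}}}{\heightlb{\rho(t_k)}{\overline S,\mathcal{L}}}
\]
along any sequence $t_k\in S'(\IQbar)$ with $\heightlb{\rho(t_k)}{\overline S,\mathcal{L}}\to\infty$. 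This is not a residual tool for ``one-dimensional slices'' or leftover fibers; it is the central bridge converting the inequality of Theorem~\ref{thm:main}(ii), which concerns $\IQbar$-points of $\mathcal{X}$, into a lower bound for $\ntheightlb{x}{A}$.

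Concretely, the paper's argument runs as follows. Set $Z=i|_X^{-1}(\mathcal{X}\ssm\remtor{\mathcal{X}})$. For $x\in(X\ssm Z)(\overline K)$ the curve $\mathcal{Y}$ meets $\remtor{\mathcal{X}}$, so one may choose $P_k=\widetilde x(t_k)\in(\mathcal{Y}\cap\remtor{\mathcal{X}})(\IQbar)$ with $\heightlb{\pi(P_k)}{\overline S,\mathcal{L}}\to\infty$. Theorem~\ref{thm:main}(ii) yields $\ntheightlb{P_k}{\mathcal{A}}\ge c^{-1}\heightlb{\pi(P_k)}{\overline S,\mathcal{L}}$ for large $k$, hence the ratio is bounded below by $c^{-1}$, and Silverman's Theorem identifies the limit of that ratio as $\ntheightlb{x}{A}$. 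Thus $\ntheightlb{x}{A}\ge c^{-1}=\epsilon$. No ``finitely many fibers of bounded height'' need to be excised: the passage to the limit along $\mathcal{Y}$ already absorbs any finite set of bad specializations. Once you rewrite your second paragraph with this section-versus-specialization picture in mind, and move Silverman's Theorem from the periphery to the core of the argument, your plan becomes exactly the paper's proof.
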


The article is organized as follows.
In Section \ref{sec:notation} we introduce much of the notation used
throughout later sections. Section \ref{sec:torsionpts} contains the
estimate on counting torsion points alluded to in the  
sketch above.
We will use it in Section \ref{sec:internumb} to prove a
preliminary height inequality. In Section
\ref{sec:mainresults} we then deduce Theorem
\ref{thm:main}. 
In the same section we also prove Theorems \ref{thm:special1} and
\ref{thm:specialE}.
Finally, Theorem \ref{thm:bogo} is shown in Section \ref{sec:bogo}.

The basis of this work was laid at the Centro di
Giorgi in Pisa in June 2009.
It is my pleasure to thank Umberto Zannier for inviting me and for
the many fruitful discussion we had. 
I am grateful to Daniel Bertrand for his comments on transcendence properties and Painlev\'e
equations
and mentioning Zarhin and Manin's
paper.
I  also thank
Walter Gubler and  Richard Pink for answering
  several  questions. The author was supported
by an ETH Fellowship grant and by the Scuola Normale Superiore in Pisa.

\section{Preliminaries}
\label{sec:notation}

\subsection{One Abelian Scheme with Two Heights}
\label{sec:heights}

Let $\B=\IP^1\ssm \{0,1,\infty\}$ and 
let $\EL$ be the closed subvariety of  $\IP^2\times \B$ given by
\begin{equation*}
  \left\{([x:y:z],[\lambda:1]);\,\, zy^2 = x(x-z)(x-z\lambda)\right\}
\subset \IP^2\times \B.
\end{equation*}
We let $\piL=\EL \rightarrow \B$ denote the projection onto
the second factor. 
Each fiber of $\piL$ is an elliptic curve given in Legendre form.
The zero section
 $\epsilon_{L} : \B\rightarrow \EL$
is defined as $\epsilon_{L}(\lambda) =
 ([0:1:0],\lambda)$; it is a closed immersion.
The addition morphism on each fiber of $\EL\rightarrow\B$ extends to 
 a
 morphism $\EL\times_\B \EL\rightarrow \EL$. Similarly, we have
 a morphism $\EL\rightarrow\EL$ which is fiberwise the
 inversion. Therefore $\mathcal{E}$ is 
a group scheme over $\B$.
The morphism $\piL$ is proper
because it is a composition of the closed immersion
$\EL\hookrightarrow \IP^2\times \B$ and the projection morphism
$\IP^2\times \B\rightarrow \B$ which is proper.
The morphism  $\piL$ is smooth and geometrically connected
because its fibers are elliptic curves.  Hence
$\EL$ is an abelian scheme over $\B$. 

Throughout this paper $g\ge 1$ is an integer and 
$\AL = \EL\times_\B \cdots \times_\B \EL$ is
 the  $g$-fold  fibered  power of $\EL$ over $\B$. 
It is  an abelian scheme
 over $\B$ of dimension $g+1$.
We have a natural embedding $\AL \subset (\IP^2)^g\times \B$. 
By abuse of notation we write $\piL$ for the canonical map $\AL\rightarrow
\B$ and
$\epsilon_L:\B\rightarrow\AL$  for the zero section.

We consider the absolute logarithmic Weil height 
 $\heightS : \IP^n(\IQbar) \rightarrow [0,\infty)$ and sometimes call
  it the projective height. For a definition
  and some basic properties we refer to Chapter 1.5 \cite{BG}. Since we
  have a natural inclusion $\B\subset \IP^1$, the projective height
restricts to a height  $\heightS:\B(\IQbar)\rightarrow [0,\infty)$. 
The Weil height of an algebraic number $x$ is the projective height of
$[x:1]\in\IP^1(\IQbar)$.

Say $Z$ is a projective variety defined over $\IQbar$ and
$\mathcal{L}$ a line bundle 
on $Z$.  
This pair determines an 
 equivalence class $\heightlbS{Z,\mathcal{L}}$
of real valued functions $Z(\IQbar)\rightarrow\IR$
where two functions are taken to be equivalent if the absolute value
of their difference is uniformly bounded from above. 
The association $(Z,\mathcal{L})\mapsto \heightlbS{Z,\mathcal{L}}$
 has useful functorial properties which we use freely
throughout this paper. For more information on
these and a 
 construction  we refer to Chapter 2 of Bombieri and
Gubler's book \cite{BG}. 
It is sometimes convenient to 
 use the same symbol $\heightlbS{Z,\mathcal{L}}$
for a specific representative in the equivalence class. 
We will point out such a choice.

We discuss two  notions for the height of a point
 $P=(P_1,\ldots,P_g,\piL(P))\in \AL(\IQbar)$. 
As in the introduction we could use $\height{\piL(P)}$ to gauge the fiber
containing $P$. It is sometimes more convenient 
to work with ``total height''  given by
\begin{equation}
\label{eq:deftotalheight}
  \heightlb{P}{\AL} = 
\height{P_1}+\cdots+\height{P_g}+\height{\piL(P)},
\end{equation}
we recall $P_1,\ldots,P_g\in\IP^2(\IQbar)$.

Let $E$ be any elliptic curve defined over $\IQbar$.
The zero element of $E$ considered as a Weil divisor determines a line bundle
$\mathcal{L}$ on $E$. 
There is a rational function $x$ on $E$
whose only pole is at the zero element and of order two there.
Then $x$ extends to a morphism $E\rightarrow \IP^1$ and
a valid choice of representative for $\heightlbS{E,\mathcal{L}}$ 
is $\frac 12 \heightS \circ x : E(\IQbar)\rightarrow [0,\infty)$
with $\heightS$ the projective height.
Tate's Limit Argument, cf. Chapter 9.2 \cite{BG},
enables us to choose a canonical element in the equivalence class
 $\heightlbS{E,\mathcal{L}}$.
We let $\ntheightlbS{E}:E(\IQbar)\rightarrow [0,\infty)$ denote this
  element and call it  the
  N\'eron-Tate height. 
The N\'eron-Tate height has the advantage that if $E'$ is an elliptic curve over $\IQbar$ and
$f:E\rightarrow E'$ is an isomorphism of elliptic curves, then functorial properties
of the height imply
$\ntheightlb{f(P)}{E'} = \ntheightlb{P}{E}$ for all $P\in E(\IQbar)$.

Let $\mathcal{A}$ and $S$  be as in the introduction. If $s\in
S(\IQbar)$ and $P=(P_1,\ldots,P_g)\in \mathcal{E}_s^g(\IQbar)$, we set
\begin{equation}
\label{eq:defntheight}
\ntheightlb{P}{\mathcal{A}} = \ntheightlb{P_1}{\mathcal{E}_s}+\cdots
+\ntheightlb{P_g}{\mathcal{E}_s}\ge 0
\end{equation}
 and  call this the N\'eron-Tate height on $\mathcal{A}$.
Of course, this also determines a N\'eron-Tate height on $\AL$.

\subsection{Period Map}

For  $\tau\in \IH$, where $\IH\subset\IC$ is the upper half-plane,  we have
the Weierstrass  function
\begin{equation*}
 \wp(z;\tau):\IC\ssm (\IZ+\tau \IZ)\rightarrow \IC,
\end{equation*}
which is holomorphic on its domain and $\IZ+\tau\IZ$-periodic; a
reference is Chapter 1 \cite{Lang:elliptic}.
If $\tau,\tau'\in\IH$ generate the same lattice, i.e.
$\IZ+\tau\IZ=\IZ+\tau'\IZ$, then  $\wp(\cdot;\tau)=\wp(\cdot;\tau')$. 
We recall the
 classical equalities
\begin{equation}
\label{eq:transfwp}
  \wp(\alpha z;\alpha \tau) = \alpha^{-2} \wp(z;\tau),
\quad\text{and}\quad
  \wp'(\alpha z;\alpha \tau) = \alpha^{-3} \wp(z;\tau)
\end{equation}
which hold if the corresponding expressions are well-defined.
The Weierstrass function and its derivative satisfy the differential
equation
\begin{equation*}
  \wp'(z;\tau)^2 = 4(\wp(z;\tau)-e_1(\tau))(\wp(z;\tau)-e_2(\tau))(\wp(z;\tau)-e_3(\tau))
\end{equation*}
where
\begin{equation*}
  e_1(\tau)= \wp(\tau/2;\tau),\quad 
  e_2(\tau)= \wp(1/2;\tau),\quad\text{and}\quad
  e_3(\tau)= \wp((1+\tau)/2;\tau)
\end{equation*}
are pairwise distinct complex numbers for fixed $\tau$. 
Thus
\begin{equation*}
  \U(\tau) = \frac{e_3(\tau)-e_1(\tau)}{e_2(\tau)-e_1(\tau)},
\end{equation*}
 is a well-defined holomorphic map $\IH\rightarrow
\IC\ssm\{0,1\}=\B(\IC)$.

We  now exhibit a local inverse for $\U$.
Gauss's hypergeometric function 
\begin{equation*}
F(\lambda)= {}_2 F_1\left(\frac 12,\frac 12, 1,\lambda\right) 
= \sum_{n=0}^{\infty} \frac{(2n)!^2}{ 2^{4n} n!^4} \lambda^n
\end{equation*}
is holomorphic on the open unit disc in $\IC$. 
We set
\begin{equation*}
  \omega_1(\lambda) =F(\lambda)\pi\quad\text{and}\quad
\omega_2(\lambda) =  F(1-\lambda)\pi i
\end{equation*}
and obtain two  functions, both holomorphic on
\begin{equation*}
 \Sigma = \{\lambda\in\IC;\,\, |\lambda|<1\text{ and }|1-\lambda|<1\}.
\end{equation*}
By Theorem 6.1, page 184 \cite{Husemoeller} the complex numbers
$\omega_1(\lambda),\omega_2(\lambda)$ are periods of an elliptic curve
for $\lambda\in\Sigma$. So they are $\IR$-linearly independent
and in particular, $\omega_1(\lambda)\not=0$.
We obtain a holomorphic map $T:\Sigma\rightarrow\IC$ defined by
\begin{equation}
\label{def:tau}
 \T(\lambda)= \frac{ \omega_2(\lambda)}{\omega_1(\lambda)}
 = \frac{F(1-\lambda)}{F(\lambda) }i.
\end{equation}
Since $\T(1/2) = i$ lies in $\IH$ we have
 $\T(\Sigma)\subset \IH$. 

\begin{lemma}
\label{lem:taulocalinv}
\begin{enumerate}
\item [(i)]
 For any $\lambda\in\Sigma$ we have
$\U(\T(\lambda)) = \lambda$.   
\item [(ii)] Let $\tau\in\IH$, we have
  \begin{equation}
\label{eq:modularbusiness1}
e_k(\tau+2)=e_k(\tau)\quad\text{and}\quad
e_k\left(\frac{\tau}{-2\tau + 1}\right)
= (-2\tau+1)^2 e_k(\tau)
\quad\text{for}\quad 1\le k\le 3
  \end{equation}
as well as
  \begin{equation}
\label{eq:modularbusiness2}
    \U(\tau + 2) = \U(\tau)\quad\text{and}\quad
    \U\left(\frac{\tau}{- 2\tau+1}\right) = \U(\tau). 
  \end{equation}
\end{enumerate}
\end{lemma}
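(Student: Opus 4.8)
The plan is to identify, for each $\lambda\in\Sigma$, the Legendre elliptic curve attached to $\lambda$ with a complex torus $\IC/(\IZ\omega_1(\lambda)+\IZ\omega_2(\lambda))$, and then read off part (i) from the transformation rule \eqref{eq:transfwp}. Concretely, set $\tau=\T(\lambda)$ and $\omega=\omega_1(\lambda)$, so that the lattice is $\U_\tau:=\IZ\omega+\IZ\omega\tau=\omega(\IZ+\tau\IZ)$. By the cited Theorem 6.1 of \cite{Husemoeller}, $\omega_1(\lambda),\omega_2(\lambda)$ are periods of the elliptic curve $y^2=x(x-1)(x-\lambda)$; this means precisely that, writing $\wp$ for the Weierstrass function of the lattice $\U_\tau$, the half-period values $\wp$ at $\omega/2$, $\omega\tau/2$, $\omega(1+\tau)/2$ are an affine rescaling of $0,1,\lambda$. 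Using \eqref{eq:transfwp} with $\alpha=\omega^{-1}$ to pass from the lattice $\U_\tau$ to the lattice $\IZ+\tau\IZ$, the three half-period values become $\omega^2 e_2(\tau),\omega^2 e_1(\tau),\omega^2 e_3(\tau)$ in the notation of the excerpt (one must pin down, from the normalization in \cite{Husemoeller}, which half-period corresponds to which of the roots $0,1,\lambda$ — this is the one bookkeeping point to get right). Then the cross-ratio defining $\U$ is invariant under the common rescaling $\omega^2$, and one computes
\[
\U(\tau)=\frac{e_3(\tau)-e_1(\tau)}{e_2(\tau)-e_1(\tau)}=\frac{\lambda-0}{1-0}=\lambda,
\]
which is (i). I would be slightly careful here because $\T$ is defined only on $\Sigma$ whereas $\U$ is defined on all of $\IH$; the identity is asserted only for $\lambda\in\Sigma$, so no analytic continuation is needed, just the pointwise computation above.

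For part (ii), the two matrices in play are $\gamma_1=\begin{pmatrix}1&2\\0&1\end{pmatrix}$ and $\gamma_2=\begin{pmatrix}1&0\\-2&1\end{pmatrix}$, acting by $\gamma_1\tau=\tau+2$ and $\gamma_2\tau=\tau/(-2\tau+1)$. For $\gamma_1$: the lattice $\IZ+(\tau+2)\IZ$ equals $\IZ+\tau\IZ$, so $\wp(\cdot;\tau+2)=\wp(\cdot;\tau)$, and the three half-periods $\tau/2,1/2,(1+\tau)/2$ get shifted by lattice elements ($1$, $0$, $1$ respectively), so $e_k(\tau+2)=e_k(\tau)$ for each $k$; this gives the first halves of \eqref{eq:modularbusiness1} and \eqref{eq:modularbusiness2} at once. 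For $\gamma_2$: put $\alpha=-2\tau+1$ and $\tau'=\tau/\alpha=\gamma_2\tau$. Then $\alpha(\IZ+\tau'\IZ)=\alpha\IZ+\tau\IZ=(-2\tau+1)\IZ+\tau\IZ=\IZ+\tau\IZ$ (using $-2\tau+1\equiv 1\bmod{\IZ+\tau\IZ}$ — more precisely $(-2\tau+1)=1-2\tau$ and $2\tau\in 2\IZ+\tau(2\IZ)\subset\IZ+\tau\IZ$... one checks $\alpha\IZ+\tau\IZ\subseteq \IZ+\tau\IZ$ and, since $\det\gamma_2=1$, equality of covolumes forces equality of lattices). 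Hence $\alpha^{-2}\wp(\alpha z;\tau')=\wp(z;\tau)$ by \eqref{eq:transfwp}, i.e. $\wp(z;\tau')=\alpha^2\wp(\alpha z;\tau)$; evaluating at the half-periods $z=\tau'/2,1/2,(1+\tau')/2$ and checking that $\alpha z$ lands on the appropriate half-period of $\IZ+\tau\IZ$ modulo the lattice (a short congruence computation, permuting among $\{\tau/2,1/2,(1+\tau)/2\}$ at most by lattice translation) yields $e_k(\tau/(-2\tau+1))=(-2\tau+1)^2 e_k(\tau)$, possibly after matching indices. Finally \eqref{eq:modularbusiness2} for $\gamma_2$ follows because $\U$ is a cross-ratio of the $e_k(\tau)$ and the common factor $(-2\tau+1)^2$ cancels; one only needs that the permutation of the $e_k$'s induced by $\gamma_2$ fixes the cross-ratio, which it does because $\gamma_1,\gamma_2$ generate a subgroup of $\mathrm{SL}_2(\IZ)$ contained in $\Gamma(2)$ and $\Gamma(2)$ acts trivially on the ordered triple $(e_1,e_2,e_3)$ up to the scaling $(-2\tau+1)^2$.

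The main obstacle I anticipate is purely bookkeeping rather than conceptual: getting the correspondence between the three half-periods $\tau/2,1/2,(1+\tau)/2$ and the three roots $e_1,e_2,e_3$ — and the corresponding labels $0,1,\lambda$ from \cite{Husemoeller} — consistent throughout, since a wrong pairing would change the cross-ratio to $1-\lambda$, $1/\lambda$, or another value in the $S_3$-orbit. I would fix conventions once, right after \eqref{def:tau}, by pinning down which root is which using the explicit value $\tau=i$ (where $\T(1/2)=i$ and $\lambda=1/2$, so the cross-ratio must come out to $1/2$, which selects the correct labelling), and then carry that choice through both parts. The verification that $\gamma_1,\gamma_2$ land inside $\Gamma(2)$ — equivalently that the induced permutations of $(e_1,e_2,e_3)$ are trivial — is a finite check on the two generators and costs nothing once the half-period dictionary is fixed.
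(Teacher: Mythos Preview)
Your treatment of part (ii) is essentially the paper's: direct verification that the half-periods are permuted only by lattice translations under $\tau\mapsto\tau+2$ and $\tau\mapsto\tau/(-2\tau+1)$, together with the homogeneity relation \eqref{eq:transfwp}. The paper carries out exactly this computation.

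For part (i) your route differs from the paper's. The paper does \emph{not} try to pin down the half-period/root dictionary. Instead it observes that the complex torus $\IC/(\IZ+\tau\IZ)$ uniformizes both $(\EL)_\lambda$ and $(\EL)_{\U(\tau)}$, hence their $j$-invariants coincide, which forces $\U(\tau)$ into the $S_3$-orbit $\{\lambda,1/\lambda,1/(1-\lambda),\lambda/(\lambda-1),(\lambda-1)/\lambda,1-\lambda\}$. It then invokes analyticity to reduce to small real $\lambda\in(0,1/2)$, places $\tau$ on the imaginary axis inside the standard fundamental domain, and uses the $q$-product for $\U$ to show $0<\U(\tau)<1/2$, which singles out $\lambda$ from the six candidates. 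Your approach of reading off the cross-ratio directly is more conceptual and would be shorter if the reference fixes the labelling; the paper's approach trades that bookkeeping for an explicit estimate.

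There is, however, a concrete flaw in your proposed disambiguation: checking at $\lambda=1/2$, $\tau=i$ cannot separate $\U\circ\T=\lambda$ from $\U\circ\T=1-\lambda$, since $1/2=1-1/2$. (At $\tau=i$ one has $e_1(i)=-e_2(i)$ and $e_3(i)=0$, so both cross-ratios $(e_3-e_1)/(e_2-e_1)$ and $(e_3-e_2)/(e_1-e_2)$ equal $1/2$.) You would need either a second test value where $\lambda\neq 1-\lambda$, a derivative computation at $\lambda=1/2$, or to extract the explicit half-period/root matching from the statement of Theorem~6.1 in \cite{Husemoeller} rather than merely citing that $\omega_1,\omega_2$ are periods. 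This is easily repaired, but as written the step fails.
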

\begin{proof}
Let $\lambda \in\Sigma$ and $\tau=\T(\lambda)$. 
  It follows from Theorem 6.1, page 184 \cite{Husemoeller}  
that 
$\IC / (\IZ + \tau \IZ)$ and 
$(\EL)_\lambda(\IC)$ are isomorphic  complex tori. 

We have a holomorphic map
given by
\begin{equation*}
z \mapsto
\left[\frac{\wp(z;\tau)-e_1(\tau)}
{e_2(\tau)-e_1(\tau)}:
\frac{\wp'(z;\tau)}{2 (e_2(\tau)-e_1(\tau))^{3/2} }:1\right]
\end{equation*}
if $z\in \IC\ssm( \IZ+\tau \IZ)$ 
and $z\mapsto [0:1:0]$ for $z\in\IZ+\tau\IZ$; the choice of 
root is irrelevant. A straightforward calculation shows
that the image of this holomorphic map lies in
$(\EL)_{\U(\tau)}$. 
 It is classical, that this map induces an isomorphism 
of complex tori between $\IC/(\IZ+\tau\IZ)$ and 
$(\EL)_{\U(\tau)}(\IC)$. 
 Hence the 
$j$-invariants of $(\EL)_\lambda$ and $(\EL)_{\U(\tau)}$
 are equal. In other words,
\begin{equation}
\label{eq:jinvlambda}
  2^8\frac{(\lambda^2-\lambda+1)^3}{\lambda^2(\lambda-1)^2} = 
  2^8\frac{(\U(\tau)^2-\U(\tau)+1)^3}{\U(\tau)^2(\U(\tau)-1)^2},
\end{equation}
by Remark 1.4, page 87 \cite{Husemoeller}.
This equality implies
\begin{equation}
\label{eq:mulambda}
 \U(\tau)\in\left\{\lambda,\frac 1 \lambda,\frac{1}{1-\lambda},
\frac{\lambda}{\lambda-1},
\frac{\lambda-1}{\lambda},1-\lambda \right\}.
\end{equation}

Since $\U\circ T$ is analytic, it suffices to show $\U(\tau)=\lambda$ 
for $\tau = T(\lambda)$ and all sufficiently small $\lambda\in(0,1/2)$
  in order to deduce part (i).
The $j$-invariant $j$
of $(\EL)_\lambda$ is in $(1728,\infty)$ by (\ref{eq:jinvlambda}). 
So there is $x \ge 1$ such that 
 $\tau$ is equivalent to $i x$
under the usual action of $\SL{2}{\IZ}$ on $\IH$. 
It follows from (\ref{def:tau}) and $\lambda\in\IR$ that
 $\tau$ has real part $0$.
Moreover,
  $\lambda < 1/2 < 1-\lambda$ and so $\tau$ has imaginary
 part at least $1$ since $F$ increases on $(0,1)$. In
 particular, $\tau$ is already in the usual fundamental domain of
 the action of $\SL{2}{\IZ}$ on $\IH$, hence
 $\tau= i x$. 
Remark 2, page 251 \cite{Lang:elliptic} gives
 \begin{equation*}
 \U(\tau) = \U(ix) 
  = 16 e^{-\pi x}\prod_{n=1}^{\infty}
 \left(
 \frac{1+e^{-2  \pi x n }}{1+e^{-2  \pi x (n-1/2)}}\right)^8,
 \end{equation*}
 so $\U(\tau) > 0$. Moreover, $\U(\tau) < 16 e^{-\pi x}$
 because  each factor in the
infinite
 product above is in $(0,1)$. 
As $\lambda$ approaches $0$, the $j$-invariant
$j$ goes to $+\infty$. But $j$ is the value of the
 modular $j$-function at $ix$; properties of this function
 imply that $x\rightarrow +\infty$ as $j\rightarrow +\infty$.
Therefore $\U(\tau) \in (0,1/2)$ 
 for $\lambda$ sufficiently small. 
By
 (\ref{eq:mulambda}) the only possibility for $\U(\tau)$ is $\lambda$.
This concludes
 the proof of (i).

For the proof of part (ii)  we remark that
(\ref{eq:modularbusiness1}) implies (\ref{eq:modularbusiness2}) by
definition
of $\Lambda$. By periodicity of the Weierstrass function we get
\begin{alignat*}5
  e_1(\tau+ 2) &= \wp(\tau /2 + 1;\tau + 2) &&=\,& &e_1(\tau), \\
 e_2(\tau +2) &= \wp( 1/2;\tau + 2) &&=\,& &e_2(\tau),\quad\text{and}  \\
e_3(\tau + 2) &= \wp( 1/2+ \tau/ 2 + 1;\tau + 2) &&=\,& &e_3(\tau).
\end{alignat*}
So the first equality in (\ref{eq:modularbusiness1}) holds for all $k$.

 Using (\ref{eq:transfwp}) we derive in
a similar way as above
that 
 the second equality in
(\ref{eq:modularbusiness1}) holds for all $k$. 
\end{proof}


We define the local period map
$\Omega:\Sigma\rightarrow \M{g,2g}{\IC}$  as
\begin{equation}
\label{eq:defineOmegat}
\Omega(\lambda)=
\left[
\begin{array}{ccccc}
\omega_1(\lambda) & \omega_2(\lambda) &  &  &   
\\
  &  &    \ddots & & \\
 &  &   & \omega_1(\lambda) & \omega_2(\lambda) 
\end{array}
\right].
\end{equation}

\subsection{Exponential Map}
\label{sec:exponential}
We take some time to introduce the (local) exponential map
of the Legendre family which will prove useful later on.

By Remark 2, page 251 \cite{Lang:elliptic} we may write
\begin{equation}
\label{def:k}
  e_2(\tau) - e_1(\tau) = r(\tau)^2
\quad\text{where}\quad
r(\tau) = \pi \prod_{n\ge 1} (1-e^{2\pi i n \tau})^2(1+e^{2\pi i
  (n-1/2)\tau})^4
\end{equation}
for any $\tau\in \IH$. The map $r:\IH\rightarrow \IC$ is holomorphic
and non-vanishing

We obtain a holomorphic map
  $\exp:\IC \times \Sigma \rightarrow \IP^2(\IC) \times \Sigma$, the
exponential map, 
given by
\begin{equation*}
(z,\lambda) \mapsto
\left(\left[\frac{\wp(z/\omega_1(\lambda);\T(\lambda))-e_1(\T(\lambda))}
{e_2(\T(\lambda))-e_1(\T(\lambda))}:
\frac{\wp'(z/\omega_1(\lambda);\T(\lambda))}
{2 r(\T(\lambda))^3 }:1\right],\lambda\right)
\end{equation*}
if $z\not\in \IZ+\T(\lambda)\IZ$ 
and $\exp(z,\lambda) = ([0:1:0],\lambda)$ else wise. 

The next lemma summarizes  some basic facts about the exponential map.

\begin{lemma}
\label{lem:exp}
\begin{enumerate}
\item[(i)]
  The diagram  
\begin{equation}
\label{eq:expdiag}
 \bfig\Vtriangle[\IC\times \Sigma` \EL(\IC)`\B(\IC);\exp ` `]\efig
\end{equation}
commutes;  the vertical arrows are projections and the bottom arrow is
the inclusion.
\item[(ii)]
For fixed $\lambda\in \B(\IC)$ the map $\IC \rightarrow
(\EL)_\lambda(\IC)$ given by
$z\mapsto \exp(z,\lambda)$ is a surjective group homomorphism
with kernel $\IZ+\T(\lambda)\IZ$. 
\end{enumerate}
\end{lemma}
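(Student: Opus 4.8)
The statement to prove is Lemma~\ref{lem:exp}, which asserts that the exponential map of the Legendre family fits into a commuting triangle over $\B(\IC)$, and that for fixed $\lambda$ it restricts to the standard uniformization $\IC \to (\EL)_\lambda(\IC)$ with kernel $\IZ + \T(\lambda)\IZ$.

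\begin{proof}[Proof proposal]
The plan is to reduce everything to the classical Weierstrass uniformization together with the transformation formula~(\ref{eq:transfwp}), exactly as the argument already sketched in the proof of Lemma~\ref{lem:taulocalinv}. Fix $\lambda \in \Sigma$ first and write $\tau = \T(\lambda) \in \IH$. In the proof of Lemma~\ref{lem:taulocalinv} it was shown that the map
\begin{equation*}
w \mapsto \left[\frac{\wp(w;\tau) - e_1(\tau)}{e_2(\tau) - e_1(\tau)} : \frac{\wp'(w;\tau)}{2(e_2(\tau)-e_1(\tau))^{3/2}} : 1\right]
\end{equation*}
(sending $w \in \IZ + \tau\IZ$ to $[0:1:0]$) induces an isomorphism of complex tori $\IC/(\IZ + \tau\IZ) \xrightarrow{\sim} (\EL)_{\U(\tau)}(\IC)$, and by Lemma~\ref{lem:taulocalinv}(i) we have $\U(\tau) = \lambda$. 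Now $\exp(z,\lambda)$ is obtained from this by the substitution $w = z/\omega_1(\lambda)$ and by replacing the ambiguous square root $(e_2(\tau)-e_1(\tau))^{1/2}$ with the specific holomorphic branch $r(\tau)$ from~(\ref{def:k}), which satisfies $r(\tau)^2 = e_2(\tau) - e_1(\tau)$; since the formula is quadratic in that root in the first coordinate and cubic in the second, the sign of the root is irrelevant, so $z \mapsto \exp(z,\lambda)$ is the composition of $z \mapsto z/\omega_1(\lambda)$ with the classical uniformization. As $z \mapsto z/\omega_1(\lambda)$ is a linear isomorphism $\IC \to \IC$ carrying the lattice $\IZ + \T(\lambda)\IZ$ onto $\omega_1(\lambda)^{-1}(\IZ + \T(\lambda)\IZ) = \IZ\omega_1(\lambda)^{-1} + \IZ\omega_2(\lambda)\omega_1(\lambda)^{-1}$; wait — more transparently, one checks directly that $\IZ + \T(\lambda)\IZ = \omega_1(\lambda)^{-1}(\IZ\,\omega_1(\lambda) + \IZ\,\omega_2(\lambda))$, so $z \mapsto z/\omega_1(\lambda)$ sends $\IZ\omega_1(\lambda) + \IZ\omega_2(\lambda)$ onto $\IZ + \tau\IZ$. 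Hence $z \mapsto \exp(z,\lambda)$ is a surjective group homomorphism $\IC \to (\EL)_\lambda(\IC)$ with kernel exactly $\IZ\omega_1(\lambda) + \IZ\omega_2(\lambda)$; but by the $\IR$-linear independence of $\omega_1,\omega_2$ recorded after~(\ref{def:tau}), this lattice equals $\{z : z/\omega_1(\lambda) \in \IZ + \T(\lambda)\IZ\}$. I should present this cleanly: the kernel of $z \mapsto \exp(z,\lambda)$ is the preimage under $z \mapsto z/\omega_1(\lambda)$ of the period lattice $\IZ + \T(\lambda)\IZ$ of $\wp(\cdot;\T(\lambda))$, and a direct computation shows this preimage is $\IZ + \T(\lambda)\IZ$ itself — no, that is false in general; the honest statement is that the kernel is $\omega_1(\lambda)(\IZ + \T(\lambda)\IZ) = \IZ\omega_1(\lambda) + \IZ\omega_2(\lambda)$, and one then observes $\IZ\omega_1(\lambda) + \IZ\omega_2(\lambda) = \IZ + \T(\lambda)\IZ$ is \emph{not} claimed; rather the lemma asserts the kernel is $\IZ + \T(\lambda)\IZ$, so I must take the definition of $\exp$ at face value: $\exp(z,\lambda)$ lands on $[0:1:0]$ precisely when $z \in \IZ + \T(\lambda)\IZ$ by construction, and for $z \notin \IZ + \T(\lambda)\IZ$ one needs $z/\omega_1(\lambda) \notin \IZ + \T(\lambda)\IZ$; this forces checking $\omega_1(\lambda)(\IZ + \T(\lambda)\IZ) \subseteq \IZ + \T(\lambda)\IZ$ which is exactly the nontrivial content, handled via the quasi-period relations / the explicit form of $\omega_1$. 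Let me instead organize around: (a) for $z \notin \IZ + \T(\lambda)\IZ$ the point $\exp(z,\lambda)$ has third coordinate $1$ and its first two coordinates satisfy $zy^2 = x(x-z)(x-z\lambda)$, using~(\ref{eq:jinvlambda})-style computation with $e_k$; (b) the resulting map $\IC/(\IZ+\T(\lambda)\IZ) \to (\EL)_\lambda(\IC)$ is the classical Weierstrass parametrization post-composed with an automorphism, hence a group isomorphism; (c) surjectivity and kernel follow.

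For part~(i), once part~(ii) is established pointwise in $\lambda$, the commutativity of~(\ref{eq:expdiag}) is immediate: by construction $\exp(z,\lambda)$ has the form $(\text{point of }(\EL)_\lambda(\IC),\lambda)$, so $\piL(\exp(z,\lambda)) = \lambda$, which is precisely the composition of the left vertical projection $\IC\times\Sigma \to \Sigma$ with the inclusion $\Sigma \hookrightarrow \B(\IC)$. One should also note holomorphy of $\exp$ away from the zero section — which is clear from holomorphy of $\wp,\wp',\omega_1,e_k,r$ and non-vanishing of $\omega_1$ and $r$ — and check continuity (hence holomorphy) across $\IZ + \T(\lambda)\IZ$ by the standard fact that $[\,\wp : \wp' : 1\,]$ extends to $[0:1:0]$ at lattice points; this is the classical statement that the Weierstrass map $\IC/\Lambda \to \IP^2$ is a morphism.

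The main obstacle is the bookkeeping in step~(a)/(b): verifying that the explicitly normalized coordinates $\bigl(\frac{\wp(z/\omega_1;\tau)-e_1}{e_2-e_1}, \frac{\wp'(z/\omega_1;\tau)}{2r^3}\bigr)$ genuinely satisfy the Legendre equation $zy^2 = x(x-z)(x-z\lambda)$ with $z=1$ there, i.e. $Y^2 = X(X-1)(X-\lambda)$. This is a direct manipulation of the Weierstrass differential equation $\wp'^2 = 4\prod_k(\wp - e_k)$: dividing through by $(e_2-e_1)^3$, the right side becomes $4 \cdot \frac{\wp-e_1}{e_2-e_1}\cdot\frac{\wp-e_2}{e_2-e_1}\cdot\frac{\wp-e_3}{e_2-e_1} = 4 X (X-1)(X - \U(\tau))$ using the definitions of $e_1,e_2,e_3$ and of $\U(\tau) = \frac{e_3-e_1}{e_2-e_1}$, and since $\U(\tau)=\lambda$ and $4r^6 = 4(e_2-e_1)^3$, the left side is $Y^2 \cdot 4$; the factor of $4$ cancels, giving exactly the Legendre relation. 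I expect no surprises here, only care with the normalizations and with the point at infinity. Everything else is a citation of the classical theory of the Weierstrass $\wp$-function as in Chapter~1 of~\cite{Lang:elliptic}.
\end{proof}
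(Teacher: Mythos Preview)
Your approach is the same as the paper's: verify by direct computation that $\exp(z,\lambda)$ lands in $(\EL)_{\U(\T(\lambda))}$, invoke Lemma~\ref{lem:taulocalinv}(i) to get $\U(\T(\lambda))=\lambda$ (which gives part~(i)), and appeal to the classical Weierstrass uniformization for part~(ii). The paper's own proof is four lines long and does exactly this, declaring part~(ii) ``classical'' without further detail; your verification of the Legendre equation from the Weierstrass differential equation is the honest content behind that word.

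The back-and-forth you go through about the kernel is not a flaw in your reasoning but a genuine slip in the paper's statement. With $\exp$ defined via $\wp(z/\omega_1(\lambda);\T(\lambda))$, the kernel of $z\mapsto\exp(z,\lambda)$ is the preimage of $\IZ+\T(\lambda)\IZ$ under division by $\omega_1(\lambda)$, namely $\omega_1(\lambda)\IZ+\omega_2(\lambda)\IZ$, not $\IZ+\T(\lambda)\IZ$ as written. (The case split in the definition of $\exp$ should likewise be on $z\notin\omega_1(\lambda)\IZ+\omega_2(\lambda)\IZ$.) This reading is forced by how the lemma is actually used: the construction of $\Xi$ in Section~\ref{sec:torsionpts} needs the kernel to be exactly the $\IZ$-span of the columns of $\Omega(\lambda)$, and Lemma~\ref{lem:rho}(i) requires $\exp(\Omega(\lambda)\trans{\xi},\lambda)=\widetilde\rho_\xi(\T(\lambda))$, both of which pick out the lattice $\omega_1\IZ+\omega_2\IZ$. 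So your instinct midway through was correct; do not try to force the kernel as literally stated. Clean the write-up to record the corrected kernel once and give the two-line argument, rather than preserving the self-corrections.
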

\begin{proof}
A straightforward calculation  shows that
$\exp(z,\lambda) \in (\EL)_{\U(\T(\lambda))}(\IC)$
for $z\in\IC$ and $\lambda\in\Sigma$. 
We  already know  $\U(\T(\lambda)) = \lambda$
from Lemma \ref{lem:taulocalinv}(i), 
so the diagram in (\ref{eq:expdiag}) commutes. 
Part (ii) is classical.
\end{proof}

By abuse of notation we write
\begin{equation*}
  \exp : \IC^g\times \Sigma \rightarrow (\AL)_\Sigma 
\end{equation*}
for the fibered product of the exponential map, here
$(\AL)_\Sigma = \piL^{-1}(\Sigma) \subset \AL(\IC)$.  

Let $\xi = (\xi_1,\xi_2)\in (\IR/\IZ)^2$. We  define a holomorphic
map $\widetilde \rho_\xi : \IH \rightarrow \EL(\IC)$
which is needed in Section \ref{sec:torsionpts}.
If $\xi=0$ we set $\widetilde\rho_\xi(\tau) = ([0:1:0],\U(\tau))$. 
If $\xi\not=0$, then we set
\begin{equation*}
  \widetilde \rho_\xi(\tau) = 
\left(\left[\frac{\wp(\xi_1+\tau \xi_2;\tau)-e_1(\tau)}
{e_2(\tau)-e_1(\tau)}:
\frac{\wp'(\xi_1+\tau \xi_2;\tau)}
{2 r(\tau)^3 }:1\right],\U(\tau)\right),
\end{equation*}
this map is well-defined by periodicity of the Weierstrass function. 
We remark, that $\widetilde \rho_\xi(\tau) \in
(\EL)_{\U(\tau)}(\IC)$. 

If $\xi =(\xi_1,\ldots,\xi_{2g}) \in (\IR/\IZ)^{2g}$, then the $g$-fold
 product of $\widetilde \rho_{(\xi_1,\xi_2)},\ldots,\widetilde
\rho_{(\xi_{2g-1},\xi_{2g})}$ is
a holomorphic map $\IH \rightarrow \AL(\IC)$, which by abuse
of notation we also call $\widetilde \rho_\xi$. 
So $\widetilde \rho_\xi(\tau)\in (\AL)_{\U(\tau)}(\IC)$. 

\begin{lemma} 
\label{lem:rho}
Let $\xi=(\xi_1,\ldots,\xi_{2g})\in(\IR/\IZ)^{2g}$.
  \begin{enumerate}
  \item [(i)] If $\lambda\in\Sigma$, then
$\exp(  \Omega(\lambda) \trans{\xi}, \lambda)$ is well-defined and
    \begin{equation*}
      \widetilde\rho_\xi(\T(\lambda) ) = 
\exp(  \Omega(\lambda) \trans{\xi}, \lambda),
    \end{equation*}
where $\trans{}$ means transpose.
   \item[(ii)] [Local monodromy around $0$.] If $\tau\in \IH$, then 
     \begin{equation*}
       \widetilde{\rho}_{\xi}(\tau + 2) = {\widetilde\rho}_{\xi +
         2(\xi_2,0,\xi_4,0,\ldots,\xi_{2g},0)}(\tau).
     \end{equation*}
   \item[(iii)] [Local monodromy around $1$.] If $\tau\in \IH$, then 
     \begin{equation*}
       \widetilde{\rho}_{\xi}\left(\frac{\tau}{-4\tau+1}\right) = {\widetilde\rho}_{\xi - 
         4(0,\xi_1,0,\xi_3,\ldots,0,\xi_{2g-1})}(\tau).
     \end{equation*}
  \end{enumerate}
\end{lemma}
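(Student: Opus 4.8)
The plan is to reduce everything to the transformation formulas for the Weierstrass $\wp$-function and its derivative recorded in \eqref{eq:transfwp} together with the modular identities \eqref{eq:modularbusiness1} for the $e_k$ and $r$. Part (i) is essentially a bookkeeping statement: by definition of $\Omega(\lambda)$ in \eqref{eq:defineOmegat}, the vector $\Omega(\lambda)\trans{\xi}$ has $k$-th entry $\omega_1(\lambda)\xi_{2k-1} + \omega_2(\lambda)\xi_{2k}$, so dividing by $\omega_1(\lambda)$ and using $\T(\lambda) = \omega_2(\lambda)/\omega_1(\lambda)$ gives $\xi_{2k-1} + \T(\lambda)\xi_{2k}$ as the argument of $\wp$ in the definition of $\exp$. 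Comparing with the definition of $\widetilde\rho_\xi$ and recalling $\U(\T(\lambda)) = \lambda$ from Lemma \ref{lem:taulocalinv}(i), the two sides agree coordinate by coordinate; one just checks that $\Omega(\lambda)\trans{\xi}$ lands outside $\IZ + \T(\lambda)\IZ$ precisely when the corresponding block of $\xi$ is nonzero in $(\IR/\IZ)^2$, which is where well-definedness is needed. The $\IR$-linear independence of $\omega_1,\omega_2$ from the discussion before \eqref{def:tau} handles this.

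For (ii) and (iii) it suffices, by taking the $g$-fold product, to treat a single block $\xi = (\xi_1,\xi_2) \in (\IR/\IZ)^2$ and verify the identity for $\widetilde\rho_{(\xi_1,\xi_2)}:\IH\to\EL(\IC)$. For (ii) I would substitute $\tau+2$ into the definition of $\widetilde\rho_\xi$: the argument of $\wp$ becomes $\xi_1 + (\tau+2)\xi_2 = (\xi_1 + 2\xi_2) + \tau\xi_2$, and by $\IZ+\tau\IZ$-periodicity of $\wp(\cdot;\tau+2)$ combined with $e_k(\tau+2) = e_k(\tau)$, $r(\tau+2) = r(\tau)$ (the first from \eqref{eq:modularbusiness1}, the second visibly from the product \eqref{def:k} since $e^{2\pi i n(\tau+2)} = e^{2\pi i n\tau}$), and $\U(\tau+2) = \U(\tau)$ from \eqref{eq:modularbusiness2}, one recognizes the right-hand side as $\widetilde\rho_{(\xi_1 + 2\xi_2,\ \xi_2)}(\tau)$. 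Running this through all $g$ blocks yields the stated shift $\xi \mapsto \xi + 2(\xi_2,0,\xi_4,0,\ldots,\xi_{2g},0)$. The case $\xi = 0$ is immediate from $\U(\tau+2)=\U(\tau)$.

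For (iii) the map is $\tau \mapsto \tau/(-4\tau+1)$. Note $\tau/(-4\tau+1) = \tfrac12 \cdot \tfrac{\tau}{-2\tau+1}$, so one is really composing the substitution $\tau \mapsto \tau/(-2\tau+1)$ from Lemma \ref{lem:taulocalinv}(ii) with the scaling $\tau \mapsto \tau/2$; alternatively one works directly with $\alpha = -4\tau+1$ and the homogeneity \eqref{eq:transfwp}. Writing $\tau' = \tau/(-4\tau+1)$, I would use $\wp(z;\tau') = \wp\!\left(\tfrac{z}{-4\tau+1}\cdot(-4\tau+1);\ \tfrac{-4\tau+1}{-4\tau+1}\tau\right)$-type manipulations — more precisely apply \eqref{eq:transfwp} with the scalar relating the lattice $\IZ + \tau'\IZ$ to $\IZ+\tau\IZ$ — to rewrite $\wp(\xi_1 + \tau'\xi_2;\tau')$ and $\wp'(\xi_1+\tau'\xi_2;\tau')$ in terms of $\wp$, $\wp'$ at a point of the form $\eta_1 + \tau\eta_2$. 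The $e_k$ transform by the factor $(-2\tau+1)^2$ under $\tau \mapsto \tau/(-2\tau+1)$ by \eqref{eq:modularbusiness1}, and the analogous factor for $r$ under the present substitution is extracted from \eqref{def:k}; crucially the normalizing denominators $e_2-e_1$ and $2r^3$ in the definition of $\widetilde\rho$ are designed to cancel exactly these automorphy factors, so that after simplification the new argument is $(\xi_1 - 4\cdot 0) + \tau(\xi_2 - 4\xi_1)$ up to the interchange of roles forced by the matrix $\left(\begin{smallmatrix}1&0\\-4&1\end{smallmatrix}\right)$, giving the shift $\xi \mapsto \xi - 4(0,\xi_1,0,\xi_3,\ldots,0,\xi_{2g-1})$; and $\U(\tau') = \U(\tau)$ by \eqref{eq:modularbusiness2} applied after the scaling. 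The main obstacle is purely the computation in (iii): keeping track of the exact automorphy factor for $r$ under $\tau\mapsto\tau/(-4\tau+1)$ and confirming it is the right power to cancel against the $(-2\tau+1)$ or $(-4\tau+1)$ powers coming from $e_2-e_1$ and from the $\alpha^{-2},\alpha^{-3}$ in \eqref{eq:transfwp}, so that the projective point is genuinely unchanged except for the lattice-coordinate substitution. Once the single-block identity is pinned down, assembling the $g$-fold product is automatic.
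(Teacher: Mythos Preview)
Your treatment of parts (i) and (ii) matches the paper's: both are direct unwindings of the definitions together with $\U(\T(\lambda))=\lambda$, the lattice equality $\IZ+(\tau+2)\IZ=\IZ+\tau\IZ$, the identities $e_k(\tau+2)=e_k(\tau)$ and $\U(\tau+2)=\U(\tau)$, and $r(\tau+2)=r(\tau)$ read off from the product in \eqref{def:k}.

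In part (iii) there is a genuine gap. First, the identity $\tau/(-4\tau+1)=\tfrac{1}{2}\cdot\tau/(-2\tau+1)$ is false (the right side equals $\tau/(-4\tau+2)$), and even if it held, the ``scaling'' $\tau\mapsto\tau/2$ is not an $\SL{2}{\IZ}$-move, so the modular identities \eqref{eq:modularbusiness1}, \eqref{eq:modularbusiness2} would not apply to it. More importantly, you correctly isolate the real obstacle --- the automorphy factor for $r$ --- but do not resolve it: from $r^2=e_2-e_1$ and \eqref{eq:modularbusiness1} you only get $r(\tau/(-2\tau+1))=\chi(\tau)(-2\tau+1)r(\tau)$ with an undetermined sign $\chi(\tau)\in\{\pm 1\}$, and the infinite product for $r$ does not transform transparently under $\tau\mapsto\tau/(-4\tau+1)$.

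The paper's device is to realize $\tau\mapsto\tau/(-4\tau+1)$ as the \emph{square} of $\tau\mapsto\tau/(-2\tau+1)$: setting $\tau'=\tau/(-2\tau+1)$ and $\tau''=\tau'/(-2\tau'+1)$ one checks $\tau''=\tau/(-4\tau+1)$ and $(-2\tau'+1)(-2\tau+1)=-4\tau+1$. Since $\chi$ is a continuous $\{\pm 1\}$-valued function on the connected set $\IH$, it is a constant, and iterating gives $r(\tau'')=\chi^2(-4\tau+1)r(\tau)=(-4\tau+1)r(\tau)$ with no sign ambiguity. Together with \eqref{eq:transfwp} applied to the lattice identity $\IZ+\tau''\IZ=(-4\tau+1)^{-1}(\IZ+\tau\IZ)$, this yields $\wp(\xi_1+\xi_2\tau'';\tau'')=(-4\tau+1)^2\wp(\xi_1+(\xi_2-4\xi_1)\tau;\tau)$ and the corresponding cube for $\wp'$; the factors cancel exactly against $e_2-e_1$ and $r^3$, and $\U(\tau'')=\U(\tau)$ finishes the single-block case. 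Once you insert this squaring trick, the rest of your outline goes through.
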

\begin{proof}
  Part (i) follows from the definition of the exponential map,
 $\widetilde \rho_\xi$,  the period matrix
  $\Omega(\lambda)$, and from the equality
$\U(\T(\lambda)) = \lambda$. 

A direct consequence of the
left-hand side of (\ref{def:k}) is
  $r(\tau + 2) = r(\tau)$. 
Part (ii) follows from this, from the first equalities in 
(\ref{eq:modularbusiness1})
and (\ref{eq:modularbusiness2}), and from periodicity of the
Weierstrass function.

We turn to  part (iii).
Say $(\xi_1,\xi_2)\in (\IR/\IZ)^2 \ssm \{0\}$
and let $\tau' = \tau / (- 2\tau+1)$ and $\tau'' = \tau' /(-2\tau'+1)=\tau/(-4\tau+1)$. 
The  functional equalities  (\ref{eq:transfwp}) 
imply 
\begin{equation*}
  \wp\left(\xi_1+\xi_2 \tau'';\tau''\right)
= (- 4\tau+1)^2 \wp(\xi_1 + \tau(-4\xi_1+\xi_2);\tau)
\end{equation*}
and
\begin{equation*}
  \wp'\left(\xi_1+\xi_2 \tau'';\tau''\right)
= (- 4\tau+1)^3 \wp'(\xi_1 + \tau(-4\xi_1+\xi_2);\tau). 
\end{equation*}
Moreover, $e_{1,2}(\tau') = (- 2\tau+1)^2 e_{1,2}(\tau)$ by
(\ref{eq:modularbusiness1}).   Using these, the left-hand side in (\ref{def:k})
implies  $r(\tau') = \chi(\tau) (-2\tau+1) r(\tau)$
with  $\chi(\tau) \in\{\pm 1\}$. But $\chi$ is continuous 
on the connected space $\IH$, so it is a constant $\chi$. 
We derive $r(\tau'') = \chi (-2\tau'+1)r(\tau')= \chi^2
(-4\tau+1)r(\tau) = (-4\tau+1)r(\tau)$. 
Finally, 
$\U(\tau'') =\U(\tau')= \U(\tau)$ by 
(\ref{eq:modularbusiness2}).
Hence $\widetilde\rho_{(\xi_1,\xi_2)}(\tau'') = 
\widetilde \rho_{(\xi_1,-4\xi_1+\xi_2 )}(\tau)$. 
This equality also holds if $(\xi_1,\xi_2)=0$. 
Part (iii) follows since if
 $\xi$ is as in the hypothesis, then $\widetilde\rho_\xi$ is the
product of $\widetilde\rho_{(\xi_1,\xi_2)},\ldots,
\widetilde\rho_{(\xi_{2g-1},\xi_{2g})}$. 
\end{proof}

\subsection{Flat Subgroup Schemes of $\mathcal{A}$}
\label{sec:sbgrpscheme}
In this section we let $\mathcal{E}\rightarrow S$ be as in the
introduction. That is, $\mathcal{E}$ is an abelian scheme
over a non-singular and irreducible quasi-projective curve $S$ defined over
$\IQbar$. Moreover, the fibers of $\mathcal{E}\rightarrow S$ are elliptic
curves. We let
$\mathcal{A}$ denote a $g$-th fibered power of $\mathcal{E}$
over $S$ with $\pi$ the structural morphism $\mathcal{A}\rightarrow S$.

We call a possibly reducible closed subvariety
 $H\subset \mathcal{A}$ a subgroup scheme if it
contains the image of the zero section $S \rightarrow \mathcal{A}$,
if it is mapped to itself by
the inversion $\mathcal{A}\rightarrow \mathcal{A}$ morphism,
and such that the image of $H\times_S H$ under the addition morphism
$\mathcal{A}\times_S \mathcal{A}\rightarrow\mathcal{A}$ is in $H$. 
In this article we disregard standard terminology and required subgroup
schemes  to be reduced.
 This is justified since the base $S$ is a curve over a
field of characteristic $0$.

A subgroup scheme may fail to be flat over $S$; it can  have
horizontal as well as vertical fibers.
We call a subgroup scheme $H$ of $\mathcal{A}$ flat if  its
irreducible components dominate $S$. 
By Proposition III 9.7 \cite{Hartshorne}
this amounts to saying that $\pi|_H:H\rightarrow S$ is flat.

For an integer $N$ we have the multiplication-by-$N$ morphism
$[N]:\mathcal{A}\rightarrow \mathcal{A}$. It is proper since $\pi =
\pi\circ [N]$ is proper.
If $s \in S(\IC)$ and if $X\subset \mathcal{A}$ is Zariski
closed, then  $X_s$ denotes the Zariski closed set
 $\pi|_X^{-1}(s) \subset \mathcal{A}_s$.

The generic fiber of $\mathcal{E}\rightarrow S$ is an elliptic curve
over $\IQbar(S)$, the function field of $S$.
Its $j$-invariant is an element of $\IQbar(S)$. It extends to 
 a morphism $j:S\rightarrow Y(1)$ with
 $j(s)\in\IC$  the $j$-invariant of the elliptic curve
$\mathcal{E}_s$ for all $s\in S(\IC)$. 

\begin{lemma}
\label{lem:jnonconst}
  If $\mathcal{A}$ is not isotrivial, then $j$ is non-constant.
\end{lemma}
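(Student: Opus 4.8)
The plan is to prove the contrapositive: if $j:S\rightarrow Y(1)$ is constant, then $\mathcal{A}$ is isotrivial. Recall that two elliptic curves over an algebraically closed field are isomorphic precisely when they have the same $j$-invariant, so if $j$ is the constant $j_0\in\IC$, then every geometric fiber $\mathcal{E}_s$ is isomorphic to a fixed elliptic curve $E_0$ over $\IQbar$ with $j(E_0)=j_0$. The point is that this fiberwise isomorphism can be rigidified over a finite \'etale cover of $S$.

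First I would recall the standard rigidification trick: the $j$-invariant fails to see the automorphisms and twists of an elliptic curve, so one introduces a level structure to kill them. Concretely, let $S'\rightarrow S$ be the finite \'etale cover parametrizing, say, a full level-$3$ structure on $\mathcal{E}$ (level $N\ge 3$ suffices, so that the automorphism group of the level structure is trivial). Over $S'$ the pulled-back abelian scheme $\mathcal{E}' = \mathcal{E}\times_S S'$ carries a level-$N$ structure, and the isomorphism class of a fiber together with its level-$N$ structure is now a fine moduli point. Since $j$ is constant on $S$, it is constant on $S'$, hence $\mathcal{E}'\rightarrow S'$ classifies a constant point of the fine moduli scheme $Y(N)$, which forces $\mathcal{E}'\rightarrow S'$ to be isomorphic (as an abelian scheme over $S'$) to the constant family $E_0\times_{\IQbar} S'$ — possibly after a further finite base change to spread out the field of definition of the level structure, which keeps things finite \'etale. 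Taking the $g$-fold fibered power, $\mathcal{A}' = \mathcal{A}\times_S S'$ becomes constant, namely $E_0^g\times_{\IQbar} S'$. By the definition of isotriviality given in the introduction, this shows $\mathcal{A}$ is isotrivial, completing the contrapositive.

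An alternative, more hands-on route avoids invoking fine moduli spaces and instead argues analytically, which fits the explicit spirit of this paper. Since the statement only needs the implication "not isotrivial $\Rightarrow$ $j$ non-constant", one may equivalently show: $j$ constant $\Rightarrow$ $j$-invariant of the generic fiber lies in $\IQbar$ and the generic fiber is, after a finite extension of $\IQbar(S)$, isomorphic to the base change of a fixed elliptic curve over $\IQbar$. The generic fiber $\mathcal{E}_\eta$ is an elliptic curve over $\IQbar(S)$ with $j(\mathcal{E}_\eta)=j_0\in\IQbar$; hence $\mathcal{E}_\eta$ is a twist of $E_0\otimes_{\IQbar}\IQbar(S)$ by a class in $H^1(\IQbar(S), \mathrm{Aut}(E_0))$, and $\mathrm{Aut}(E_0)$ is a finite group (of order $2$, $4$, or $6$). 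Such a cohomology class is split by a finite separable extension $L/\IQbar(S)$, which corresponds to a finite (étale, after shrinking $S$) cover $S'\rightarrow S$, and over $S'$ the abelian scheme $\mathcal{E}'$ is isomorphic to $E_0\times_{\IQbar} S'$ at the generic point, hence on a dense open, hence everywhere by properness and the rigidity of abelian schemes. Again passing to the $g$-fold power gives isotriviality of $\mathcal{A}$.

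The main obstacle is purely bookkeeping: ensuring that the base change trivializing the twist (or the level structure) can be taken finite \emph{\'etale} over $S$ as required by the definition of isotriviality, rather than merely finite — this may force one to shrink $S$ to remove ramification, but since $S$ is only defined up to the open locus where $\mathcal{E}$ is an abelian scheme anyway, and isotriviality is insensitive to such shrinking, this causes no real trouble. A second minor point is that one must check the field of definition issue: the isomorphism $\mathcal{E}_\eta\cong E_0\otimes\IQbar(S)$ is defined over $\IQbar(S)$ only after the finite extension, but that is exactly the cover we use. I expect the cohomological or fine-moduli argument to be short once these descent-versus-étale subtleties are dispatched; everything else is standard rigidity of abelian schemes over a base of characteristic zero.
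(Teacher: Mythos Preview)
Your proposal is correct, and your second route --- pass to the generic fiber, observe that $j\in\IQbar$ forces $\mathcal{E}_\eta$ to be a twist of a fixed $E_0/\IQbar$, split the twist over a finite extension of $\IQbar(S)$, then spread out --- is exactly the paper's argument. The paper is simply terser: it states that the generic fiber becomes isomorphic over a finite extension $K/\IQbar(S)$ to the base change of an elliptic curve over $\IQbar$, and then outsources the spreading-out and the \'etale bookkeeping to Theorem~3.1(i) of Zarhin's paper \cite{ZarhinIsogeny} rather than writing it out.

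Your first route via full level-$N$ structure and the fine moduli scheme $Y(N)$ is a genuinely different packaging of the same content. It has the advantage of being self-contained and conceptually clean (one sees immediately why a finite \'etale cover suffices: the level cover is \'etale because $N$ is invertible), whereas the twist argument requires a separate check that the splitting field can be taken unramified over $S$, or an appeal to an external reference as the paper does. Either way the substance is the same: constancy of $j$ plus finiteness of $\mathrm{Aut}(E_0)$ gives triviality after a finite \'etale base change.
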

\begin{proof}
We consider the $j$-invariant as an element of $\IQbar(S)$ and
 assume it to be constant.
The generic fiber of $\mathcal{E}\rightarrow S$ is an elliptic
curve with $j$-invariant in $\IQbar$. Hence it is isomorphic, over some finite
field extension $K$ of $\IQbar(S)$, to the base change to $K$ of an
 elliptic curve defined over $\IQbar$.
The lemma follows from Theorem 3.1(i) \cite{ZarhinIsogeny}.
\end{proof}

Below
we give a  description of all subgroup schemes of the abelian
scheme  $\mathcal{A}$.

Any  $\varphi=(a_1,\ldots,a_g)\in \IZ^g$ induces a
morphism 
$\varphi: \mathcal{A}\rightarrow \mathcal{E}$ with 
$\varphi(P_1,\ldots,P_g) = [a_1](P_1)+\cdots +
[a_g](P_g)$.
Then $\varphi$ is proper since $\pi=\pi\circ\varphi$ is proper. 
 From now on we identify elements of
$\IZ^g$ with the associated morphism $\mathcal{A}\rightarrow\mathcal{E}$. 
The fibered product  $\Psi = \varphi_1\times_S\cdots\times_S\varphi_r$ of
 $\varphi_1,\ldots,\varphi_r\in\IZ^g$
 determines a proper morphism
$\mathcal{A}\rightarrow \mathcal{E}\times_S \cdots \times_S
\mathcal{E}=\mathcal{B}$ ($r$ factors) over $S$.
The restriction of $\mathcal{A}\rightarrow \mathcal{B}$ to a fiber
above any $s \in S(\IC)$
induces a homomorphism of abelian varieties
$\mathcal{A}_s\rightarrow \mathcal{B}_s$. 
We define the kernel 
  $\ker \Psi $, as
the fibered product of $\Psi:\mathcal{A}\rightarrow\mathcal{B}$ with the zero section
$S\rightarrow\mathcal{B}$. We consider it as a closed subscheme
of $\mathcal{A}$.


\begin{lemma}
\label{lem:horizontal}
\begin{enumerate}
\item[(i)] Any flat subgroup scheme of $\mathcal{A}$ is equidimensional.
\item [(ii)] 
Let us assume that $\mathcal{A}$ is not isotrivial
and let $H\subsetneq \mathcal{A}$ be a flat subgroup scheme.
  There exists $\varphi\in\IZ^g\ssm\{0\}$ such that $H\subset
  \ker\varphi$. 
\item[(iii)] Let $\varphi_1,\ldots,\varphi_r\in\IZ^g$ be $\IZ$-linearly
  independent and $\Psi =
  \varphi_1\times_S\cdots\times_S\varphi_r:\mathcal{A}\rightarrow\mathcal{B}$
  with $\mathcal{B}$ as above. Then $\Psi$ is smooth and
$\ker\Psi$ is a 
  non-singular flat subgroup scheme of $\mathcal{A}$ whose irreducible
  components have dimension
  $g-r+1$. 
\end{enumerate}
\end{lemma}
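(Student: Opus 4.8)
The plan is to prove the three parts of Lemma~\ref{lem:horizontal} in the order (iii), (i), (ii), since (iii) is the most concrete and provides the building blocks for the rest.

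\textbf{Part (iii).} The key point is that a single morphism $\varphi=(a_1,\dots,a_g)\in\IZ^g\ssm\{0\}$, viewed as $\varphi:\mathcal{A}\rightarrow\mathcal{E}$, restricts on each fiber $\mathcal{A}_s=\mathcal{E}_s^g$ to the homomorphism of elliptic curves $\mathcal{E}_s^g\rightarrow\mathcal{E}_s$, $(P_1,\dots,P_g)\mapsto\sum[a_i](P_i)$, which is surjective with connected kernel (an elliptic curve of dimension $g-1$) precisely because $\gcd(a_1,\dots,a_g)$ and the group structure make it a split surjection up to isogeny. First I would note that $\varphi$ is smooth: it is flat (being a nonzero homomorphism of abelian schemes of relative dimensions $g$ and $1$, with equidimensional fibers of dimension $g-1$) and its fibers over $\mathcal{E}$ are smooth, being torsors under $\ker\varphi$; alternatively, smoothness can be checked on fibers over $S$ using that a surjective homomorphism of abelian varieties is smooth. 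For the general $\Psi=\varphi_1\times_S\cdots\times_S\varphi_r$ with the $\varphi_i$ being $\IZ$-linearly independent, I would argue that $\Psi$ restricted to each fiber $\mathcal{A}_s\rightarrow\mathcal{B}_s=\mathcal{E}_s^r$ is a surjective homomorphism of abelian varieties: $\IZ$-linear independence of $\varphi_1,\dots,\varphi_r$ in $\IZ^g$ implies the induced map $\IZ^r\rightarrow\IZ^g$ (really the transpose) has rank $r$, so the composite $\mathcal{A}_s\rightarrow\mathcal{B}_s$ has image of dimension $r=\dim\mathcal{B}_s$, hence is surjective. A surjective homomorphism of abelian varieties is smooth with kernel an extension of a finite group scheme by an abelian subvariety of dimension $g-r$; since we work in characteristic zero and take reduced structures, $\ker\Psi$ has fibers that are finite unions of translates of a fixed abelian variety of dimension $g-r$, so $\dim\ker\Psi=g-r+1$. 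Smoothness of $\Psi$ globally over $S$ then follows fiberwise by the fibral criterion (\citeEGAIVIV, or Hartshorne III.10), and non-singularity of $\ker\Psi$ follows since it is smooth over the non-singular base $S$. The irreducible components dominate $S$ (they are not contained in any fiber $\mathcal{A}_s$ since they have dimension $g-r+1 > g-r \ge \dim$ of any subvariety of a fixer fiber of dimension $g-r$... more carefully: a component of $\ker\Psi$ maps onto $S$ because its generic fiber over $S$ is the kernel over $\IQbar(S)$, which dominates), hence $\ker\Psi$ is a flat subgroup scheme.

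\textbf{Part (ii).} Assume $\mathcal{A}$ is not isotrivial and let $H\subsetneq\mathcal{A}$ be a flat subgroup scheme. The strategy is to pass to the generic fiber $A=\mathcal{A}_\eta$ over $k=\IQbar(S)$, which is $E_\eta^g$ for $E_\eta$ the generic fiber of $\mathcal{E}\rightarrow S$, an elliptic curve over $k$ with non-constant $j$-invariant by Lemma~\ref{lem:jnonconst}. Then $H_\eta$ is a proper algebraic subgroup of $E_\eta^g$ (proper because $H\neq\mathcal{A}$ and $H$ flat forces $H_\eta\subsetneq A$; here flatness is exactly what guarantees $H_\eta\neq A$ rather than some $H$ with only a vertical component). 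The key input is that $E_\eta$ has no nontrivial endomorphisms over $\overline k$: since its $j$-invariant is transcendental over $\IQbar$, $E_\eta$ has no complex multiplication, so $\mathrm{End}(E_\eta^g\otimes\overline k)=\M{g}{\IZ}$. Hence every proper connected algebraic subgroup of $E_\eta^g$ is contained in the kernel of some nonzero $\varphi=(a_1,\dots,a_g)\in\IZ^g$ acting as $\sum[a_i]$ — this is the standard classification of abelian subvarieties of a power of an elliptic curve without CM. Taking the connected component of $H_\eta$ (or just $H_\eta$ itself, which being a subgroup scheme over a characteristic-zero field is smooth, and whose component containing $0$ is an abelian subvariety) gives $\varphi\in\IZ^g\ssm\{0\}$ with $H_\eta\subset\ker\varphi$ at the generic fiber. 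Spreading out, $H\subset\ker\varphi$ as closed subschemes of $\mathcal{A}$, since both are closed, $\mathcal{A}$ is separated, and they agree on the dense open generic fiber — more precisely $H$, being reduced and with every component dominating $S$, is the closure of $H_\eta$, which lies in the closed set $\ker\varphi$.

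\textbf{Part (i).} A flat subgroup scheme $H$ is equidimensional: its generic fiber $H_\eta$ over $k$ is a group scheme over a field, hence equidimensional (a smooth group scheme over a field is equidimensional since translation acts transitively on components). Each irreducible component of $H$ dominates $S$ by flatness, so has dimension $\dim H_\eta + 1$, which is the same for all components. The main obstacle I anticipate is part (ii): one must be careful that ``flat subgroup scheme'' rules out $H$ having purely vertical components and that the classification of abelian subvarieties of $E_\eta^g$ genuinely uses the no-CM property coming from Lemma~\ref{lem:jnonconst}; the reduction from a possibly disconnected subgroup scheme $H_\eta$ to the identity component, and the bookkeeping of spreading out the inclusion $H_\eta\subset\ker\varphi$ to an inclusion of schemes over all of $S$, both need a small amount of care but are routine.
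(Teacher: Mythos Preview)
Your proposal is correct, and for part (iii) it is essentially the paper's argument. For parts (i) and (ii), however, you take a genuinely different route: you work at the \emph{generic fiber} $\mathcal{A}_\eta$ over $k=\IQbar(S)$, whereas the paper works at a well-chosen \emph{closed point} $s\in S(\IC)$ with $j(s)$ transcendental. In (ii) the paper finds $\varphi$ with $H_s\subset\ker\varphi$, then must argue globally by showing that $G=\varphi(H)\subset\mathcal{E}$ is a subgroup scheme whose fibers are finite of uniformly bounded order, and finally replaces $\varphi$ by a multiple to kill $G$. Your spreading-out argument from $H_\eta\subset\ker\varphi$ (using that $H=\overline{H_\eta}$ since every component dominates $S$) is cleaner and avoids this detour, though you do need the same ``multiply $\varphi$ by $N$'' step to pass from $H_\eta^0$ to all of $H_\eta$; you flag this as routine, and it is. Similarly in (i) the paper picks a closed $s$ meeting each $H_i$ away from the others and compares $\dim H_i$ to $\dim H_s$, while you compare $\dim H_i$ to $\dim (H_i)_\eta$ and use equidimensionality of the group $H_\eta$; both work. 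One small wobble: your dimension-count justification that components of $\ker\Psi$ dominate $S$ is circular as written (you use $\dim=g-r+1$ before establishing dominance); the clean fix, which the paper uses, is that $\ker\Psi\rightarrow S$ is flat (being smooth), and flatness over an irreducible base forces every component to dominate.
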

\begin{proof}
Let $H\subset\mathcal{A}$ be a flat subgroup
scheme with irreducible components $H_1,\ldots,H_l$. 
There exists $s\in S(\IC)$ with the following property. 
For any $1\le i\le l$ there is $P_i\in (H_i\ssm\bigcup_{j\not=i}
H_j)(\IC)$ with $\pi(P_i)=s$.
On applying Exercise II 3.22
 \cite{Hartshorne}, which we call the Fiber Dimension
 Theorem from now on,
we find $\dim_{P_i} (H_i)_s \ge\dim H_i-1$. But equality must hold
because $\pi|_{H_i}:H_i\rightarrow S$ is dominant. By choice of $s$, any
irreducible component of $(H_i)_s$ containing $P_i$ is an irreducible
component of $H_s$. But $H_s$ is an algebraic group and therefore
equidimensional. So $\dim H_s = \dim_{P_i} (H_i)_s = \dim H_i - 1$
is independent of $i$. Part (i) follows.

We turn to part (ii). Since $\mathcal{A}$ is not
 isotrivial, $j$ is non-constant by Lemma \ref{lem:jnonconst} and hence dominant.
The fiber $H_s$ is an
  algebraic subgroup of $\mathcal{A}_s$. We fix $s\in S(\IC)$ with $j(s)$ 
  transcendental. 
Since $H\not=\mathcal{A}$ we may also assume that
 $ H_s \not=\mathcal{A}_s$.
It is a  classical fact that  $\mathcal{E}_s$  does not have complex
  multiplication.  The endomorphism ring of $\mathcal{A}_s
=\mathcal{E}_s^g$ is $\M{g}{\IZ}$. 
 There exists $\varphi\in\IZ^g\ssm\{0\}$ with $H_s \subset \ker\varphi$.
Let us consider $G =
\varphi(H)\subset\mathcal{E}$. Since $\varphi$ is a proper
morphism, $G$ is Zariski closed in $\mathcal{E}$. It is a subgroup scheme of
$\mathcal{E}$. 
But $\mathcal{E}\rightarrow S$ is
also proper, it follows that each irreducible component of $G$ maps surjectively
to $S$. So the fiber of $G\rightarrow S$ above
$s$ meets all irreducible components of $G$. On the other hand,
it contains only the zero element of $\mathcal{E}_s$. 
The Fiber Dimension Theorem,
implies that each irreducible component of $G$ has dimension $1$.
 Therefore, $G_{s'}$ is a finite
group for all $s' \in S(\IC)$. The cardinality can
even be bounded from above  independently of $s'$.
Hence after replacing $\varphi$ by a positive integral
multiple we may assume $G = \varphi(H)$ is the image of the zero
section $S\rightarrow\mathcal{E}$. Hence  $H\subset \ker{\varphi}$
and  (ii) follows.

We now prove (iii). 
The restriction of $\Psi$ to any fiber of $\mathcal{A}\rightarrow S$ induces a
homomorphism between a $g$-th and an $r$-th power of an elliptic curve.
Such a homomorphism is surjective because $\varphi_1,\ldots,\varphi_r$
are linearly independent and have as kernel  an algebraic group of
dimension $g-r$.
These homomorphisms are smooth since domain
and target are abelian varieties over a field of characteristic zero.
Since $\mathcal{A}\rightarrow S$ is flat,  Proposition 17.8.2 \citeEGAIVIV 
implies that $\Psi:\mathcal{A}\rightarrow\mathcal{B}$ is
smooth. Smoothness is preserved under base change, hence
$\ker\Psi\rightarrow S$ is smooth. 
It follows that $\ker\Psi$ is a closed (possibly reducible)
non-singular subvariety
of $\mathcal{A}$. 
We see that $\ker\Psi$ is a subgroup scheme of $\mathcal{A}$. 
But $\ker\Psi\rightarrow S$ is flat and Proposition III 9.7
\cite{Hartshorne} implies that any irreducible component of $\ker\Psi$
dominates $S$. So $\ker\Psi$ is a
flat subgroup scheme of $\mathcal{A}$.  
The statement on the dimension of $\ker\Psi$ follows from Corollary
III 9.6 \cite{Hartshorne}
 and the fact that each fiber
of $\ker\Psi\rightarrow S$ has dimension $g-r$. 
\end{proof}

\section{Torsion Points}
\label{sec:torsionpts}

\subsection{The Main Proposition}

The main result of this section is the following proposition. We
count torsion points on subvarieties of the abelian scheme
 $\AL$ from Section \ref{sec:heights}. In this section we
consider both $\AL$ and $\B$ as defined over $\IC$. 
The cardinality of a set $M$ is denoted by $\# M$.

\begin{proposition}
\label{prop:counting}
Let $X\subset\AL$ be an irreducible closed subvariety
defined over $\IC$ which dominates $\B$  with $\dim X = g$.
Furthermore, we assume that 
\begin{enumerate}
\item[(i)]
either we have $\dim\varphi(X) \ge 2$ for every
$\varphi\in\IZ^g\ssm\{0\}$, 
\item[(ii)] or $\dim X = 1$ and 
$X$ is not an irreducible component of a flat subgroup scheme of $\mathcal{E}$.
\end{enumerate}
Then there exist a (possible reducible) non-singular algebraic curve $C\subset
\AL$ and a  constant $c=c(X) > 0$  such that if $N$ is an
integer with $N\ge c^{-1}$, then
\begin{equation}
\label{eq:propset}
\#\{P\in X(\IC);\,\, Q=[N](P) \in C(\IC) \text{ and }
\dim_Q [N](X)\cap C = 0\} 
  \ge c N^{2g}.
     \end{equation}
\end{proposition}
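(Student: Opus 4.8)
The plan is to produce the many torsion points by exploiting the explicit analytic description of $\AL$ over the disc $\Sigma$ and, crucially, the local monodromy computations in Lemma \ref{lem:rho}. First I would pick a smooth point $s_0 \in \Sigma \subset \B(\IC)$ over which $X$ is well-behaved, pull $X$ back to the universal cover using $\exp$ and the period matrix $\Omega$, and thereby view a branch of $X$ over a neighborhood of $s_0$ as an analytic family $\lambda \mapsto \exp(\Omega(\lambda)\trans{\xi(\lambda)},\lambda)$ where $\xi(\lambda) \in \IR^{2g}$ runs over an analytic $g$-dimensional piece of $X^{\mathrm{an}}$. The set of torsion points in the fiber $\mathcal{A}_\lambda$ corresponds to $\xi \in \IQ^{2g}$; a point $P = \widetilde\rho_\xi(\tau)$ with $\xi \in \tfrac1N\IZ^{2g}$ satisfies $[N]P = \widetilde\rho_0(\tau) = \epsilon_L(\lambda)$, so taking $C$ to be (a non-singular model of) the zero section $\epsilon_L(\B) \subset \AL$ — or a small perturbation thereof to guarantee the transversality condition $\dim_Q [N](X)\cap C = 0$ — reduces the problem to counting, for each large $N$, the rational points of exact denominator dividing $N$ that lie on the analytic variety cut out by $X$ in the $\xi$-coordinates.

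The heart of the argument is then a quantitative statement: the image of $X$ under the (locally defined, multivalued) "coordinate map" $X \dashrightarrow (\IR/\IZ)^{2g}$ sending a point to its $\xi$-class contains an open subset of some real-analytic manifold of dimension $g$ that is not contained in any proper rational affine subspace of $(\IR/\IZ)^{2g}$; once this is known, a neighborhood of a generic point of that manifold meets $\tfrac1N\IZ^{2g}/\IZ^{2g}$ in $\gg N^{g}\cdot N^{g} = N^{2g}$ points (the manifold has dimension $g$ inside a $2g$-torus, but the relevant count is of $N$-torsion points in the $g$ fibral directions times the number of admissible base points, which altogether yields the exponent $2g$), each giving a distinct torsion point on $X$ mapping into $C$ under $[N]$. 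The transversality clause $\dim_Q[N](X)\cap C=0$ is handled by discarding the bad locus where $[N](X)$ and $C$ fail to meet properly — since $\dim [N](X) = \dim X = g$ and $\dim C = 1$ in an ambient of dimension $g+1$, generic intersections are $0$-dimensional, and the exceptional contribution is $O(N^{2g-1})$, absorbed by shrinking $c$.

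The main obstacle — and this is exactly where hypotheses (i) and (ii) enter — is proving that the $\xi$-image is not trapped in a proper rational subtorus, equivalently, that the relevant analytic functions (components of $\xi(\lambda)$, essentially periods/Painlevé VI solutions) are not forced to satisfy a non-trivial $\IZ$-linear relation with the constant term. Here I would run the monodromy argument: a nontrivial relation among the $\xi_i$ would have to be invariant under the local monodromy transformations of Lemma \ref{lem:rho}(ii),(iii), i.e. under $\xi \mapsto \xi + 2(\xi_2,0,\dots)$ around $0$ and $\xi \mapsto \xi - 4(0,\xi_1,0,\dots)$ around $1$. These two unipotent transformations generate a group whose only common invariant rational subspaces are the "obviously invariant" ones, and one checks that a flat-subgroup-scheme relation for $X$ is precisely what is needed to be compatible with both; the assumption that $X$ is not such a component (case (ii)), resp. that $\dim\varphi(X)\ge 2$ for all $\varphi\ne 0$ (case (i), which prevents $X$ from collapsing into a sub-abelian-scheme direction), forces the monodromy orbit to span enough of $\IR^{2g}$ that Kronecker's theorem on inhomogeneous approximation applies and gives the equidistribution needed for the lower bound $cN^{2g}$. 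I expect the detailed verification of the rank of the nilpotent monodromy and the consequent Kronecker input to be the longest and most delicate part of the proof.
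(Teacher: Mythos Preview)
Your outline assembles the right ingredients---the coordinate map to $(\IR/\IZ)^{2g}$, monodromy via Lemma~\ref{lem:rho}, Kronecker---but the central geometric picture is off in a way that breaks the counting. You describe the $\xi$-image of $X$ as ``an open subset of some real-analytic manifold of dimension $g$'' and then attempt an ad~hoc $N^g\cdot N^g$ factorisation into ``fibral'' and ``base'' contributions. But $X$ has \emph{complex} dimension $g$, hence real dimension $2g$, and the map $\Xi:(\AL)_\Sigma\to(\IR/\IZ)^{2g}$ is only real-analytic (it involves $\overline{\Omega}$), so source and target have the \emph{same} real dimension. The key step (Lemma~\ref{lem:imageopen}) is that once a single smooth point of $X_\Sigma$ has discrete $\Xi$-fibre, $\Xi(X_\Sigma)$ contains a nonempty \emph{open} subset $W$ of the full torus $(\IR/\IZ)^{2g}$. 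With an honest open set the count is the trivial Lemma~\ref{lem:count}: $\#\{\xi\in W:\ N\xi=\xi_0\}\ge cN^{2g}$ for any fixed target $\xi_0$. A genuinely $g$-real-dimensional image would meet the $N$-torsion in far fewer points, and your sketch gives no mechanism for recovering the exponent $2g$.

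Two further corrections. Transversality is not achieved by discarding a bad locus of size $O(N^{2g-1})$; it is built into the choice of $\xi_0$ via Manin--Mumford on the generic fibre. Raynaud's theorem gives finitely many $\varphi_1,\dots,\varphi_n\in\IZ^g\ssm\{0\}$ such that every torsion point of $X_\eta$ lies in some $\ker\varphi_i$; one picks $\xi_0\in(\IQ/\IZ)^{2g}$ outside $\bigcup_i\ker\Xi(\varphi_i)$, sets $T$ equal to its order, and takes $C=\ker[T]$ (a torsion curve, not the zero section). For each preimage $\xi\in W$ of $\xi_0$ one then checks directly that a positive-dimensional component of $[N](X)\cap C$ through $[N]P$ would force $\Xi(\varphi_i)(\xi_0)=0$ for some $i$, which was excluded. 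And monodromy enters contrapositively, not as a certificate that the image avoids rational subtori: if \emph{every} smooth point is degenerate, Lemma~\ref{lem:notindep} gives $\widetilde\rho_{\Xi(P)}(\IH)\subset X$, and then Kronecker plus monodromy around $0$ (Lemmas~\ref{lem:applykronecker}--\ref{lem:degenerate2}) produce some $\varphi$ with $\dim\varphi(X)\le 1$, contradicting hypothesis~(i); in the curve case~(ii) one uses monodromy around both cusps to force $X\subset\ker[N]$. Either way a non-degenerate point exists and the open-image argument applies.
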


We remark that in case (i) we have $\dim X \ge \dim \varphi(X) \ge 2$,
so $X$ cannot be a curve.

In the proof we will take $C$ equal to $\ker[T]$ for some positive
integer $T$. So the points $P$ in (\ref{eq:propset}) are torsion.

\subsection{Counting Torsion Points}

In this section we consider $\AL(\IC)$ as a  complex analytic
space \cite{CAS}; throughout the whole paper, all complex analytic spaces are
assumed to be reduced. 
Then $\AL(\IC)$ is even a complex manifold since $\AL$
 is non-singular variety. All references to a topology on
 $\AL(\IC)$ will refer to the Euclidean topology unless stated
 otherwise with the exception that ``irreducible''  refers to the
 Zariski topology. 
 
Recall that $\Sigma=\{z\in \IC;\,\, |z|<1\text{
  and }|1-z|<1\}$.
The preimage $(\AL)_\Sigma = \piL^{-1}(\Sigma)\subset \AL(\IC)$
 is an  open complex submanifold of $\AL(\IC)$. 
We also have a holomorphic (local) exponential map
  $\exp : \IC^g\times \Sigma \rightarrow (\AL)_\Sigma$. 
Its differential is an isomorphism at all points; to see this consider for
example the Jacobian matrix.

Using the topological group $\IR/\IZ$  we now define a continuous function
\begin{equation*}
  \Xi : (\AL)_\Sigma \rightarrow (\IR/\IZ)^{2g}.
\end{equation*}
For any $P\in (\AL)_\Sigma$ there is $w\in \IC^g$ 
such that $\exp(w,\piL(P)) = P$ by Lemma \ref{lem:exp}. 
Because the columns of $\Omega(\piL(P))$ are an $\IR$-basis of $\IC^g$
there exists a unique  $\xi \in\IR^{2g}$ with 
$\Omega(\piL(P))\trans{\xi} = w$. 
We define $\Xi(P)$ to be the image of $\xi$  in $(\IR/\IZ)^{2g}$. 
 We
remark that if $w'\in \IC^g$ with $\exp(w',\piL(P)) = P$, then the
resulting $\xi'$ will differ from $\xi$ by an element in
$\IZ^{2g}$.
So $\Xi(P)$ is well-defined.
 It remains to show that
$\Xi$ is continuous.
Indeed, $P$ has an
open neighborhood $U$ in $(\AL)_\Sigma$ such that there exists a holomorphic map 
$\log: U \rightarrow \IC^g \times \B(\IC)$ with
$\log(U)$  open in $\AL(\IC)$ and with
$\exp\circ\log$  the identity on $U$. Let $\log^0: U\rightarrow
\IC^g$ denote $\log$ composed with the projection onto $\IC^g$. 
The matrix 
\begin{equation*}
  \left[
\begin{array}{cc}
\Omega(\piL(P)) \\
\overline{\Omega}(\piL(P))
\end{array}\right]
\end{equation*}
is invertible because the columns of $\Omega(\piL(P))$ are
$\IR$-linearly independent;  the bar denotes complex conjugation.
We set
\begin{equation*}
  \widetilde \xi(P)
=\left[
\begin{array}{cc}
\Omega(\piL(P)) \\
\overline{\Omega}(\piL(P))
\end{array}\right]^{-1}
\left[
\begin{array}{cc}
\log^0(P) \\ \overline{\log^0}(P)
\end{array}
\right] \in \IR^{2g}.
\end{equation*}
Then $\widetilde \xi$ is clearly continuous on $U$. We let
 $\Xi|_U$ denote  $\widetilde\xi:U \rightarrow\IR^{2g}$ composed with the natural
map $\IR^{2g}\rightarrow (\IR/\IZ)^{2g}$.

By Lemma \ref{lem:exp}(ii) the map
$\Xi|_{(\AL)_\lambda}:(\AL)_\lambda(\IC)\rightarrow(\IR/\IZ)^{2g}$ 
is a group isomorphism for all $\lambda\in\Sigma$. 





For any variety $X$ defined over $\IC$ let $\ns{X}$ denote its Zariski open and dense subset
of non-singular points. If $X\subset \AL$ is a (possibly reducible)
  subvariety we set $X_{\Sigma} = (\AL)_\Sigma \cap X(\IC)$.

\begin{lemma}
\label{lem:imageopen}
 Let $X$ be an irreducible closed subvariety of $\AL$ of
 dimension $g$.
Let  $P\in \ns{X}_\Sigma$ 
such that $\Xi|_{X_\Sigma}^{-1}(\Xi(P))$ contains a countable
neighborhood of $P$.
Then $P$ is isolated in $\Xi|_{X_\Sigma}^{-1}(\Xi(P))$
and $\Xi(X_\Sigma)$ contains a non-empty open subset of
$(\IR/\IZ)^{2g}$. 
\end{lemma}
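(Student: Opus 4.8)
The plan is to work locally, using the biholomorphic exponential chart to transport the problem to a linear-algebra statement about the real subspace cut out by the tangent directions of $X$ at $P$, and then to invoke the standard fact that a semialgebraic (here even analytic) set containing a neighborhood in its ``rational points'' must itself be large.

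First I would fix a point $P\in\ns{X}_\Sigma$ as in the hypothesis, pick $w\in\IC^g$ with $\exp(w,\piL(P))=P$, and use the local section $\log:U\to\IC^g\times\B(\IC)$ constructed just above to identify a Euclidean neighborhood of $P$ in $(\AL)_\Sigma$ with an open subset $V$ of $\IC^g\times\Sigma$. Under this identification $X_\Sigma\cap U$ becomes a closed complex-analytic subset $\widetilde X\subset V$ of dimension $g$, smooth at the point $\widetilde P=(w,\piL(P))$ since $P\in\ns X$, and $\Xi|_U$ becomes the composition of $\widetilde\xi$ (a real-analytic, indeed real-affine-in-the-fibre-coordinates, submersion $V\to\IR^{2g}$) with $\IR^{2g}\to(\IR/\IZ)^{2g}$. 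I would then set $N=\Xi|_{X_\Sigma}^{-1}(\Xi(P))$; by hypothesis $N$ contains a countable neighborhood of $P$, and I want to show $P$ is isolated in $N$ and that $\Xi(X_\Sigma)$ has non-empty interior.

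The key step is this dichotomy. Consider the real-analytic set $Y=\widetilde X\cap \widetilde\xi^{\,-1}(\widetilde\xi(\widetilde P))$ near $\widetilde P$; this is the germ at $\widetilde P$ of (the $U$-part of) the fibre $\Xi|_{X_\Sigma}^{-1}(\Xi(P))$, up to the discrete $\IZ^{2g}$-ambiguity, which near $P$ contributes nothing. A germ of a real-analytic set is either a single point or contains a real-analytic arc, hence is uncountable; since $N$ (and so $Y$) contains a countable neighborhood of $\widetilde P$, the germ $Y$ must be the single point $\widetilde P$, i.e. $P$ is isolated in $\Xi|_{X_\Sigma}^{-1}(\Xi(P))$ — this gives the first conclusion. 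For the second: $\widetilde\xi|_{\widetilde X}:\widetilde X\to\IR^{2g}$ is real-analytic, $\widetilde X$ is a smooth real manifold of real dimension $2g$ near $\widetilde P$, and we have just shown the fibre of $\widetilde\xi|_{\widetilde X}$ over $\widetilde\xi(\widetilde P)$ is $0$-dimensional at $\widetilde P$. By the real-analytic analogue of generic smoothness / the rank theorem (or: a real-analytic map between manifolds of equal dimension whose fibre at a point is isolated there is open near that point after possibly shrinking — one can see this via the Weierstrass preparation / Remmert-type argument, or simply by noting the complexified differential must be an isomorphism on a dense set so $\widetilde\xi|_{\widetilde X}$ is a local diffeomorphism at a nearby point), the image $\widetilde\xi(\widetilde X)$ contains an open set. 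Projecting to $(\IR/\IZ)^{2g}$, which is a local homeomorphism, $\Xi(X_\Sigma)$ contains a non-empty open subset.

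The main obstacle I expect is making the ``isolated fibre $\Rightarrow$ locally open'' implication airtight for a real-analytic (not holomorphic) map: unlike the complex case, equal-dimensional real-analytic maps with isolated fibres need not be open at the point itself (e.g. $(x,y)\mapsto(x,xy)$ type degeneracies), only at nearby points or after shrinking. I would handle this by arguing that the set of points of $\ns X_\Sigma$ where $d(\widetilde\xi|_{\widetilde X})$ is an isomorphism is Zariski-open and, by the countability hypothesis combined with the arc argument applied at a \emph{generic} nearby point, non-empty in every neighborhood of $\widetilde P$; at such a point $\widetilde\xi|_{\widetilde X}$ is an honest local diffeomorphism, so its image — hence $\Xi(X_\Sigma)$ — contains an open set. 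The second, milder, nuisance is keeping track of the $\IZ^{2g}$-ambiguity in the definition of $\Xi$, but since we work in a small neighborhood of a single point this is a non-issue: exactly one lift is relevant.
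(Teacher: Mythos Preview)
Your argument for the first conclusion is fine: the fibre germ is (complex-, in fact) analytic, hence either a point or uncountable, and the countability hypothesis forces the former.

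The gap is in the second conclusion. You correctly flag that for a real-analytic map between equidimensional real manifolds an isolated fibre does \emph{not} imply openness, but your proposed repair does not close the gap. The countability hypothesis concerns only the single fibre $\Xi|_{X_\Sigma}^{-1}(\Xi(P))$; it gives you no information about fibres over nearby values, so you cannot ``apply the arc argument at a generic nearby point''. And an isolated fibre at one point does not force the real Jacobian to be nonsingular anywhere nearby: take $f:\IR^2\to\IR^2$, $f(x,y)=(x^2+y^2,(x^2+y^2)^2)$. The Jacobian vanishes identically, the fibre over $(0,0)$ is the single point $(0,0)$, yet the image is a half-parabola with empty interior. (Your own example $(x,y)\mapsto(x,xy)$ actually has a one-dimensional fibre over the origin.) The phrases ``Weierstrass preparation / Remmert-type argument'' and ``the complexified differential must be an isomorphism on a dense set'' gesture in the right direction but do not constitute an argument.

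The paper's proof supplies precisely the missing idea: it \emph{complexifies the parameter}. Rather than study the real-analytic map $\widetilde\xi|_{\widetilde X}:\widetilde X\to\IR^{2g}$, it introduces the complex-analytic set
\[
Z=\{(Q,w)\in (X(\IC)\cap U)\times\IC^{2g}:\ \Omega(\piL(Q))w=\log^0(Q)\},
\]
which is a complex manifold of dimension $2g$ because $\Omega$ has full rank $g$, and the holomorphic projection $q:Z\to\IC^{2g}$. The fibre $q^{-1}(\xi)$ coincides with your $Y$, hence is isolated at $(P,\xi)$; now the genuine complex-analytic open mapping theorem (finite holomorphic maps between equidimensional complex spaces are open) yields an open neighbourhood $W'\subset\IC^{2g}$ of $\xi$ in the image of $q$. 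Intersecting with $\IR^{2g}$ and observing that for real $w$ the equation $\Omega(\piL(Q))w=\log^0(Q)$ forces $w=\widetilde\xi(Q)$ finishes the proof. The point is that the defining equations of the fibres are holomorphic in $Q$ \emph{and} in the parameter once the parameter is allowed to be complex; this is what rescues openness.
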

\begin{proof}
Let $\log^0:U\rightarrow \IC^g$ and $\widetilde\xi:U\rightarrow \IR^{2g}$ be
as in the proof of continuity of $\Xi$ where $U$ is an open
neighborhood of $P$. We may assume $X(\IC)\cap U \subset
\ns{X}_\Sigma$; so, $X(\IC)\cap U$ is a  complex manifold
of dimension $g$.

  We define
  \begin{equation}
\label{eq:defineZ}
    Z = \{(Q,w)\in (X(\IC)\cap U)\times\IC^{2g};\,\, 
\Omega(\piL(Q)) w - \log^0(Q) =  0\}.
  \end{equation}
Then $Z$, being the set of common zeros of holomorphic functions, is an
analytic subset of the complex manifold $(X(\IC)\cap U)\times \IC^{2g}$. 
It contains
$(P,\xi)$ where $\xi=\widetilde\xi(P)$. 

Using the Jacobian condition and the fact that the matrix
$\Omega(\piL(Q))$ has rank $g$ we conclude that $Z$ is a
 complex submanifold of $(X(\IC)\cap U)\times\IC^{2g}$
of dimension $2g$.

Now let $q:Z\rightarrow \IC^{2g}$ denote the projection onto the last
$2g$ coordinates. 
One readily checks that
\begin{equation*}
  \Xi|_{X(\IC)\cap U}^{-1}(\Xi(P)) \times \{\xi\} = q^{-1}(\xi)
 = q^{-1}(q(P,\xi)). 
\end{equation*}
By hypothesis, there is a countable neighborhood of $(P,\xi)$ 
in $q^{-1}(q(P,\xi))$. But latter is a complex analytic subset of $Z$. So it contains
$(P,\xi)$ as an isolated point. Moreover, $P$ is isolated
in $\Xi|_{X_\Sigma}^{-1}(\Xi(P))$ and the first assertion follows.

We note that $Z$ and $\IC^{2g}$ are complex manifolds 
of equal dimension.
The comment on page 64 \cite{CAS} implies that $q$ is a finite
holomorphic map at
$(P,\xi)$.  The proposition on page 107 in the same reference tells us
that $q$ is open at $(P,\xi)$. 

In particular, $q(Z)$ contains an open
neighborhood $W'\subset \IC^{2g}$ of $\xi$. Since $\xi\in\IR^{2g}$ it
follows that $W'\cap\IR^{2g}$ is an open and non-empty subset of
$\IR^{2g}$. 
Finally, $W$, its image in $(\IR/\IZ)^{2g}$,
is open too.

By definition of $\Xi$ and (\ref{eq:defineZ}),  any element of $W$ is the image
under $\Xi$ of some element of $X(\IC)\cap U \subset X_\Sigma$,
so the second assertion follows.
\end{proof}

We need a simple counting result on $N$-th roots of elements
in $(\IR/\IZ)^{2g}$.

\begin{lemma}
\label{lem:count}
Let $W$ be a non-empty open subset of $(\IR/\IZ)^{2g}$ 
then there exists a constant $c>0$ with the following property.
If $\xi_0 \in (\IR/\IZ)^{2g}$
and if $N$ is an integer with 
$N\ge c^{-1}$, then 
 \begin{equation*}
\#\{\xi\in W;\,\, N\xi = \xi_0 \} \ge cN^{2g}.
 \end{equation*}
\end{lemma}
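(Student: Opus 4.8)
The plan is to reduce to a box-counting statement in $\IR^{2g}$. Since $W$ is open and non-empty, there is a point $a\in\IR^{2g}$ whose image lies in $W$ together with an open cube $a + (-\delta,\delta)^{2g}$ whose image is still contained in $W$; fix such an $a$ and $\delta>0$. Given $\xi_0\in(\IR/\IZ)^{2g}$, choose any representative $b\in\IR^{2g}$ of $\xi_0$. An element $\xi\in(\IR/\IZ)^{2g}$ with $N\xi=\xi_0$ is precisely the image of a vector of the form $\frac1N(b+m)$ for some $m\in\IZ^{2g}$, and distinct $m$ in a fundamental domain give distinct $\xi$; concretely, the images of $\frac1N(b+m)$ for $m$ ranging over $\{0,1,\ldots,N-1\}^{2g}$ are $N^{2g}$ pairwise distinct $N$-th roots of $\xi_0$, and every $N$-th root arises this way. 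So it suffices to count those $m\in\IZ^{2g}$ for which $\frac1N(b+m)$ lands (modulo $\IZ^{2g}$) inside $W$.

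The second step is to force $\frac1N(b+m)$ into the fixed cube $a+(-\delta,\delta)^{2g}$, whose image lies in $W$ by construction. That is, I want $m\in\IZ^{2g}$ with $\frac1N(b+m) - a \in (-\delta,\delta)^{2g}$, i.e.\ $m \in N a - b + (-N\delta, N\delta)^{2g}$. The number of integer points in an axis-parallel open cube of side length $2N\delta$ is at least $(2N\delta - 1)^{2g}$ as soon as $2N\delta > 1$, hence is at least $(N\delta)^{2g}$ once, say, $N\delta \ge 2$, i.e.\ $N \ge 2/\delta$. For each such $m$ one has $0 \le \| \frac1N(b+m) - a\|_\infty < \delta$, but I also need these $m$ to give \emph{distinct} elements of $(\IR/\IZ)^{2g}$ and to be genuine $N$-th roots: distinctness holds because if $\frac1N(b+m)$ and $\frac1N(b+m')$ differ by an element of $\IZ^{2g}$ then $m - m' \in N\IZ^{2g}$, and the cube $(-N\delta,N\delta)^{2g}$ has diameter smaller than $N$ in each coordinate once $\delta < 1/2$ (which we may assume by shrinking $\delta$), so $m = m'$; and each is an $N$-th root by the identity $N\cdot\frac1N(b+m) \equiv b \equiv \xi_0 \pmod{\IZ^{2g}}$. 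Therefore, with $c = \min\{\delta^{2g}, \delta/2\}$ (so that $N \ge c^{-1}$ forces $N \ge 2/\delta$ and $c N^{2g} \le (N\delta)^{2g}$), we obtain $\#\{\xi\in W;\ N\xi = \xi_0\} \ge \delta^{2g} N^{2g} \ge c N^{2g}$, and $c$ depends only on $W$ (through $a$ and $\delta$), not on $\xi_0$ or $N$.

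The only mild subtlety — and the step I would be most careful with — is bookkeeping between $\IR^{2g}$ and $(\IR/\IZ)^{2g}$: ensuring that the $m$'s I count really do produce pairwise distinct torus elements, and that the lattice-point count in the shifted cube is uniform in the shift (which it is, since the count of $\IZ^{2g}$-points in an open axis-parallel cube depends only on the side length up to an error of $1$ per coordinate, and I have absorbed that error by passing from side $2N\delta$ to the weaker bound $(N\delta)^{2g}$). Everything else is elementary. No use of the earlier analytic machinery is needed; this lemma is purely combinatorial and feeds into the proof of Proposition 3.1 via Lemma 3.3, where the open set $W$ will be the set $\Xi(X_\Sigma)$ produced there.
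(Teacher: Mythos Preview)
Your proof is correct and follows essentially the same approach as the paper's own argument: choose a small cube of half-side $\delta$ (the paper's $\epsilon$) whose image lies in $W$, count lattice points in the dilated and shifted cube to get at least $(N\delta)^{2g}$ preimages, and use $\delta<1/2$ to guarantee that distinct lattice points yield distinct torus elements. The only differences are cosmetic (you parametrize by the integer vector $m$ where the paper parametrizes by the real point $u\in U$, and your constant $c=\min\{\delta^{2g},\delta/2\}$ is packaged slightly differently from the paper's $c=\epsilon^{2g}$).
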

\begin{proof}
There are $x_1,\ldots,x_{2g}\in \IR$ and an
$\epsilon \in (0,1/2]$
such that the image of $U=\prod_{i=1}^{2g}(x_i-\epsilon,x_i+\epsilon)$
under the natural map
$\IR^{2g}\rightarrow (\IR/\IZ)^{2g}$ is contained in $W$.
Let $y=(y_1,\ldots,y_{2g})\in \IR^{2g}$ be a lift of $\xi_0$
and $N$ an integer with $N\ge\epsilon^{-1}$.
We have
$NU-y = \prod_{i=1}^{2g} (N x_i-\epsilon N-y_i,N x_i+\epsilon N-y_i)$. 
Any real interval $(a,b)$ contains at least $b-a-1$ integers. Hence
$\# (NU-y)\cap \IZ^{2g} \ge (2\epsilon N - 1)^{2g}
\ge (2\epsilon N - \epsilon N)^{2g}=
 \epsilon^{2g} N^{2g}$. 
If $Nu-y\in \IZ^{2g}$ with $u\in U$, then $N \xi = \xi_0$
where $\xi\in W$ is the image of $u$ in $(\IR/\IZ)^{2g}$. 
If $u,u'\in U$ have equal image in $(\IR/\IZ)^{2g}$, 
then $u=u'$ because $\epsilon \le 1/2$.  So
$\#\{\xi\in W;\,\,N\xi=\xi_0\} 
\ge \epsilon^{2g} N^{2g}$ and the current lemma holds
with $c=\epsilon^{2g}$.
\end{proof}

The following remark on
$\Xi:(\AL)_\Sigma\rightarrow (\IR/\IZ)^{2g}$  will be
useful further down. 
By abuse of notation we also use $\Xi$ to denote
the continuous map $(\EL)_\Sigma\rightarrow (\IR/\IZ)^2$ if $g=1$. 
For any $\varphi=(a_1,\ldots,a_g)\in\IZ^g$ we
 have a commutative diagram
\begin{equation}
\label{eq:Xidiag}
\bfig
\square[(\AL)_\Sigma`(\IR/\IZ)^{2g}`(\EL)_\Sigma`(\IR/\IZ)^{2};
\Xi`\varphi|_{(\AL)_\Sigma}` \Xi(\varphi)`\Xi]
\efig
\end{equation}
where 
\begin{equation*}
 \Xi(\varphi)(\xi_1,\ldots,\xi_{2g}) = 
(a_1 \xi_1+ a_2 \xi_3 +\cdots + a_g \xi_{2g-1},
a_1 \xi_{2}+a_2 \xi_4 + \cdots +a_g \xi_{2g}) 
\end{equation*}
is a continuous homomorphism of groups.

By the next lemma, Proposition \ref{prop:counting} holds for hypersurfaces
which satisfy a  non-degeneracy property with respect to $\Xi$.

\begin{lemma}
\label{lem:propnondegen}
Let $X$ be an irreducible closed subvariety of
$\AL$ of dimension $g$.
Let us assume that there exists $P\in \ns{X}_\Sigma$ which is isolated in 
 $\Xi|_{X_\Sigma}^{-1}(\Xi(P))$.
Then  Proposition
\ref{prop:counting} holds
for $X$.
\end{lemma}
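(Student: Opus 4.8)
The plan is to distill the hypothesis, via Lemma~\ref{lem:imageopen}, into the single fact that $\Xi(X_\Sigma)$ contains a non-empty open subset $W$ of $(\IR/\IZ)^{2g}$, and then to produce the curve $C$ as one carefully chosen torsion multisection of $\AL\rightarrow\B$ that avoids $[N](X)$ for \emph{every} $N$; the count will then fall out of Lemma~\ref{lem:count}. Concretely, since $P\in\ns{X}_\Sigma$ is isolated in $\Xi|_{X_\Sigma}^{-1}(\Xi(P))$, the singleton $\{P\}$ is a countable neighborhood of $P$ in that fiber, so Lemma~\ref{lem:imageopen} produces a non-empty open $W\subset(\IR/\IZ)^{2g}$ with $W\subset\Xi(X_\Sigma)$, and Lemma~\ref{lem:count} then gives $c_1>0$ with $\#\{\xi\in W;\ N\xi=\xi_0\}\ge c_1 N^{2g}$ for every $\xi_0\in(\IR/\IZ)^{2g}$ and every integer $N\ge c_1^{-1}$. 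I may assume $X$ dominates $\B$, since otherwise $X$ is a single fiber $(\AL)_{\lambda_0}$ and the proposition holds with $C=\ker[2]$ and $c=1$: indeed $\{P\in(\AL)_{\lambda_0};\ [2N]P=0\}$ has $(2N)^{2g}\ge N^{2g}$ elements, each with $[N]$-image in the finite set $(\AL)_{\lambda_0}\cap\ker[2]$.

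The main step is the construction of $C$. Passing to the generic fiber of $\AL\rightarrow\B$ (over an algebraic closure of the function field of $\B$), one obtains an abelian variety $(\AL)_\eta$ of dimension $g$ containing the generic fiber $X_\eta$ of $X$, which has dimension $g-1$. By the Manin-Mumford Conjecture (Raynaud \cite{Raynaud:MM}) applied to $X_\eta\subset(\AL)_\eta$, the Zariski closure of the set of torsion points of $X_\eta$ is a finite union of torsion cosets $B_1+t_1,\dots,B_r+t_r$, the $B_i\subset(\AL)_\eta$ being abelian subvarieties of dimension $\le g-1$ and the $t_i$ torsion points. Hence $R:=\bigcup_{i=1}^r\bigcup_{k\in\IZ}(B_i+kt_i)$ is a finite union of translates of the $B_i$ (finite, the $t_i$ being torsion), so a proper Zariski closed subset of $(\AL)_\eta$; as torsion points are Zariski dense in $(\AL)_\eta$, I may choose a torsion point $P_0\in(\AL)_\eta\ssm R$ and let $C\subset\AL$ be its Zariski closure. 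Since $C\subset\ker[m_0]$ with $m_0$ the order of $P_0$, the curve $C$ is an irreducible component of $\ker[m_0]$, hence non-singular, $\ker[m_0]\rightarrow\B$ being finite \'etale over the non-singular curve $\B$. The point of this choice is that $C\not\subset[N](X)$ for every integer $N\ge1$: the torsion points of $[N](X_\eta)$ are exactly the $[N]Q$ with $Q\in X_\eta$ torsion (for if $[N]Q$ is torsion so is $Q$), so their Zariski closure equals $[N]\bigl(\bigcup_i(B_i+t_i)\bigr)=\bigcup_i(B_i+[N]t_i)\subset R$; as $P_0$ is a torsion point not lying in $R$, it cannot lie in $[N](X_\eta)$, so $C=\overline{\{P_0\}}$ lies in no $[N](X)$.

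For the count I fix $\xi_0\in(\IQ/\IZ)^{2g}$ such that the sheet $\{Q\in(\AL)_\Sigma;\ \Xi(Q)=\xi_0\}$ lies in $C$; this is possible because $C$ meets $(\AL)_\Sigma$ and, over the simply connected $\Sigma$, the finite \'etale cover $C\cap(\AL)_\Sigma\rightarrow\Sigma$ splits into sheets of this form. Let $N\ge c_1^{-1}$. For each of the at least $c_1 N^{2g}$ elements $\xi\in W$ with $N\xi=\xi_0$, choose $Q_\xi\in X_\Sigma$ with $\Xi(Q_\xi)=\xi$ (using $W\subset\Xi(X_\Sigma)$); these $Q_\xi$ are pairwise distinct, having distinct $\Xi$-images. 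The group isomorphism $\Xi|_{(\AL)_\lambda}$ onto $(\IR/\IZ)^{2g}$ is induced by the exponential homomorphism and the period matrix, hence carries $[N]$ to multiplication by $N$; so $\Xi([N]Q_\xi)=N\xi=\xi_0$, and $[N]Q_\xi$ lies in the chosen sheet, in particular in $C$. Finally $\dim_{[N]Q_\xi}\bigl([N](X)\cap C\bigr)=0$, since $C$ is an irreducible curve not contained in $[N](X)$, so that $[N](X)\cap C$ is a proper Zariski closed, hence finite, subset of $C$. This gives $\#\{P\in X(\IC);\ [N]P\in C,\ \dim_{[N]P}\bigl([N](X)\cap C\bigr)=0\}\ge c_1 N^{2g}$ for all $N\ge c_1^{-1}$, which is Proposition~\ref{prop:counting} for $X$ with $c=c_1$.

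The hard part is the choice of $C$ in the second step: it must escape $[N](X)$ for all $N$ simultaneously, so one needs uniform control of the torsion locus of $[N](X)$ as $N$ varies, and the Manin-Mumford Conjecture on the generic fiber supplies exactly this, the finitely many torsion cosets of $X_\eta$ having only finitely many $[N]$-translates. The remaining bookkeeping — identifying the analytic sheets of $C$ over $\Sigma$ with the algebraic multisection, and checking that $\Xi$ intertwines $[N]$ with multiplication by $N$ — is routine.
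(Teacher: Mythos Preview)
Your proof is correct and shares the same skeleton as the paper's: apply Lemma~\ref{lem:imageopen} to get an open $W\subset\Xi(X_\Sigma)$, invoke Manin--Mumford on the generic fiber $X_\eta$, then count via Lemma~\ref{lem:count}. The difference lies in the choice of $C$ and the verification of the dimension condition. The paper extracts from Manin--Mumford finitely many $\varphi_1,\dots,\varphi_n\in\IZ^g\ssm\{0\}$ with every torsion point of $X_\eta$ in some $\ker\varphi_i$, picks $\xi_0\in(\IQ/\IZ)^{2g}$ outside $\bigcup_i\ker\Xi(\varphi_i)$, and takes $C=\ker[T]$ with $T$ the order of $\xi_0$; since this $C$ is reducible, the condition $\dim_Q[N](X)\cap C=0$ requires a separate contradiction argument (a curve through $Q$ in $[N](X)\cap C$ lifts to a torsion section of $X$, forcing $\Xi(\varphi_i)(\xi_0)=0$). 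You instead take $C$ to be the Zariski closure of a single torsion point $P_0\in(\AL)_\eta$ chosen outside $R=\bigcup_{i,k}(B_i+[k]t_i)$, which makes $C$ irreducible and guarantees $C\not\subset[N](X)$ for every $N$ at once; the dimension condition is then immediate. Your route is a bit more economical at that step, at the cost of tracking the full torsion-coset decomposition rather than just the $\varphi_i$. One small remark: your aside on the non-dominating case is unnecessary, since the lemma is only ever invoked for $X$ satisfying the hypotheses of Proposition~\ref{prop:counting}, which include $X$ dominating $\B$.
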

\begin{proof}
 
The generic fiber 
$(\AL)_\eta$ of $\AL\rightarrow \B$
is an abelian variety over the rational function field $\IC(\B)$. It is
 the $g$-th power  of  $(\EL)_\eta$, the generic fiber of
 $\EL\rightarrow \B$. Then $(\EL)_\eta$ is an elliptic curve with non-constant
$j$-invariant, cf. the left-hand side of (\ref{eq:jinvlambda}).
 So $(\EL)_\eta$ cannot have
 complex multiplication. Since
$(\AL)_\eta =(\EL)_\eta^g$ we may identify 
the group of homomorphisms of algebraic groups
 $(\AL)_\eta\rightarrow (\EL)_\eta$ 
with $\IZ^g$. Moreover, when replacing  $\IC(\B)$ by an algebraic
closure $K$ and regarding $(\AL)_\eta$ and $(\EL)_\eta$ over
this larger field we do not get any new homomorphisms
$(\AL)_\eta\rightarrow(\EL)_\eta$.

We recall that the Manin-Mumford Conjecture, a result of Raynaud \cite{Raynaud:MM},
holds for an abelian variety defined over any field of characteristic
$0$. As a consequence 
 there exist  finitely 
many homomorphisms
$\varphi_{1},\ldots,\varphi_{n}\in\IZ^g\ssm\{0\}$
 with the following property.
Any torsion  point of $(\AL)_\eta(K)$ in $X_\eta(K)$
 is contained in the kernel of some
$\varphi_{i}$.

As usual, we consider each $\varphi_i$ as a homomorphism
$\AL\rightarrow \EL$.
We may pick $\xi_0 \in (\IQ/\IZ)^{2g}\subset (\IR/\IZ)^{2g}$ which avoids
$\ker\Xi(\varphi_1)\cup\cdots\cup\ker\Xi(\varphi_n)$. 
Then $\xi_0$ is torsion of order
$T$, say. We shall apply Lemma \ref{lem:count} to $W$ and $\xi_0$. But
first we define $C$ to be $\ker[T]$. This is just the
$T$-torsion subgroup scheme of $\AL$. It is a possibly
reducible non-singular curve 
whose irreducible components dominate $\B$
by  Lemma \ref{lem:horizontal}(iii).

We note that $P$ as in statement of this lemma satisfies the
hypothesis of Lemma \ref{lem:imageopen}. 
So $\Xi(X_\Sigma)$ contains a non-empty open subset 
 $W\subset(\IR/\IZ)^{2g}$.
Let $c>0$ be as in Lemma \ref{lem:count} and $N \ge c^{-1}$. Recalling
$W\subset \Xi(X_\Sigma)$, this lemma
provides at least $cN^{2g}$ points $P'\in X(\IC)$ such that $N\Xi(P') =
\xi_0$. We proceed to show that any such $P'$ lies in the set on the left
of (\ref{eq:propset}). 

We have $0=T\xi_0 = TN\Xi(P') = \Xi([TN]P')$ and 
because $\Xi$ is fiberwise a group isomorphism 
this shows
$Q\in C(\IC)$ for $Q=[N](P')$. We note $\Xi(Q) =\xi_0$. It remains to show 
\begin{equation}
\label{eq:Qisolated}
\dim_Q [N](X)\cap C=0.  
\end{equation}
Let us assume the contrary and suppose that $Z'\subset
[N](X)\cap C$ is an irreducible curve containing $Q$. 
Then $Z'$ dominates $\B$ since it is an irreducible component of $C$.
We may choose an irreducible closed subvariety $Z\subset X$
with $[N](Z) = Z'$. Then  $\dim Z = 1$
by the Fiber Dimension Theorem  since $[N]$ has finite fibers. 
Let $P''\in Z(\IC)$ with $[N](P'')=Q$.
We have  $[TN](Z) = [T](Z') \subset [T](C) = \epsilon_L
(\B)$. Certainly, $Z$ dominates $\B$ 
so the function field of $Z$ contains the function field of $\B$.
The former leads to a torsion point in
$X_\eta(K) \subset (\AL)_\eta(K)$. By
our setup, this torsion point is in the kernel of some
$\varphi_{i}$. This implies 
$Z\subset \ker\varphi_i$
 and in particular $\varphi_i(P'')=0$. 
Diagram (\ref{eq:Xidiag}) implies
$\Xi(\varphi_i)(\Xi(P'')) = 0$. 
If we multiply this equality with $N$ we get
$0=\Xi(\varphi_i)(\Xi([N](P'')))=
\Xi(\varphi_i)(\Xi(Q))= \Xi(\varphi_i)(\xi_0)$. But this contradicts our choice
of $\xi_0$. So (\ref{eq:Qisolated}) must hold true. 
\end{proof}

\subsection{The Degenerate Case}
Throughout this section 
 $X\subset\AL$ will be an irreducible closed subvariety which
dominates $\B$. In general we pose no restriction on the dimension
of $X$. 

The previous discussion, in particular Lemma \ref{lem:propnondegen},
 suggests that we  study the following property more careful.
We call $P\in \ns{X}_\Sigma$  degenerate if
it is not isolated in
 $\Xi|_{X_\Sigma}^{-1}(\Xi(P))$. 
If all points of $\ns{X}_\Sigma$ are degenerate, then  we call $X$ degenerate. 

In order to handle the degenerate case we exploit monodromy of the family
$\AL\rightarrow \B$ using the holomorphic map
$\widetilde \rho_\xi$ from Section \ref{sec:exponential}.


We say that $\xi=(\xi_1,\ldots,\xi_{2g})\in (\IR/\IZ)^{2g}$ is in
general position if
$a_1\xi_1+\cdots +a_{2g}\xi_{2g}\not=0$ in $\IR/\IZ$ for all
$(a_1,\ldots,a_{2g})\in\IZ^{2g}\ssm\{0\}$. 

We define $\widetilde\Sigma = \T(\Sigma)\subset \IH$. This set is
 open in
$\IH$ because $\T$ is holomorphic and non-constant. 
We remark that Lemma \ref{lem:taulocalinv} implies
$\U(\widetilde \Sigma) = \Sigma$.

The next lemma uses Kronecker's Theorem in diophantine approximation
 to extract information on
 the image of $\widetilde\rho_\xi$ for  $\xi$ in general position.
This image is what is refered to colloquially as an analytic subgroup
scheme in the introduction.

\begin{lemma}
\label{lem:applykronecker}
Let $\xi=(\xi_1,\dots,\xi_{2g})\in(\IR/\IZ)^{2g}$ such that $(\xi_2,\xi_4,\ldots,\xi_{2g})\in
(\IR/\IZ)^g$ is in general position.
 Then $\widetilde\rho_\xi(\IH)$
is Zariski dense in $\AL$.
\end{lemma}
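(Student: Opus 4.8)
The plan is to show that the Zariski closure $V \subseteq \AL$ of $\widetilde\rho_\xi(\IH)$ is all of $\AL$ by a monodromy-and-Kronecker argument. First I would record that $V$ dominates $\B$: indeed $\piL \circ \widetilde\rho_\xi = \U \circ (\text{id}) $ maps $\IH$ onto $\Sigma$ (using $\U(\widetilde\Sigma)=\Sigma$ from Lemma~\ref{lem:taulocalinv}), so $V$ has positive-dimensional image in $\B$, hence $\piL|_V$ is dominant and $\dim V \ge 1$. Since $\AL \to \B$ has irreducible generic fiber $(\AL)_\eta = (\EL)_\eta^g$, which is an abelian variety over $K=\overline{\IC(\B)}$ with endomorphisms exactly $\M{g}{\IZ}$ (as in the proof of Lemma~\ref{lem:propnondegen}, since $(\EL)_\eta$ has no CM), it suffices to prove that the generic fiber $V_\eta$ of $V \to \B$ is all of $(\AL)_\eta$.

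Suppose not. Then $V_\eta$ is a proper closed subvariety of $(\EL)_\eta^g$ stable — after possibly passing to a translate — under no nontrivial obstruction; more to the point, a proper subvariety of an abelian variety that is not a coset has bounded geometry, and here the key structural input is: there is a nonzero $\varphi=(a_1,\dots,a_g)\in\IZ^g$ and a torsion point such that $V_\eta$ (or a component) lies in a coset of $\ker\varphi$, OR $V_\eta$ is itself cut out by finitely many such relations. The cleanest route is: after translating by a section, we may assume $\epsilon_L(\B)\subset V$ (note $\widetilde\rho_0 = \epsilon_L\circ\U$, and one can arrange $\xi$ so that $0$ is a limit point, or simply argue with the subgroup generated), so $V_\eta$ is an abelian subvariety of $(\EL)_\eta^g$; being proper, it is killed by some nonzero $\varphi\in\IZ^g$, i.e. $V \subset \ker\varphi$. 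Then $\varphi\circ\widetilde\rho_\xi \equiv 0$ as a map $\IH \to \EL$. But by Lemma~\ref{lem:rho}(i), for $\lambda\in\Sigma$ we have $\varphi(\widetilde\rho_\xi(\T(\lambda))) = \varphi(\exp(\Omega(\lambda)\trans\xi,\lambda))$, and chasing through diagram~(\ref{eq:Xidiag}) this equals $\exp\bigl((a_1\omega_1(\lambda)\xi_1 + a_1\omega_2(\lambda)\xi_2 + \cdots),\lambda\bigr)$ in $(\EL)_\lambda$. For this to vanish identically in $\lambda$, the argument must lie in the period lattice $\IZ + \T(\lambda)\IZ$ for all $\lambda$; comparing with $\Omega(\lambda)$ this forces $(a_1\xi_1 + a_2\xi_3 + \cdots + a_g\xi_{2g-1}, a_1\xi_2 + a_2\xi_4 + \cdots + a_g\xi_{2g}) \in \IZ^2$, i.e. $a_1\xi_2 + \cdots + a_g\xi_{2g} = 0$ in $\IR/\IZ$ with $(a_1,\dots,a_g)\ne 0$ — contradicting that $(\xi_2,\xi_4,\dots,\xi_{2g})$ is in general position.

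The genuinely delicate step, and where I expect the real work to be, is the reduction ``$V_\eta$ proper $\Rightarrow$ $V\subset\ker\varphi$ for some $\varphi\neq 0$''. The map $\widetilde\rho_\xi$ is not a homomorphism in $\tau$, so $\widetilde\rho_\xi(\IH)$ need not be a ``subgroup scheme'' on the nose, and $V$ could a priori be some translated or twisted subvariety. This is exactly where monodromy enters: by Lemma~\ref{lem:rho}(ii),(iii) the monodromy around the cusps $0$ and $1$ sends $\widetilde\rho_\xi(\tau)$ to $\widetilde\rho_{\xi'}(\tau)$ with $\xi' = \xi + 2(\xi_2,0,\dots,\xi_{2g},0)$ resp. $\xi' = \xi - 4(0,\xi_1,\dots,0,\xi_{2g-1})$, and since $\U$ is invariant under the corresponding substitutions, \emph{all} these translates $\widetilde\rho_{\xi'}(\IH)$ have the same Zariski closure $V$. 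Iterating the cusp-$0$ monodromy $n$ times multiplies the ``even'' coordinates of the translation vector by $2n$; because $(\xi_2,\xi_4,\dots,\xi_{2g})$ is in general position, Kronecker's theorem says $\{(2n\xi_2,\dots,2n\xi_{2g}) : n\in\IZ\}$ is dense in $(\IR/\IZ)^g$. Hence for a fixed $\tau$, the closure of $\{\widetilde\rho_{\xi'}(\tau)\}$ over all these $\xi'$ contains a full coset of the ``first-factor-type'' subgroup $\{\exp(\Omega(\lambda)\trans{(t_1,0,t_3,0,\dots)},\lambda)\}$ inside $(\AL)_{\U(\tau)}$; combining with cusp-$1$ monodromy (which analogously fills the complementary ``second-coordinate'' directions) one gets that $V_{\U(\tau)}$ contains a coset of a subgroup of $(\AL)_{\U(\tau)}$ of full dimension $g$, i.e. $V_{\U(\tau)} = (\AL)_{\U(\tau)}$ for $\tau\in\widetilde\Sigma$, and therefore $V=\AL$. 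I would organize the final write-up around this last, cleaner line (directly producing a full-dimensional coset in one fiber via the two monodromies and Kronecker), rather than the abstract-endomorphism detour, and the main obstacle to watch is verifying carefully that the limit of the $\widetilde\rho_{\xi'}(\tau)$ — taken in the Euclidean topology on the fixed compact fiber — indeed lands in $V(\IC)$, which holds because $V(\IC)$ is Euclidean-closed and each $\widetilde\rho_{\xi'}(\IH)\subset V(\IC)$.
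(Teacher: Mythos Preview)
Your preferred line (the second one, via both monodromies and Kronecker) is in the right spirit but has a gap at the cusp-$1$ step. By Lemma~\ref{lem:rho}(iii) the cusp-$1$ monodromy shifts $\xi$ by multiples of $-4(0,\xi_1,0,\xi_3,\ldots,0,\xi_{2g-1})$, so for Kronecker's theorem to give density in the even-indexed directions you would need $(\xi_1,\xi_3,\ldots,\xi_{2g-1})$ to be in general position. The hypothesis only assumes this for $(\xi_2,\xi_4,\ldots,\xi_{2g})$, so your claim that cusp-$1$ ``analogously fills the complementary directions'' is unjustified as written. The gap is repairable: after the cusp-$0$ step and Euclidean closure you have $\widetilde\rho_{\xi'}(\IH)\subset V(\IC)$ for every $\xi'$ sharing the even coordinates of $\xi$ but with \emph{arbitrary} odd coordinates, so you may first replace $\xi$ by such a $\xi'$ whose odd coordinates are themselves in general position and only then invoke cusp-$1$. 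But this extra pass is not in your proposal, and the two-monodromy route ends up longer than necessary. (Your first, ``abstract-endomorphism'' approach you were right to drop: there is no mechanism forcing $V$ to contain the zero section or to be a subgroup scheme.)

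The paper avoids cusp $1$ entirely by a complex-analytic trick. After cusp-$0$ monodromy and Kronecker one knows that the holomorphic map
\[
f(z_1,\ldots,z_g)=\exp\bigl(\Omega(\lambda)\,\trans{(z_1,\xi_2,z_2,\xi_4,\ldots,z_g,\xi_{2g})},\lambda\bigr)
\]
sends a dense subset of $\IR^g$ into $V(\IC)$, hence $f(\IR^g)\subset V(\IC)$ by continuity. The key observation is that $f^{-1}(V(\IC))$ is then a complex \emph{analytic} subset of $\IC^g$ containing $\IR^g$, and the only such subset is $\IC^g$ itself. Since $\omega_1(\lambda)\ne 0$, the image $f(\IC^g)$ is the entire fiber $(\AL)_\lambda(\IC)$; letting $\lambda$ run over $\Sigma$ gives $V=\AL$. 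This ``$\IR^g$ is not contained in any proper analytic subset of $\IC^g$'' step is what replaces your second monodromy and neatly sidesteps the asymmetry in the general-position hypothesis.
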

\begin{proof}
Say $Z$ is the Zariski closure of
  $\widetilde\rho_\xi(\IH)$ in $\AL$. 

We fix $\tau\in \widetilde\Sigma \subset \IH$; then $\tau =
\T(\lambda)$ for some $\lambda\in\Sigma$. 
By hypothesis we have $\widetilde\rho_\xi(\tau+2k)\in Z(\IC)$ for all
$k\in\IZ$.
We apply Lemma \ref{lem:rho}(ii) by induction to obtain
\begin{equation*}
 \widetilde\rho_\xi(\tau+2k)=\widetilde\rho_{\xi+2k(\xi_2,0,\ldots,\xi_{2g},0)}(\tau)
\in Z(\IC).
\end{equation*}

Kronecker's Theorem IV page 53 \cite{Cassels} and our
hypothesis on $\xi$ imply that 
\begin{equation*}
 \{(2k\xi_2,2k\xi_4,\ldots,2k\xi_{2g});\,\,k\in\IZ\}\quad\text{lies
   dense in}
\quad(\IR/\IZ)^g.  
\end{equation*}

Now
\begin{equation*}
  f(z_1,\ldots,z_g) = \exp(\Omega(\lambda)
\trans{( z_1,\xi_2,z_2,\xi_4,\ldots,z_g,\xi_{2g})},\lambda)
\end{equation*}
is a $\IZ^{g}$-periodic holomorphic map
$f:\IC^g\rightarrow\AL(\IC)$.
We recall Lemma \ref{lem:rho}(i) to deduce that
 $f$ takes values in
$Z(\IC)$ at $(2k\xi_2,2k\xi_4,\ldots,2k\xi_{2g})$ for all $k\in\IZ$. 
By continuity we conclude $f(\IR^g) \subset Z(\IC)$. 
So
$f^{-1}(Z(\IC))$ is a complex analytic subset of $\IC^g$ which contains
$\IR^g$. But the only such  set is $\IC^g$ itself. Hence
$f(\IC^g)\subset Z(\IC)$. 
From the definition (\ref{eq:defineOmegat}) of $\Omega$ 
and because $\omega_1$ never vanishes we see 
$f(\IC^g)= (\AL)_{\lambda}(\IC)  \subset Z(\IC)$.

We let $\tau$ vary over  $\widetilde\Sigma$ and use
 $\U(\widetilde\Sigma) = \Sigma$ to find
$(\AL)_\Sigma \subset Z(\IC)$. Now $(\AL)_\Sigma$ is non-empty and  open in 
 $\AL(\IC)$ with respect to the Euclidean topology. It is therefore
Zariski dense in $\AL$ and thus $Z=\AL$.
\end{proof}

Without much effort one can strengthened this argument
to show that $\widetilde\rho_{\xi}(\IH)$ is
not contained in a proper analytic subset of $\AL(\IC)$.

The next lemma uses the fact that $\B$ has dimension $1$ in an
essential way. 

\begin{lemma}
\label{lem:notindep}
If $P\in \ns{X}_\Sigma$   is 
degenerate,
 then 
$\widetilde \rho_{\Xi(P)} (\IH) \subset X(\IC)$.
\end{lemma}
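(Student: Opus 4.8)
The plan is to exploit the meaning of "degenerate" together with the local exponential picture. Let $P\in\ns{X}_\Sigma$ be degenerate, so $P$ is \emph{not} isolated in $\Xi|_{X_\Sigma}^{-1}(\Xi(P))$. Set $\lambda_0=\piL(P)\in\Sigma$ and $\tau_0=\T(\lambda_0)$. First I would work in a small neighborhood $U$ of $P$ as in the proof of continuity of $\Xi$, with the holomorphic maps $\log^0:U\to\IC^g$ and $\widetilde\xi:U\to\IR^{2g}$, and revisit the analytic set
\begin{equation*}
Z=\{(Q,w)\in(X(\IC)\cap U)\times\IC^{2g};\,\,\Omega(\piL(Q))w-\log^0(Q)=0\}
\end{equation*}
from Lemma \ref{lem:imageopen}, together with the projection $q:Z\to\IC^{2g}$ onto the last $2g$ coordinates. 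The hypothesis says exactly that $q^{-1}(q(P,\xi))$ does \emph{not} have $(P,\xi)$ as an isolated point, where $\xi=\widetilde\xi(P)$; since $Z$ and $\IC^{2g}$ are complex manifolds of equal dimension $2g$, this forces $q$ to fail to be finite at $(P,\xi)$, hence the fiber $q^{-1}(\xi)$ contains an analytic curve through $(P,\xi)$. Projecting to the $X$-factor, I get a positive-dimensional analytic set $A\subset X(\IC)\cap U$, containing $P$, on which $\Xi$ is constant equal to $\Xi(P)$.

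The key point is now to identify such points via the map $\widetilde\rho_{\Xi(P)}$. For $Q\in A$ write $\lambda=\piL(Q)\in\Sigma$, $\tau=\T(\lambda)$. Because $\Xi(Q)=\Xi(P)=:\xi$, the definition of $\Xi$ and Lemma \ref{lem:rho}(i) give $Q=\exp(\Omega(\lambda)\trans{\xi},\lambda)=\widetilde\rho_\xi(\T(\lambda))=\widetilde\rho_\xi(\tau)$. Thus $\widetilde\rho_\xi$ maps $\T(\piL(A))$ into $X(\IC)$. Here is where $\dim\B=1$ is essential: $A$ is a positive-dimensional analytic subset of $X$, and since $\piL|_X$ has $1$-dimensional image at most, either $\piL$ is non-constant on some analytic curve in $A$ — in which case $\piL(A)$ contains a nonempty open subset of $\Sigma$, so $\T(\piL(A))$ contains a nonempty open subset of $\widetilde\Sigma\subset\IH$ — or $\piL$ is constant on all of $A$, i.e. $A$ lies in a single fiber $(\AL)_{\lambda_0}$. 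I would first dispose of the second case: on the fiber, $\Xi|_{(\AL)_{\lambda_0}}$ is a group isomorphism onto $(\IR/\IZ)^{2g}$ by Lemma \ref{lem:exp}(ii), so $\Xi$ constant on $A$ forces $A$ a single point, contradicting $\dim A\ge 1$. Hence we are in the first case and $\widetilde\rho_\xi$ maps a nonempty open subset $V\subset\IH$ into $X(\IC)$.

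Finally I would upgrade "open subset of $\IH$" to "all of $\IH$". The map $\widetilde\rho_\xi:\IH\to\AL(\IC)$ is holomorphic, so $\widetilde\rho_\xi^{-1}(X(\IC))$ is an analytic subset of the connected complex manifold $\IH$; as it contains the nonempty open set $V$, it must be all of $\IH$. Therefore $\widetilde\rho_{\Xi(P)}(\IH)\subset X(\IC)$, as claimed. The main obstacle I anticipate is the bookkeeping in the first paragraph: making precise, from failure of isolatedness, that $q^{-1}(q(P,\xi))$ genuinely contains a $1$-dimensional analytic germ (as opposed to just being non-discrete as a set), so that its projection $A$ is honestly positive-dimensional — this is where one invokes that an analytic set in which a point is not isolated has positive dimension at that point, using the local description of $Z$ as a submanifold and of $q$ as a holomorphic map between equidimensional manifolds.
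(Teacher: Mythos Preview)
Your argument is correct, and the core idea --- pass from non-isolation to a positive-dimensional analytic germ, then use that $\IH$ is one-dimensional to upgrade a local inclusion to $\widetilde\rho_{\Xi(P)}(\IH)\subset X(\IC)$ --- is the same as the paper's. The execution differs, though, and the paper's is shorter. Rather than extract an analytic set $A\subset X(\IC)\cap U$ and then analyse $\piL|_A$ to produce an open subset of $\Sigma$, the paper goes straight to $\IH$: it observes that $\Xi^{-1}(\xi)\subset\widetilde\rho_\xi(\widetilde\Sigma)$ (precisely your identity $Q=\widetilde\rho_\xi(\T(\piL(Q)))$ for $\Xi(Q)=\xi$), so the uncountable fiber $\Xi|_{X_\Sigma}^{-1}(\xi)$ yields an uncountable set $Y=\widetilde\rho_\xi^{-1}(X(\IC))\subset\IH$. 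Since $Y$ is analytic in the one-dimensional manifold $\IH$, it has a point of positive dimension, and the Identity Lemma gives $Y=\IH$. This bypasses the case split on $\piL|_A$ and the openness argument entirely; in effect, your observation that $\Xi$ is fiberwise injective (ruling out $\piL|_A$ constant) is replaced by the single remark that $Z\subset\widetilde\rho_\xi(\widetilde\Sigma)$, which already parametrises the fiber by points of $\IH$.

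One caution: you invoke ``$Z$ and $\IC^{2g}$ are complex manifolds of equal dimension $2g$'', but $\dim Z=\dim X+g$, and Lemma~\ref{lem:notindep} does \emph{not} assume $\dim X=g$ (only that $X$ dominates $\B$). Fortunately you never actually use this equality: all you need is that $q^{-1}(\xi)$ is an analytic set in which $(P,\xi)$ is not isolated, hence has positive dimension there --- and that is dimension-free.
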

\begin{proof}
For brevity we set $\xi=\Xi(P)$.
By the first assertion of Lemma \ref{lem:imageopen} the fiber $Z =
\Xi|_{X_\Sigma}^{-1}(\xi)$ is uncountable. 

We claim that $\Xi^{-1}(\xi)\subset \widetilde\rho_\xi(\widetilde
\Sigma)$. Indeed, say $P'\in(\AL)_\Sigma$ with $\Xi(P')=\xi$. 
By Lemma \ref{lem:taulocalinv}(i) 
we have  $\U(\tau)=\piL(P')$ with $\tau = \T(\pi_L(P'))$. Hence
$P'=\exp(\Omega(\piL(P'))\xi,\piL(P')) = \widetilde
\rho_\xi(\tau)$ by Lemma \ref{lem:rho}(i), as desired.

The claim implies  $Z\subset \widetilde \rho_\xi(\widetilde
\Sigma)$. 
So $Y = \widetilde\rho^{-1}_\xi(X(\IC))$ is an uncountable
complex analytic subset of $\IH$.
In particular, there is $\tau\in Y$ with $\dim_\tau Y \ge 1$. 

Now we make use of the trivial, but crucial, fact that $\IH$ has
dimension $1$
at all points. 
So we must have $\dim_\tau Y = \dim_\tau\IH$.
With this, the  Identity Lemma, page 167 \cite{CAS} implies $Y=\IH$.
In other words, $\widetilde\rho_\xi(\IH)\subset X(\IC)$.
\end{proof}

The following possibly well-known
statement helps to study the degenerate case.

\begin{lemma}
\label{lem:translate}
  Let $A$ be an abelian variety of dimension $g$
 and $Y\subset A$ an irreducible closed
  subvariety. We let $s:A^g\rightarrow A$ denote the morphism
$(P_1,\ldots,P_g)\rightarrow P_1+\cdots +P_g$. Then $s(Y^g)$ is the
  translate of an abelian subvariety of $A$. 
\end{lemma}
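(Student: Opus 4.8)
The plan is to show that $W=s(Y^g)$ is a closed subvariety which is stable under translation by a fixed abelian subvariety $B\subset A$, and that $W$ is itself a coset of $B$; since $s$ is proper (a morphism of projective varieties), $W$ is automatically Zariski closed and irreducible (being the image of the irreducible $Y^g$). First I would fix a point $y_0\in Y(\overline k)$ and replace $Y$ by its translate $Y-y_0$, so that $0\in Y$; this does not affect whether $s(Y^g)$ is a translate of an abelian subvariety, since translating $Y$ by $y_0$ translates $s(Y^g)$ by $g\,y_0$. So from now on assume $0\in Y$, and I aim to prove $W=s(Y^g)$ is an abelian subvariety.

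The key observation is a chain of inclusions exploiting $0\in Y$. Because $0\in Y$, for every $n\ge 1$ we have $nW := \{P_1+\cdots+P_n : P_i\in W\}\subset s(Y^{gn})\subset W$ as soon as $gn\ge g$, hence $W+W\subset W$; more precisely $W$ is closed under addition. Indeed given $P,Q\in W$ write $P=s(\mathbf{p})$, $Q=s(\mathbf{q})$ with $\mathbf p,\mathbf q\in Y^g$; then $P+Q=s(p_1)+\cdots+s(\ )$— more cleanly, $P+Q = (p_1+\cdots+p_g)+(q_1+\cdots+q_g)$, and since $0\in Y$ this is $s$ applied to the $g$-tuple obtained by pairwise sums $p_i+q_i$ only if those sums lie in $Y$, which need not hold. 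So instead I argue as follows: consider the morphism $s_{2g}\colon A^{2g}\to A$ and note $s(Y^g)+s(Y^g)=s_{2g}(Y^{2g})$, and I claim $s_{2g}(Y^{2g})=s_g(Y^g)$. The inclusion $\supset$ uses $0\in Y$ (pad a $g$-tuple with zeros). The inclusion $\subset$ is the real content: I need $s_{2g}(Y^{2g})\subset s_g(Y^g)$, i.e. that adding up $2g$ points of $Y$ gives nothing more than adding up $g$ of them. This should follow because $\dim W = \dim s(Y^g)$ stabilizes: the ascending chain $s(Y^g)\subset s(Y^{2g})\subset s(Y^{3g})\subset\cdots$ of irreducible closed subvarieties of $A$ has bounded dimension, so it stabilizes, say at $s(Y^{Ng})=s(Y^{(N+1)g})=:W$. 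Then $W+W\subset s(Y^{2Ng})=W$ (using stabilization and $gN\ge g$), and $-W=s((-Y)^g)$; but $0\in Y$ and $Y$ irreducible does not give $-Y=Y$, so to get $-W\subset W$ I use that $W$ is a closed submonoid of the abelian variety $A$ containing $0$: a closed irreducible subsemigroup of an abelian variety containing the identity is an abelian subvariety. This last fact is standard — a nonempty closed subsemigroup $W$ of an algebraic group $A$ with $W=W+W$ and $W$ a closed subvariety: for any $w\in W$ the map $x\mapsto x+w$ is an automorphism of $A$ mapping $W$ into $W$, hence (by properness/dimension, as $W$ is irreducible closed) onto $W$, so $W-w\subset W$ too; thus $W$ is a subgroup, and being closed and irreducible it is an abelian subvariety.

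Putting it together: after the initial translation reducing to $0\in Y$, the chain $s(Y^{g})\subset s(Y^{2g})\subset\cdots$ stabilizes to some irreducible closed $W$ with $W+W=W$ and $0\in W$; by the semigroup-to-subgroup argument $W$ is an abelian subvariety; and $W=s(Y^{Ng})\supset s(Y^g)$. For the reverse, I need $s(Y^g)=W$, i.e. $g$ summands already suffice. Here I would use that $W$ is a group: $s(Y^g)$ generates $W$ as a group (since the union of $s(Y^{ng})$ over all $n$ is $W$ by stabilization and these are increasing), and a standard fact (e.g. the argument that finitely many irreducible subvarieties through $0$ generating a group do so after boundedly many steps, cf. the proof that $V-V$ is a subgroup when $V$ generates it) gives that the generation stabilizes after finitely many steps — but we have shown it stabilizes exactly after $N$ steps to give $W=s(Y^{Ng})$. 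To conclude $s(Y^g)=W$ rather than merely $s(Y^{Ng})=W$, note $Y$ was only required irreducible; if $s(Y^g)\subsetneq W$ the inclusion could be strict. Rereading the statement, it asserts $s(Y^g)$ — with exactly $g$ copies — is a translate of an abelian subvariety. So the genuine claim must be that $s(Y^g)$ itself already equals $W$. To see this: $s(Y^g)$ is irreducible closed of some dimension $d$; consider the morphism $Y\times s(Y^{g})\to s(Y^{g+1})$ — if this were not dominant onto something bigger we'd be stuck, but in fact once we know $W$ is a subgroup and $0\in Y\subset W$, we get $s(Y^g)\subset W$; and for the reverse, since $W$ is generated by $s(Y)$ (one copy!) as a semigroup — because $s(Y)=Y\subset W$ and $\bigcup_n s(Y^n)$ is $W$ — the key point is that for an irreducible subvariety $Y$ of an abelian variety with $0\in Y$, the semigroup $\langle Y\rangle=\bigcup_{n\ge1}s(Y^n)$ stabilizes at $n=\dim W\le g$, i.e. $s(Y^{\dim W})=W$; since $\dim W\le\dim A=g$, already $s(Y^g)=W$.

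\begin{proof}
We may translate $Y$ so as to assume $0\in Y$; it suffices to prove $s(Y^g)$ is an abelian subvariety of $A$ in this case. Since $0\in Y$, for every $n\ge 1$ we have $s_n(Y^n)\subset s_{n+1}(Y^{n+1})$ (pad an $n$-tuple with a zero), where $s_m\colon A^m\to A$ denotes the sum of $m$ points. Each $s_m(Y^m)$ is an irreducible closed subvariety of $A$ (the image of the irreducible projective variety $Y^m$ under a morphism), and the dimensions are bounded by $\dim A=g$. Hence the ascending chain $s_1(Y)\subset s_2(Y^2)\subset\cdots$ stabilizes; let $W=s_N(Y^N)$ be its stable value, so $W=s_M(Y^M)$ for all $M\ge N$.

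From $s_{2N}(Y^{2N})=s_N(Y^N)+s_N(Y^N)$ and $2N\ge N$ we get $W+W\subset W$. Also $0\in Y\subset W$. Thus $W$ is an irreducible closed subvariety of $A$ which is stable under addition and contains $0$. For any $w\in W$, the translation $t_w\colon A\to A$, $x\mapsto x+w$, is an automorphism of varieties mapping $W$ into $W$; since $W$ is irreducible, closed, and $t_w$ has finite (indeed trivial) fibers, $t_w(W)$ is a closed irreducible subvariety of $W$ of the same dimension, hence $t_w(W)=W$. Therefore $W-w\subset W$ for all $w\in W$, so $W$ is a subgroup of $A$; being closed, irreducible, and containing $0$, it is an abelian subvariety.

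It remains to show $s_g(Y^g)=W$. Let $d=\dim W$; note $d\le g$. Consider the increasing sequence of irreducible closed subvarieties $s_1(Y)\subset s_2(Y^2)\subset\cdots\subset W$ of $W$. I claim this stabilizes by step $d$, i.e. $s_d(Y^d)=W$. If $s_m(Y^m)\subsetneq s_{m+1}(Y^{m+1})$, then $\dim s_{m+1}(Y^{m+1})>\dim s_m(Y^m)$, because $s_{m+1}(Y^{m+1})=s_m(Y^m)+Y$ is a union of translates of the irreducible $s_m(Y^m)$ inside $W$: if it had the same dimension it would be a single irreducible component, forcing equality $s_m(Y^m)+Y=s_m(Y^m)$, hence (adding $0$'s) equality with $s_{m+1}(Y^{m+1})$, a contradiction. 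Thus the dimension strictly increases at each non-stable step, so after at most $d$ steps it reaches its maximum $d=\dim W$, and then $s_d(Y^d)$ is an irreducible closed subvariety of $W$ of dimension $\dim W$, hence equals $W$. Since $d\le g$ and the sequence is increasing, $s_g(Y^g)=W$ as well. Therefore $s(Y^g)=W$ is an abelian subvariety, and in the original (untranslated) situation $s(Y^g)$ is a translate of an abelian subvariety.
\end{proof}
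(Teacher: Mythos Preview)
Your proof is correct and follows essentially the same approach as the paper's: translate so that $0\in Y$, consider the increasing chain $s_1(Y)\subset s_2(Y^2)\subset\cdots$, use a dimension count to find stabilization within the first $g$ steps, and then show the stable value is a subgroup via the surjectivity of translation by any of its elements. The paper phrases the dimension step as a pigeonhole on $1\le \dim s_1(Y)\le\cdots\le \dim s_g(Y^g)\le g-1$ to locate two consecutive equal terms, which is exactly your observation that each strict inclusion forces a strict dimension increase.
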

\begin{proof}
After translating $Y$ we may assume $0\in Y(\IC)$. We  also
immediately reduce to the case $Y\not=0$ and $s(Y^g)\not= A$. 
For $1\le k\le g$ we let $s_k : A^k\rightarrow A$ denote the morphism
$(P_1,\ldots,P_k)\rightarrow P_1+\cdots + P_k$. 
Then $s_1(Y) \subset s_2(Y^2) \subset\cdots\subset s_g(Y^g)$ because
$0\in Y(\IC)$. Each $s_k(Y^k)$ is an irreducible closed subvariety
of $A$.
The dimensions satisfy
$1\le \dim s_1(Y) \le\cdots\le \dim s_g(Y^g) \le g-1$. By the
Pigeonhole Principle there exist $k < l$ such that 
$\dim s_k(Y^k) = \dim s_l(Y^l)$, we may assume $l=k+1$. We must even have 
$s_k(Y^k) = s_{k+1}(Y^{k+1}) = B$ because these varieties
are irreducible. Certainly, $0\in B(\IC)$ and if 
 $P_i,Q_i\in Y(\IC)$ for $1\le i\le k$,
then 
$(P_1+\cdots + P_k + Q_1)+Q_2+\cdots +Q_k \in B(\IC)+
Q_2+\cdots + Q_k$. By induction we get
$P_1+\cdots + Q_k\in B(\IC)$, so $B$ is closed under
addition.
If $P\in B(\IC)$ is fixed, then $Q\mapsto Q+P$ defines a
proper morphism $B\rightarrow B$ with finite fibers. 
Comparing dimension we see that this morphism is surjective, so there
is $Q\in B(\IC)$ with $P+Q=0$.  Hence $B$ is closed under
inversion too. Therefore, $B$ is an abelian subvariety of $A$.
It follows that $s(Y^g)=B$.
\end{proof}

\begin{lemma}
\label{lem:degenerate}
Let us assume that $X$ is degenerate and 
$X\not=\AL$. 
There exist $\varphi\in\IZ^g\ssm \{0\}$ and an uncountable subset $\Sigma'\subset \Sigma$
 such that for any $\lambda\in \Sigma'$
there is
an irreducible component $X'_\lambda$ of $X_\lambda$ with
$\dim \varphi(X'_\lambda) = 0$.
\end{lemma}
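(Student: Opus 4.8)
The plan is to combine Lemma \ref{lem:notindep} with the contrapositive of Lemma \ref{lem:applykronecker}, and then to pick out a single homomorphism $\varphi$ by a Baire category argument on $\ns{X}_\Sigma$. Since $X$ is degenerate, every $P\in\ns{X}_\Sigma$ is degenerate, so Lemma \ref{lem:notindep} gives $\widetilde\rho_{\Xi(P)}(\IH)\subseteq X(\IC)$; as $X\subsetneq\AL$ is Zariski closed, $\widetilde\rho_{\Xi(P)}(\IH)$ is not Zariski dense, so by Lemma \ref{lem:applykronecker} the tuple $((\Xi(P))_2,(\Xi(P))_4,\ldots,(\Xi(P))_{2g})$ is not in general position. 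Hence there is $\varphi=(a_1,\ldots,a_g)\in\IZ^g\ssm\{0\}$ with $a_1(\Xi(P))_2+\cdots+a_g(\Xi(P))_{2g}=0$ in $\IR/\IZ$, which by diagram (\ref{eq:Xidiag}) says precisely that the second coordinate of $\Xi(\varphi(P))$ vanishes. Setting $V_\varphi=\{Q\in\ns{X}_\Sigma:\ \text{the second coordinate of }\Xi(\varphi(Q))\text{ is }0\}$, this shows $\ns{X}_\Sigma=\bigcup_{\varphi\in\IZ^g\ssm\{0\}}V_\varphi$, and each $V_\varphi$ is closed in $\ns{X}_\Sigma$ as the preimage under a continuous map of a closed subgroup of $(\IR/\IZ)^2$.

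Next I would invoke Baire category: $\ns{X}_\Sigma$ is a nonempty complex manifold (nonempty since $X$ dominates $\B$), hence a Baire space, so some $V_\varphi$ — fix this $\varphi\in\IZ^g\ssm\{0\}$ — contains a nonempty Euclidean open subset $U\subseteq\ns{X}_\Sigma\subseteq X(\IC)$. Put $\Sigma'=\piL(U)\subseteq\Sigma$. Because a nonempty Euclidean open subset of the irreducible variety $X$ cannot be covered by countably many proper closed subvarieties, and each fiber $X_\lambda$ is such a subvariety ($X$ dominates $\B$), the set $\Sigma'$ is uncountable. Fix $\lambda\in\Sigma'$, choose $P\in U$ with $\piL(P)=\lambda$, and let $X'_\lambda$ be an irreducible component of $X_\lambda$ containing $P$, so that $U\cap X'_\lambda$ is a nonempty Euclidean open subset of $X'_\lambda(\IC)$.

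It remains to show $\dim\varphi(X'_\lambda)=0$. Unwinding the definition of $\Xi$ on $(\EL)_\Sigma$, the vanishing of the second coordinate of $\Xi(\varphi(Q))$ for $Q\in U$ means $\varphi(Q)=\exp(\omega_1(\piL(Q))\,t,\piL(Q))$ for some $t\in\IR$; hence $\varphi$ maps $U\cap X'_\lambda$ into $R_\lambda:=\exp(\omega_1(\lambda)\IR,\lambda)$, the image in $(\EL)_\lambda(\IC)$ of a real one-parameter subgroup, which is nowhere dense in $(\EL)_\lambda(\IC)$. If $\dim\varphi(X'_\lambda)$ were $1$, then $\varphi|_{X'_\lambda}$ would be a dominant morphism onto the curve $(\EL)_\lambda$, hence smooth and submersive on a dense open subset of $X'_\lambda$ which meets $U$, and therefore open there for the Euclidean topology; so $\varphi(U\cap X'_\lambda)$ would contain a nonempty open subset of $(\EL)_\lambda(\IC)$, contradicting $\varphi(U\cap X'_\lambda)\subseteq R_\lambda$. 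Thus $\dim\varphi(X'_\lambda)=0$, and $\varphi$ together with $\Sigma'$ has the required properties.

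The step I expect to be the main obstacle is this last one: converting the purely topological containment $\varphi(U\cap X'_\lambda)\subseteq R_\lambda$ into the algebraic conclusion $\dim\varphi(X'_\lambda)=0$. It rests on the openness of non-constant holomorphic maps (applied on the smooth locus of $X'_\lambda$, via generic smoothness of $\varphi|_{X'_\lambda}$ over the curve $(\EL)_\lambda$) and on $R_\lambda$ having empty interior. A secondary subtlety is that Baire category supplies only a single $\varphi$ valid on some open set $U$, so one must separately check that $\piL(U)$ is uncountable and carefully keep track of which irreducible component $X'_\lambda$ of each fiber is being used.
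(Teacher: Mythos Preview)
Your proof is correct and in fact takes a cleaner route than the paper's. The key difference is that you exploit the precise hypothesis of Lemma~\ref{lem:applykronecker}: only the even-indexed coordinates $(\xi_2,\xi_4,\ldots,\xi_{2g})$ need to fail general position, which already gives a relation indexed by $\varphi\in\IZ^g\ssm\{0\}$. You then run Baire category over the countable family $\{V_\varphi\}_{\varphi\in\IZ^g\ssm\{0\}}$ and obtain a single $\varphi$ immediately. The paper instead relaxes this to the coarser statement that $\Xi(P)\in(\IR/\IZ)^{2g}$ is not in general position, does Baire over $a\in\IZ^{2g}\ssm\{0\}$, and then has to work its way back to a $\varphi\in\IZ^g$: it invokes Lemma~\ref{lem:translate} (the $g$-fold sum trick) to show $s({X'_\lambda}^g)$ is a translate of a proper abelian subvariety, restricts to transcendental $\lambda$ so that $(\EL)_\lambda$ has no complex multiplication and hence every proper abelian subvariety lies in $\ker\varphi_\lambda$ for some $\varphi_\lambda\in\IZ^g\ssm\{0\}$, and finally uses a second pigeonhole over $\IZ^g$ to make $\varphi_\lambda$ independent of $\lambda$.

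Your endgame --- the open-mapping argument pitting $\varphi(U\cap X'_\lambda)$ against the nowhere-dense real circle $R_\lambda\subset(\EL)_\lambda(\IC)$ --- is the same mechanism the paper uses, just carried out on $(\EL)_\lambda$ rather than on $(\AL)_\lambda$. Your approach thus bypasses Lemma~\ref{lem:translate} and the transcendence/no-CM step entirely; the paper's detour buys nothing extra here, though the sum-to-abelian-subvariety lemma is of independent interest. One minor remark: the paper shows $\piL(U)$ is actually \emph{open} in $\B(\IC)$ (open mapping for non-constant holomorphic maps), which is stronger than the uncountability you establish via Baire; either suffices for the lemma.
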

\begin{proof}
Lemmas \ref{lem:applykronecker} and \ref{lem:notindep} 
 imply that if $P\in\ns{X}_\Sigma$, then $\Xi(P)$ is not in general position.
If $a=(a_1,\ldots,a_{2g})\in \IZ^{2g}$ we let
$G_a$ denote the closed subgroup $\{(\xi_1,\ldots,\xi_{2g})\in (\IR/\IZ)^{2g};\,\,
a_1\xi_1+\cdots +a_{2g}\xi_{2g} = 0 \} \subset (\IR/\IZ)^{2g}$. Then
\begin{equation*}
  X_\Sigma = (X\ssm\ns{X})_\Sigma\cup
 \bigcup_{a\in\IZ^{2g}\ssm\{0\}} \Xi^{-1}(G_a)\cap X_\Sigma.
\end{equation*}
Each of the countably many sets in the union above is 
closed in $X_\Sigma$.
Of course, $X_\Sigma$ is non-empty because $X$ dominates $\B$. 
So the interior in $X_\Sigma$ of one of the sets
\begin{equation*}
  (X\ssm\ns{X})_\Sigma, \quad \Xi^{-1}(G_a)\cap X_\Sigma \text{ with
  }a\in\IZ^{2g}\ssm \{0\}
\end{equation*}
is non-empty by the Baire Category Theorem. But it cannot be 
 $(X\ssm \ns{X})_\Sigma$  since the singular locus
 is a Zariski closed and proper subset of $X$. So 
 there is $a\in\IZ^{2g}\ssm\{0\}$ and a non-empty open
set $U\subset X_\Sigma$ with 
\begin{equation}
\label{eq:XisubsetHa}
\Xi(U)\subset G_a.
\end{equation}

We remark that $\piL|_{X(\IC)}:X(\IC)\rightarrow \B(\IC)$ is a
non-constant holomorphic function and $\B(\IC)\subset \IC$ is open. So
$\piL(U)$ is open in $\B(\IC)$ by the corollary on page 109
\cite{CAS}. 
Let $\Sigma'$  be the set of transcendental elements in $\piL(U)$. 
Then $\Sigma'$ is uncountable 
and if $\lambda\in\Sigma'$
then $(\EL)_\lambda$ does not have complex
multiplication because its $j$-invariant
is the transcendental number (\ref{eq:jinvlambda}).

Let $V\subset \IR^{2g}$ be the preimage of $G_a$ under
$\IR^{2g}\rightarrow (\IR/\IZ)^{2g}$.
Say $\lambda\in\Sigma'$ and 
let $X'_\lambda$ be an irreducible component of the fiber $X_\lambda$ 
 with $U_\lambda = X'_\lambda(\IC)\cap U\not=\emptyset$.
It follows from
(\ref{eq:XisubsetHa}) that $U_\lambda\subset
\exp(\Omega(\lambda)V,\lambda)$. 
In fact, we even have
$P_1+\cdots+P_g \in \exp(\Omega(\lambda)V,\lambda)$ for 
$P_1,\ldots,P_g\in U_\lambda$ since $V$ is a group. 
In the notation of Lemma
\ref{lem:translate} we can restate this as 
\begin{equation}
\label{eq:fUgsubset}
 s(U_\lambda^g)\subset \exp(\Omega(\lambda)V,\lambda). 
\end{equation}

We claim that $s({X'_\lambda}^g)\not= (\AL)_\lambda$. This will
complete the proof of the lemma in view of Lemma \ref{lem:translate}
and the following argument.
Any proper algebraic subgroup of $(\AL)_\lambda$ is
 in the kernel of a non-trivial homomorphism
$(\AL)_\lambda\rightarrow (\EL)_\lambda$ which can be identified with
an element $\varphi_\lambda \in \IZ^{g}\ssm\{0\}$ since $(\EL)_\lambda$ lacks
complex multiplication. 
Moreover, since $\IZ^g\ssm\{0\}$ is countable
we may  assume that $\varphi_\lambda$ is independent of $\lambda$
after  replacing $\Sigma'$ by an
uncountable subset.

To prove our claim let us suppose 
$s({X'_\lambda}^g) = (\AL)_\lambda$ and derive
contradiction. The set $U_\lambda^g$ is non-empty and open in
${X'_\lambda}^g(\IC)$,  so it  lies Zariski dense. Hence it 
contains a point where the variety ${X'_\lambda}^g$ is non-singular and 
where $s|_{{X'_\lambda}^g}$ has maximal rank. 
The holomorphic map  $s|_{{X'_\lambda}^g(\IC)}$ is open at this point. 
Thus $s(U_\lambda^g)$ contains a non-empty  open subset of 
$(\AL)_\lambda(\IC)$. So (\ref{eq:fUgsubset}) implies that 
$V$ contains a non-empty open  subset of $\IR^{2g}$,
a contradiction. 
\end{proof}

The statement of this lemma is void if $\dim X=1$. Indeed,
in this case every irreducible component of $X_\lambda$ is a point
and, as such, the
 translate of the trivial abelian subvariety of
$(\AL)_\lambda$. 




\begin{lemma}
\label{lem:degenerate2}
Let us assume that $X$ is degenerate and $X \not=\AL$.  There exists
$\varphi\in\IZ^g\ssm\{0\}$ such that 
$\dim \varphi(X) \le 1$.
\end{lemma}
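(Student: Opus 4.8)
The plan is to derive this by strengthening Lemma \ref{lem:degenerate}. I would apply that lemma to produce $\varphi\in\IZ^g\ssm\{0\}$ and an uncountable set $\Sigma'\subset\Sigma$ so that for every $\lambda\in\Sigma'$ some irreducible component $X'_\lambda$ of the fiber $X_\lambda$ satisfies $\dim\varphi(X'_\lambda)=0$; since $\varphi\colon\AL\to\EL$ is proper, $\varphi(X'_\lambda)$ is a single point of $(\EL)_\lambda$, which I write $Q_\lambda$. The goal is to show that this very $\varphi$ satisfies $\dim\varphi(X)\le 1$. Throughout put $d=\dim X$ and recall that $X$ dominates $\B$.

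First I would note that $\dim X'_\lambda=d-1$ for all $\lambda\in\Sigma'$: the fiber $X_\lambda$ is a proper Zariski closed subset of the irreducible variety $X$, so $\dim X'_\lambda\le d-1$, while the Fiber Dimension Theorem forces $\dim X'_\lambda\ge d-1$. The key step is then to prove that $\bigcup_{\lambda\in\Sigma'}X'_\lambda$ is Zariski dense in $X$. Let $Z\subset X$ be its Zariski closure; since $\pi_L(Z)\supset\Sigma'$ is infinite, $Z$ dominates $\B$. If $Z\ne X$ then $\dim Z\le d-1$, and applying the Fiber Dimension Theorem to the irreducible components of $Z$ one would find a finite set $F\subset\B$ with $\dim Z_\lambda\le d-2$ for every $\lambda\notin F$; choosing $\lambda\in\Sigma'\ssm F$ and using $X'_\lambda\subset Z_\lambda$ gives $\dim Z_\lambda\ge d-1$, a contradiction. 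Hence $Z=X$. Because $\varphi$ is proper, and hence closed, this yields
\[
\varphi(X)=\varphi(Z)=\overline{\textstyle\bigcup_{\lambda\in\Sigma'}\varphi(X'_\lambda)}=\overline{\{\,Q_\lambda:\lambda\in\Sigma'\,\}}.
\]

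Finally I would argue by contradiction: suppose $\dim\varphi(X)=2$. Since $\EL$ is irreducible of dimension $2$ this gives $\varphi(X)=\EL$, so on the one hand $\{Q_\lambda:\lambda\in\Sigma'\}$ is Zariski dense in $\EL$, and on the other hand $\varphi|_X\colon X\to\EL$ is dominant with $d\ge 2$ and generic fiber of dimension $d-2$. By upper semicontinuity of the fiber dimension the locus $T\subset X$ of points through which the fiber of $\varphi|_X$ has dimension $\ge d-1$ is Zariski closed, hence $W:=\varphi|_X(T)$ is Zariski closed in $\EL$ ($\varphi|_X$ being proper), and $W\ne\EL$ because the generic fiber of $\varphi|_X$ has dimension $d-2$. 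As the $Q_\lambda$ lie Zariski dense in $\EL$, some $\lambda_0\in\Sigma'$ has $Q_{\lambda_0}\notin W$, i.e. $\dim(\varphi|_X)^{-1}(Q_{\lambda_0})\le d-2$; but $X'_{\lambda_0}$ is an irreducible closed subset of $(\varphi|_X)^{-1}(Q_{\lambda_0})$ of dimension $d-1$, which is absurd. Therefore $\dim\varphi(X)\le 1$. The one delicate point is the density assertion $\overline{\bigcup_{\lambda}X'_\lambda}=X$: it is exactly what upgrades the fiberwise degeneracy supplied by Lemma \ref{lem:degenerate} to a statement about $\dim\varphi(X)$ itself rather than merely about images of individual components of fibers.
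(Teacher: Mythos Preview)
Your argument is correct and follows essentially the same route as the paper: invoke Lemma~\ref{lem:degenerate}, establish that $\bigcup_{\lambda\in\Sigma'}X'_\lambda$ is Zariski dense in $X$, and then compare the fiber dimension of $\varphi|_X$ at points of this union (which is $\ge d-1$) with the generic fiber dimension $d-\dim\varphi(X)$. The paper concludes directly from the Fiber Dimension Theorem at this point, whereas you take a short detour through $\varphi(X)=\overline{\{Q_\lambda\}}$ and an upper-semicontinuity contradiction; the extra step is harmless but unnecessary, since density already guarantees that the generic fiber dimension is attained somewhere in the union.
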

\begin{proof}
Let $\varphi,\Sigma',$ and $X'_\lambda$ be as in Lemma \ref{lem:degenerate}.
 The 
 Fiber Dimension Theorem applied to 
$\piL|_X:X\rightarrow \B$ implies 
$X'_\lambda \ge \dim X-1$ for all $\lambda\in \Sigma'$. So, the fiber of
$\varphi|_X:X\rightarrow \EL$ through any point of
$ \bigcup_{\lambda\in \Sigma'} X'_\lambda(\IC)$ has dimension at least $\dim
X-1$. 
By comparing dimensions we see that
 $\bigcup_{\lambda\in \Sigma'} X'_\lambda(\IC)$ is Zariski dense in $X$. We again
apply the Fiber Dimension Theorem to conclude that
there is $P\in \bigcup_{\lambda\in \Sigma'}X'_\lambda(\IC)$ such that
$\dim_P \varphi|_X^{-1}(\varphi(P)) = \dim X - \dim \varphi(X)$. So
$\dim\varphi(X)\le 1$, as desired.
\end{proof}

\subsection{The Case of Curves}
\label{sec:curves}

We first  handle 
case (ii) of Proposition \ref{prop:counting}.
The case of curves will be the starting point of an inductive argument
eventually leading to the proof of Theorem \ref{thm:main}.

\begin{proof}[Proof of Proposition \ref{prop:counting}(ii)]
Let $P\in \ns{X}_\Sigma$ and $\lambda = \pi_L(P)$.
In the current case
 $X\subset \EL$ is a curve which dominates $\B$. So $X_\lambda(\IC)$ is
finite of cardinality bounded independently of $P$.
By Lemma \ref{lem:propnondegen} we may assume that
$P$ is not isolated
 in  $ \Xi|_{X_\Sigma}^{-1}(\Xi(P))$.
So Lemma \ref{lem:notindep} implies
$\widetilde\rho_\xi(\IH) \subset X(\IC)$. 
We set $\xi = (\xi_1,\xi_2)= \Xi(P) \in (\IR/\IZ)^2$.

First we use local monodromy around  $0$ 
by applying  Lemma \ref{lem:rho}(ii) to see
\begin{equation*}
  \widetilde\rho_{\xi + 2k(\xi_2,0)}(\T(\lambda))
 = \widetilde\rho_{\xi}(\T(\lambda)+2k) \in X_\lambda(\IC)\quad\text{for
    all}\quad
k\in\IZ.
\end{equation*}
By Lemma  \ref{lem:rho}(i) and 
the Pigeonhole Principle there is an integer $N\ge 1$ independent
of $P$ with $N \xi_2 = 0$.


 To handle $\xi_1$  we need local monodromy around the cusp
$1$.
We use Lemma \ref{lem:rho}(iii) and obtain
\begin{equation*}
  \widetilde\rho_{\xi - 4k(0,\xi_1)}(\T(\lambda)) \in X_\lambda(\IC)\quad\text{for
    all}\quad
k\in\IZ.
\end{equation*}
As before we have $N \xi_1=0$, after possibly adjusting $N$.

Because $N$ is independent of $P$ we obtain
\begin{equation*}
 \widetilde\rho_\xi (\T(\pi_L(P))) \in X\cap \ker[N] \quad\text{for
   all}\quad
 P\in\ns{X}_\Sigma.
\end{equation*}
Therefore, $X \cap \ker[N]$ is
infinite and so $X\subset \ker [N]$. This is a contradiction
since $\ker[N]$ is one-dimensional  flat subgroup scheme
of $\mathcal{E}$ by Lemma \ref{lem:horizontal}(iii).
\end{proof}

\subsection{Proof of the Proposition} 
Let $X$ be as in the statement
of the proposition. 
Part (ii) was already proved in Section \ref{sec:curves} and 
it remains to show (i).
The hypothesis  and Lemma \ref{lem:degenerate2} imply
that $X$ is not degenerate. The proof now follows from
Lemma \ref{lem:propnondegen}.
\qed

\section{Intersection Numbers}
\label{sec:internumb}

In this section we use Proposition \ref{prop:counting} with a theorem
of Siu to construct an
  auxiliary non-zero global section of a certain
 line bundle. We then deduce the following height inequality.

\begin{proposition}
\label{prop:heightlb}
  Let $X\subset\AL$ be an irreducible closed subvariety
  defined over $\IQbar$ which dominates $\B$, has dimension $g$, and 
satisfies (i) or (ii) of Proposition \ref{prop:counting}.
There exists a constant $c > 0$ with the following property. For any
integer $N \ge c^{-1}$ there is a non-empty Zariski open subset
$U\subset X$ and a constant $c'(N)$ such that
\begin{equation*}
  \heightlb{[2^N](P)}{\AL} \ge c 4^N \heightlb{P}{\AL} -c'(N)
\end{equation*}
for all $P \in U(\IQbar)$. 
\end{proposition}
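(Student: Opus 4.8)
The plan is to combine the counting result of Proposition~\ref{prop:counting} with a theorem of Siu controlling the dimension of global sections of a line bundle in terms of intersection numbers, and then convert the resulting section into a height inequality by the standard machinery relating sections, effective divisors, and heights of algebraic points. First I would pass to a compactification: choose a projective variety $\overline X$ containing $X$ as a dense open subset, equipped with morphisms extending $\piL|_X$ and the inclusion $X\subset\AL\subset(\IP^2)^g\times\B$, so that the two relevant line bundles on $X$ --- the one pulled back along $\piL$ from an ample bundle on $\IP^1$, and the one inducing the total height $\heightlbS{\AL}$ via the embedding into $(\IP^2)^g$ --- extend to line bundles on $\overline X$. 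Write $M$ for the (extension of the) ``total height'' bundle and $L$ for the pullback bundle that accounts for the fiber. The multiplication map $[2^N]:\AL\to\AL$ acts on $M$ roughly by $[2^N]^*M\sim 4^N M$ (modulo the fiber direction, i.e.\ modulo $L$), because the N\'eron--Tate height is a quadratic form on fibers; making this precise on $\overline X$ up to bounded error is the kind of functorial height bookkeeping that Chapter~2 of \cite{BG} handles.

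The key step is the section-construction. The points $P\in X(\IC)$ produced by Proposition~\ref{prop:counting}, of which there are $\ge cN'^{\,2g}$ with $N'=2^N$ (taking $N$ in the proposition to be $2^N$), are torsion in their fibers, hence have $\ntheightlb{P}{\AL}=0$ and small total height --- more precisely $\heightlb{P}{\AL}$ is controlled by $\heightlb{\piL(P)}{\B}$ alone, which in turn is governed by the multiplicities of these points against the fiber bundle $L$. The requirement $\dim_Q[2^N](X)\cap C=0$ for $Q=[2^N](P)$ ensures these points are genuinely spread out and impose independent conditions. I would count the total intersection contribution of these $\ge cN'^{\,2g}$ points against the line bundle $[2^N]^*M$ restricted to $X$: each contributes a definite positive amount, while $\overline X$ has dimension $g$, so by Siu's numerical criterion (for the $\{$big$\}$-ness of a difference of nef classes, or a Riemann--Roch type lower bound for $h^0$), the bundle $[2^N]^*M\otimes L^{-a}$ on $\overline X$, for a suitable $a$ proportional to $cN'^{\,2}=c4^N$, has a non-zero global section $s$ whose zero divisor avoids --- on a non-empty Zariski open $U\subset X$ --- the torsion points and the bad locus. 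I expect this is the main obstacle: getting Siu's inequality to give a section of the \emph{right} twist (the power $a\sim c4^N$ of $L$ must scale correctly with $N$), which forces careful tracking of how the counting exponent $N'^{\,2g}$ in \eqref{eq:propset} matches the top self-intersection $M^g$ on the $g$-dimensional $\overline X$, and of the positivity (nef-ness) properties of $M$ and $L$ needed for Siu's theorem to apply.

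Finally I would translate the section $s\in H^0(\overline X,[2^N]^*M\otimes L^{-a})$ into the asserted height bound. On the open set $U=X\setminus(\mathrm{supp}(\mathrm{div}\, s)\cup(\text{bad locus}))$, the fundamental inequality of height theory (a non-zero global section of a line bundle gives a lower bound, up to $O(1)$, for the height attached to that bundle at points off its zero locus, by Chapter~2 of \cite{BG}) yields
\[
h_{[2^N]^*M}(P)\ge a\, h_L(P) - c''(N)\qquad\text{for all }P\in U(\IQbar).
\]
Since $h_{[2^N]^*M}(P)=h_M([2^N](P))+O(1)=\heightlb{[2^N](P)}{\AL}+O(1)$ by functoriality, $h_L(P)=\heightlb{\piL(P)}{\B}\cdot(\deg\text{ of }\piL\text{-bundle})+O(1)$, and $\heightlb{P}{\AL}\le \heightlb{\piL(P)}{\B}+\ntheightlb{P}{\AL}+O(1)$ --- but the points we care about at the end may have positive N\'eron--Tate height, so I must instead write $h_L$ directly in terms of $\heightlb{P}{\AL}$ modulo the N\'eron--Tate contribution. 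After absorbing the N\'eron--Tate part (which satisfies $\ntheightlb{[2^N](P)}{\AL}=4^N\ntheightlb{P}{\AL}$ exactly, hence is harmless on the left) and rescaling $a\sim c4^N$, one reaches exactly
\[
\heightlb{[2^N](P)}{\AL}\ge c\, 4^N\,\heightlb{P}{\AL}-c'(N)
\]
on $U(\IQbar)$, with $c>0$ independent of $N$ and $c'(N)$ depending only on $N$, which is the claim.
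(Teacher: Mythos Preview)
Your overall architecture (counting from Proposition~\ref{prop:counting}, Siu's theorem, then height machinery) matches the paper's, but the central mechanism is missing. You try to feed the $\ge c(2^N)^{2g}$ torsion points directly into an intersection number on a compactification $\overline X$, saying ``each contributes a definite positive amount'' to $[2^N]^*M$; but these are just complex points of $X$, not cycles, and there is no clear way to make them witness a lower bound for $([2^N]^*M)^g\cdot[\overline X]$. The paper does something more concrete: it uses that the curve $C$ from Proposition~\ref{prop:counting} is locally cut out in $\AL\subset\IP^n$ by homogeneous forms $H_1,\dots,H_g$, composes with explicit duplication polynomials (Lemma~\ref{lem:duplication}) of degree $\le c_4 4^N$, and obtains a rational map $f=[F_0:\cdots:F_g]:X\dashrightarrow\IP^g$. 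The condition $\dim_Q[2^N](X)\cap C=0$ in \eqref{eq:propset} then says exactly that each of the $\ge c_3 4^{gN}$ points $P$ is an \emph{isolated} point of $f^{-1}([1:0:\cdots:0])$, so by Lemma~\ref{lem:degree} one gets $\deg f\ge c_3 4^{gN}$. Siu's theorem is then applied not on $\overline X$ but on the closure of the graph of $f$ in $X\times\IP^g$ (Lemma~\ref{lem:heightineq}), comparing $\pi_2^*\O{1}$ and $\pi_1^*\O{1}|_X$; the ratio $\deg f/D^{g-1}\gtrsim 4^{gN}/4^{(g-1)N}=4^N$ is what produces the coefficient $c\,4^N$.

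Your endgame is also tangled: the proposition compares $\heightlb{[2^N](P)}{\AL}$ with $\heightlb{P}{\AL}$, both for the same total-height bundle, so there is no need to introduce a separate fiber bundle $L$ or to ``absorb the N\'eron--Tate part''. In the paper's route one simply bounds $\height{f(P)}=\height{H([2^N](P))}\le c_7\max\{1,\heightlb{[2^N](P)}{\AL}\}$ from above by elementary height estimates, and the Siu-based Lemma~\ref{lem:heightineq} bounds it from below by $c_5 4^N\heightlb{P}{\AL}-c_6(N)$. The N\'eron--Tate height does not enter this proposition at all; it appears only later (Lemma~\ref{lem:passNTheight}) when one chooses $N$ large.
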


\subsection{Degree and Height Lower Bounds}

The proof  Proposition \ref{prop:heightlb} is based on a
degree estimate.

Let $f$ be a rational map between two irreducible varieties. Then $\dom{f}$ denotes
the domain of $f$. 
If source and target of $f$  have equal 
 dimension
we  define $\deg{f}$, the degree of $f$, as follows. 
If $f$ is dominant, then
$\deg{f}$
is the degree of the (finite) extension of function fields induced by
$f$. If $f$ is not dominant, we set $\deg{f}=0$.

\begin{lemma}
\label{lem:degree}
Let $X$ be an irreducible variety defined over $\IC$ of dimension $g$ and let
$f:X\dashrightarrow \IP^g$  be a rational map. If $Q\in\IP^g(\IC)$,
then the 
number of zero-dimensional irreducible components of
 $f^{-1}(Q)$ is at most $\deg{f}$.
\end{lemma}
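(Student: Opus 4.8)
The plan is to reduce to a statement about generic fibers and then count. First I would pass to an open subset $V\subset X$ on which $f$ is a morphism; since the complement $X\ssm V$ is a proper closed subset, it can contain only finitely many of the zero-dimensional irreducible components of $f^{-1}(Q)$, but actually more care is needed—a zero-dimensional component of $f^{-1}(Q)$ need not lie in $V$. So instead I would argue as follows. Let $Y = \overline{f^{-1}(Q)\cap V}$ inside $X$, together with the finitely many zero-dimensional components of $f^{-1}(Q)$ not meeting $V$; it suffices to bound the number of zero-dimensional components of $f^{-1}(Q)\cap V$ by $\deg f$ (the ones outside $V$ contribute nothing when $f$ is dominant and the claim is vacuous when $f$ is not dominant, since then $\deg f=0$ and we must instead observe that for $Q$ in the image the fiber could still have isolated points—so the non-dominant case needs the convention that the statement claims at most $\deg f=0$ such components, which forces us to check that a non-dominant $f$ has no isolated points in any fiber; this fails in general, so the intended reading must be that $f$ is dominant, or that one only counts components mapping to the generic point's specialization—I will assume dominance, as that is how the lemma is used).

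Assuming $f$ dominant, the key step is the following. Let $x_1,\dots,x_g$ be the coordinate ratios pulled back via $f$, generating the function field extension $\IC(X)/\IC(\IP^g)$ of degree $d=\deg f$. An isolated point $P$ of $f^{-1}(Q)$ is a closed point of $X$ lying over the closed point $Q$ of $\IP^g$; the local ring $\mathcal{O}_{X,P}$ is then finite over $\mathcal{O}_{\IP^g,Q}$ after localizing, because $P$ is isolated in the fiber (so the fiber is finite near $P$, hence the morphism is quasi-finite there, hence—being of finite type—finite after shrinking by Zariski's main theorem or by the standard local structure of quasi-finite morphisms). The sum over all isolated points $P$ of $f^{-1}(Q)$ of the local degrees $[\kappa(P):\kappa(Q)]\cdot\mathrm{length}$ is bounded by the generic degree $d$: this is the standard semicontinuity/specialization statement that the total length of a finite fiber is at most the generic rank. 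Since each isolated point contributes at least $1$, the number of them is at most $d=\deg f$.

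The cleanest way to make the counting rigorous is: choose an affine open $W\subset\IP^g$ containing $Q$ such that $f^{-1}(W)\to W$ is affine; let $A=\mathcal{O}(W)$ and $B=\mathcal{O}(f^{-1}(W)\cap V)$, so $B$ is a domain finite-dimensional of dimension $d$ over $\mathrm{Frac}(A)$ after tensoring. Localize $A$ at the maximal ideal $\mathfrak{m}_Q$ and then further so that the semilocal ring $B'$ supported at the isolated points of the fiber is a finite $A_{\mathfrak m_Q}$-module (possible by quasi-finiteness plus the going-up/Zariski main theorem argument). Then $B'\otimes_{A_{\mathfrak m_Q}}\kappa(Q)$ is an Artinian $\kappa(Q)$-algebra whose $\kappa(Q)$-dimension is at most $\mathrm{rank}_{A_{\mathfrak m_Q}}B' \le d$ (rank can only drop under specialization for a module that becomes free after inverting one element, or one invokes that $\dim_{\kappa(Q)} M\otimes\kappa(Q)\le \dim_{\mathrm{Frac}(A)}M\otimes\mathrm{Frac}(A)$ for finite modules over a Noetherian domain), and this dimension is a sum over the isolated points of strictly positive local contributions. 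Hence the count is $\le d$.

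The main obstacle I expect is the non-dominant case and, relatedly, handling components of the fiber that fail to meet the chosen affine/smooth locus: one must be sure that passing to $V$ does not lose isolated points of $f^{-1}(Q)$. I would resolve this by noting that for the application only the dominant case with the fiber's isolated points matters, and that any isolated point of $f^{-1}(Q)$ is automatically a point where $f$ is defined (it lies in $\dom f$) and where, after possibly further shrinking, the fiber is finite—so one can always work in a neighborhood where the finiteness argument above applies. The rest is the standard commutative-algebra fact that a finite fiber has total length bounded by the generic degree, which is elementary once set up correctly.
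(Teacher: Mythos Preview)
Your overall strategy---reduce to a finite morphism via Zariski's Main Theorem and bound the fiber---is a legitimate algebraic alternative to the paper's analytic proof, but there is a genuine error in the key step. You invoke the inequality
\[
\dim_{\kappa(Q)} M\otimes\kappa(Q)\le \dim_{\mathrm{Frac}(A)}M\otimes\mathrm{Frac}(A)
\]
for finite modules over a Noetherian domain. This is false: semicontinuity of fiber dimension goes the \emph{other} way. For instance, with $A=k[x,y]$ and $M=(x,y)$, the generic rank is $1$ while $M\otimes\kappa((x,y))\cong k^2$. So ``rank can only drop under specialization'' is simply wrong, and the paper's own remark immediately after the lemma (normalization of a nodal curve) gives a counterexample to the lemma with $\IP^g$ replaced by a non-normal target. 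What you actually need is the standard fact that a finite dominant morphism of integral varieties to a \emph{normal} target has fibers of cardinality at most the degree; the normality of $\IP^g$ is essential and must be invoked explicitly (e.g., via the trace pairing to embed $B$ into a free $A$-module of rank $d$, or by passing to the integral closure). Once this is done your argument goes through.

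Two smaller points. First, your worry about the non-dominant case is misplaced: if $f$ is not dominant then $f(\dom f)$ lies in a proper closed $Z\subsetneq\IP^g$, and the Fiber Dimension Theorem forces every component of every fiber of $\dom f\to Z$ to have dimension $\ge g-\dim Z\ge 1$, so there are no isolated points and the bound $0\le\deg f=0$ holds trivially. Second, the paper takes a different route entirely: it passes to the complex analytic category, uses that a holomorphic map between equidimensional complex spaces which is finite at a point is \emph{open} there, and then observes that a nonempty Euclidean-open subset of $\IP^g(\IC)$ is Zariski dense and hence meets the locus where the fiber has exactly $\deg f$ points. The analytic openness argument is what replaces the normality input in your approach; both approaches ultimately rely on $\IP^g$ being nonsingular.
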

\begin{proof}
In this proof, any mention to a topology on $X(\IC)$ or $\IP^g(\IC)$
refers to the Euclidean topology if not stated otherwise.

Let $P_1,\ldots,P_d\in \dom{f}(\IC)$ be distinct and 
isolated in the fiber of $f$ above
$Q$ with respect to the Zariski topology. 
The $P_i$ are also isolated with respect to the Euclidean topology.
We may assume $d\ge 1$.


We regard $\dom{f}(\IC)$ and $\IP^g(\IC)$ as $g$-dimensional 
 complex analytic spaces and
$f$ as a holomorphic map between them. 
The comment on page 64
\cite{CAS} implies that $f$ is a finite holomorphic map at  $P_i$ for
$1\le i\le d$. By the
proposition on page 107 \cite{CAS} we conclude that $f$ is an open
 map at each $P_i$.
Hence there exists an open
neighborhood $U_i$ of $P_i$ in $\dom{f}(\IC)$ such
that $f|_{U_i}$ is an open mapping.
We may assume that the $U_i$ are pairwise disjoint. 
The intersection $W = \bigcap_{i=1}^d f(U_i)$ is 
open  in $\IP^g(\IC)$ and  contains $Q$. 
Let $Q'\in W$.  There exists $P'_i \in U_i$ with
$f(P'_i)=Q'$.
The resulting $P'_i$ are pairwise distinct, so
the fiber of $f$ above any point in $W$
has cardinality at least $d$. 

There exists a Zariski closed and proper $Z\subset \IP^g$ 
 such that $\# f^{-1}(Q')= \deg{f}$ for
all $Q'\in (\IP^g\ssm Z)(\IC)$. 
But $W$, being a non-empty open subset of $\IP^g(\IC)$, 
 is Zariski dense in $\IP^g$ and hence
must meet $(\IP^g\ssm
Z)(\IC)$.
We obtain $d \le\deg{f}$.
\end{proof}

The previous lemma can fail with $\IP^g$  replaced by a
(non-normal) variety. Indeed, the normalization morphism of a
 curve with a node has degree $1$ but more than one point above the node.

Let $\O{1}$ denote the unique ample generator of the Picard group of
projective space. 
For an irreducible closed subvariety $X$ of projective space we let $\deg{(X)}$ be its
geometric degree $(\O{1}^{\cdot \dim X}.[X])$;
we refer to Chapters 1 and 2 \cite{Fulton} for a treatment of the
 intersection theory
needed here. 

We come to a preliminary height lower bound which depends on a Theorem
of Siu.

\begin{lemma}
\label{lem:heightineq}
Let $X\subset \IP^n$ be an irreducible closed subvariety defined over
$\IQbar$ of dimension $g\ge 1$.
 Let $f:X\dashrightarrow \IP^g$ be the
rational map given by $f=[F_0:\cdots:F_g]$ where $F_i$ 
are homogeneous polynomials that are not all identically  zero on $X$ and have
 equal degree at most $D\ge 1$. 
 There is a constant $c=c(X,f)$ 
and a proper
  and Zariski closed subset $Z\subset X$ 
such that $F_0,\ldots,F_g$ have no common zeros on $X\ssm Z$ and
  \begin{equation*}
    \height{f(P)} \ge 
\frac{1}{4^g \deg{(X)}}\frac{\deg{f}}{D^{g-1}} \height{P} - c
  \end{equation*}
for all $P\in (X/Z)(\IQbar)$. 
\end{lemma}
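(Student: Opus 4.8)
The plan is to reduce the problem to the functoriality of heights under the morphism $f$ together with a quantitative control on the degree of the graph of $f$. First I would resolve the indeterminacy of $f$: let $Z_0 \subset X$ be the common zero locus of $F_0,\ldots,F_g$ on $X$. Since the $F_i$ are not all identically zero on $X$, the set $Z_0$ is Zariski closed and proper. On $X \ssm Z_0$ the map $f$ is a genuine morphism to $\IP^g$. The key point is to estimate the geometric degree of the image $Y = \overline{f(X)} \subset \IP^g$ and, when $f$ is dominant, to relate $\deg(Y)$, $\deg f$, and $\deg(X)$. When $f$ is not dominant the inequality is vacuous since $\deg f = 0$ and the right-hand side is $-c$; so I may assume $f$ is dominant, hence $Y = \IP^g$ and $\deg(Y) = 1$.

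Next I would bound the degree of the graph. Let $\Gamma \subset X \times \IP^g$ be the Zariski closure of the graph of $f|_{X \ssm Z_0}$, with projections $p : \Gamma \to X$ (birational) and $q : \Gamma \to \IP^g$ (generically finite of degree $\deg f$). Embedding $\Gamma$ into $\IP^n \times \IP^g$ and then via the Segre embedding, the relevant intersection-theoretic quantity is $(p^*\O{1}^{\,a} \cdot q^*\O{1}^{\,b} . [\Gamma])$ for $a+b = g$. The class $q^*\O{1}$ is represented by the pullback of a hyperplane, which on $X$ corresponds to the linear system spanned by $F_0,\ldots,F_g$, a subsystem of $|\O{1}^{\otimes D}|$; hence $q^*\O{1} \le D \cdot p^*\O{1}$ in the appropriate sense on $\Gamma$ (the difference is effective, being the exceptional part supported over $Z_0$). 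Combining this with the projection formula $(q^*\O{1}^{\,g}.[\Gamma]) = \deg f \cdot \deg(\IP^g) = \deg f$ gives, for each mixed term, the bound $(p^*\O{1}^{\,a} \cdot q^*\O{1}^{\,b}.[\Gamma]) \le D^b \deg(X)$ when $b < g$, and $= \deg f$ when $b = g$. This is where Siu's theorem enters: it provides the quantitative bound on the section ring / intersection numbers of the two semiample classes $p^*\O{1}$ and $q^*\O{1}$ on $\Gamma$ that lets me extract an effective divisor $E$ with $q^*\O{1}(N) - p^*\O{1}$ (suitably scaled) effective, with $N$ controlled by the ratio $\deg f / (D^{g-1}\deg(X))$ up to the factor $4^g$.

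With the geometric input in hand, the height estimate follows from standard height machinery. On $X \ssm Z$, where $Z$ is the union of $Z_0$ with the image under $p$ of the support of the exceptional divisor $E$, one has a comparison of Weil height functions: up to a bounded constant $c = c(X,f)$,
\begin{equation*}
\height{f(P)} = \heightlb{P}{X, f^*\O{1}} + O(1) = \heightlb{P}{\Gamma, q^*\O{1}} + O(1)
\end{equation*}
pulling back along the birational $p$. Since a positive multiple of $q^*\O{1}$ dominates $p^*\O{1}$ modulo an effective divisor avoided on $X\ssm Z$, the positivity of heights attached to effective divisors yields $\heightlb{P}{\Gamma, q^*\O{1}} \ge \tfrac{1}{m}\heightlb{P}{\Gamma, p^*\O{1}} - O(1)$ with $m \le 4^g \deg(X) D^{g-1}/\deg f$, which is exactly the claimed inequality after rearranging and absorbing constants into $c$.

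The main obstacle I anticipate is the precise form of the intersection-number / section-existence bound coming from Siu's theorem: one needs the comparison constant to come out as $4^g \deg(X) D^{g-1}/\deg f$ rather than something weaker, and this requires carefully tracking how Siu's effectivity criterion for the difference of two nef classes interacts with the mixed intersection numbers $(p^*\O{1}^{\,a}\cdot q^*\O{1}^{\,b}.[\Gamma])$ computed above. Getting the exponent of $D$ right (namely $D^{g-1}$, not $D^g$) depends on the observation that the top self-intersection $q^*\O{1}^{\,g}$ already carries the factor $\deg f$ and only the remaining $g-1$ factors contribute powers of $D$; making this rigorous on the possibly singular variety $\Gamma$, using the intersection theory of Fulton \cite{Fulton} rather than assuming smoothness, is the delicate part.
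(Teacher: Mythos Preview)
Your proposal is correct and follows the same architecture as the paper: pass to the graph closure $\Gamma\subset X\times\IP^g$ with projections $p,q$, apply Siu's bigness criterion (Theorem~2.2.15 of \cite{PosAlgGeom}) to show that a positive power of $q^*\O{1}^{\otimes m}\otimes p^*\O{1}^{\otimes(-\deg f)}$ with $m=4^g\deg{(X)}D^{g-1}$ has a nonzero section, and then invoke the functoriality of heights. The one substantive difference lies in how the mixed intersection number $(p^*\O{1}\cdot q^*\O{1}^{\cdot(g-1)}.[\Gamma])$ is bounded. You observe that $Dp^*\O{1}-q^*\O{1}$ is effective on $\Gamma$ (the section $F_i/Y_i$ glues over the charts $\{Y_i\ne 0\}$ since $Y_iF_j=Y_jF_i$ on $\Gamma$), then iterate using nefness of both pullbacks to obtain the bound $D^{g-1}\deg{(X)}$. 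The paper instead realizes $\Gamma$ as an irreducible component of $X\times\IP^g$ cut out by the bihomogeneous forms $F_i(X)-Y_i$ of bidegree $(\deg F_i,1)$ and invokes Philippon's multiprojective B\'ezout inequality (Proposition~3.3 of \cite{Philippon}) to get the weaker bound $\tfrac{1}{g}\binom{2g}{g}\deg{(X)}D^{g-1}$. Your route is more self-contained and sharper (Siu's criterion then only needs $4^g>g$ rather than $4^g>\binom{2g}{g}$); the paper's route trades this for an off-the-shelf citation. Your worry about the singularity of $\Gamma$ is unfounded: both Siu's theorem as stated in \cite{PosAlgGeom} and the nonnegativity of nef classes against effective cycles hold on arbitrary irreducible projective varieties. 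One small imprecision: the closed set $Z$ you must remove is (the $p$-image of) the zero locus of the section produced by Siu, not merely the exceptional divisor $E$; but this does not affect the argument, since any proper closed subset suffices.
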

\begin{proof}
Without loss of generality we may assume $\deg{f} \ge 1$. 

The rational map $f$ need not be a morphism of
  varieties. In order to resolve the points of indeterminacy 
we define the Zariski closure of its graph
  \begin{equation*}
    \Gamma = \overline {\{(P,f(P));\,\, P\in \dom{f}(\IQbar)\}} \subset
    X\times\IP^g. 
  \end{equation*}
This is an irreducible projective variety with  $\dim \Gamma = \dim X
= g$. 
The morphism $\dom{f}\hookrightarrow \Gamma$ determined by 
$P\mapsto (P,f(P))$ is birational.
Say $\pi_1:\Gamma\rightarrow X$ and $\pi_2:\Gamma\rightarrow \IP^g$
are the two projection morphisms. Then $\pi_2(\Gamma) = \IP^g$ by
the Fiber Dimension Theorem and since $\pi_2|_\Gamma$ has 
finite fibers generically.

From functorial  properties of the height 
 we  see that  
\begin{equation*}
  \heightlb{P,P'}{\Gamma,\pi_1^*\O{1}|_X} = \height{P}
\quad\text{and}\quad
  \heightlb{P,P'}{\Gamma,\pi_2^*\O{1}} = \height{P'}
\quad\text{for}\quad (P,P')\in \Gamma(\IQbar)
\end{equation*}
 are valid choices for height functions; we recall that
$\heightS$ is the projective height.
In order to prove this lemma it is enough to show that there exists
$c\in\IR$ with
\begin{equation*}
  \heightlb{Q}{\Gamma,\pi_2^*\O{1}} \ge \frac{1}{4^g \deg{(X)}} \frac{\deg{f}}{D^{g-1}}
\heightlb{Q}{\Gamma,\pi_1^*\O{1}|_X} - c,
\end{equation*}
for all $Q$ in a Zariski open dense subset of $\Gamma$. Using 
 functorial  properties of the height, this
inequality holds if  some positive integral power of  the line bundle
\begin{equation*}
  \pi_2^*\O{1}^{\otimes 4^g \deg{(X)} D^{g-1}} \otimes \pi_1^*\O{1}|_X^{\otimes(-\deg{f})}
\end{equation*}
admits a non-zero global section.
By a result of Siu, Theorem 2.2.15 \cite{PosAlgGeom}, such a
  section exists provided we have the
following inequality on intersection numbers
\begin{equation*}
  \left((\pi_2^*\O{1}^{\otimes 4^g \deg{(X)}D^{g-1} })^{\cdot g}.[\Gamma]\right)
\stackrel{?}{>} g \left(\pi_1^*\O{1}|_X^{\otimes\deg{f}}.
(\pi_2^*\O{1}^{\otimes 4^g \deg{(X)}D^{g-1}})^{\cdot(g-1)}.[\Gamma]\right).
\end{equation*}
By multilinearity of intersection numbers  the left-hand side is
$4^{g^2}\deg{(X)}^gD^{g(g-1)}(\pi_2^*\O{1}^{\cdot g}.[\Gamma])$
while the right-hand side is
$4^{g(g-1)}g\deg{(X)}^{g-1} D^{(g-1)^2}(\deg{f})( \pi_1^*\O{1}|_X.\pi_2^*\O{1}^{\cdot
  (g-1)}.[\Gamma])$. 
Hence our lemma follows if we can prove
\begin{equation}
\label{eq:wts}
  4^g \deg{(X)} D^{g-1} \left(\pi_2^*\O{1}^{\cdot g}.[\Gamma]\right) \stackrel{?}{>} g(\deg{f})
  \left(\pi_1^*\O{1}|_X.\pi_2^*\O{1}^{\cdot(g-1)}.[\Gamma]\right). 
\end{equation}

We proceed by proving this inequality. 
The projection  formula implies 
\begin{equation*}
(\pi_2^*\O{1}^{\cdot g}.[\Gamma]) 
= (\deg{\pi_2})(\O{1}^{\cdot g}.[\pi_2(\Gamma)]) 
 = (\deg{\pi_2}) (\O{1}^{\cdot g}.[\IP^g]) = \deg{\pi_2}.  
\end{equation*}
The birational morphism $\dom{f}\hookrightarrow \Gamma$ composed with
$\pi_2$ is nothing other then $f:\dom{f}\rightarrow \IP^g$. Hence
 we have
$\deg{\pi_2}=\deg{f}$ and so 
\begin{equation*}
 (\pi_2^*\O{1}^{\cdot g}.[\Gamma]) = \deg{f}. 
\end{equation*}
By (\ref{eq:wts}) it suffices to show
\begin{equation}
 \label{eq:step1}
  4^g \deg{(X)} D^{g-1} \stackrel{?}{>}
g\left(\pi_1^*\O{1}|_X.\pi_2^*\O{1}^{\cdot(g-1)}.[\Gamma]\right).
\end{equation}

Let $\rho_{1,2}$ denote the projections of $\IP^n\times\IP^g$ onto the
first and second factor, respectively. 
If $Z\subset\IP^n\times\IP^g$ is an irreducible closed subvariety we set
\begin{equation*}
  \hpS{Z} =  \sum_{\atopx{i+j = \dim Z}{i,j\ge 0}} 
{\dim Z \choose i} \left(\rho_1^*\O{1}^{\cdot i}.\rho_2^*\O{1}^{\cdot j}.[Z]\right) U^i V^j
\in \IZ[U,V].
\end{equation*}
This is  the highest homogeneous part of the biprojective
Hilbert polynomial of $Z$ multiplied by $(\dim Z)!$,
cf. \cite{Philippon}. It is
 homogeneous of degree $\dim Z$
 with non-negative integer coefficients. In particular,
 $\hpS{Z}(D,1)\ge 0$. 

Our projective variety $\Gamma$ is an irreducible component of the
intersection of $X\times\IP^g$ with the set of common zeros
 of 
 \begin{equation}
\label{eq:Gammapolys}
 F_i(X_0,\ldots,X_n) - Y_i
\in \IQbar[X_0,\ldots,X_n,Y_0,\ldots,Y_g]\quad (0\le i
\le g);  
 \end{equation}
here $X_i, Y_i$ are projective coordinates on $\IP^n$ and
$\IP^g$, respectively.
These polynomials are  bihomogeneous of bidegree $(\deg F_i,1)$. 
We recall $\deg F_i \le D$. 
Philippon's Proposition
3.3 \cite{Philippon} implies
$\sum_{\Gamma'} \hp{D,1}{\Gamma'} \le\hp{D,1}{X\times \IP^g} $
where the sum runs over all irreducible components $\Gamma'$ 
cut out on $X\times\IP^g$ by the polynomials (\ref{eq:Gammapolys}). 
For any $\Gamma'$ we have
  $\hp{D,1}{\Gamma'} \ge 0$. By forgetting about all irreducible
components except $\Gamma$ we see
\begin{equation}
\label{eq:philipponthm}
 \hp{D,1}{\Gamma} \le \hp{D,1}{X\times \IP^g}.
\end{equation}

Now  ${g \choose
  1}\left(\pi_1^*\O{1}|_X.\pi_2^*\O{1}^{\cdot(g-1)}.[\Gamma]\right)D$ is 
one term in the sum $\hp{D,1}{\Gamma}$. Since all other terms are
non-negative, (\ref{eq:philipponthm}) gives
\begin{equation}
\label{eq:inbound}
  g\left(\pi_1^*\O{1}|_X.\pi_2^*\O{1}^{\cdot(g-1)}.[\Gamma]\right)D \le
  \hp{D,1}{X\times\IP^g}. 
\end{equation}

To complete the proof of (\ref{eq:step1})
 we now bound $\hp{D,1}{X\times\IP^g}$ from above. 
We have $\dim X\times\IP^g = 2g$, so by definition
\begin{equation}
\label{eq:hpD1}
  \hp{D,1}{X\times\IP^g} = \sum^{2g}_{i = 0} 
{2g \choose i} 
\left(\rho_1^*\O{1}^{\cdot
  i}.\rho_2^*\O{1}^{\cdot (2g-i)}.[X\times\IP^g]\right) D^i. 
\end{equation}

Two applications of the projection formula lead to
\begin{alignat}1
\label{eq:projformula}
 (\rho_1^*\O{1}^{\cdot
  i}.\rho_2^*\O{1}^{\cdot (2g-i)}.[X\times\IP^g])
&=  \left(\O{1}^{\cdot
  i}.{\rho_1}_*(\rho_2^*\O{1}^{\cdot (2g-i)}.[X\times\IP^g])\right) \\
\nonumber
& = \left(\O{1}^{\cdot
  (2g-i)}.{\rho_2}_*(\rho_1^*\O{1}^{\cdot i}.[X\times\IP^g])\right).
\end{alignat}
The cycle class $\rho_2^*\O{1}^{\cdot
  (2g-i)}.[X\times\IP^g]$
on $\IP^n\times\IP^g$ is trivial if $2g-i > g$
and $\rho_1^*\O{1}^{\cdot i}.[X\times\IP^g]$ is trivial if $i >\dim X
=g$. 
Therefore, all terms in (\ref{eq:hpD1}) with $i\not=g$ vanish. We are left with 
\begin{alignat*}1 
  \hp{D,1}{X\times\IP^g} &=
{2g \choose g} 
\left(\rho_1^*\O{1}^{\cdot g}.
\rho_2^*\O{1}^{\cdot g}.[X\times\IP^g]\right) D^g.
\end{alignat*}

We find 
$(\rho_1^*\O{1}^{\cdot
  g}.\rho_2^*\O{1}^{\cdot g}.[X\times \IP^g]) = \deg{(X)}$
on inserting $i=g$ in (\ref{eq:projformula}).
We recall (\ref{eq:inbound}) and conclude
\begin{equation*}
(\pi_1^*\O{1}|_X.\pi_2^*\O{1}^{\cdot(g-1)}.[\Gamma]) \le \frac 1g {2g \choose
    g} \deg{(X)}D^{g-1}. 
\end{equation*}

So inequality
(\ref{eq:step1}) holds true since $ {2g \choose g} < 4^g$. As
stated above, this completes the proof. 
\end{proof}

Before we come to the proof of Proposition \ref{prop:heightlb} we 
give an explicit formula for the duplication morphism on $\EL$. 

\begin{lemma}
\label{lem:duplication}
  Let $N\in\IN$, there exist polynomials
$G_{N,0},G_{N,1},G_{N,2}\in \IZ[X_0,X_1,X_2,X_3]$
of total degree at most $2\cdot 4^N$ and homogeneous of degree
 $4^N$
in $X_0,X_1,X_2$
 with the following properties. 
If $([x:y:z],\lambda)\in\EL(\IC)$ then 
$G_{N,i}(x,y,z,\lambda)\not=0$ for some $i\in\{0,1,2\}$ and
\begin{equation*}
  [2^N]([x:y:z],\lambda) = 
([G_{N,0}(x,y,z,\lambda):G_{N,1}(x,y,z,\lambda):G_{N,2}(x,y,z,\lambda)],\lambda).
\end{equation*}
\end{lemma}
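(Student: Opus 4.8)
The plan is to obtain the polynomials $G_{N,i}$ by iterating the classical duplication formula on a Legendre curve $N$ times, carefully tracking degrees in both the affine coordinates and in $\lambda$, and then homogenizing. First I would write down the affine duplication formula on the curve $y^2 = x(x-1)(x-\lambda)$: starting from an affine point $(x,y)$ with $y\neq 0$, the $x$-coordinate of $[2](x,y)$ is a rational function in $x$ and $\lambda$ of the shape $\frac{(x^2-\lambda)^2}{4x(x-1)(x-\lambda)}$ (the numerator and denominator being polynomials of degree $4$ and $3$ in $x$, degree $2$ and $1$ in $\lambda$), and the $y$-coordinate is $y$ times a rational function in $x,\lambda$. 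Since we are on the curve, every occurrence of $y^2$ may be replaced by $x(x-1)(x-\lambda)$, so after one doubling the image can be written as $([A(x,\lambda):yB(x,\lambda):C(x,\lambda)])$ with $A,C$ polynomials and $B$ a polynomial, all of controlled degree. The key bookkeeping observation is that one doubling multiplies the $x$-degree by roughly $4$ and adds a bounded amount to the $\lambda$-degree, which after $N$ iterations yields $x$-degree $O(4^N)$ and $\lambda$-degree $O(4^N)$; homogenizing the (at most) three affine polynomials with the variable $z$ then produces $G_{N,0},G_{N,1},G_{N,2}\in\IZ[X_0,X_1,X_2,X_3]$ homogeneous of degree $4^N$ in $X_0,X_1,X_2$ and of total degree at most $2\cdot 4^N$.

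The cleanest way to organize the induction is to proceed by induction on $N$, assuming the statement for $N$ and composing the rational map $[2^N]$ with a single $[2]$. Here I would set up the composition so that after substituting the degree-$4^N$ homogeneous forms $G_{N,i}(X_0,X_1,X_2,X_3)$ into the single-duplication formula and clearing denominators, one recovers homogeneous forms of degree $4^{N+1}$ in $X_0,X_1,X_2$; the $\lambda=X_3$-degree grows by the bounded amount coming from the single step, and a short estimate shows the total degree stays below $2\cdot 4^{N+1}$. The reduction using the curve equation $zy^2 = x(x-z)(x-z\lambda)$ (in homogeneous form) is what keeps everything polynomial rather than merely rational and also what keeps the exponent of $y=X_1$ from growing; one should note that $y$ appears to at most the first power in each of the three forms, and that the homogeneous curve relation can be used to absorb any surplus.

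The two things that require genuine care are: (a) checking that at every point $([x:y:z],\lambda)\in\EL(\IC)$ at least one of $G_{N,0},G_{N,1},G_{N,2}$ is nonzero — this follows because $[2^N]$ is a morphism of the proper scheme $\EL$ (it factors through the closed immersion into $(\IP^2)\times\B$ composed with a proper projection, and multiplication by a nonzero integer is proper on an abelian scheme), so the rational map given by the $G_{N,i}$ has no point of indeterminacy on $\EL$, forcing the three forms not to vanish simultaneously; and (b) verifying that the degree bound $2\cdot 4^N$ is actually respected, rather than merely $O(4^N)$ — this is a matter of being slightly careful with the single-step degrees in $\lambda$ so that the recursion $d_{N+1}\le 4 d_N + (\text{small})$ telescopes to at most $2\cdot 4^{N+1}$. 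I expect step (b), the precise degree accounting, to be the main routine obstacle, while step (a) is the one conceptual point and is handled entirely by properness as in the "$\varphi$ is proper since $\pi=\pi\circ\varphi$ is proper" arguments used earlier in the paper.
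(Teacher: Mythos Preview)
Your overall strategy---iterate the duplication formula and induct on $N$, tracking the degree in $X_0,X_1,X_2$ and separately in $X_3=\lambda$---is exactly what the paper does. The paper simply writes down explicit polynomials $G_{1,0},G_{1,1},G_{1,2}$ (homogeneous of degree $4$ in $X_0,X_1,X_2$, degree $\le 3$ in $X_3$), cites Silverman's duplication formula, sets $G_{N,i}=G_{1,i}(G_{N-1,0},G_{N-1,1},G_{N-1,2},X_3)$, and reads off that the $X_3$-degree satisfies $d_N\le 4d_{N-1}+3$, whence $d_N\le 4^N-1$ and the total degree is at most $2\cdot 4^N-1$. So your part (b) is fine and matches the paper once you pin down the single-step $\lambda$-degree.

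There is, however, a genuine gap in your argument for (a). Knowing that $[2^N]$ is a morphism does \emph{not} force a particular triple of polynomials representing it to be free of common zeros on $\EL$: a morphism can be given by forms sharing a nontrivial common factor (think of the identity on $\IP^1$ written as $[X_0^2:X_0X_1]$). Properness tells you the map extends, not that your chosen formulas have empty base locus. The paper handles this by asserting directly that the explicit $G_{1,0},G_{1,1},G_{1,2}$ have no common zero on $\EL$; once that base case is in hand, the inductive step is immediate: if all $G_{N,i}(P)=0$ then, since not all $G_{N-1,j}(P)$ vanish, the point $[G_{N-1,0}(P):G_{N-1,1}(P):G_{N-1,2}(P)]=[2^{N-1}](P)\in\EL$ would be a common zero of the $G_{1,i}$, contradiction. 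So replace your properness argument by this explicit verification at $N=1$ followed by the obvious induction.
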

\begin{proof}
Let 
\begin{alignat*}1
  G_{1,0} &= 2X_1 X_2 ^3 X_3^2 + (2X_0^3X_1 - 6X_0^2X_1 X_2)X_3 + (2X_0^3X_1 + 2X_0X_1^3), \\
  G_{1,1} &=(-4X_0^2X_2^2 + 6X_0X_2^3 - X_2^4)X_3^3 + (-X_0^4 + 9X_0^3X_2 - 17X_0^2X_2^2 +
  6X_0 X_2^3 - 4X_1^2X_2^2)X_3^2 \\ 
& \quad+ (-2X_0^4 + 9X_0^3X_2 - 4X_0^2X_2^2 + 3X_0X_1^2X_2 -
  4X_1^2X_2^2)X_3 + (-X_0^4 + X_1^4), \\
G_{1,2} &= 8X_1^3X_2.
\end{alignat*}
These three polynomials have no common zeros on $\EL\subset\IP^2\times\B$.
They are homogeneous of degree $4$ in $X_0,X_1,X_2$ and of degree at most $3$
in $X_3$.
If $([x:y:z],\lambda)\in\EL(\IC)$, 
 the duplication formula on page 59 \cite{Silverman:AEC} implies
\begin{equation*}
  [2]([x:y:z],\lambda) = 
([G_{1,0}(x,y,z,\lambda):G_{1,1}(x,y,z,\lambda):G_{1,2}(x,y,z,\lambda)],\lambda).
\end{equation*}

We define $G_{Ni} =
G_{1,i}(G_{N-1,0},G_{N-1,1},G_{N-1,2},X_3)$ inductively.
These polynomials describe $[2^N]$ since
 $G_{N,0},G_{N,1},G_{N,2}\in\IZ[X_0,X_1,X_2,X_3]$ have no
common zero on $\EL$.
By induction we find that $G_{Ni}$
are homogeneous of degree $4^N$ in $X_0,X_1,X_2$ and 
of degree at most $4^N-1$ in $X_3$.
So their total degree
 is at most $2\cdot 4^N-1$.
\end{proof}

\begin{proof}[Proof of Proposition \ref{prop:heightlb}]
Let $X\subset\AL$ be as in the hypothesis. Recall that 
$\AL\subset (\IP^2)^g\times\IP^1$ is quasi-projective. 
The Segre embedding $(\IP^2)^g\times\IP^1
\hookrightarrow \IP^n$, with $n=2\cdot 3^g-1$,
 enables us to embed $\AL$ into projective space. 
Under this embedding, $\AL$ becomes Zariski open in its Zariski
closure. 
 By abuse of notation we will suppose $\AL\subset\IP^n$. 
If $P \in\AL(\IQbar)$, then by Proposition 2.4.4 \cite{BG} the total height
given by (\ref{eq:deftotalheight}) satisfies
\begin{equation}
\label{eq:heightequality}
  \heightlb{P}{\AL} = \height{P}
\end{equation}
where  the height on the right-hand side is the projective height
of $P\in \IP^n(\IQbar)$. 
Let $X_0,\ldots,X_n$ denote
the projective coordinates on $\IP^n$. 
 Throughout this proof $c_1,c_2,\ldots$ denote
positive constants which are independent of $N$ if not stated otherwise.


Let $C$ be the  curve from Proposition
\ref{prop:counting}.
Any point $Q\in C$ is a non-singular point of $C$ and of
$\AL$. By Example II 8.22.1 \cite{Hartshorne}
 there is a Zariski open neighborhood $V$ of $Q$ in
$\AL$ and  homogeneous polynomials
 $H_1,\ldots,H_g\in \IQbar[X_0,\ldots,X_n]$ 
with $\deg H_1 = \cdots = \deg H_g \le c_1$ such that
\begin{equation*}
  C\cap V
\text{ is cut out on $V$ by $H_1,\ldots,H_g$}.
\end{equation*}
We fix $H_0 \in\IQbar[X_0,\dots,X_n]$ such that $H_0(Q)\not =0$ and
$\deg H_0 = \deg H_1$. 
We replace $V$ by a possibly smaller neighborhood of $Q$ on which
$H_0$ does not vanish.
By quasi-compactness, $C$ can be  covered by $c_2$ such $V$ and   $c_1$ is
 independent of $V$. 

Let $c_3=c > 0$ be from Proposition \ref{prop:counting} and $N$ an integer
with $2^N\ge c_3^{-1}$. 
The proposition gives us
 at least $c_3 (2^N)^{2g} =c_3 4^{gN}$ distinct points $P\in
X(\IC)$ such that 
\begin{equation}
\label{eq:irreduciblecomp}
  \left\{[2^N](P)\right\}\quad\text{is an irreducible component of}\quad
[2^N](X)\cap C.
\end{equation}

By the Pigeonhole Principle and after replacing $c_3$ by $c_3/
c_2$
 we may assume that all  $P$ as above
satisfy  $[2^N](P)\in C\cap V$,
 where $V$ is among  the  fixed Zariski open  sets from the
 covering above. 
After replacing $c_3$ by $c_3/(n+1)$, we may suppose that some fixed coordinate of all  $P$ is non-zero.
These $P$ are then contained in a non-empty Zariski open  subset of
$\AL$
on which the Segre morphism can be inverted using monomials.

We use Lemma \ref{lem:duplication} to see that $[2^N]$
equals $[G_0:\cdots:G_n]$ on a Zariski open and non-empty subset of
$\AL$
where $G_i\in\IZ[X_0,\ldots,X_n]$ have suitably bounded degree.
Let $H_0,\ldots,H_g$ be the polynomials attached to $V$. We set
$F_i = H_i(G_0,\ldots,G_n)\in\IQbar[X_0,\ldots,X_n]$ for $0\le i \le g$. The
$F_i$ are homogeneous with $\deg{F_i}\le c_4 4^N$. 

The rational map $f:X\dashrightarrow \IP^g$ given 
by $f = [F_0:\cdots:F_g]$ is regular at the points $P$ considered
above; indeed, by construction $[G_0(P):\cdots:G_n(P)] = [2^N](P)$ lies in
$V(\IC)$ and is thus not a zero of $H_0$.
 Moreover, we have $f(P) = [1:0:\cdots:0]$.
We claim that each $P$ is an irreducible component of
$f^{-1}([1:0:\cdots:0])$. 
We aim for a contradiction by assuming that 
 there is an irreducible curve
$Y\subset\dom{f}$ containing $P$ with $f(Y) = [1:0:\cdots:0]$. Without
loss of generality we may assume $Y\subset [2^N]^{-1}(V)$. But then
$[2^N](Y) \subset V$ is in the set of common zeros of $H_1,\ldots, H_g$,
hence
$[2^N](Y) \subset [2^N](X)\cap C$. Now  $[2^N](Y)$
remains a curve and it contains $[2^N](P)$. But this contradicts 
(\ref{eq:irreduciblecomp}).

So the fiber $f^{-1}([1:0:\cdots:0])$ contains at least $c_3 4^{gN}$
isolated points. By Lemma \ref{lem:degree} we conclude $\deg{f}\ge c_3
4^{Ng}$. 

The proposition will now follow  from Lemma
\ref{lem:heightineq} applied to the Zariski closure of $X$ in
$\IP^n$. Indeed, taking $D= c_4 4^N$ we get 
$\height{f(P)} \ge c_5 4^N \height{P} - c_6(N)$
for all $P\in U(\IQbar)$ where $U\subset X$ is Zariski open and
dense and $c_6(N)$ is a constant which may depend on $N$. 
From (\ref{eq:heightequality}) we conclude 
$\height{f(P)} \ge c_5 4^N \heightlb{P}{\AL} - c_6(N)$.
After shrinking $U$ we may assume that $U\subset V$.  
 Hence $f(P) = H([2^N](P))$ with 
$H = [H_0:\cdots:H_g]$ and thus
\begin{equation}
\label{eq:heightflb}
\height{H([2^N](P))} \ge c_5 4^N \heightlb{P}{\AL} - c_6(N).
\end{equation}

Using the local definition of the projective height given in 
Chapter 1.5 \cite{BG}
together with the
triangle, respectively the ultrametric inequality we find $c_7
\ge 0$ such that
\begin{equation*}
  \height{H(P')} \le c_7 \max\{1,\height{P'}\}
\end{equation*}
for all $P'\in\IP^n(\IQbar)$ such that 
at least one $H_i(P')\not=0$.
If $P'=[2^N](P)$ we obtain
 $\height{H([2^N](P))} \le c_7\max\{1,\height{[2^N](P)}\}$. 
But $\height{[2^N](P)} = 
\heightlb{[2^N](P)}{\AL}$
 by (\ref{eq:heightequality}).
The proposition now follows
from 
(\ref{eq:heightflb}). 
\end{proof}

\section{Passing to the N\'eron-Tate Height and Proof of the Main Results}
\label{sec:mainresults}

\subsection{A Weak Version of  Theorem \ref{thm:main} for the Legendre Family}

The height involved in the upper bound of
 Proposition \ref{prop:heightlb} is the total  height given by
(\ref{eq:deftotalheight}). On a fixed
 fiber of $\AL$ above $\B(\IQbar)$ this height differs from
 the N\'eron-Tate height  (\ref{eq:defntheight}) by a bounded function. 
The dependency of this bound on the fiber was made explicit by
Silverman-Tate \cite{Silverman} and Zimmer \cite{Zimmer}. See also the
related result of Zarhin and Manin \cite{ZarhinManin}.

 \begin{theorem}
\label{thm:stz}
There is an absolute constant $c>0$ such that if
$P\in\AL(\IQbar)$, then 
\begin{equation*}
  |\heightlb{P}{\AL}-\ntheightlb{P}{\AL}|\le
  c\max\{1,\height{\piL(P)}\}. 
\end{equation*}
 \end{theorem}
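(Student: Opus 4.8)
The plan is to reduce to a single fibre of the Legendre family and there carry out the telescoping argument of Silverman and Tate in the explicit form that the Legendre model makes available; the point is that the explicit defining and duplication polynomials keep every implied constant uniform in the fibre, even near the cusps. Both sides of the asserted inequality are additive over the $g$ factors: by (\ref{eq:deftotalheight}), $\heightlb{P}{\AL}=\height{\piL(P)}+\sum_{i=1}^{g}\height{P_i}$; by (\ref{eq:defntheight}), $\ntheightlb{P}{\AL}=\sum_{i=1}^{g}\ntheightlb{P_i}{(\EL)_{\piL(P)}}$; and $\height{\piL(P)}\le\max\{1,\height{\piL(P)}\}$. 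So it suffices to produce an absolute constant $c_0>0$ such that, for every $\lambda\in\IQbar\ssm\{0,1\}$ and every $Q\in(\EL)_\lambda(\IQbar)$, the projective height $\height{Q}$ of $Q$ viewed in $\IP^2(\IQbar)$ satisfies
\begin{equation*}
|\height{Q}-3\,\ntheightlb{Q}{(\EL)_\lambda}|\le c_0\max\{1,\height{\lambda}\};
\end{equation*}
summing the $g$ such estimates and adding the base term then gives the theorem, once one matches up the two normalisations of the polarisation on $\AL$ (a hyperplane section of the plane cubic $(\EL)_\lambda$ has degree $3$, so $\height{Q}$ represents $3$ times the degree-one class used to define $\ntheightlb{\cdot}{\AL}$).

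For the reduced estimate I would compare both $\height{Q}$ and $\ntheightlb{Q}{(\EL)_\lambda}$ with the height $\height{x(Q)}$ of the $x$-coordinate $x(Q)=[x:z]\in\IP^1(\IQbar)$ of $Q=([x:y:z],\lambda)$. The first comparison is elementary: working place by place with the Legendre equation $zy^2=x(x-z)(x-z\lambda)$, one gets at every absolute value $|\cdot|_v$ the bound $|y|_v^2\le C_v\max\{|x|_v,|z|_v\}^3\max\{|\lambda|_v,1\}$, with $C_v=1$ off a fixed finite set of places and $C_v$ absolutely bounded at the remaining places, together with the matching lower bound at the places where $|x/z|_v$ is large compared with $\max\{|\lambda|_v,1\}$; summing over $v$ yields
\begin{equation*}
|\height{Q}-\tfrac{3}{2}\height{x(Q)}|\le c_1\max\{1,\height{\lambda}\}
\end{equation*}
with $c_1$ absolute. (This just says that the naive $\IP^2$-height and $\tfrac{3}{2}$ times the naive $x$-height both represent the degree-$3$ line bundle of $(\EL)_\lambda$, with an error bounded uniformly in $\lambda$ because the defining equation has coefficients in $\IZ[\lambda]$ of bounded degree.)

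The second comparison is Tate's limit argument made uniform. The $x$-coordinate duplication on $(\EL)_\lambda$ can be read off from the explicit polynomials $G_{1,0},G_{1,1},G_{1,2}$ of Lemma \ref{lem:duplication}, or written directly as the degree-$4$ self-morphism
\begin{equation*}
x([2](Q))=[(x^2-\lambda z^2)^2\;:\;4xz(x-z)(x-\lambda z)]
\end{equation*}
of $\IP^1$, given by two forms with coefficients in $\IZ[\lambda]$ of $\lambda$-degree at most $2$ which, for $\lambda\notin\{0,1\}$, have no common zero. The standard estimate for the height of the values of such forms therefore gives, uniformly in $Q$, $|\height{x([2](Q))}-4\height{x(Q)}|\le c_2\max\{1,\height{\lambda}\}$ with $c_2$ absolute. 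Applying this with $Q$ replaced by $[2^{k}](Q)$, dividing by $4^{k+1}$ and summing the geometric series $\sum_{k\ge0}4^{-(k+1)}=\tfrac{1}{3}$, one finds that $\tfrac{1}{2}\height{x(Q)}$ and the Tate limit $\lim_{n\to\infty}4^{-n}\tfrac{1}{2}\height{x([2^{n}](Q))}$ differ by at most $\tfrac{c_2}{6}\max\{1,\height{\lambda}\}$. Since $\tfrac{1}{2}\heightS\circ x$ is a Weil representative of the degree-one class $\heightlbS{(\EL)_\lambda,\mathcal{L}}$, that Tate limit is exactly $\ntheightlb{Q}{(\EL)_\lambda}$; combining with the first comparison yields the reduced estimate with $c_0$ depending only on $c_1$ and $c_2$, and hence the theorem.

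The only genuine work is the uniformity in $\lambda$ of all the implied constants, in particular as $\lambda$ approaches the cusps $0$ and $1$ of $\B$. This is precisely what the explicit formulas supply: every polynomial entering the argument (the Legendre equation, the duplication map, the $G_{1,i}$ of Lemma \ref{lem:duplication}) has coefficients in $\IZ[\lambda]$ of bounded degree, so the elementary ``size of a polynomial value'' inequalities apply with constants depending only on those finitely many, uniformly bounded, coefficient patterns rather than on the particular $\lambda$. This is of course the content of the Silverman--Tate and Zimmer estimates cited in the statement; the plan is to re-derive that content for the Legendre family rather than to invoke their general form.
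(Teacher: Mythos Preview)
Your approach is correct but genuinely different from the paper's. The paper's proof is a one-line citation: the inequality ``follows from either Zimmer's Theorem or from the result of Silverman and Tate, cf.\ Theorem~A \cite{Silverman}.'' You instead unfold the Silverman--Tate telescoping argument explicitly for the Legendre family, using the concrete $x$-coordinate duplication map $x([2]Q)=[(x^{2}-\lambda z^{2})^{2}:4xz(x-z)(x-\lambda z)]$ and the fact that all coefficients lie in $\IZ[\lambda]$ of bounded degree to keep every implied constant uniform in $\lambda$. Both routes yield the same estimate; yours is self-contained and shows precisely where the dependence on $\height\lambda$ enters (through the resultant of the two duplication polynomials, which is a polynomial in $\lambda$ vanishing only at the cusps).

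One point deserves comment. You correctly observe that the $\IP^{2}$-height $\height{Q}$ represents three times the degree-one class used to normalise $\ntheightlbS{(\EL)_\lambda}$, so what the telescoping argument actually produces is
\[
\bigl|\height{Q}-3\,\ntheightlb{Q}{(\EL)_\lambda}\bigr|\le c_{0}\max\{1,\height\lambda\},
\]
and hence, after summing over the $g$ factors and adding the base term,
\[
\bigl|\heightlb{P}{\AL}-3\,\ntheightlb{P}{\AL}\bigr|\le c\max\{1,\height{\piL(P)}\}.
\]
Your phrase ``once one matches up the two normalisations'' leaves this last step implicit. In fact with the definitions (\ref{eq:deftotalheight}) and (\ref{eq:defntheight}) exactly as written, the literal statement $|\heightlbS{\AL}-\ntheightlbS{\AL}|\le c\max\{1,\heightS\circ\piL\}$ cannot hold (take $g=1$, fix $\lambda$, and let $P_{1}$ vary). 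What Silverman--Tate and Zimmer give, and what you prove, is the inequality with the factor $3$; this is exactly what the paper needs in Lemma~\ref{lem:passNTheight}, where the constant in front of $\ntheightlbS{\AL}$ is immediately absorbed. So your observation is not a gap in your argument but a genuine normalisation subtlety in the statement as written; you have handled it correctly by making the factor explicit.
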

 \begin{proof}
This follows from either Zimmer's Theorem  or from
 the result
of Silverman and Tate, cf. Theorem A \cite{Silverman}.
 \end{proof}

On $X$ we can now bound $\height{\piL(P)}$ from above in terms
of $\ntheightlb{P}{\AL}$ by applying Proposition \ref{prop:heightlb}
to a 
 sufficiently large but fixed  integer $N$. 

\begin{lemma}
\label{lem:passNTheight}
  Let $X\subset\AL$ be an irreducible closed subvariety
  defined over $\IQbar$ which dominates $\B$, has dimension $g$, and 
satisfies (i) or (ii) of Proposition \ref{prop:counting}. There exist a
constant $c = c(X) > 0$ and
  a non-empty Zariski open subset $U\subset X$ such that 
  \begin{equation*}
    \height{\piL(P)} \le c \max\{1,\ntheightlb{P}{\AL}\}
  \end{equation*}
 for all $P\in U(\IQbar)$. 
\end{lemma}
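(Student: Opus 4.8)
The plan is to combine the lower bound of Proposition \ref{prop:heightlb} with the height comparison of Theorem \ref{thm:stz}, exploiting that the N\'eron--Tate height is quadratic along the fibers while the fibral height $\height{\piL(\cdot)}$ is invariant under multiplication-by-$2^N$. First I would record three elementary observations. Since $[2^N]:\AL\rightarrow\AL$ is a morphism over $\B$, one has $\piL([2^N](P))=\piL(P)$ for every $P\in\AL(\IQbar)$. Since the N\'eron--Tate height on each elliptic fiber is a quadratic form, summing over the $g$ factors in \eqref{eq:defntheight} gives $\ntheightlb{[2^N](P)}{\AL}=4^N\,\ntheightlb{P}{\AL}$. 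Finally, directly from the definition \eqref{eq:deftotalheight} of the total height and the nonnegativity of the projective height, $\heightlb{P}{\AL}\ge\height{\piL(P)}$.

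Next, fix the constant $c_0>0$ from Proposition \ref{prop:heightlb} and the absolute constant $c_1>0$ from Theorem \ref{thm:stz}. Choose once and for all an integer $N$ with $2^N\ge c_0^{-1}$ and, in addition, large enough that $c_0\,4^N\ge 2c_1$; this is possible because $c_0\,4^N\to\infty$. For this $N$ Proposition \ref{prop:heightlb} supplies a non-empty Zariski open $U\subset X$ and a constant $c'(N)$ with $\heightlb{[2^N](P)}{\AL}\ge c_0\,4^N\,\heightlb{P}{\AL}-c'(N)$ for all $P\in U(\IQbar)$. Applying Theorem \ref{thm:stz} to the point $[2^N](P)$ and using the three observations above, the left-hand side is at most $4^N\,\ntheightlb{P}{\AL}+c_1\max\{1,\height{\piL(P)}\}$, while the right-hand side is at least $c_0\,4^N\,\height{\piL(P)}-c'(N)$. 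Rearranging, bounding $\max\{1,t\}\le 1+t$, and then using $c_0\,4^N-c_1\ge\tfrac12 c_0\,4^N$, one obtains $\height{\piL(P)}\le (2/c_0)\,\ntheightlb{P}{\AL}+c''$ for a constant $c''$ depending only on $N$, hence only on $X$. Absorbing this affine bound into $c\max\{1,\ntheightlb{P}{\AL}\}$ for a suitably large $c=c(X)$ finishes the proof, with the open set $U$ being the one produced by Proposition \ref{prop:heightlb}.

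I do not expect a genuine obstacle here: the only point requiring minimal care is the bookkeeping of constants and the order of quantifiers, namely that $N$ must be chosen large enough to dominate the \emph{absolute} error constant $c_1$ of Theorem \ref{thm:stz} \emph{before} $N$ is frozen, after which everything is formal. The one structural input worth flagging is the quadraticity $\ntheightlb{[2^N](P)}{\AL}=4^N\,\ntheightlb{P}{\AL}$: this is what lets the exponentially growing factor $c_0\,4^N$ on the total-height side win, since the N\'eron--Tate side grows by the same factor $4^N$ while the fibral term $\height{\piL(P)}$ does not grow at all.
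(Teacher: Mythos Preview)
Your argument is correct and matches the paper's proof essentially line for line: apply Theorem~\ref{thm:stz} to $[2^N](P)$, use $\piL([2^N](P))=\piL(P)$ and the quadraticity $\ntheightlb{[2^N](P)}{\AL}=4^N\ntheightlb{P}{\AL}$, then choose $N$ large enough that $c_0\,4^N$ dominates the absolute constant from Theorem~\ref{thm:stz}. One trivial slip: the threshold in Proposition~\ref{prop:heightlb} is $N\ge c_0^{-1}$, not $2^N\ge c_0^{-1}$, but this is harmless since you take $N$ large anyway.
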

\begin{proof}
  Let $c_1 > 0 $ be the constant $c$ in Proposition \ref{prop:heightlb}
  and let $c_2$ be $c$ from Theorem \ref{thm:stz}.
  We fix $N$ to be the least  positive integer with $2^N\ge
  \max\{2^{1/c_1},\sqrt{2c_2/c_1}\}$. 
Proposition \ref{prop:heightlb} implies
\begin{equation*}
 c_1 4^N \heightlb{P}{\AL}-c_3(N)\le \heightlb{[2^N](P)}{\AL}
\end{equation*}
for all $P\in U(\IQbar)$ where  $c_3(N)$ is $c'(N)$ from said proposition.
We use the bound for
$|\heightlb{P}{\AL}-\ntheightlb{P}{\AL}|$
to obtain the second inequality in
\begin{equation*}
 c_1 4^N \height{\pi_L(P)}-c_3(N)\le 
 c_1 4^N \heightlb{P}{\AL}-c_3(N)\le 
\ntheightlb{[2^N](P)}{\AL}+c_2\max\{1,\height{\piL(P)}\},
\end{equation*}
the first one follows from (\ref{eq:deftotalheight}).

The N\'eron-Tate height is quadratic. Dividing by $4^N$ leads to
\begin{equation*}
\left( c_1-\frac{c_2}{4^N}\right) \height{\piL(P)} \le
 \ntheightlb{P}{\AL}+\frac{c_3(N)+c_2}{4^N}.
\end{equation*}
By our choice of $N$ we have $c_1-c_2/4^N\ge c_1/2 > 0$ and the current
lemma follows.
\end{proof}

The previous lemma only holds for 
 hypersurfaces in $\AL$. Moreover, the
restriction (i) in  Proposition \ref{prop:counting}
is stronger than $\remtor{X}\not=\emptyset$, the
implicit condition of Theorem \ref{thm:main}(ii).
We address both issues in the next lemma which is 
essentially an induction on dimension.

\begin{lemma}
\label{lem:heightubU}
  Let $X\subset\AL$ be an irreducible closed  subvariety 
defined over $\IQbar$
 which is not an irreducible component of a flat subgroup scheme
  of $\AL$. 
There is a constant $c = c(X)> 0$ and
  a Zariski open non-empty set $U\subset X$ such that 
  \begin{equation}
\label{eq:heightubU}
 \height{\piL(P)} \le c\max\{1,\ntheightlb{P}{\AL}\}
  \end{equation}
 for all $P\in U(\IQbar)$. 
\end{lemma}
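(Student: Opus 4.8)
```latex
\begin{proof}[Proof proposal for Lemma \ref{lem:heightubU}]
The plan is to reduce the general case to the hypersurface case covered by
Lemma \ref{lem:passNTheight} by an induction on $g$ and on $\dim X$, using a
suitable morphism $\Psi$ built from elements of $\IZ^g$ to cut $X$ down. If
$X$ does not dominate $\B$, then $\piL(X)$ is a single point, $\height{\piL(P)}$
is bounded on $X(\IQbar)$, and \eqref{eq:heightubU} is trivial; so we may assume
$X$ dominates $\B$. The Fiber Dimension Theorem then gives $\dim X_\lambda = \dim
X - 1$ for generic $\lambda$. First I would treat the case $\dim X = g$: if $X$
satisfies condition (i) of Proposition \ref{prop:counting} we are done by
Lemma \ref{lem:passNTheight} directly; otherwise there is
$\varphi\in\IZ^g\ssm\{0\}$ with $\dim\varphi(X)\le 1$. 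Since $X$ is not an
irreducible component of a flat subgroup scheme of $\AL$, Lemma
\ref{lem:horizontal}(ii) rules out $X\subset\ker\psi$ for any
$\psi\in\IZ^g\ssm\{0\}$ only when $X$ itself is such a component; here $X$ is a
hypersurface that is \emph{not} such a component, so $\varphi(X)$ must be
one-dimensional (it dominates $\B$). Then I would replace $\varphi$ by part of a
basis, complete $\{\varphi\}$ to a $\IZ$-linearly independent tuple
$\varphi=\varphi_1,\varphi_2,\dots,\varphi_g\in\IZ^g$, and use the automorphism
of $\AL$ these define over $\B$ (an element of $\GL{g}{\IZ}$) to change
coordinates so that $\varphi$ becomes the first projection $\EL\times_\B\cdots
\times_\B\EL\to\EL$; this does not change the N\'eron-Tate height and changes
$\height{\piL(P)}$ not at all, since $\piL$ is unaffected. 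So without loss of
generality $\varphi(X)=:C_0$ is a curve in $\EL$ dominating $\B$ which is not an
irreducible component of a flat subgroup scheme of $\EL$, hence satisfies
(ii) of Proposition \ref{prop:counting}, and by Lemma \ref{lem:passNTheight}
(applied with $g=1$) we get $\height{\piL(Q)}\le c\max\{1,\ntheightlb{Q}{\EL}\}$
for $Q$ in a dense open subset of $C_0$. Pulling back along $\varphi$ and using
$\piL\circ\varphi=\piL$ together with
$\ntheightlb{\varphi(P)}{\EL}\le c'\ntheightlb{P}{\AL}$ (functoriality of the
N\'eron-Tate height under the homomorphism $\varphi$, which only increases the
height by a bounded factor on each fiber uniformly in the fiber) yields
\eqref{eq:heightubU} on $\varphi|_X^{-1}(\text{dense open})$, which is dense
open in $X$ because $\varphi|_X$ is dominant onto $C_0$.

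Next I would handle $\dim X < g$. Here the idea is to find
$\varphi_1,\dots,\varphi_r\in\IZ^g$ that are $\IZ$-linearly independent with
$r=g-\dim X$ such that $\Psi=\varphi_1\times_S\cdots\times_S\varphi_r:\AL\to
\mathcal{B}$, where $\mathcal{B}$ is the $r$-fold fibered power of $\EL$, has the
property that $\Psi|_X:X\to\mathcal{B}$ is dominant onto an irreducible
subvariety $X'$ of $\mathcal{B}$ with $\dim X'=\dim X<\dim\mathcal{B}=r+1$,
i.e.\ $X'$ is not all of $\mathcal{B}$; and we want $X'$ still not an
irreducible component of a flat subgroup scheme of $\mathcal{B}$. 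Existence of
such $\varphi_i$ is a dimension count: a generic choice of $r$ elements of
$\IZ^g$ makes $\Psi|_X$ generically finite onto its image. By Lemma
\ref{lem:horizontal}(iii), $\Psi$ is smooth with non-singular fibers, so this
is a clean reduction. By the induction hypothesis applied to $X'\subset
\mathcal{B}$ (which is governed by Lemma \ref{lem:passNTheight} once
$\dim X' = \dim\mathcal{B}$ — if $r\ge 1$ we will in fact have, after further
change of basis, that $X'$ is a hypersurface in $\mathcal{B}$ or we iterate),
there is a dense open $U'\subset X'$ with $\height{\piL(Q)}\le
c\max\{1,\ntheightlb{Q}{\mathcal{B}}\}$ for $Q\in U'(\IQbar)$. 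Pulling back along
$\Psi|_X$ and using $\piL\circ\Psi=\piL$ and $\ntheightlb{\Psi(P)}{\mathcal{B}}
\le c''\ntheightlb{P}{\AL}$ gives \eqref{eq:heightubU} on the dense open set
$(\Psi|_X)^{-1}(U')\subset X$. One must check that $X'$ is not an irreducible
component of a flat subgroup scheme of $\mathcal{B}$: if it were, then by Lemma
\ref{lem:horizontal}(ii) applied to $\mathcal{B}$, $X'$ would be contained in
$\ker\psi$ for some $\psi\in\IZ^r\ssm\{0\}$, and then $X\subset\ker(\psi\circ
\Psi)$ with $\psi\circ\Psi\in\IZ^g\ssm\{0\}$ (nonzero since the $\varphi_i$ are
independent), forcing $X$ into a flat subgroup scheme $\ker(\psi\circ\Psi)$ of
$\AL$ and hence, being irreducible of the right dimension, into an irreducible
component of one — contradicting the hypothesis on $X$. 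A generic choice of the
$\varphi_i$ also guarantees we can keep $X'$ from degenerating in the sense of
condition (i) when it is top-dimensional, but if that fails we fall back to the
$\dim X = g$ argument of the previous paragraph applied inside $\mathcal{B}$.

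The main obstacle I anticipate is bookkeeping the base case of the induction
correctly: Lemma \ref{lem:passNTheight} only applies to a hypersurface $X$ in
$\AL$ (with $g$ equal to the number of factors) that satisfies (i) or (ii) of
Proposition \ref{prop:counting}, so the reduction must land exactly on such a
configuration, and one has to be careful that after applying $\Psi$ the image
$X'$ has dimension one less than the ambient fibered power (so that it is a
hypersurface) and satisfies the non-degeneracy alternative. This is why I would
organize the induction on the quantity $\dim(\text{ambient fibered
power})-\dim X$: at each step either $X$ is already a hypersurface satisfying
(i) and we invoke Lemma \ref{lem:passNTheight}, or it satisfies neither and we
peel off one more factor via a well-chosen $\varphi\in\IZ^{(\cdot)}$ of the
appropriate rank, strictly decreasing this quantity while preserving the "not a
component of a flat subgroup scheme" condition via the Lemma
\ref{lem:horizontal}(ii) argument above. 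A secondary point to get right is the
uniformity of the comparison $\ntheightlb{\varphi(P)}{\mathcal{B}}\le
c\,\ntheightlb{P}{\AL}$ over all fibers; this holds because $\varphi$ (resp.\
$\Psi$) is a morphism of abelian schemes, hence fiberwise a homomorphism of
bounded degree, and functoriality of the N\'eron-Tate height on each fiber gives
a bound with constant depending only on the finitely many entries of $\varphi$,
not on $\lambda$. With these pieces in place the lemma follows by the stated
induction.
\end{proof}
```
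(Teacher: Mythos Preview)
Your overall strategy---reduce to the hypersurface case via projections to smaller fibered powers, then invoke Lemma~\ref{lem:passNTheight}---matches the paper's. The hypersurface subcase where some $\varphi\in\IZ^g\ssm\{0\}$ has $\dim\varphi(X)\le 1$ is essentially right, modulo the slip that a $\GL{g}{\IZ}$ change of coordinates does \emph{not} preserve $\ntheightlbS{\AL}$ exactly (it mixes the summands $\ntheightlb{P_i}{\EL}$); the paper sidesteps this by using $\varphi$ directly and the Cauchy--Schwarz bound $\ntheightlb{\varphi(P)}{\EL}\le g\max_i a_i^2\,\ntheightlb{P}{\AL}$.

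The genuine gap is in the non-hypersurface case $\dim X<g$. First, the target size $r=g-\dim X$ is wrong: e.g.\ for $g=3$, $\dim X=2$ you get $\mathcal{B}=\EL$, and for a general such $X$ every nonzero $\varphi\in\IZ^3$ maps $X$ onto all of $\EL$, leaving no proper $X'$ to induct on. The paper takes $\mathcal{B}$ with $\dim X$ factors, so that $X'=\Psi(X)$ is a hypersurface. Second, and more fundamentally, your argument that $X'$ is not a component of a flat subgroup scheme does not go through: from $X'\subset\ker\psi$ you correctly get $X\subset\ker(\psi\circ\Psi)$, but $\psi\circ\Psi$ is a \emph{single} nonzero element of $\IZ^g$, so $\dim\ker(\psi\circ\Psi)=g>\dim X$ and mere containment does not force $X$ to be a component. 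One projection cannot give the contradiction. The paper's fix is to first choose a coordinate projection $\Psi$ onto $\dim X-1$ factors that is surjective from $X$ (possible by a fiberwise dimension argument), then for each $j=\dim X,\dots,g$ extend to $\Psi_j$ onto $\dim X$ factors by adjoining coordinate $j$. If \emph{every} $\Psi_j(X)\subset\ker\psi_j$ for some $\psi_j\in\IZ^{\dim X}\ssm\{0\}$, the surjectivity of $\Psi$ forces the $j$-th entry of $\psi_j\circ\Psi_j\in\IZ^g$ to be nonzero; hence these $g-\dim X+1$ elements are linearly independent and their common kernel has dimension exactly $\dim X$, now contradicting the hypothesis on $X$. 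So at least one $\Psi_j(X)$ works, and induction on $\dim X+\dim\AL$ finishes.
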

\begin{proof}
The hypothesis implies $X\not=\AL$.
The lemma certainly holds for all $X$ which do not dominate $\B$ because
$\piL|_X$ is then constant.
Hence we reduce to the case where $X$
dominates $\B$.
  The proof is by induction on $\dim X + \dim\AL = \dim X + g + 1$. 
 The induction parameter is at
  least $3$. 

If $\dim X +\dim\AL  =3$, then the only possibility
is $\dim X = 1$ and $\dim\AL=2$, so 
$\AL=\EL$. 
By hypothesis, we are in case (ii) of Proposition \ref{prop:counting}. 
The height inequality (\ref{eq:heightubU}) follows from
 Lemma \ref{lem:passNTheight}.

So let us assume $\dim X +\dim\AL  \ge 4$. 
Because $\dim X <\dim  \AL$ we split into
two  cases.

The first case is when $X$ is no hypersurface, so $\dim X\le g-1$.
 We pick any
 $\lambda\in \B(\IQbar)$. All irreducible components of the fiber
 $X_\lambda$ have dimension at least $\dim X -1$ by the 
Fiber Dimension Theorem. But since $X$ dominates $\B$, all these irreducible
 components have dimension precisely $\dim X-1$. 
 Moreover,  any $X_\lambda$ is non-empty because $\piL|_X:X\rightarrow \B$ is
dominant and proper, hence surjective. Let $Z\subset X_\lambda$ be an
 irreducible component. 
 The whole fiber
 $(\AL)_\lambda = (\EL)_\lambda^g$ is a power of an
 elliptic curve. So there is a projection $\Psi:
 (\EL)_\lambda^g\rightarrow (\EL)_\lambda^{\dim X-1}$ 
 onto $\dim X-1$ coordinates such that $\dim Z = \dim \Psi(Z)$. We again
 apply the Fiber Dimension Theorem to find a point $P\in
 Z(\IQbar)$
 with $\dim_P \Psi|_Z^{-1}(\Psi(P)) = 0$. We can even arrange that $P$
  is not contained in any other irreducible component of $X_\lambda$. 
 Of course, $\Psi$ extends to a projection
 $\AL\rightarrow\EL\times_\B\cdots\times _\B\EL$
 ($\dim X-1$ factors). 
 An irreducible component
 of
 $\Psi|_X^{-1}(\Psi(P))\subset X_\lambda$  containing $P$ must be
 $\{P\}$.
 A final
 application of the Fiber Dimension Theorem shows
 $\dim\Psi(X) \ge \dim X$. But the reverse inequality  also
 holds. So $\dim \Psi(X) = \dim X$.
To  simplify notation we assume that $\Psi$
 projects onto the first $\dim X -1$ coordinates. For $\dim X\le j\le
 g$ let
 $\Psi_j:\AL\rightarrow\EL\times_\B\cdots\times_\B\EL=\mathcal{B}$
 ($\dim X$ factors)
be the projection onto the first $\dim X-1$ and the
 $j$-th coordinate. We note $\dim \mathcal{B} = 1+\dim X$.
Then $\Psi_j$ is proper and so $\Psi_j(X)$ is an irreducible  closed
subvariety  of $\mathcal{B}$.
It follows quickly that $\dim \Psi_j(X) = \dim X$, so $\Psi_j(X)$ has
codimension $1$. 
We claim that at least one $\Psi_j(X)$ is not contained in a proper
flat subgroup scheme of $\mathcal{B}$. 
Indeed, otherwise $X$ would be contained in a flat
 subgroup scheme of dimension $\dim X$ by
Lemma \ref{lem:horizontal}(ii) and (iii).  Hence  $X$  would be an
irreducible component of a flat subgroup scheme. 
This is impossible by
hypothesis. So let us assume that  $X'=\Psi_j(X)$ is not contained in a proper flat subgroup scheme of
$\mathcal{B}$. Since $ X' \not= \mathcal{B}$ we conclude that $X'$ 
 is not an irreducible component of a flat  subgroup scheme.
Because $\dim X'+\dim\mathcal{B} = 2\dim X + 1 \le \dim X +
\dim\AL-1$ we may apply induction.
So there is $c_1 > 0$ and a non-empty Zariski
open subset $U'\subset X'$ such that 
$\height{\piL(Q)} \le c_1 ( 1+\ntheightlb{Q}{\mathcal{B}})$ 
for all $Q\in U'(\IQbar)$.
The current case  follows with $U=\Psi_j|_X^{-1}(U')$ because 
$\piL(P)=\piL(\Psi_j(P))$ and 
$\ntheightlb{\Psi_j(P)}{\mathcal{B}}\le
\ntheightlb{P}{\AL}$; in fact, we are just omitting certain coordinates.

The second   case is when $X$ is a hypersurface, so $\dim X = g$; it is here where we apply
Lemma \ref{lem:passNTheight}. We  split up into two
subcases. In the first subcase we suppose $\dim \varphi(X) \ge 2$
for all $\varphi\in \IZ^g\ssm\{0\}$. That is, $X$
satisfies  hypothesis (i) of Proposition \ref{prop:counting}. We
conclude this subcase immediately
by applying Lemma \ref{lem:passNTheight}. It remains to treat the case
where there exists $\varphi\in \IZ^g\ssm\{0\}$
such that $\dim \varphi(X) \le 1$, in this case $C=\varphi(X)$ 
is an irreducible closed curve in $\EL$. 
It satisfies property (ii) of Proposition \ref{prop:counting}.
We note that $4\le \dim X
+\dim\AL$ hence
$\dim C + \dim \EL < \dim X + \dim\AL$. By induction
there is $c_2 > 0$
with $\heightlb{Q}{\EL}\le
c_2\max\{1,\ntheightlb{Q}{\EL}\}$ 
for all $Q\in
C(\IQbar)$; indeed any non-empty Zariski open  subset of $C$ misses
merely finitely many points of $C$. Writing $Q=\varphi(P)$ for some $P\in
X(\IQbar)$ we see
$\height{\piL(P)} = \height{\piL(\varphi(P))}\le c_2
\max\{1,\ntheightlb{\varphi(P)}{\EL}\}$ 
for all $P\in
X(\IQbar)$.

 It is well-known how to bound $\ntheightlb{\varphi(P)}{\EL}$ from
above in terms of $\ntheightlb{P}{\AL}$.
Indeed, say  $\varphi=(a_1,\ldots,a_g)$. If $P_1,\ldots,P_g\in(\EL)_\lambda(\IQbar)$ where
$\lambda\in\B(\IQbar)$, then 
\begin{alignat*}1
 \ntheightlb{[a_1](P_1)+\cdots + [a_g](P_g)}{\EL}&\le
g(a_1^2\ntheightlb{P_1}{\EL}+\cdots
+a_g^2\ntheightlb{P_g}{\EL})\\ &\le g\max\{a_1^2,\ldots,a_g^2\}\ntheightlb{P}{\AL}; 
\end{alignat*}
this follows from the fact that the N\'eron-Tate height is a quadratic
form and from the Cauchy-Schwarz inequality.
So
$\ntheightlb{\varphi(P)}{\EL}\le g
\max\{a_1^2,\ldots,a_g^2\}\ntheightlb{P}{\AL}$.
We conclude $\height{\piL(P)} \le c_3\max\{1,\ntheightlb{P}{\AL}\}$
for all $P\in X(\IQbar)$ where $c_3$ is independent of $P$. 
\end{proof}

\subsection{Adding Level Structure and Proof of Theorem \ref{thm:main}}

Let $\mathcal{E},\mathcal{A},S,$ and $\pi$ be as in the introduction.
We recall that the curve  $S$ is
defined over $\IQbar$.
We fix an irreducible and non-singular projective curve $\overline S$
and assume $S$ is Zariski open in $\overline{S}$. 
 Let $\mathcal{L}$ be
a line bundle on $\overline S$ and let $\heightlbS{\overline
  S,\mathcal{L}}$ be
a choice of height function  ${\overline S}(\IQbar)\rightarrow\IR$.

Before coming to the proof of Theorem \ref{thm:main}(ii) we need an
auxiliary construction.

\begin{lemma}
\label{lem:levelstructure}
  Let us assume that $\mathcal{A}$ is not isotrivial. After possibly
  replacing $S$ by a non-empty Zariski open subset 
there exists an irreducible non-singular quasi-projective curve $S'$
  defined over $\IQbar$ with the following property. We have a
  commutative diagram
\begin{equation}
\label{eq:leveldiagramm}
  \bfig 
   \hSquares/{<-}`>`>`>`>`{<-}`>/
             [\mathcal{A}`\mathcal{A}'`\AL`S`S'`\B; f ` e `\pi ` ` \piL`l`\lambda]
  \efig
\end{equation}
where $l$ is finite, $\lambda$ is quasi-finite, 
$\mathcal{A}'$ is the abelian scheme 
$\mathcal{A}\times_S S'$,
 $f$ is finite and flat, and $e$ is quasi-finite and
flat. 
Moreover, the restriction of $f$ and $e$ to any fiber
of $\mathcal{A}'\rightarrow S'$ is an isomorphism of abelian varieties.
Finally, if $P\in \mathcal{A}'_s(\IQbar)$, then
\begin{equation}
  \label{eq:NTequality}
  \ntheightlb{f(P)}{\mathcal{A}} = \ntheightlb{e(P)}{\AL}.
\end{equation}
\end{lemma}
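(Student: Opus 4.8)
The plan is to construct $S'$ as a finite cover of a Zariski open subset of $S$ that trivializes the ordered $2$-torsion of $\mathcal{E}$ and, in addition, rigidifies the resulting Legendre model; the second feature is the only non-formal point. First I would replace $S$ by a non-empty affine Zariski open subset over which $\mathcal{E}\to S$ is given by a global Weierstrass equation (possible after shrinking so that the Hodge bundle is trivial). Since $\mathcal{E}\to S$ is an abelian scheme and $2$ is invertible on $\IQbar$-schemes, $\ker[2]\subset\mathcal{E}$ is finite \'etale over $S$ and its complement of the zero section is finite \'etale of degree $3$ over $S$. The $S$-scheme parametrizing orderings of these three sections is finite \'etale over $S$ (it embeds as an open and closed subscheme of a fibre product), and I take $S''$ to be one of its irreducible components; over $S''$ the cubic in the Weierstrass equation acquires an ordered triple of roots $e_1,e_2,e_3\in\Gamma(S'',\mathcal{O})$, and each $e_i-e_j$ is a unit because the $2$-torsion is fibrewise \'etale. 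Finally I pass to the finite \'etale double cover of $S''$ defined by $t^2=e_2-e_1$ and let $S'$ be an irreducible component; then $S'$ is an irreducible non-singular quasi-projective curve over $\IQbar$ and $l\colon S'\to S$ is finite (in fact \'etale).

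Over $S'$ I set $\lambda=(e_3-e_1)/(e_2-e_1)$; since the $e_i-e_j$ are units this is a morphism $\lambda\colon S'\to\IA^1\ssm\{0,1\}=\B$, and the Legendre change of variables $x\mapsto(e_2-e_1)x+e_1$, $y\mapsto t^3 y$ identifies the Weierstrass model of $\mathcal{E}'=\mathcal{E}\times_S S'$ with $\lambda^*\EL=\EL\times_{\B,\lambda}S'$, i.e. yields an isomorphism of abelian schemes $\mathcal{E}'\xrightarrow{\ \sim\ }\lambda^*\EL$ over $S'$; this is exactly the step where the square root $t$ is needed. Because $\mathcal{A}$ is not isotrivial, $j$ is non-constant by Lemma \ref{lem:jnonconst}, hence so is its pull-back to the finite cover $S'$; as $j$ factors through $\lambda$ by the sextic in (\ref{eq:jinvlambda}), the morphism $\lambda$ is non-constant, hence a dominant morphism between non-singular curves, and is therefore quasi-finite and flat.

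Now put $\mathcal{A}'=\mathcal{A}\times_S S'=(\mathcal{E}')^{\times_{S'}g}$, let $f\colon\mathcal{A}'\to\mathcal{A}$ be the first projection — finite and flat, being a base change of the finite flat $l$ — and let $e\colon\mathcal{A}'\to\AL$ be the $g$-fold fibered product of $\mathcal{E}'\xrightarrow{\sim}\lambda^*\EL$, an isomorphism $\mathcal{A}'\xrightarrow{\sim}\lambda^*\AL$, followed by the projection $\lambda^*\AL\to\AL$; this projection is a base change of $\lambda$, hence quasi-finite and flat, so $e$ is quasi-finite and flat. The square (\ref{eq:leveldiagramm}) commutes by construction. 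On the fibre over any $s\in S'$, the map $f$ is the canonical identification $\mathcal{A}'_s=\mathcal{A}_{l(s)}$ while $e|_{\mathcal{A}'_s}$ is the $g$-th power of an isomorphism of elliptic curves $\mathcal{E}'_s\xrightarrow{\sim}(\EL)_{\lambda(s)}$, so both are isomorphisms of abelian varieties.

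For the height identity, let $P=(P_1,\dots,P_g)\in\mathcal{A}'_s(\IQbar)$. The invariance of the N\'eron--Tate height of an elliptic curve under isomorphisms of elliptic curves, recalled in Section \ref{sec:heights}, applied to $\mathcal{E}'_s\xrightarrow{\sim}(\EL)_{\lambda(s)}$ gives $\ntheightlb{e(P)_i}{(\EL)_{\lambda(s)}}=\ntheightlb{P_i}{\mathcal{E}'_s}=\ntheightlb{f(P)_i}{\mathcal{E}_{l(s)}}$ for each $i$, and summing over $i$ yields (\ref{eq:NTequality}) via the definition (\ref{eq:defntheight}). The one genuinely non-formal ingredient is the rigidification built into $S'$: the ordered $2$-torsion alone determines $\lambda$ but pins $\mathcal{E}'$ down only up to the quadratic twist by $e_2-e_1$, so without adjoining $\sqrt{e_2-e_1}$ the abelian schemes $\mathcal{E}'$ and $\lambda^*\EL$ need not be isomorphic; the double cover $t^2=e_2-e_1$ repairs this at no cost, since it preserves the finiteness of $l$.
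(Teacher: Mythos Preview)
Your proof is correct and follows the same overall strategy as the paper---pass to a finite cover of $S$ over which the elliptic surface acquires a Legendre model---but the execution differs in two instructive ways.

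First, the paper argues more abstractly at the level of function fields: it simply asserts the existence of a finite extension $K/\IQbar(S)$ over which the generic fibre is isomorphic to a Legendre curve, takes $S'$ to be (an open in) the normalization of $S$ in $K$, and then invokes the spreading-out principle for abelian schemes (Proposition~8 of \cite{NeronModels}) to extend the generic isomorphism $\mathcal{E}'_\eta\cong(\EL)_{\lambda,\eta}$ to all of $S'$. Your construction is entirely explicit: you build $S'$ by first ordering the $2$-torsion and then adjoining $\sqrt{e_2-e_1}$, and you write down the isomorphism $\mathcal{E}'\xrightarrow{\sim}\lambda^*\EL$ by hand. Your remark that the extra square root is precisely what kills the quadratic twist ambiguity is a point the paper absorbs silently into ``there exists $K$''.

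Second, your flatness argument for $e$ is considerably simpler. You factor $e$ as an isomorphism followed by the projection $\lambda^*\AL\to\AL$, which is a base change of the dominant morphism $\lambda$ between non-singular curves and hence flat; the paper instead shrinks $S'$ to make $\lambda$ \'etale and then runs a fibrewise EGA argument (Corollaire~11.3.11 of \citeEGAIVIII and related results) to deduce flatness of $e$. Your route avoids both the shrinking and the EGA machinery. The paper's approach, on the other hand, has the virtue of showing that the specific shape of the cover is irrelevant---any spreading-out of the generic Legendre model works---which is a mild conceptual gain.
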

\begin{proof}
Let $j:S\rightarrow Y(1)$ be the morphism
as before Lemma \ref{lem:jnonconst}. It is non-constant by said lemma.

We  regard $j$ as an element of $\IQbar(\overline S)$, the
function field of $\overline S$. 
We may fix a finite field extension $K$ of 
 $\IQbar(S)$ 
such that 
the generic fiber of $\mathcal{E}\rightarrow
S$ is isomorphism, over $K$, to an elliptic curve determined by
$y^2=x(x-1)(x-\lambda)$ with $\lambda\in K$.
Then $K$ is the function field of an irreducible non-singular projective curve $\overline
S'$ and the inclusion $\IQbar(S)\subset K$ induces 
 a finite  morphism   $l:\overline S'\rightarrow \overline S$.
The rational function $\lambda\in \IQbar(\overline S')$ extends to a morphism
$\lambda:\overline{S}'\rightarrow\IP^1$.

We let $S'$ denote the preimage of $S$ in $\overline{S'}$.
Hence we obtain a 
irreducible non-singular quasi-projective  curve $S'$ over $\IQbar$ 
such that
\begin{equation*}
\bfig
\square[S'`\B`S`Y(1);\lambda`l`
\lambda\mapsto 2^8\frac{(\lambda^2-\lambda+1)^3}{\lambda^2(\lambda-1)^2}
`j]
\efig
\end{equation*}
commutes; by abuse of notation we use the symbols $\lambda$ 
and $l$ to denote their restrictions to $S'$.
The restricted morphism $l:S'\rightarrow S$ remains finite.
 We see that $\lambda$ is
a non-constant morphism between two irreducible curves; hence it is quasi-finite.
After replacing $S$ and $S'$ by Zariski open subsets we may assume
that $\lambda:S'\rightarrow \B$ is \'etale and $l:S'\rightarrow S$ is
 finite and \'etale. 
After shrinking $S$, we still have an abelian scheme
$\mathcal{A}\rightarrow S$.

The fibered product $\mathcal{E}' = \mathcal{E}\times_S S'$ is an
abelian scheme over $S'$ with  elliptic curves as fibers. 
By construction, the generic fiber of $\mathcal{E}'\rightarrow S'$ 
is isomorphic,  over $\IQbar(S')$, to the elliptic curve
defined by $y^2=x(x-1)(x-\lambda)$.
 By Proposition 8, page 15 \cite{NeronModels}   an  isomorphism on the
 generic fiber extends to an isomorphism on the whole abelian scheme.
We remark that no argument in this paper relies on the existence of a
N\'eron model. 
 This
 provides the top arrow in  the commutative square on the left in
 \begin{equation*}
 \bfig
  \hSquares/>`>`>`>`>`=`>/[\mathcal{E}' ` S'\times_\B \mathcal{E}_L ` \mathcal{E}_L `
    S' ` S' ` \B;``````\lambda]
 \efig   
 \end{equation*}
the square on the right is Cartesian. 


Let $\mathcal{A}'$ be the $g$-fold fibered power of $\mathcal{E}'$
over $S'$. We remark that there is a natural morphism
$\mathcal{A'}\rightarrow \B$.
We take the product over $\B$ of the 
morphisms $\mathcal{A}'\rightarrow \mathcal{E}'\rightarrow \EL$ coming
from the $g$ projections 
to get the right square in
(\ref{eq:leveldiagramm}),
the square on the left is the product over $S$ of the $g$ morphisms
$\mathcal{A}'\rightarrow \mathcal{E}'\rightarrow \mathcal{E}$.

We claim that $e$ is flat. By Corollary 11.3.11 \citeEGAIVIII it suffices to
prove that the following statement. Say $x$ is in
$\B$ below $s$, a point of $S'$. We must show 
that $e$ restricts to a flat morphism
$\mathcal{A}'_x\rightarrow (\AL)_x$ where
$\mathcal{A}'_x$ and  $(\AL)_x$ are the fibers of
$\mathcal{A}'\rightarrow \B$ and $\AL\rightarrow \B$ above $x$, respectively.
We consider the scheme theoretic fiber $\lambda^{-1}(x) \rightarrow \spec{k(x)}$.
Since $S'\rightarrow \B$ is \'etale, $\lambda^{-1}(x)$ is \'etale over
$\spec{k(x)}$.
We have a natural morphism $\spec{k(s)}\rightarrow \lambda^{-1}(x)$
which when composed
with $\lambda^{-1}(x)\rightarrow \spec{k(x)}$ is the  \'etale morphism
$\spec{k(s)}\rightarrow \spec{k(x)}$. So $\spec{k(s)}\rightarrow
\lambda^{-1}(x)$ is \'etale, cf. Corollary 17.3.5 \citeEGAIVIV, and in
particular flat. 
The induced morphism
$\mathcal{A}'_x\times_{\lambda^{-1}(x)} \spec{k(s)}\rightarrow
\mathcal{A}'_x$ is flat too; but this new fibered product is
 $\mathcal{A}'_s$. Now the composition $\mathcal{A}'_s\rightarrow
\mathcal{A}'_x\rightarrow (\AL)_x$ is an isomorphism of
abelian varieties over $k(x)$ and therefore  flat. 
Corollary 2.2.11(iv) \citeEGAIVII 
  implies that
$\mathcal{A}'_x\rightarrow (\AL)_x$ is flat at all points
in the image of $\mathcal{A}'_s$. If we let $s$ run over all points in
the fiber of $S'\rightarrow \B$ above $x$ we conclude that
$\mathcal{A}'_x\rightarrow (\AL)_x$ is flat. In a similar
way one can show that $f$ is flat. 

Since $l$ and $\mathcal{A}\rightarrow S$ are proper, we see that
$\mathcal{A}'\stackrel{f}{\rightarrow}
\mathcal{A}\stackrel{\pi}{\rightarrow} S$ is proper
by (\ref{eq:leveldiagramm}).
Therefore, $f$ is proper. 
We have $\dim \mathcal{A} = \dim \mathcal{A}'  =\dim \AL = g + 1$
and Corollary III 9.6 \cite{Hartshorne} implies
that $f$ and $e$ are quasi-finite. So $f$ is finite. 

By construction, the restriction of $f$ and $e$ to a fiber of
$\mathcal{A}'\rightarrow S'$ determines an isomorphism of abelian
varieties. 

For each $s'\in S'(\IQbar)$  we have
  isomorphisms $\mathcal{E}'_{s'}\rightarrow \mathcal{E}_{l(s')}$ and 
$\mathcal{E}'_{s'}\rightarrow (\EL)_{\lambda(s')}$
of elliptic curves.
Hence 
$\mathcal{E}_{l(s')}$ and $(\EL)_{\lambda(s')}$ are isomorphic.
By our definition made in the introduction
we see that N\'eron-Tate height of a point in $\mathcal{E}_{l(s')}$
equals the N\'eron-Tate height of its image in $(\EL)_{\lambda(s')}$.
Passing to the product gives (\ref{eq:NTequality}).
\end{proof}


\begin{lemma}
\label{lem:heightubUS}
Let us assume that $\mathcal{A}$ is not isotrivial.
  Let $X\subset\mathcal{A}$ be an irreducible closed  subvariety 
defined over $\IQbar$
 which is not an irreducible component of a flat subgroup scheme
  of $\mathcal{A}$. 
There is a constant $c = c(X)> 0$ and
  a non-empty Zariski open  subset $U\subset X$ such that 
  \begin{equation}
\label{eq:heightubUS}
    \heightlb{\pi(P)}{\overline S,\mathcal{L}} \le c\max\{1,\ntheightlb{P}{\mathcal{A}}\}
  \end{equation}
 for all $P\in U(\IQbar)$. 
\end{lemma}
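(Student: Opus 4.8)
The plan is to descend everything to the Legendre family by means of the level structure provided by Lemma~\ref{lem:levelstructure} and then to quote Lemma~\ref{lem:heightubU}.

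First I dispose of the case in which $X$ does not dominate $S$: then $\pi|_X$ is constant, so $\heightlb{\pi(P)}{\overline S,\mathcal{L}}$ is a bounded function on $X(\IQbar)$ and the asserted inequality holds trivially with $U=X$. So assume $X$ dominates $S$, and apply Lemma~\ref{lem:levelstructure}. This replaces $S$ by a non-empty Zariski open subset $S_0\subseteq S$ (which replaces $X$ by the dense open subset $X\cap\pi^{-1}(S_0)$ of itself; this is still not an irreducible component of a flat subgroup scheme of the restricted $\mathcal{A}$, since Zariski closures in $\mathcal{A}$ of flat subgroup schemes of $\mathcal{A}|_{S_0}$ are flat subgroup schemes of $\mathcal{A}$), and produces an irreducible non-singular curve $S'/\IQbar$, the abelian scheme $\mathcal{A}'=\mathcal{A}\times_S S'$ — which is again a $g$-fold fibered power of an elliptic abelian scheme over a curve over $\IQbar$, so the results of Section~\ref{sec:sbgrpscheme} apply to it — and morphisms $f\colon\mathcal{A}'\to\mathcal{A}$ (finite flat) and $e\colon\mathcal{A}'\to\AL$ (quasi-finite flat) lying over $l\colon S'\to S$ (finite, surjective) and $\lambda\colon S'\to\B$ (quasi-finite), with the further properties listed in that lemma; recall in particular that, as shown in its proof, $\mathcal{A}'\cong S'\times_\B\AL$ with $e$ the projection, so $f$ and $e$ are homomorphisms of group schemes. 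I then set $X'$ to be an irreducible component of $f^{-1}(X)$ and $X''=\overline{e(X')}$. Since $f$ is finite flat and surjective, $f|_{X'}\colon X'\to X$ is finite and surjective (so $f(X')=X$) and $X'$ dominates $S'$; since $e$ is quasi-finite, $\dim X'=\dim X''=\dim X$; both $X'$ and $X''$ are irreducible and defined over $\IQbar$.

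The step I expect to be the main obstacle is to check that $X''$ is \emph{not} an irreducible component of a flat subgroup scheme of $\AL$, so that Lemma~\ref{lem:heightubU} becomes applicable to $X''$. Suppose it were a component of the flat subgroup scheme $H_L\subset\AL$; by Lemma~\ref{lem:horizontal}(i) $H_L$ is equidimensional, so $\dim H_L=\dim X''=\dim X$. Then $e^{-1}(H_L)=S'\times_\B H_L$ is a subgroup scheme of $\mathcal{A}'$, flat over $S'$ because it is a base change of the flat morphism $H_L\to\B$ (equivalently, by going-down for the flat morphism $e$ each component of $e^{-1}(H_L)$ dominates a component of $H_L$, hence dominates $\B$, hence dominates $S'$ since $\lambda$ is quasi-finite); so $e^{-1}(H_L)$ is a flat subgroup scheme of $\mathcal{A}'$, equidimensional of dimension $\dim H_L=\dim X'$, and since $X'\subset e^{-1}(H_L)$ is irreducible of full dimension, $X'$ is a component of $e^{-1}(H_L)$. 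Applying the finite flat group homomorphism $f$: the image $f(e^{-1}(H_L))$ is a subgroup scheme of $\mathcal{A}$ whose components are images of components of $e^{-1}(H_L)$ and therefore dominate $S$ (as $l$ is finite and surjective), so it is a flat subgroup scheme of $\mathcal{A}$, equidimensional of dimension $\dim X'=\dim X$; as $f(X')=X$ sits in it as an irreducible subvariety of full dimension, $X$ would be an irreducible component of a flat subgroup scheme of $\mathcal{A}$, contrary to hypothesis.

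Finally I transfer the height bound. Lemma~\ref{lem:heightubU} gives $c''>0$ and a non-empty Zariski open $U''\subset X''$ with $\height{\piL(Q)}\le c''\max\{1,\ntheightlb{Q}{\AL}\}$ for $Q\in U''(\IQbar)$; shrinking $U''$ I may assume $U''\subset e(X')$ (a dense constructible subset of $X''$ contains a dense open one), and then $U:=X\ssm f\bigl(X'\ssm(e|_{X'})^{-1}(U'')\bigr)$ is a non-empty Zariski open subset of $X$, being the complement of the image under the finite map $f$ of a proper closed subset of $X'$. Let $P\in U(\IQbar)$, choose a lift $P'\in X'(\IQbar)$ with $f(P')=P$ — so that $P'':=e(P')\in U''(\IQbar)$ — and put $s':=\pi'(P')$, where $\pi'\colon\mathcal{A}'\to S'$. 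Commutativity of the two squares of~(\ref{eq:leveldiagramm}) gives $\pi(P)=l(s')$ and $\piL(P'')=\lambda(s')$, while~(\ref{eq:NTequality}) gives $\ntheightlb{P}{\mathcal{A}}=\ntheightlb{f(P')}{\mathcal{A}}=\ntheightlb{e(P')}{\AL}=\ntheightlb{P''}{\AL}$. It remains to bound $\heightlb{l(s')}{\overline S,\mathcal{L}}$ by $\height{\lambda(s')}$ up to $O(1)$ on $\overline{S'}(\IQbar)$: by functoriality $\heightlb{l(s')}{\overline S,\mathcal{L}}=\heightlbS{\overline{S'},l^*\mathcal{L}}(s')+O(1)$, and since $\lambda\colon\overline{S'}\to\IP^1$ is non-constant the line bundle $\lambda^*\O{1}$ has positive degree on the curve $\overline{S'}$, hence is ample, so for a suitable integer $m\ge1$ the bundle $\lambda^*\O{1}^{\otimes m}\otimes (l^*\mathcal{L})^{\otimes(-1)}$ is ample and in particular has a non-zero section after a positive power, whence $\heightlbS{\overline{S'},l^*\mathcal{L}}(s')\le m\,\heightlbS{\overline{S'},\lambda^*\O{1}}(s')+O(1)=m\,\height{\lambda(s')}+O(1)$. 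Chaining the inequalities yields
\begin{equation*}
\heightlb{\pi(P)}{\overline S,\mathcal{L}}\le m\,\height{\piL(P'')}+O(1)\le mc''\max\{1,\ntheightlb{P''}{\AL}\}+O(1)=mc''\max\{1,\ntheightlb{P}{\mathcal{A}}\}+O(1),
\end{equation*}
and absorbing the additive constant (using $\max\{1,\cdot\}\ge1$) gives~(\ref{eq:heightubUS}) with a suitable $c$ and with $U$ as above. The initial shrinking of $S$ causes no problem, since the displayed inequality for $P$ in a dense open subset of the restricted $X$ immediately gives it for $P$ in a dense open subset of the original $X$: neither $\heightlb{\cdot}{\overline S,\mathcal{L}}\circ\pi$ nor $\ntheightlb{\cdot}{\mathcal{A}}$ is affected by that shrinking.
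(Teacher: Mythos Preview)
Your proof is correct and follows essentially the same route as the paper's: reduce to the Legendre family via Lemma~\ref{lem:levelstructure}, check that the pushed-forward subvariety is not a component of a flat subgroup scheme (by pulling back along $e$ and pushing forward along $f$ a hypothetical such subgroup scheme), apply Lemma~\ref{lem:heightubU}, and transport the height inequality back using ampleness of $\lambda^*\O{1}$ on $\overline{S'}$. The only cosmetic differences are that the paper's $X''$ and $X'$ are your $X'$ and $X''$ respectively, and that your construction of $U$ via $X\ssm f\bigl(X'\ssm(e|_{X'})^{-1}(U'')\bigr)$ is slightly more explicit than the paper's appeal to Chevalley-type constructibility.
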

\begin{proof}
We  keep the notation from the previous lemma. 
Without loss of generality we may replace $S$ by the non-empty Zariski
open subset given there. 
Using
 Corollary III 9.6 \cite{Hartshorne} and the fact that $f$ is finite and flat we
 see that
the irreducible components of
$f^{-1}(X)$ have dimension $\dim X$.
As $f$ is an open and closed morphism, it is surjective. So
$f^{-1}(X)$ is non-empty. We  pick an irreducible component
 $X''$ of $f^{-1}(X)$ and define $X'$ to be the Zariski closure of
$e(X'')$ in $\AL$. Since $e|_{X''}$ has finite fibers, the
Fiber Dimension Theorem implies $\dim X' = \dim X'' = \dim X$.

 We claim that $X'$
is not an irreducible component of a subgroup scheme of $\AL$. 
We assume the contrary and deduce a contradiction.
  Let $G\subset \AL$ be a subgroup scheme whose
  irreducible components dominate $\B$ and such that one of them is $X'$.
Then  $G$ is equidimensional of dimension $\dim X$ by Lemma \ref{lem:horizontal}(i).
Since $e$ is flat and quasi-finite we may again conclude that
 $e^{-1}(G)$ is equidimensional of dimension $\dim G=\dim X$.
We see that $X''$ is an irreducible component of $e^{-1}(X')$.
The fact that $f$ is finite  implies that $f(e^{-1}(G))$
is equidimensional of dimension $\dim X$ and  $\dim f(X'') = \dim
X$. 
Since $X$ is irreducible and because $f$ is closed, we have
$f(X'')=X$. We remark $f(X'')\subset f(e^{-1}(G))$, 
so $X$ is an irreducible component of $f(e^{-1}(G))$.
Latter is a subgroup scheme of $\mathcal{A}$ by Lemma
\ref{lem:levelstructure}, 
hence it remains to
show that all irreducible components of $f(e^{-1}(G))$ dominate $S$.
An irreducible component of $e^{-1}(G)$ has dimension $\dim G$.
Its image under the quasi-finite morphism $e$ 
is Zariski dense in some irreducible component of $G$.
 Therefore, any irreducible component of $e^{-1}(G)$
dominates $S'$. Its image under $f$ dominates $S$ since $l:S'\rightarrow
S$ is dominant.
From this we conclude that any irreducible component of $f(e^{-1}(G))$
dominates $S$. We have a contradiction.

Thus we may apply Lemma \ref{lem:heightubU} to $X'$ and obtain a
Zariski open and non-empty $U'\subset X'$ on which the height
inequality holds.
 Certainly, $e|_{X''}^{-1}(U')$ is Zariski open in $X''$ and non-empty. From above we
 have $f(X'')=X$, so
 $f(e|_{X''}^{-1}(U'))$ contains a non-empty Zariski open subset $U$ of $X$.

We claim that (\ref{eq:heightubUS}) holds on $U(\IQbar)$.
Let $P\in U(\IQbar)$ lie above $s\in S(\IQbar)$ and say
$P'' \in e^{-1}|_{X''}(U')(\IQbar)$ with $f(P'')=P$ and $e(P'')=P'\in U'(\IQbar)$.
If $P''$ and $P'$ lie above $s''\in S'(\IQbar)$ and $s' \in
\B(\IQbar)$, respectively, then  chasing around
(\ref{eq:leveldiagramm})
yields $\lambda(s'') = s'$ and $l(s'')=s$.

We know that $\height{s'} \le c_1 \max\{1,\ntheightlb{P'}{\AL}\}$
for some constant $c_1>0$ which does not depend on $P$.
We recall that $\heightS$ is the projective height on $\IP^1(\IQbar)$.
By height properties, $\heightS\circ \lambda$ is a choice
for a representative of 
 $\heightlbS{\overline{S}',\lambda^*\O{1}}$; with this choice we have
$\heightlb{s''}{\overline{S}',\lambda^*\O{1}} = \height{s'}$.
Now $\lambda$ is finite and $\O{1}$ is ample, so $\lambda^*\O{1}$ is
ample. Therefore, there is a positive integer $a$ such that
$\lambda^*\O{1}^{\otimes a}\otimes l^*\mathcal{L}^{\otimes(-1)}$
is ample.
Functorial properties of the height imply that
$a\heightlbS{\overline {S}',\lambda^*\O{1}} \ge \heightlbS{\overline
  {S},\mathcal{L}}\circ l  - c_2$ on $\overline{S}'(\IQbar)$ for some
constant $c_2$.
On inserting $s''$ we find
$\heightlb{s}{\overline{S},\mathcal{L}}\le
c_3\max\{1,\ntheightlb{P'}{\AL}\}$ for some constant $c_3>0$ which is
independent of $P$.
Finally, (\ref{eq:NTequality}) implies
$\ntheightlb{P'}{\AL} = \ntheightlb{P}{\mathcal{A}}$ and this
completes the proof.
\end{proof}


\begin{proof}[Proof of Theorem \ref{thm:main}(ii)]
  We prove the height inequality in the assertion by induction on the dimension. The case of
  dimension $0$ being trivial we assume $\dim X \ge 1$. 
  If $X$ is an irreducible component of a flat subgroup scheme, then
  $\remtor{X}=\emptyset$ and there is nothing to prove.
So we may assume the
  contrary. By Lemma \ref{lem:heightubUS} inequality
  (\ref{eq:mainthmineq}) holds  on $(X\ssm Z)(\IQbar)$ for some
proper  Zariski closed subset
  $Z\subsetneq X$. Let $Z=Z_1\cup\cdots\cup Z_r$ be
  the decomposition into irreducible components.
It suffices to show  (\ref{eq:mainthmineq})  on
all $Z_i(\IQbar)\cap \remtor{X}(\IQbar)$.
Since $\dim Z_i \le \dim X-1$ we may do  induction on the
dimension. 
 We obtain the desired inequality
for all $P$ in  $(X\ssm Z)(\IQbar) \cup \remtor{Z_1}(\IQbar)\cup\cdots\cup
\remtor{Z_r}(\IQbar)$.  This set contains 
$\remtor{X}(\IQbar)$ by  a formal argument using the definition of
$\remtor{X}$.
\end{proof}

The next lemma implies part (i) of Theorem \ref{thm:main}. 

\begin{lemma}
  Let $X\subset\mathcal{A}$ be an irreducible closed subvariety
  defined over $\IC$. 
  Then $\remtor{X}$ is Zariski
  open in $X$ and empty if and only if $X$ is itself an irreducible
  component of a flat subgroup scheme of $\mathcal{A}$. 
\end{lemma}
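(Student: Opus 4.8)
The plan is to reduce to the generic fibre $A:=\mathcal{A}_\eta$, an abelian variety over the function field of $S$, and to match the irreducible components of flat subgroup schemes contained in $X$ with the torsion cosets of $A$ contained in $X_\eta$; the finiteness assertion will then come from the Manin-Mumford Conjecture (Raynaud, \cite{Raynaud:MM}) applied to $A$.

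First I would dispose of the case in which $X$ does not dominate $S$ --- in particular the case $\dim X=0$. By definition every irreducible component of a flat subgroup scheme dominates $S$, so no such component can be contained in such an $X$; hence $\remtor{X}=X$, which is Zariski open and non-empty, and $X$ is not an irreducible component of a flat subgroup scheme. So assume from now on that $X$ dominates $S$, so that $X$ is the Zariski closure of $X_\eta\subseteq A$; throughout I work with geometric components, as permitted by the standing convention of extending scalars, so that fields of definition of the subvarieties below need not be tracked.

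The heart of the argument is a correspondence which I would prove in two steps. If $Z\subseteq X$ is an irreducible component of a flat subgroup scheme $H\subseteq\mathcal{A}$, then $Z$ dominates $S$ and $Z_\eta$ is an irreducible component of the algebraic subgroup $H_\eta\subseteq A$; the image of $H_\eta$ in the abelian variety $A/H_\eta^{0}$ is finite, hence torsion, and since $A\to A/H_\eta^{0}$ is surjective with divisible kernel it is surjective on $n$-torsion for every $n$, so every component of $H_\eta$ --- in particular $Z_\eta$ --- is a torsion coset $t+B$ (a translate of an abelian subvariety $B\subseteq A$ by a torsion point $t$), and $Z=\overline{Z_\eta}$. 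Conversely, given a torsion coset $C=t+B\subseteq X_\eta$ with $[N]t=0$, I claim its Zariski closure $\overline{C}$ in $\mathcal{A}$ is an irreducible component of a flat subgroup scheme, and it is contained in $X=\overline{X_\eta}$. Here one first checks that the closure $\overline{B}$ of $B$ in $\mathcal{A}$ is a flat subgroup scheme: it is irreducible and dominates $S$, it contains the zero section (whose generic point lies on $B$), and the images of $\overline{B}$ under inversion and of $\overline{B}\times_S\overline{B}$ under addition are closed irreducible sets with generic fibre $B$, hence equal $\overline{B}$. Since $[N]$ is finite and \'etale on $\mathcal{A}$ (we are in characteristic $0$), $[N]^{-1}(\overline{B})$ is a reduced closed subgroup scheme, equidimensional of dimension $\dim\overline{B}$, and each of its irreducible components maps onto the component $\overline{B}$ of the flat subgroup scheme under $[N]$ and therefore dominates $S$; so $[N]^{-1}(\overline{B})$ is a flat subgroup scheme. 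As $[N]C=B$ we have $\overline{C}\subseteq[N]^{-1}(\overline{B})$, and $\overline{C}$ is irreducible of dimension $\dim B+1=\dim\overline{B}$, hence is one of its components.

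With the correspondence in hand I would conclude as follows. By the Manin-Mumford Conjecture applied to $A$, the Zariski closure in $X_\eta$ of the set of torsion points of $A$ lying on $X_\eta$ is a finite union of torsion cosets $C_1,\dots,C_m$, and any torsion coset contained in $X_\eta$ carries a Zariski dense set of torsion points, hence lies in some $C_j$. Therefore $\bigcup_Z Z=\overline{C_1}\cup\dots\cup\overline{C_m}$ is a finite union of closed subvarieties of $X$, so $\remtor{X}$ is Zariski open in $X$. Moreover $\remtor{X}=\emptyset$ if and only if $X=\overline{C_1}\cup\dots\cup\overline{C_m}$, which --- $X$ being irreducible --- holds if and only if $X=\overline{C_j}$ for some $j$, i.e. if and only if $X_\eta$ is a torsion coset; by the correspondence this is exactly the condition that $X$ itself is an irreducible component of a flat subgroup scheme, the converse implication being immediate since then $X$ is one of the $Z$. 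The step I expect to require the most care is the spreading-out in the second half of the correspondence --- that the closure of a torsion coset of the generic fibre genuinely is an irreducible component of a flat subgroup scheme, and conversely --- together with the mild bookkeeping of passing to the generic fibre and back; the Manin-Mumford Conjecture itself enters only as a black box.
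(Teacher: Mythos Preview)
Your proof is correct and follows essentially the same approach as the paper: pass to the generic fibre $A=\mathcal{A}_\eta$, set up a correspondence between irreducible components of flat subgroup schemes contained in $X$ and torsion cosets contained in $X_\eta$, and invoke Raynaud's theorem (Manin--Mumford) for the finiteness. The only cosmetic difference is in the spreading-out step: the paper realizes the relevant flat subgroup scheme as $\ker\Psi$ for an endomorphism $\Psi$ extended from the generic fibre via the N\'eron mapping property, whereas you construct it directly as $[N]^{-1}(\overline{B})$ --- both arguments are equivalent.
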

\begin{proof}
Without loss of generality we may assume that $X$ dominates $S$,
otherwise $\remtor{X}=X$ by definition.

To prove the lemma 
 it suffices to show the following statement. There are at most
finitely many irreducible subvarieties of $X$ that are
irreducible
 components of a flat  subgroup scheme of
$\mathcal{A}$ and  maximal with this property. 

Let $Z$ be such a subvariety; it must dominate $S$.
The generic fibers $Z_\eta$ and $X_\eta$ of $\pi|_Z$ and $\pi|_X$,
respectively,  are 
subvarieties of $\mathcal{A}_\eta$, the generic fiber of $\pi$.
Let $Y_\eta\subset
X_\eta$ be a further variety 
which is defined and irreducible over
$\IC(S)$ and with $Z_\eta\subset Y_\eta$. 
We also assume that $Y_\eta$ is an irreducible component
of an algebraic subgroup of $\mathcal{A}_\eta$. Therefore, it is an irreducible
component
of the kernel $\ker\Psi_\eta$ for an endomorphism
$\Psi_\eta$ of $\mathcal{A}_\eta$. 
By Proposition 8, page 15 \cite{NeronModels} we may extend $\Psi_\eta$
to an endomorphism $\Psi$ of $\mathcal{A}$.
Let $Y$ be the Zariski closure of $Y_\eta$ in $\mathcal{A}$.
Then $Y_\eta$ is the generic fiber of $\pi|_Y$ by 
 Proposition 2.8.5 \citeEGAIVII and the comment after its proof.
 Then $Y\subset\ker\Psi$ and $Z\subset Y \subset X$. By comparing  dimensions using
the Fiber Dimension Theorem
 one shows that $Y$ is an irreducible component of
$\ker\Psi$. Therefore, $Z=Y$ by maximality. 
So,  $Z_\eta = Y_\eta$. 
We have just shown that $Z_\eta$ is an  irreducible subvariety of
$X_\eta$ 
which is an irreducible component of an algebraic subgroup of
$\mathcal{A}_\eta$
and which is maximal with this property. By Raynaud's Theorem, the Manin-Mumford
Conjecture, $Z_\eta$ comes from a finite set of subvarieties of
$X_\eta$. 
But $Z$ is the Zariski closure of $Z_\eta$ in $\mathcal{A}$ and we see
that there are only finitely many such $Z$.

It follows that $X\ssm \remtor{X}$ is a finite
union of irreducible components of flat subgroups schemes of
$\mathcal{A}$.
The second claim of the lemma follows too.
\end{proof}

\subsection{Special Points on $\mathcal{A}$ and Proof of Theorem \ref{thm:special1}}

We recall that the $j$-invariant of an elliptic curve with complex
multiplication is an algebraic number. Therefore, it makes sense to
speak of its Weil height.

The following result of Poonen is needed for the proof of 
Theorem \ref{thm:special1}.

\begin{lemma}[Poonen]
\label{lem:cmheight}
Let $T\in\IR$. 
 Up-to $\IQbar$-isomorphism there are only finitely many elliptic
  curves over $\IQbar$ with complex multiplication and
 whose $j$-invariant has Weil   height at most $T$.
\end{lemma}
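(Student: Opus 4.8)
The plan is to reduce the statement to Poonen's finiteness theorem for complex multiplication elliptic curves of bounded Faltings height \cite{Poonen:MRL01} by comparing $\height{j_E}$ with the Faltings height $\Fh{E}$ of an elliptic curve $E$ over $\IQbar$. Since $\height{j_E}$ depends only on the $\IQbar$-isomorphism class of $E$, it suffices to produce a bound $B=B(T)$ such that every $E$ over $\IQbar$ with complex multiplication and $\height{j_E}\le T$ satisfies $\Fh{E}\le B$; the lemma then follows at once from Poonen's theorem, which asserts that only finitely many elliptic curves over $\IQbar$ with complex multiplication have Faltings height bounded by any prescribed constant.

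The key step is thus the classical comparison: there is an absolute constant $c>0$ with $\Fh{E}\le c\max\{1,\height{j_E}\}$ for every elliptic curve $E$ over $\IQbar$. One obtains this from the standard expression for the (stable) Faltings height as a sum of a finite part and archimedean terms. The finite part, coming from the minimal discriminant, is supported at the places of bad reduction, which for an elliptic curve are places of potentially multiplicative reduction; there the valuation of the minimal discriminant is controlled by that of $j_E$, so the finite part is $\le c'\max\{1,\height{j_E}\}$. At each archimedean place one may represent $E$ by a point $\tau$ in the standard fundamental domain, and then the archimedean contribution, essentially $-\log\bigl((2\pi)^{12}\lvert\Delta(\tau)\rvert(\mathrm{Im}\,\tau)^{6}\bigr)$, is bounded above by $\log^{+}\lvert j(\tau)\rvert$ up to an absolute additive constant, using $\lvert\Delta(\tau)\rvert\gg e^{-2\pi\mathrm{Im}\,\tau}$ and $\lvert j(\tau)\rvert\asymp e^{2\pi\mathrm{Im}\,\tau}$ for large $\mathrm{Im}\,\tau$; summing over the archimedean places and averaging gives a contribution $\le c''\max\{1,\height{j_E}\}$. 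Combining the two parts yields the claimed inequality. The finitely many curves with $j_E\in\{0,1728\}$ may be set aside, since they are finite in number to begin with, so the precise behaviour of the constants there is irrelevant.

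The main obstacle is precisely this height comparison: although entirely classical, it requires a little care with the archimedean contribution to the Faltings height and with the distinction between the Faltings height and its stable variant. Once it is available, the proof consists only of substituting $B=c\max\{1,T\}$ into Poonen's theorem. (An alternative route would be to argue directly that $\height{j_E}\to\infty$ as the absolute value of the discriminant of the CM order tends to infinity, together with the fact that each order contributes only finitely many $j$-invariants; but this again rests on the same circle of analytic estimates for CM elliptic curves, so going through Poonen's statement is the cleanest option.)
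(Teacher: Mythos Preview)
Your argument is correct, but the paper's proof is much shorter: it simply cites Lemma~3 of Poonen \cite{Poonen:MRL01}, which is already stated in terms of the Weil height of the $j$-invariant, not the Faltings height. So no comparison between $\height{j_E}$ and $\Fh{E}$ is needed here.

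Your detour through the Faltings height is nonetheless valid, and the comparison you sketch is exactly Silverman's Proposition~2.1 \cite{Silverman:HEC}, which the paper itself invokes later (in the proof of Lemma~\ref{lem:heckeheight}) to pass in the opposite direction. What you gain by your route is a self-contained reduction to the more commonly quoted form of the finiteness statement; what the paper gains is brevity, since Poonen's lemma as written already matches the hypothesis verbatim.
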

\begin{proof}
  This is Lemma 3 \cite{Poonen:MRL01}.
\end{proof}

\begin{proof}[Proof of Theorem \ref{thm:special1}] 
We recall that $\mathcal{A}$ and $S$ are defined over $\IQbar$.
By hypothesis $\mathcal{A}$ is not isotrivial. By Lemma \ref{lem:jnonconst}
the morphism $j:S\rightarrow Y(1)$ which associates to $s\in S(\IC)$
the $j$-invariant of $\mathcal{E}_s$ is dominant.

We begin with the elementary ``if'' direction.
Say there is $s\in S(\IC)$ such that $X$ 
is an irreducible component of an algebraic subgroup of 
$\mathcal{A}_s$ and such that $\mathcal{A}_s$  has complex multiplication. 
The claim follows since the set of torsion points of an abelian
variety lies Zariski dense.
 Now say
$X$ is an irreducible component of a flat subgroup scheme of
$\mathcal{A}$.
The set of $s \in S(\IQbar)$ such that
$\mathcal{A}_s$ has complex multiplication is infinite, hence
Zariski dense in $S$. For any such $s$, the fiber $X_s$ is an
algebraic subgroup of $\mathcal{A}_s$ and thus contains a
Zariski dense set of torsion points.  It follows that if $Z$ is
 the Zariski closure of all special points in $X$, then $Z\cap X_s =
 X_s$ for infinitely many $s\in S(\IQbar)$. Therefore,
 $Z=X$ by a dimension argument. 

We now prove the ``only if'' direction. 
Let us assume that $X$ is an irreducible closed subvariety of
$\mathcal{A}$ which contains a Zariski dense set of special
points. Let us also assume that $X$ is not an irreducible component 
of a flat subgroup scheme of $\mathcal{A}$. 

If $s\in S(\IC)$ such that $\mathcal{A}_s$ has complex multiplication then so does
$(\EL)_s$ 
and it follows that
$j(s)$ is algebraic. But since $S$ is defined over $\IQbar$ we see $s\in S(\IQbar)$.
Moreover, any  $P\in \mathcal{A}(\IC)$ which is a torsion point of
$\mathcal{A}_{\pi(P)}$ and for which this fiber has complex
multiplication must be algebraic. 
Hence $X$ contains a Zariski dense set of algebraic points.
It follows that $X$ is defined over
$\IQbar$. 

Let $\overline S$ be as before the statement of
Theorem \ref{thm:main}
and $\mathcal{L}$ and ample
line bundle on $\overline S$.
By part (i) of said theorem
we see that $ \remtor{X}$ is  non-empty and Zariski open in $X$.
So there is Zariski dense subset of points in 
$\remtor{X}(\IQbar)$ which are torsion in a fiber with
complex multiplication. Let $P$ be in this set and $s=\pi(P)$.
The N\'eron-Tate height of $P$ vanishes because this point is torsion.
So
$\heightlb{s}{\overline S,\mathcal{L}}$ is bounded from above 
independently of $P$ by  Theorem \ref{thm:main}.

The curve $\overline S$ is projective and non-singular  so $j$ extends to a
morphism
$\overline S\rightarrow \IP^1$.
By properties of the height,  
the projective height on $\IP^1(\IQbar)$ is a valid choice for the
representative of 
 $\heightlbS{\IP^1,\O{1}}$ and 
$\heightS\circ j$ is a valid representative for 
 $\heightlbS{\overline S,j^*\O{1}}$.
Since $\mathcal{L}$ is ample, there is a positive integer $a$ such
that
$\mathcal{L}^{\otimes a}\otimes j^*\O{1}^{\otimes(-1)}$ is ample. 
Functorial properties of the height imply that 
$a \heightlbS{\overline S,\mathcal{L}} - \heightlbS{\overline
  S,j^*\O{1}}$
is bounded from below on $\overline S(\IQbar)$.
It follows that  $\height{j(s)}$ is bounded from above
independently of $P$.
  Poonen's result implies 
 that the set of possible $j$-invariants of
$\mathcal{E}_{s}$ is finite. So there
 are only finitely many possible $s$.
Hence the Zariski dense set of $P$ we consider is  in  
 finitely many fibers of $\pi|_X:X\rightarrow S$.
This means that  $X$ is contained in 
$\mathcal{A}_s$ for some $s\in S(\IQbar)$ and
this fiber must have complex multiplication. 
We now regard $X$ as a subvariety of  the fixed
abelian variety $\mathcal{A}_s$. By hypothesis $X$ contains a
Zariski dense set of torsion points. The classical Manin-Mumford
conjecture implies that $X$ is an irreducible component of an
algebraic subgroup of $\mathcal{A}_s$. So $X$ is as in
case (i) of the definition of  special subvarieties given before
Theorem \ref{thm:special1}. 
\end{proof}

\subsection{Proof of Theorem \ref{thm:specialE}}

Theorem \ref{thm:specialE} is a  consequence of Theorem \ref{thm:main}
 and a result of Szpiro and Ullmo which is encapsulated in
 the next lemma.

Any elliptic curve $E$ over $\IQbar$ has a semi-stable Faltings height
$\Fh{E}$
which depends only on its $\IQbar$-isomorphism class, see
\S 1 \cite{Silverman:HEC}.

\begin{lemma}
\label{lem:heckeorbitheight}
Let $E$ be an elliptic curve defined over $\IQbar$ without complex
multiplication.
  There exists a constant $c=c(E)$  with the following
  property. If $E'$ is an elliptic curve such that there exists a
   isogeny $E\rightarrow E'$ with cyclic kernel of cardinality  $N$, then 
  \begin{equation*}
    \Fh{E'} \ge \Fh{E} + \frac 12 \log (N) - c\log\log(3N). 
  \end{equation*}
\end{lemma}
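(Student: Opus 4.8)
This is a theorem of Szpiro and Ullmo \cite{SU:variation}, and the plan is to read the asserted inequality off their analysis of the Faltings height in an isogeny class. First I would reduce to the case in which $E$, the target $E'$ and the isogeny $\phi\colon E\to E'$ are all defined over one number field $K$ over which $E$ --- hence also $E'$, semistability being an isogeny invariant --- has everywhere semistable reduction. Over such a $K$ one has the decomposition of the stable Faltings height into a finite and an archimedean contribution,
\[
\Fh{E}=\frac{1}{12[K:\IQ]}\log\bigl|N_{K/\IQ}(\mathfrak D_E)\bigr|-\frac{1}{12[K:\IQ]}\sum_{\sigma\colon K\hookrightarrow\IC}\log\Bigl((2\pi)^{12}\,\bigl|\Delta(\tau_\sigma)\bigr|\,(\mathrm{Im}\,\tau_\sigma)^{6}\Bigr),
\]
where $\mathfrak D_E$ is the minimal discriminant ideal and $\tau_\sigma\in\IH$ a period ratio of $E^{\sigma}$, and likewise for $E'$.

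Next I would invoke the isogeny formula. Extending $\phi$ to N\'eron models and comparing the pull-back under $\phi$ of a N\'eron differential of $E'$ with one of $E$, place by place, one obtains
\[
\Fh{E'}=\Fh{E}+\frac{1}{2}\log N-\frac{1}{[K:\IQ]}\sum_{\mathfrak p}\ell_{\mathfrak p}(\phi)\,\log N\mathfrak p ,
\]
where $\ell_{\mathfrak p}(\phi)\ge 0$ is the length of the cokernel of $\phi^{*}$ on cotangent spaces at $\mathfrak p$: the finite places contribute the sum on the right, while the archimedean places contribute the term $\frac{1}{2}\log N$, coming from the way the covolumes of the period lattices scale under an isogeny of degree $N$. (The function $(2\pi)^{12}|\Delta(\tau)|(\mathrm{Im}\,\tau)^{6}$ is $\SL{2}{\IZ}$-invariant and vanishes at the cusp, hence is bounded on $\IH$; this is the only archimedean input, and it also shows directly that the archimedean part of $\Fh{E'}$ is bounded below.) The formula already gives the trivial bound $\Fh{E'}\le\Fh{E}+\frac{1}{2}\log N$, so the entire substance of the lemma is the estimate
\[
\sum_{\mathfrak p}\ell_{\mathfrak p}(\phi)\,\log N\mathfrak p\ \le\ c(E)\,[K:\IQ]\,\log\log(3N)
\]
for the ``defect''.

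The hard part will be this defect estimate, and it is here that Szpiro and Ullmo's argument --- and the hypothesis that $E$ has no complex multiplication --- is indispensable, the statement being false for curves with complex multiplication. One has $\ell_{\mathfrak p}(\phi)=0$ whenever $\mathfrak p\nmid N$, since there $\ker\phi$ is \'etale and $\phi$ is an \'etale morphism; for $\mathfrak p\mid N$ the integer $\ell_{\mathfrak p}(\phi)$ is governed, through the connected--\'etale sequence of the $p$-divisible group of $E$ at $\mathfrak p$, by the connected part of $\ker\phi$ at $\mathfrak p$, which is cyclic because $\ker\phi$ is. What remains is to bound the sum of these local terms: one controls the reduction of $\ker\phi$ at the finitely many primes of bad reduction of $E$, at the primes of supersingular reduction dividing $N$ --- these being sparse precisely because $E$ has no complex multiplication --- and at the primes of ordinary reduction, and this is what forces the $\log\log(3N)$ on the right-hand side. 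Granting the defect estimate, the isogeny formula gives
\[
\Fh{E'}\ge\Fh{E}+\frac{1}{2}\log N-\frac{1}{[K:\IQ]}\sum_{\mathfrak p}\ell_{\mathfrak p}(\phi)\,\log N\mathfrak p\ \ge\ \Fh{E}+\frac{1}{2}\log N-c(E)\log\log(3N),
\]
which is the assertion.
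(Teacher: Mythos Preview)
Your approach is correct in outline and, like the paper, ultimately rests on Szpiro--Ullmo \cite{SU:variation}. But you take a substantially longer route than necessary, and the hard step is left as a gesture rather than pinned to a precise citation.

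The paper's proof is a two-line affair: it quotes Szpiro--Ullmo's Th\'eor\`eme 1.1 as a black box, which already gives
\[
\Fh{E'} \ge \Fh{E} + \tfrac{1}{2}\log N - \sum_{p\mid N} \frac{p^{e_p}-1}{(p^2-1)p^{e_p-1}}\log p - c_1,
\]
where $e_p$ is the exponent of $p$ in $N$. The remaining work is the trivial bound $\frac{p^{e_p}-1}{(p^2-1)p^{e_p-1}}\le \frac{2}{p}$ together with the Mertens-type estimate $\sum_{p\mid N}\frac{\log p}{p}\ll \log\log(3N)$, and the lemma follows. No isogeny formula, no N\'eron models, no local lengths $\ell_{\mathfrak p}(\phi)$ need to be written down: all of that is already packaged inside Szpiro--Ullmo's explicit formula.

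By contrast, you re-derive the Faltings isogeny formula from scratch and then reduce to a defect bound $\sum_{\mathfrak p}\ell_{\mathfrak p}(\phi)\log N\mathfrak p \le c(E)[K:\IQ]\log\log(3N)$ which you do not prove but attribute back to Szpiro--Ullmo. This is not wrong, but it is circuitous: you are reconstructing the input to their theorem only to invoke their theorem for the output. If you are going to cite them anyway, cite the endpoint. Also, your heuristic for why the no-CM hypothesis enters (sparsity of supersingular primes) is not quite the mechanism at work in their argument; it is safer simply to record that the hypothesis is theirs and is known to be necessary.
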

\begin{proof}
Let $E,E',$ and $N$ be as in the hypothesis.
For a prime $p$ let $e_p$ be the exponent of $p$ in the factorization
of $N$.
Szpiro and Ullmo's Th\'eor\`eme 1.1 \cite{SU:variation} implies
\begin{equation}
\label{eq:SU}
  \Fh{E'} \ge \Fh{E} + \frac 12 \log N - \sum_{p|N}
  \frac{p^{e_p}-1}{(p^2-1)p^{e_p-1}} \log p - c_1
\end{equation}
where the constant $c_1$  may depend  $E$, but not on $E'$ or $N$. We have
\begin{equation*}
  \frac{p^{e_p}-1}{(p^2-1)p^{e_p-1}}   \le 
\frac{p^{e_p}}{p^2p^{e_p-1}/2} = 
 \frac{2}{p}.
\end{equation*}
And by an elementary calculation
\begin{equation*}
 \sum_{p|N} \frac{p^{e_p}-1}{(p^2-1)p^{e_p-1}}  \log p 
\le 2 \sum_{p|N}\frac{\log p}{p} \le c_2 \log\log(3N),
\end{equation*}
with $c_2$ absolute.
The lemma follows from (\ref{eq:SU}). 
\end{proof}

\begin{lemma} 
\label{lem:heckeheight}
Let $E$ be an elliptic curve defined over $\IQbar$ without complex
multiplication and let $T\in\IR$. 
\begin{enumerate}
\item [(i)] Up-to $\IQbar$-isomorphism there are only finitely many elliptic
  curves  over $\IQbar$ which are isogenous to $E$ and which have Faltings height at most
  $T$.
\item[(ii)] Up-to $\IQbar$-isomorphism there are only finitely many elliptic
  curves over $\IQbar$ which are isogenous to $E$ and whose $j$-invariant has Weil
  height at most $T$
\end{enumerate}
\end{lemma}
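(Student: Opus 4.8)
The plan is to deduce both statements from Lemma \ref{lem:heckeorbitheight}, by reducing an arbitrary isogeny to a cyclic one and then counting cyclic subgroups of $E$. For part (i) I would first recall the elementary fact that every elliptic curve $E'$ isogenous to $E$ is $\IQbar$-isomorphic to a quotient $E/G$ with $G\subset E$ a finite \emph{cyclic} subgroup: since $E'$ is isogenous to $E$ there is an isogeny $\phi\colon E\to E'$, and letting $m$ be the largest integer with $E[m]\subset\ker\phi$ one checks that $\ker\phi/E[m]$ is cyclic and that $\phi$ factors as $E\xrightarrow{[m]}E\to E'$, the last arrow identifying $E'$ with $E/G$ where $G$ is the image of $\ker\phi/E[m]$ under an isomorphism $E/E[m]\cong E$; this $G$ is cyclic. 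Writing $N=\#G$, Lemma \ref{lem:heckeorbitheight} gives $\Fh{E'}\ge\Fh{E}+\tfrac12\log N-c\log\log(3N)$ with $c=c(E)$. As $t\mapsto\tfrac12\log t-c\log\log(3t)$ tends to $+\infty$, the hypothesis $\Fh{E'}\le T$ forces $N\le N_0$ for some $N_0=N_0(E,T)$. Finally there are only finitely many cyclic subgroups $G\subset E$ with $\#G\le N_0$, since they all lie in the finite group $E[N_0!]$; hence only finitely many $\IQbar$-isomorphism classes $E'\cong E/G$ occur, which is (i).

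For part (ii) I would invoke the classical comparison between the semistable Faltings height and the Weil height of the $j$-invariant. Since $\Fh{E'}$ depends only on $j(E')$, there is an absolute constant $c$ with $\Fh{E'}\le c(1+\height{j(E')})$ for every elliptic curve $E'$ over $\IQbar$ (this crude upper bound suffices; see \cite{Silverman:HEC} and the references therein). Thus $\height{j(E')}\le T$ implies $\Fh{E'}\le c(1+T)=:T_0$, and part (i) applied with $T_0$ in place of $T$ gives the claim.

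The only steps requiring any care are the reduction to cyclic isogenies in (i) — where a short group-theoretic argument is hidden — and the appeal to the Faltings-versus-$j$-height comparison in (ii); neither is a genuine obstacle, the latter being standard. I would also remark that the hypothesis that $E$ has no complex multiplication is used precisely through Lemma \ref{lem:heckeorbitheight}: for CM curves its growth conclusion is no longer available, and the finiteness corresponding to (ii) is instead furnished by Poonen's Lemma \ref{lem:cmheight}.
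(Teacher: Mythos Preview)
Your proof is correct and follows essentially the same route as the paper: reduce to a cyclic isogeny, apply Lemma~\ref{lem:heckeorbitheight} to bound its degree, and conclude by the finiteness of cyclic subgroups of bounded order; part~(ii) is deduced from (i) via the comparison of $\Fh{E'}$ with $\height{j(E')}$ in \cite{Silverman:HEC}. The only cosmetic difference is that the paper cites Lemma~6.2 of \cite{MW:ellisogenies} for the existence of a cyclic isogeny, whereas you supply the short direct argument yourself.
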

\begin{proof}
  The second part of the lemma follows from the first since bounding the
  $j$-invariant of an elliptic curve amounts to bounding its Faltings
  height 
by Proposition 2.1 \cite{Silverman:HEC}.

Let $E'$ be an elliptic curve which is isogenous
to $E$ with $\Fh{E'}\le T$.  There exists an  isogeny $E\rightarrow
E'$ with cyclic kernel of cardinality $N$, say; for a proof we refer to Lemma 6.2
\cite{MW:ellisogenies}. By Lemma \ref{lem:heckeorbitheight} we see that $N$ is
bounded  in terms of $T$. So there are only finitely many
possibilities for the kernel of $E\rightarrow
E'$. Hence up-to $\IQbar$-isomorphism there are only finitely many
possibilities for $E'$.  
\end{proof}

\begin{proof}[Proof of Theorem \ref{thm:specialE}]
The proof runs along the lines of the proof of Theorem
\ref{thm:special1}. If $E$ does not have complex multiplication we
use Lemma \ref{lem:heckeheight} instead of  Lemma \ref{lem:cmheight}.
\end{proof}

\section{An Instance of the Bogomolov Conjecture over Functions
  Fields}
\label{sec:bogo}

Let $K,\overline K,E$ be as in the hypothesis of Theorem
\ref{thm:bogo}.
So $K$ is the function field of an irreducible 
non-singular projective curve $\overline
S$ defined over $\IQbar$. 
By considering a Weierstrass model over $K$ of 
$E$ we see that
 $E$ is the generic fiber of an abelian 
scheme $\mathcal{E}\rightarrow S$; here $S$ is a sufficiently
small  Zariski open and dense
subset of $\overline S$.
As in the introduction, let $\mathcal{A}$ be the $g$-fold 
fibered power of 
$\mathcal{E}$ over $S$ with $\pi:\mathcal{A}\rightarrow S$ the
structural morphism.
The condition that $E$ has non-constant $j$-invariant implies that
$\mathcal{A}$ is not isotrivial. 
We write $A$ for  the generic fiber of $\pi:\mathcal{A}\rightarrow S$;
this is just the abelian variety $E^g$ over $K$.

On $\overline S$ we fix an ample line bundle $\mathcal{L}$. We also choose
a representative of the equivalence class of height functions
 associated to the pair $\overline S,\mathcal{L}$ and denote
 it by $\heightlbS{\overline S,\mathcal{L}}$,
cf. Section \ref{sec:heights}.

Any finite field extension $K'$ of $K$ is the
function field of an irreducible non-singular projective curve $\overline{S}'$ defined over
$\IQbar$. The inclusion $K\subset K'$ induces a finite morphism $\rho:
\overline{S}'\rightarrow \overline{S}$.
We set $S' = \rho^{-1}(S)$ and regard $K'$ as the function field of $S'$.

A point $x\in A(K')$ induces a rational map $\widetilde x :
S'\dashrightarrow \mathcal{A}$ such that 
$\pi \circ \widetilde x = \rho$ on the domain of $\widetilde x$.

Recall that we defined the N\'eron-Tate height
$\ntheightlbS{\mathcal{A}}$ on  $\mathcal{A}$
in Section \ref{sec:heights}.
For any algebraic point $t\in S'(\IQbar)$ in the domain of
 $\widetilde x$  it makes sense to
speak of $\ntheightlb{\widetilde x(t)}{\mathcal{A}}$.

On $E$ we have a symmetric and ample line bundle coming from the zero
element of $E$ considered as a Weil divisor.
Taking the tensor product of the pull-backs coming from the $g$
projections 
 $E^g\rightarrow E$ determines
 a symmetric and ample line bundle on $A$. Since
$K$ is equipped with
 a product formula in the sense of Chapter 1.4 \cite{BG}, 
we may associate to said line bundle 
a N\'eron-Tate height $\ntheightlbS{A}$; cf. Chapter 9.2 of the same reference.

 Of course,  $\ntheightlbS{A}$
 need not equal the height appearing in Theorem \ref{thm:bogo}.
However, functorial properties of the N\'eron-Tate height imply the
following statement. If $\ntheightlbS{A}'$ is a  N\'eron-Tate height on
$A(\overline K)$
coming from a  symmetric and ample line bundle there exists $c
> 0$ such that $\ntheightlbS{A}\le c \ntheightlbS{A}'$.
Therefore, it suffices to prove Theorem \ref{thm:bogo} with the fixed
height function described above.


We can now state Silverman's  Theorem \cite{Silverman} 
applied to our situation.

\begin{theorem}
\label{thm:silverman}
In the notation above,  
let $t_1,t_2,\ldots\in S'(\IQbar)$ be a sequence of points in
the domain of $\widetilde x$
 such that
$\lim_{k\rightarrow \infty}\heightlb{\rho(t_k)}{\overline S,\mathcal{L}}=\infty$.
Then
\begin{equation*}
  \lim_{k\rightarrow\infty}
\frac{\ntheightlb{\widetilde x(t_k)}{\mathcal{A}}}
     {\heightlb{\rho(t_k)}{\overline S,\mathcal{L}}} = 
\ntheightlb{x}{A}.
\end{equation*}
\end{theorem}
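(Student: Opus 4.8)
The plan is to deduce the statement from \cite{Silverman}, so the task is to match Silverman's framework (an abelian scheme over a curve, a symmetric ample class, a section, and a height on the base) with the present notation and to make two reductions. First, I reduce to $g=1$: writing $x=(x_{1},\dots,x_{g})$ with $x_{i}\in E(K')$, both $\ntheightlbS{\mathcal{A}}$ and $\ntheightlbS{A}$ are by definition sums over the $g$ coordinates, and $\widetilde x(t)$ has $i$-th coordinate $\widetilde{x_{i}}(t)$; so the asserted limit splits into $g$ copies of the statement for $\mathcal{E}\rightarrow S$ and a point of $E(K')$. From now on $\mathcal{A}=\mathcal{E}$ and $A=E$, and via a Weierstrass equation I take $\mathcal{E}\subset\IP^{2}\times S$.

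Next comes the geometric input. Recall $\rho\colon\overline{S}'\rightarrow\overline{S}$ is finite and each $t\in S'(\IQbar)$ is a $\IQbar$-point of $\overline{S}'$. Fix $n\ge 0$; composing the rational section $\widetilde{[2^{n}]x}\colon S'\dashrightarrow\mathcal{E}$ with $\mathcal{E}\hookrightarrow\IP^{2}\times S\hookrightarrow\IP^{2}\times\overline{S}$ and the projection to $\IP^{2}$ yields a rational map $\overline{S}'\dashrightarrow\IP^{2}$ which extends to a morphism $g_{n}\colon\overline{S}'\rightarrow\IP^{2}$, because $\overline{S}'$ is a smooth curve; on the generic point $g_{n}$ is the point $[2^{n}]x\in E(K')\subset\IP^{2}(K')$. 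Write $h_{\mathcal{E}}$ for the naive height on $\mathcal{E}$ induced by $\IP^{2}\times\overline{S}$. Functoriality of Weil heights gives $h_{\mathcal{E}}([2^{n}]\widetilde x(t))=\height{g_{n}(t)}=\heightlb{t}{\overline{S}',g_{n}^{*}\O{1}}+O_{n}(1)$. As $\overline{S}'$ is a curve, putting $D_{n}=\deg g_{n}^{*}\O{1}$ and $e=\deg\rho^{*}\mathcal{L}$ the line bundle $(g_{n}^{*}\O{1})^{\otimes e}\otimes(\rho^{*}\mathcal{L})^{\otimes(-D_{n})}$ has degree $0$; the estimate from \cite{Silverman} that a height attached to a degree-$0$ class on a curve is $O(\sqrt{h})$, combined with $\heightlb{t}{\overline{S}',\rho^{*}\mathcal{L}}=\heightlb{\rho(t)}{\overline{S},\mathcal{L}}+O(1)$, yields
\begin{equation*}
 h_{\mathcal{E}}\bigl([2^{n}]\widetilde x(t)\bigr)=\frac{D_{n}}{e}\,\heightlb{\rho(t)}{\overline{S},\mathcal{L}}+O_{n}\bigl(1+\sqrt{\max\{1,\heightlb{\rho(t)}{\overline{S},\mathcal{L}}\}}\,\bigr).
\end{equation*}
Moreover $D_{n}$ is the naive Weil height of $[2^{n}]x\in\IP^{2}(K')$ for the product formula on $K'$, so Tate's limit argument shows that $D_{n}/4^{n}$ converges to a fixed positive multiple of $\ntheightlb{x}{A}$.

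Finally, I run the fiberwise telescoping. The Silverman--Tate comparison (Theorem A of \cite{Silverman}, the analogue for $\mathcal{E}\rightarrow S$ of Theorem \ref{thm:stz}) gives a constant $c$ and a positive integer $c_{0}$ with $|h_{\mathcal{E}}(Q)-c_{0}\ntheightlb{Q}{\mathcal{A}}|\le c\max\{1,\heightlb{\pi(Q)}{\overline{S},\mathcal{L}}\}$ for all $Q\in\mathcal{E}(\IQbar)$. Applying this to $Q=[2^{n}]\widetilde x(t)$, using $\pi\circ[2^{n}]=\pi$ and $\ntheightlb{[2^{n}]Q}{\mathcal{A}}=4^{n}\ntheightlb{Q}{\mathcal{A}}$, dividing by $4^{n}$ and inserting the display, one finds that $\ntheightlb{\widetilde x(t)}{\mathcal{A}}/\heightlb{\rho(t)}{\overline{S},\mathcal{L}}$ equals $D_{n}/(c_{0}e\,4^{n})$ up to a term that tends to $0$ as $\heightlb{\rho(t)}{\overline{S},\mathcal{L}}\rightarrow\infty$ with $n$ fixed, plus a term of size $O(4^{-n})$. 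Taking $t=t_{k}$ with $\heightlb{\rho(t_{k})}{\overline{S},\mathcal{L}}\rightarrow\infty$ removes the first error; letting then $n\rightarrow\infty$ removes the second while $D_{n}/(c_{0}e\,4^{n})\rightarrow\ntheightlb{x}{A}$ once the normalizations are matched, which is the assertion. The main obstacle is exactly this last bookkeeping: identifying the fiberwise N\'eron-Tate height, defined through the $x$-coordinate, with the canonical height over $K'$, and reconciling the degrees $D_{n}$, $e$ and $c_{0}$ so that the overall constant equals $1$; the two supporting facts (the $O(\sqrt{h})$ bound for degree-$0$ classes on $\overline{S}'$ and the uniform Silverman--Tate estimate on $\mathcal{E}$) are taken from \cite{Silverman} rather than reproved, and the only genuinely new verifications are the reduction from $E^{g}$ to $E$ and the check that $\mathcal{E}\rightarrow S$, the point $x$ and the chosen heights fit Silverman's hypotheses.
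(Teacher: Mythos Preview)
Your argument is structurally sound, but it does much more work than the paper does. The paper's proof is a two-line citation: it applies Silverman's Theorem~B \cite{Silverman} directly to the base change $A_{K'}$ over $S'$, and then observes that the two normalizations involved---the N\'eron--Tate height over $K'$ versus over $K$, and Silverman's base height on $\overline{S}'$ versus $\heightlbS{\overline S,\mathcal{L}}\circ\rho$---differ by factors of $[K':K]$ and $[K':K]^{-1}$ respectively, which cancel. No reduction to $g=1$, no telescoping, no degree-zero bound is invoked.

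What you have written is essentially a sketch of Silverman's own proof of Theorem~B, specialized to this situation: the reduction to one elliptic factor, the extension of the $[2^n]$-section to a morphism $g_n$ on the compactified curve, the degree-zero $O(\sqrt{h})$ estimate, and the fiberwise Tate telescoping. This is correct in outline and more self-contained, but the final step---matching $D_n/(c_0 e\,4^n)$ with $\ntheightlb{x}{A}$ exactly rather than up to a positive constant---is precisely the normalization bookkeeping that the paper sidesteps by quoting the theorem as a black box, and you leave it as an acknowledged loose end. If you want a complete proof along your lines you must actually carry out that identification; otherwise the cleaner route is simply to cite Theorem~B and check, as the paper does, that the $[K':K]$ factors cancel.
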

\begin{proof}
We apply Silverman's Theorem B to $A_{K'}$, the base change of $A$ to $K'$.
From this we will see that the limit equality holds. Indeed, we have
$\ntheightlbS{A_{K'}} =
[K':K]\ntheightlbS{A}$ on $A_{K'}(K')$. Moreover,
Silverman's choice of height on $\overline{S'}(\IQbar)$ is asymptotically equal to 
$[K':K]^{-1} \heightS\circ\rho$ by functorial properties of the height.
\end{proof}

We now combine the upper bound from  Theorem \ref{thm:main} with the
conclusion of Theorem \ref{thm:silverman} which serves as a competing
lower bound.

\begin{proof}[Proof of Theorem \ref{thm:bogo}]
We let $i:A\rightarrow \mathcal{A}$ be the natural morphism
and let $\mathcal{X}$ be the Zariski closure of $i(X)$ in
$\mathcal{A}$.
Then $\mathcal{X}$ is irreducible. By 
 Proposition 2.8.5 \citeEGAIVII,
 $\mathcal{X}$ is flat over $S$ and 
satisfies $i^{-1}(\mathcal{X}) = X$. Hence $X$ is the generic fiber of 
$\pi|_\mathcal{X}:\mathcal{X}\rightarrow S$, so the Fiber Dimension Theorem implies 
 $\dim X = \dim\mathcal{X} - 1$.

We claim that $\mathcal{X}$ is not an irreducible component of a flat subgroup
scheme of $\mathcal{A}$. Let us assume  the converse, we will arrive
at a contradiction.
A repeated application of Lemma \ref{lem:horizontal}(ii) together
appropriate projections to $\mathcal{E}^{g'}$ with $g'\le g$
gives us independent $\varphi_1,\ldots,\varphi_{g-\dim \mathcal{X} +1}\in\IZ^g$
such that $\mathcal{X}\subset\ker 
(\varphi_1\times_S\cdots\times_S \varphi_{g-\dim\mathcal{X} + 1})$.
This implies a similar inclusion on the generic fiber. The
 common kernel of $\varphi_1,\ldots,\varphi_{g-\dim \mathcal{X} + 1}$
considered as homomorphisms   $A=E^g\rightarrow E$ is an algebraic subgroup of dimension
$\dim \mathcal{X}-1=\dim X$. So it contains $X$ as an irreducible
component, this is a contradiction.


From Theorem \ref{thm:main}(i) we conclude that $\mathcal{X}\ssm 
\remtor{\mathcal{X}}$ is Zariski closed and proper in $\mathcal{X}$. 
Let $c > 0$ be as in part (ii) of this theorem. We claim that 
$c^{-1}$ is a suitable choice for $\epsilon$ and that the preimage
of  $\mathcal{X}\ssm \remtor{\mathcal{X}}$
under the dominant morphism $i|_X : X\rightarrow \mathcal{X}$
  is a suitable choice for $Z$. 
Indeed, let $x\in  (X\ssm Z)(K')$ where $K'$ is a finite field
extension of $K$ contained in $\overline K$.  As above,
 there is an irreducible non-singular projective curve $S'$
over $\IQbar$
with function field $K'$, a finite morphism   $\rho:S'\rightarrow S$,
and a rational map $\tilde x : S'\dashrightarrow \mathcal{A}$ such that 
$\rho = \pi \circ\tilde x$ on the domain of $\tilde x$.

The Zariski closure $\mathcal{Y}$ of the image of $\tilde x$  in $\mathcal{A}$
 is an irreducible subvariety of $\mathcal{A}$. It has
dimension at most $1$. But it must be a curve since $\rho$ is
dominant. We have
$\mathcal{Y}\subset \mathcal{X}$. Finally, 
 $\mathcal{Y} \cap
\remtor{\mathcal{X}}\not=\emptyset$. 
because
$x\notin Z(\overline K)$. 

So $\mathcal{Y} \cap
\remtor{\mathcal{X}}$ is a quasi-projective curve which dominates
$S$. Since $\mathcal{L}$ is ample
there is a sequence of points 
$P_1,P_2,\ldots\in (\mathcal{Y} \cap \remtor{\mathcal{X}})(\IQbar)$ 
 such that 
\begin{equation*} 
 \lim _{k\rightarrow\infty} \heightlb{\pi(P_k)}{\overline S,\mathcal{L}} = \infty.
\end{equation*}
For $k$ large enough there is $t_k\in S(\IQbar)$ with $\widetilde x(t_k) = P_k$. 
Then $\lim_{k\rightarrow\infty}
\heightlb{\rho(t_k)}{\overline{S},\mathcal{L}} = \infty$ because $\pi(P_k) = \rho(t_k)$. 

 Theorem \ref{thm:main}(ii) implies
 \begin{equation*}
   \ntheightlb{\tilde x(t_k)}{\mathcal{A}} = 
   \ntheightlb{P_k}{\mathcal{A}} \ge
c^{-1} \heightlb{\pi(P_k)}{\overline S,\mathcal L}
=\epsilon \heightlb{\rho(t_k)}{\overline S,\mathcal L}
 \end{equation*}
for  $k$ large enough since the left-hand side will eventually be
greater than $1$. So
\begin{equation*}
  \liminf_{k\rightarrow\infty}
\frac{\ntheightlb{\tilde
    x(t_k)}{\mathcal{A}}}{\heightlb{\rho(t_k)}{\overline S,\mathcal L}} \ge \epsilon.
\end{equation*}

By Silverman's Theorem this limes inferior is in fact a limes which
equals $\ntheightlb{x}{A}$. We conclude
$\ntheightlb{x}{A}\ge \epsilon$. The theorem follows
because $\epsilon > 0$ was independent of $x$. 
\end{proof}

\bibliographystyle{amsplain}
\bibliography{literature}

\def\cprime{$'$}
\providecommand{\bysame}{\leavevmode\hbox to3em{\hrulefill}\thinspace}
\providecommand{\MR}{\relax\ifhmode\unskip\space\fi MR }
\providecommand{\MRhref}[2]{%
  \href{http://www.ams.org/mathscinet-getitem?mr=#1}{#2}
}
\providecommand{\href}[2]{#2}
\begin{thebibliography}{10}

\bibitem{Andre01}
Y.~Andr{\'e}, \emph{Shimura varieties, subvarieties, and {CM} points}, Six
  lectures at the University of Hsinchu (Taiwan), August-September 2001 (with
  an appendix by C.-L.~Chai), \url{http://www.math.umd.edu/~yu/notes.shtml}.

\bibitem{BG}
E.~Bombieri and W.~Gubler, \emph{{H}eights in {D}iophantine {G}eometry},
  Cambridge University Press, 2006.

\bibitem{NeronModels}
S.~Bosch, W.~L{\"u}tkebohmert, and M.~Raynaud, \emph{N\'eron models},
  Ergebnisse der Mathematik und ihrer Grenzgebiete (3), vol.~21,
  Springer-Verlag, Berlin, 1990.

\bibitem{Cassels}
J.W.S. Cassels, \emph{{A}n {I}ntroduction to {D}iophantine {A}pproximation},
  Cambridge University Press, 1957.

\bibitem{Fulton}
W.~Fulton, \emph{Intersection theory}, Springer, 1984.

\bibitem{CAS}
H.~Grauert and R.~Remmert, \emph{{C}oherent {A}nalytic {S}heaves}, Springer,
  1984.

\bibitem{EGAIV}
A.~Grothendieck, \emph{\'{E}l\'ements de g\'eom\'etrie alg\'ebrique. {IV}.
  \'{E}tude locale des sch\'emas et des morphismes de sch\'emas {I}-{IV}},
  Inst. Hautes \'Etudes Sci. Publ. Math. (1964--1967), no.~20,24,28,32.

\bibitem{Gubler:Bogo}
W.~Gubler, \emph{{T}he {B}ogomolov conjecture for totally degenerate abelian
  varieties}, Invent. Math. \textbf{169} (2007), 377--400.

\bibitem{Hartshorne}
R.~Hartshorne, \emph{{A}lgebraic {G}eometry}, Springer, 1997.

\bibitem{Husemoeller}
D.~Husem{\"o}ller, \emph{Elliptic {C}urves}, Springer, 2004.

\bibitem{KlinglerYafaev}
B.~Klingler and A.~Yafaev, \emph{On the {A}ndr\'e-{O}ort {C}onjecture},
  Preprint.

\bibitem{Lang:elliptic}
S.~Lang, \emph{Elliptic {F}unctions}, Springer, 1987.

\bibitem{PosAlgGeom}
R.~Lazarsfeld, \emph{{P}ositivity in {A}lgebraic {G}eometry {I}}, Springer,
  2004.

\bibitem{MW:ellisogenies}
D.W. Masser and G.~W{\"u}stholz, \emph{Estimating isogenies on elliptic
  curves}, Invent. Math. \textbf{100} (1990), 1--24.

\bibitem{MZ:torsionanomalous}
D.W. Masser and U.~Zannier, \emph{Torsion anomalous points and families of
  elliptic curves}, C. R. Acad. Sci. Paris, S\'er. I \textbf{346} (2008),
  491--494.

\bibitem{Philippon}
P.~Philippon, \emph{{L}emmes de z{\'e}ros dans les groupes alg{\'e}briques
  commutatifs}, Bull. Soc. Math. France \textbf{114} (1986), 355--383.

\bibitem{PilaCurves}
J.~Pila, \emph{Rational {P}oints of {D}efinable {S}ets and {R}esults of
  {A}ndr\'e-{O}ort-{M}anin-{M}umford type}, Internat. Math. Res. Notices
  (2009), no.~13, 2476--2507.

\bibitem{Pink05}
R.~Pink, \emph{{A} combination of the conjectures of {M}ordell-{L}ang and
  {A}ndr\'e-{O}ort}, {G}eometric methods in algebra and number theory, {P}rogr.
  {M}ath., vol. 235, Birk{\"a}user, 2005, pp.~251--282.

\bibitem{Pink}
\bysame, \emph{{A} {C}ommon {G}eneralization of the {C}onjectures of
  {A}ndr\'e-{O}ort, {M}anin-{M}umford, and {M}ordell-{L}ang}, Preprint (2005),
  13pp.

\bibitem{Poonen:MRL01}
B.~Poonen, \emph{Spans of {H}ecke {P}oints on {M}odular {C}urves}, Mathematical
  Research Letters \textbf{8} (2001), 767--770.

\bibitem{Raynaud:MM}
M.~Raynaud, \emph{Sous-vari\'et\'es d'une vari\'et\'e ab\'elienne et points de
  torsion}, Arithmetic and geometry, {V}ol. {I}, Progr. Math., vol.~35,
  Birkh\"auser Boston, Boston, MA, 1983, pp.~327--352.

\bibitem{Silverman}
J.H. Silverman, \emph{{H}eights and the specialization map for families of
  abelian varieties}, J. Reine Angew. Math. \textbf{342} (1983), 197--211.

\bibitem{Silverman:AEC}
\bysame, \emph{The {A}rithmetic of {E}lliptic {C}urves}, Springer, 1986.

\bibitem{Silverman:HEC}
\bysame, \emph{Heights and {E}lliptic {C}urves}, Arithmetic Geometry (Gary
  Cornell and Joseph~H. Silverman, eds.), Springer, 1986, pp.~253--265.

\bibitem{SU:variation}
L.~Szpiro and E.~Ullmo, \emph{Variation de la hauteur de {F}altings dans une
  classe de {$\overline{\bf Q}$}-isog\'enie de courbe elliptique}, Duke Math.
  J. \textbf{97} (1999), no.~1, 81--97.

\bibitem{ZarhinIsogeny}
Ju.~G. Zarhin, \emph{Isogeny classes of abelian varieties over function
  fields}, Proc. Lond. Math. Soc. (3) \textbf{96} (2008), no.~2, 312--334.

\bibitem{ZarhinManin}
Ju.~G. Zarhin and Ju.~I. Manin, \emph{Height on families of abelian varieties},
  Mat. Sb. (N.S.) \textbf{89(131)} (1972), 171--181, 349.

\bibitem{Zimmer}
H.G. Zimmer, \emph{On the difference of the {W}eil height and the
  {N}\'eron-{T}ate height}, Math. Z. \textbf{147} (1976), no.~1, 35--51.

\end{thebibliography}

\address{
\noindent
Philipp Habegger,
Johann Wolfgang Goethe-Universit\"at,
Robert-Mayer-Str. 6-8,
60325 Frankfurt am Main,
Germany,
{\tt habegger@math.uni-frankfurt.de}
} 


\noindent MSC 2010: 11G15 (main), 11G18, 11G50, 14G40, 14K22.

\end{document}